\renewcommand{\raggedright}{\leftskip=0pt \rightskip=0pt plus 0cm}
\titleformat{\section}{\sc\centering\Large}{\thesection}{1em}{}
\numberwithin{equation}{section}
\newtheorem{thm}{Theorem}[section]
\newtheorem{lem}{Lemma}[section]
\newtheorem{prop}{Proposition}[section]
\newtheorem{defn}{Definition}[section]
\newtheorem{exa}{Example}[section]
\newtheorem{cor}{Corollary}[section]
\newtheorem{rem}{Remark}[section]
\newtheorem{claim}{Claim}[]
\renewcommand{\d}{\mathrm{d}}
\renewcommand{\H}{\mathcal{H}}
\renewcommand{\over}{\overline}
\newcommand{\R}{\mathbb{R}} 
\newcommand{\Rn}{\R^n}
\newcommand{\Rp}{\R^{n+1}_+}
\newcommand{\si}{\sigma}              \newcommand{\Sn}{\mathbb{S}^n}
\newcommand{\al}{\alpha}                \newcommand{\lam}{\lambda}
\newcommand{\om}{\Omega}                \newcommand{\pa}{\partial}
\newcommand{\var}{\varepsilon}           
\newcommand{\be}{\begin{equation}}      \newcommand{\ee}{\end{equation}}               
\newcommand{\Lam}{\Lambda}              \newcommand{\B}{\mathcal{B}}
\begin{document}
	
\title{
\bf	On the density   and multiplicity of solutions to the fractional Nirenberg problem  \bigskip
}

\author[a]{{\sc Zhongwei Tang}\thanks{The research was supported by National Science Foundation of China(12071036,12126306)}}
\author[a]{\sc Heming Wang}
\author[a]{\sc Ning Zhou}
\affil[a]{\rm School of Mathematical Sciences, Laboratory of Mathematics and Complex Systems, MOE,   \par Beijing Normal University,  Beijing, 100875, P.R. of China}

\date{}	
\maketitle

\begin{abstract}
	This paper is devoted to establishing some results on the density  and multiplicity of solutions to the  fractional Nirenberg problem which is equivalent to studying the  conformally invariant equation  $P_\si(v)=K v^{\frac{n+2\si}{n-2\si}}$  on the standard unit sphere $(\Sn,g_0)$ with   $\si\in (0,1)$ and $n\geq 2$, where $P_\si$ is the intertwining operator of order $2\si$ and $K$ is the prescribed curvature function.  More specifically, by  using the variational gluing  method,  refined  analysis of bubbling behavior,  extension formula,  as well as the blow up analysis arguments, we obtain the existence  of infinitely many multi-bump solutions. In particular, we show  the smooth curvature  functions of metrics conformal to $g_0$ are dense in the $C^{0}$ topology. Moreover, the related fractional Laplacian  equations  $(-\Delta)^{\si} u=K(x) u^{\frac{n+2\si}{n-2\si}}$ in $\Rn$, with $K(x)$ being asymptotically periodic in one of the variables, are also studied and infinitely many  solutions are obtained under natural flatness assumptions.	
\end{abstract}

{\noindent \bf Key words:} Fractional Nirenberg problem, Degenerate   elliptic equation,   Multi-bump solution.

{\noindent\bf Mathematics Subject Classification (2020)}\quad 35B44 · 35R11  
%\tableofcontents
%----------------------------------------------------------------------------%
\section{\sc Introduction}\label{sec:0}
\subsection{History}
The Nirenberg problem, raised by Nirenberg in the years 1969-1970, 
asks on the $n$-dimensional standard sphere $(\Sn,g_0)$ $(n\geq 2)$, if one
can find a conformally invariant metric $g$ such that the scalar curvature (Gauss curvature for $n=2$) of $g$ is equal to the given positive  function $K$. So the Nirenberg problem is also called the prescribed curvature problem on $\Sn$. If we denote $g=e^{2v}g_0$ in the two dimensional case
and $g=v^{\frac{4}{n-2}}g_0$ in the $n$ $(n\geq 3)$ dimensional case, This problem is equivalent to solving
$$
-\Delta_{g_{0}}v+1=Ke^{2v} \quad\mbox{ on }\, \mathbb{S}^2,
$$
and
$$
-\Delta_{g_{0}}v+c(n)R_0v=c(n)Kv^{\frac{n+2}{n-2}} \quad\mbox{ on }\, \Sn \quad  \mbox{ for } \,n\geq 3,
$$
where  $\Delta_{g_{0}}$ is the Laplace-Beltrami operator on $(\Sn, g_{0})$, $c(n)=(n-2)/(4(n-1))$,  $R_0=n(n-1)$ is the scalar curvature associated to $g_0$. The Nirenberg  problem has been studied extensively and it would be impossible to mention here all works in this area. Two significant aspects most  related to this paper are the fine analysis of blow up (approximate) solutions and the gluing methods in construction of solutions. These were studied in \cite{CL2,BC1,BC2,Li93,Li93b,Li95,Li96,Han,Yan1,DLY,LWX,WY,SZ,CY91,ES,BE,GMPY,PWW,Ahmedou,ZD,Ahmedou1} and references therein.  

In this paper, we are concerned with  the \emph{fractional Nirenberg problem} with the Nirenberg’s equation in the fractional setting which constitutes in itself a branch in geometric analysis.  This problem was naturally raised on $\si$-curvature:  finding a new metric $g$  on the standard sphere $\Sn$, $n\geq 2$, conformally equivalent to the standard one $g_{0}$, such that its $\si$-curvature is equal to a   positive function $K$ on $\Sn$.  More precisely, we investigate the existence of solutions to the following nonlinear equation:
\be\label{maineq}
 P_{\si}(v)=K v^{\frac{n+2 \si}{n-2 \si}} ,\quad v>0\quad  \text{ on }\,\Sn,
\ee
where $n\geq 2$, $\si\in (0,1)$,    $P_{\si}$ is an intertwing operator (see, e.g., Branson \cite{Branson}) of order $2\si$ given by
$$P_{\si}=\frac{\Gamma(B+\frac{1}{2}+\si)}{\Gamma(B+\frac{1}{2}-\si)}, \quad B=\sqrt{-\Delta_{g_{0}}+\Big(\frac{n-1}{2}\Big)^{2}}$$ with $\Gamma$ being  the Gamma function,   and $K$ is the  prescribed  $\si$-curvature function. The operator  $P_{\si}$ can be seen more concretely on $\Rn$ 
using stereographic projection. Indeed,  let  $\mathcal{N}$ be the north pole of $\Sn$ and let 
$$
F: \Rn \to  \Sn \setminus \{\mathcal{N}\}, \quad x \mapsto\Big(\frac{2 x}{|x|^{2}+1}, \frac{|x|^{2}-1}{|x|^{2}+1}\Big)
$$
be the inverse of stereographic projection. Then it holds that
$$
P_{\si}(\phi) \circ F=|J_{F}|^{-\frac{n+2 \si}{2 n}}(-\Delta)^{\si}(|J_{F}|^{\frac{n-2 \si}{2 n}}(\phi \circ F))\quad \text{ for }\, \phi\in C^{\infty}(\Sn),
$$
where $|J_{F}|=(\frac{2}{1+|x|^{2}})^{n}$ is the deteminant of the Jacobian of $F$ and $(-\Delta)^{\si}$ is the fractional Laplacian operator (see, e.g., \cite{Stein,Hitchhiker}).   When $\si \in(0,1)$, Pavlov and Samko \cite{Pavlov} showed that
$$
P_\si(v)(\xi)=P_\si(1) v(\xi)+c_{n,-\si} \int_{\Sn} \frac{v(\xi)-v(\zeta)}{|\xi-\zeta|^{n+2 \si}}\, \d vol_{g_{0}}(\zeta)\quad \text{ for }\,v \in C^2(\Sn),
$$
 where $c_{n,-\si}=\frac{2^{2 \si} \si \Gamma(\frac{n+2 \si}{2})}{\pi^{n / 2} \Gamma(1-\si)}$ and $\int_{\Sn}$ is understood as $\lim _{\var \to  0} \int_{|x-y|>\var}$.
Therefore, if we write  $u=|J_{F}|^{\frac{n-2 \si}{2 n}}(v \circ F)$,  one can transfer Eq. \eqref{maineq} into the following  equation 
\be\label{maineq1}
(-\Delta)^{\si} u=K(x) u^{\frac{n+2 \si}{n-2 \si}},\quad u>0 \quad  \text{ in }\,\Rn,
\ee 
which contains fractional Laplacian operator and Sobolev critical exponent. We piont out that if $K(x)$ behaves well at infinity and a solution $u$ of \eqref{maineq1} decays to zero at infinity (hence $u$ must decay to zero at the rate of $|x|^{2\si-n}$), then $u$ is actually a solution of \eqref{maineq}.

Actually, the intertwing operator  $P_{\si}$ can be well-defined for all real number $\si\in (0,\frac{n}{2})$,  see, e.g., Branson  \cite{Branson}. For $\si=1$, $P_{1}=-4 \frac{n-1}{n-2} \Delta_{g_{0}}+n(n-1)$ is the well-known conformal Laplacian associated with the classical Nirenberg problem. For $\si=2$, $P_{2}=\Delta_{g_{0}}^{2}-\frac{1}{2}(n^{2}-2n-4)\Delta_{g_{0}}+\frac{n-4}{16} n(n^{2}-4)$ is the well known \emph{Paneitz} operator. Up to positive constants $P_{1}(1)$ is the scalar curvature associated to $g_{0}$ and $P_{2}(1)$ is the so-called $Q$-curvature. In fact, $P_{1}$ and $P_{2}$ are the first two terms of a sequence of conformally covariant elliptic operators $P_{k}$ which exists for all positive integers $k$ if $n$ is odd and for $k=\{1, \ldots, n / 2\}$ if $n$ is even. These operators have been first introduced by Graham, Jenne, Mason and Sparling in \cite{GJMS}. In \cite{GZ}, Graham and Zworski showed that $P_{k}$ can be realized as the residues at $\si=k$ of a meromorphic family of scattering operators.  Unlike the Laplacian, the fractional Laplacian is a non-local operator. In a seminal paper \cite{CS2}, Caffarelli and Silvester express the non-local operator $(-\Delta)^{\si}$ when $\si \in(0,1)$ on $\Rn$ as a generalized Dirichlet-Neumann map for an elliptic boundary value problem with local differential operators. Later on, Chang and Gonz\'alez \cite{CG} showed that for any $\si\in (0,\frac{n}{2})$, the operator $P_{\si}$ can also be defined as a Dirichlet-to-Neumann operator of a conformally compact Einstein manifold.  
The fractional operators $P_{\si}$ and their associated fractional order curvatures $P_{\si}(1)$ which will be called $\si$-curvatures have been the subject of many studies. On general manifolds, the prescribing $\si$-curvature problem was considered in \cite{GZ,CG,GMS,MQ,QR} and references therein. 
  Throughout the paper, we assume $\si \in (0,1)$ and  $n\geq 2$ without otherwise stated.

Problem \eqref{maineq} (or \eqref{maineq1})  is a focus of reserach in the recent decades, and it continues to inspire new thoughts,  see for example \cite{JLX,ACH,JLX2,LR1,LR2,CZ1,AACHM,CY,CA2,LY,Niu18,Liuzy,GNT,GN,CA1}. 
 Fundamental progress  was made by Jin, Li and Xiong in \cite{JLX,JLX2}, from which they obtained compactness and existence results by applying the blow up analysis and the degree counting argument. Later on, the authors in \cite{AACHM,ACH,CY,CA1} obtained some existence criterions by establishing Euler-Hopf type index formula.   Recently, there have been some works devoted to the multiplicity results, and those  mainly use the Lyapunov-Schmidt reduction method  (see, e.g., \cite{CA2,LY,CZ1,Niu18,Liuzy,LR2,LR1,GN}).

In general, Eq. \eqref{maineq} may have no positive solution, since if $v$ is a positive solution of \eqref{maineq} with $K\in C^1(\Sn)$, then it has to satisfy the Kazdan–Warner type condition (see \cite[Proposition A.1]{JLX})
\be\label{KW}
\int_{\Sn}\langle\nabla_{g_{0}} K, \nabla_{g_{0}} \xi\rangle v^{2 n /(n-2 \si)} \,\d \xi=0.
\ee
Hence, if $K(\xi)=\xi_{n+1}+2$ with $\xi=(\xi_1,\ldots,\xi_n
,\xi_{n+1})\in \Sn$, Eq. \eqref{maineq} has no positive solutions.
 The aim of this paper is to investigate the number of positive solutions to Eq. \eqref{maineq}  (or \eqref{maineq1}) under various local assumptions on the prescribed function  $K$. Basically speaking, we obtain a $C^0$ density result for the fractional Nirenberg problem \eqref{maineq} by constructing infinitely many multi-bump solutions to some  perturbed equations.  As a variation of this idea, the related problem \eqref{maineq1}  with $K(x)$ being periodic in one of the variables are also studied and infinitely many multi-bump solutions (modulo translations by its periods) are obtained under some  flatness conditions. 
\subsection{Main theorems}
We now list the main theorems of this paper and some remarks on them.
The first one deals with the existence of multi-bump solutions to the perturbed \emph{fractional Nirenberg problem}.
\begin{thm}\label{thm:0.1}
	Let $K(x)$ be a positive function on $\Sn$. Assume that $\widetilde{x}\in \Sn$ and  $K \in C^{0}(B_{\widetilde{\var}}(\widetilde{x}))$ ($B_{\widetilde{\var}}(\widetilde{x})$ denotes the geodestic ball in $\Sn$ of radius $\widetilde{\var}$ and centered at $\widetilde{x}$). Then for any $\var\in(0,\widetilde{\var})$, any integers $k\geq 1$ and $m\geq 2$,  there exists
	$K_{\var, k, m} \in L^{\infty}(\Sn)$ with $K_{\var, k, m}-K \in C^{0}(\Sn)$, $\|K_{\var, k, m}-K\|_{C^{0}(\Sn)}<\var$ and $K_{\var, k, m}\equiv K$ in $\Sn \setminus  B_{\var}(\widetilde{x})$, such that, for each $2 \leq s \leq m$, the equation
\be\label{perturbed}
		P_\si(v)=K_{\var, k, m}  v^{\frac{n+2\si}{n-2\si}} \quad \mbox{ on } \,\Sn
\ee
	has at least $k$ positive solutions with $s$ bumps.
\end{thm}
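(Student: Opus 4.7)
My plan is to engineer $K_{\var,k,m}$ inside $B_\var(\widetilde{x})$ so that it has many well-separated local maxima with prescribed polynomial flatness, and then to build an $s$-bump solution concentrated at any chosen $s$-subset of these maxima by a variational gluing argument; the required count $k$ of distinct solutions is obtained by varying which subset of maxima is occupied. Accordingly, I fix an integer $N=N(k,m)$ with $\binom{N}{s}\ge k$ for every $2\le s\le m$.

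To construct $K_{\var,k,m}$, I pick $N$ distinct points $p_1,\ldots,p_N\in B_{\var/2}(\widetilde{x})$ whose pairwise geodesic distances are bounded below by some $d_0>0$, and in stereographic coordinates $y$ centered at each $p_j$, on a tiny ball $B_{r_j}(p_j)\subset B_\var(\widetilde{x})$ with $r_j\ll d_0/4$, I replace $K$ by
\[
K_{\var,k,m}(y)=K(p_j)-c_j|y|^{\beta_j}\chi(|y|/r_j),
\]
where $\chi$ is a fixed smooth cut-off equal to $1$ on $[0,1/2]$ and vanishing on $[1,\infty)$, the flatness exponent $\beta_j\in(n-2\si,n)$ lies in the admissible range that permits bubble concentration (cf.\ the blow-up analysis of \cite{JLX,JLX2}), and $c_j,r_j>0$ are small enough that $\|K_{\var,k,m}-K\|_{C^0(\Sn)}<\var$. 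By construction $K_{\var,k,m}\equiv K$ on $\Sn\setminus\bigcup_j B_{r_j}(p_j)\supset\Sn\setminus B_\var(\widetilde{x})$, and each $p_j$ is an isolated strict local maximum of $K_{\var,k,m}$ with admissible flatness.

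For every subset $I\subset\{1,\ldots,N\}$ with $|I|=s\in\{2,\ldots,m\}$, I then build an $s$-bump solution concentrated at $\{p_j:j\in I\}$ by a Lyapunov--Schmidt reduction. Using the Caffarelli--Silvestre extension \cite{CS2}, I lift \eqref{perturbed} to a degenerate elliptic boundary value problem on $\Rp$ whose associated energy $J$ admits the superposition $\sum_{j\in I}W_{p_j,\lambda_j}$ of standard bubbles (Aubin--Talenti profiles with scale $\lambda_j$ centered at $p_j$) as a family of approximate critical points. Splitting the test space into the tangent space of the bubble manifold and its orthogonal complement, the orthogonal correction $\omega$ is solved implicitly by the inverse function theorem, reducing the problem to a finite-dimensional one in the scales $\vec\lambda\in[\Lambda_0^{-1},\Lambda_0]^s$. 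The leading terms of the reduced functional are the positive interaction energy between bubbles at distinct $p_j,p_{j'}$ and the flatness contribution $c_j\lambda_j^{-\beta_j}$ engineered above, which, for $\beta_j\in(n-2\si,n)$, scale comparably and yield an interior critical point $\vec\lambda^\ast$, hence a positive solution of \eqref{perturbed}. Because solutions associated to different subsets $I$ concentrate at disjoint point sets, the $\binom{N}{s}\ge k$ subsets produce at least $k$ geometrically distinct $s$-bump solutions.

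The main obstacle is carrying out this gluing under the very weak hypothesis that $K\in C^0(B_{\widetilde\var}(\widetilde x))$ and is only $L^\infty$-positive elsewhere: the reduction requires expansions of $J$ at the approximate solution with error smaller than $\min_j\lambda_j^{-\beta_j}$, and the lack of smoothness of $K$ outside the engineered bumps blocks any naive integration-by-parts argument. I would handle this by combining the extension formula with the refined local bubbling analysis of \cite{JLX,JLX2} to obtain the required pointwise and integral estimates on $\omega$, and by a Pohozaev-type identity on the extension to prevent the concentration parameters from escaping $[\Lambda_0^{-1},\Lambda_0]$ and to rule out bubble collapse. The fact that only $C^0$ regularity of $K$ is needed near $\widetilde x$ is precisely what upgrades this multiplicity result into the $C^0$ density statement that motivates the paper.
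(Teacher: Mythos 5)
Your overall blueprint (perturb $K$ inside $B_{\var}(\widetilde{x})$ to create many well‑separated local maxima with prescribed flatness, then obtain $s$‑bump solutions concentrated at $s$‑element subsets) does match the paper's, but both the geometry of the configuration and the gluing mechanism you propose differ in ways that matter.

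\textbf{Different method.} You carry out a Lyapunov--Schmidt reduction. The paper deliberately avoids this and instead uses a variational gluing scheme in the spirit of Coti~Zelati--Rabinowitz, S\'er\'e and Li: it first proves a subcritical multi‑bump existence result (Theorem~\ref{thm:3.1}) via a minimax/deformation argument, and then passes to the critical exponent through the blow‑up analysis of Jin--Li--Xiong (Proposition~\ref{prop:5.1}). The paper explicitly emphasizes that ``the method is variational, rather than through Lyapunov--Schmidt reduction.'' This is not merely stylistic: the gluing is topological and does not require the precise expansions of the reduced functional that Lyapunov--Schmidt does, which is exactly what makes it run under the weak $C^0$ hypothesis on $K$. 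You identify this regularity problem yourself and propose to patch it with ``refined local bubbling analysis,'' but the paper sidesteps it entirely by never doing a finite‑dimensional reduction.

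\textbf{A genuine gap in your configuration.} You fix $p_1,\ldots,p_N\in B_{\var/2}(\widetilde{x})$ with pairwise geodesic distances bounded below by a fixed $d_0>0$, and then claim the bubble interaction and the flatness contribution $c_j\lambda_j^{-\beta_j}$ ``scale comparably.'' This is false for $\beta_j\in(n-2\si,n)$ at fixed separation: after stereographic projection the $p_j$ map to points at fixed Euclidean distance, so the mutual interaction is of order $(\lambda_j\lambda_{j'})^{-(n-2\si)/2}\sim\lambda^{-(n-2\si)}$, while the flatness term is of order $\lambda^{-\beta_j}$ with $\beta_j>n-2\si$. The interaction therefore \emph{dominates} and the reduced functional is monotone in the concentration parameters on any fixed compact configuration, producing no interior critical point. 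The paper avoids this by placing the perturbations at $le_i$ with $l\to\infty$, so that the Euclidean separation between the bump regions goes to infinity (even though on the sphere the points all collapse into any prescribed $B_{\var}(\widetilde{x})$); this is precisely what decouples the bumps and lets each local flatness condition $(H_3)$ drive its own bubble. To salvage your construction you would need to let the $p_j$ approach $\widetilde{x}$ (and shrink the $r_j$) along a sequence, recovering the paper's picture — but then you are forced into the same kind of limiting argument the paper performs, and Lyapunov--Schmidt would again demand expansion accuracy that the merely $C^0$ background $K$ does not obviously supply.

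\textbf{Minor points.} As written, your $K_{\var,k,m}(y)=K(p_j)-c_j|y|^{\beta_j}\chi(|y|/r_j)$ equals $K(p_j)$ on $\partial B_{r_j}(p_j)$, which does not match $K(y)$ there, so the function is discontinuous across $\partial B_{r_j}(p_j)$; this is fixable (e.g.\ interpolate back to $K$, or use the paper's construction with the connected super‑level component $\widetilde{\om}_l^{(i)}$) but needs to be addressed to get the claimed $K_{\var,k,m}-K\in C^0(\Sn)$. Also, constraining $\vec\lambda\in[\Lambda_0^{-1},\Lambda_0]^s$ as stated does not let the bubbles concentrate; you presumably mean a rescaled parameter, but some explicit small parameter driving concentration must be present.
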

	For the precise  meaning of $s$ bumps, refer to the  proof of Theorem \ref{thm:0.1} in Section \ref{sec:6}.	Roughly speaking, we say a solution has $s$ bumps if most of its mass is concentrated in $s$ disjoint regions. Since the number of bumps and the number of solutions can be chosen arbitrarily, we obtain the existence of infinitely many multi-bump solutions to Eq. \eqref{perturbed}.

If $K_{\var}=1+\var K(x)$ and $K(x)$ has at least two critical points satisfying some local conditions, Chen-Zheng \cite{CZ1} showed that Eq. \eqref{maineq1} with $K=K_{\var}$ has two multi-bump solutions when $\var$ is small. Here we give a more general existence result  since we can perturb  any given positive continous function in any  neighborhood of any given point on $\Sn$ such that for the perturbed equations there exist many solutions. The perturbation $K_{\var, k, m}$ can be constructed explicitly
in Section \ref{sec:5} and it is a way of gluing approximate solutions into genuine solutions. The method is variational, rather than through Lyapunov-Schmidt reduction method as  in \cite{CA2,LY,CZ1,Niu18,Liuzy,LR2,LR1,GN}, etc. The solutions we obtained  have most of their mass in disjoint small balls centered at the maximum points of $K_{\var, k, m}$, which are far away from each other. This fact accounts for why there are infinitely many solutions solve Eq. \eqref{perturbed}. Moreover, the solutions we constructed are nearly \emph{bubble} functions, we refer to Section \ref{sec:5} for more details.  However, one cannot expect to perturb any $K(x)$ near any point $\widetilde{x} \in \Sn$ in the $C^{1}$ sense to obtain the existence of solutions, which is evident from Kazdan-Warner type condition \eqref{KW}.

\begin{rem}
	The main feature of Theorem \ref{thm:0.1}  is that, even if a given positive function $K$ in Theorem \ref{thm:0.1}  cannot be realized as the $\si$-curvature of a metric $g$ conformal to $g_{0}$, nevertheless we can find a function $K'$ arbitraly close to $K$ in $C^{0}(\Sn)$ which is the $\si$-curvature  as many conformal metrics to $g_{0}$ as we want. 	
\end{rem}

Regarding the statement in  Theorem \ref{thm:0.1}, we have the following result.
\begin{cor}\label{cor:1}
	The 	smooth $\si$-curvature  functions of metrics conformal to $g_0$ are dense in $C^{0}(\Sn)$ among positive functions.
\end{cor}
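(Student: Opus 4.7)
The plan is to deduce Corollary \ref{cor:1} as a direct consequence of Theorem \ref{thm:0.1} after a preliminary smoothing of $K$. Given a positive $K\in C^0(\Sn)$ and $\var>0$, I would first approximate $K$ by a strictly positive smooth function $\widetilde K\in C^\infty(\Sn)$ with $\|K-\widetilde K\|_{C^0(\Sn)}<\var/2$; this is standard on the compact manifold $\Sn$ via a finite partition of unity and Euclidean convolution in local charts, and positivity is preserved because $\inf_\Sn K>0$.

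Next, I would apply Theorem \ref{thm:0.1} with the smoothed function $\widetilde K$ playing the role of $K$, any basepoint $\widetilde x\in\Sn$ (the local continuity hypothesis is trivial since $\widetilde K$ is globally smooth), parameter $\var'\in(0,\var/2)$, and for instance $k=1$, $m=2$. The theorem then produces a function $K_{\var',1,2}$ with $\|K_{\var',1,2}-\widetilde K\|_{C^0(\Sn)}<\var/2$, coinciding with $\widetilde K$ outside $B_{\var'}(\widetilde x)$, together with a positive solution $v$ of $P_\si(v)=K_{\var',1,2}\,v^{(n+2\si)/(n-2\si)}$. The conformal metric $g=v^{\frac{4}{n-2\si}}g_0$ then has $\si$-curvature equal to $K_{\var',1,2}$, and the triangle inequality gives $\|K_{\var',1,2}-K\|_{C^0(\Sn)}<\var$, so $K$ is approximated within $\var$ in $C^0(\Sn)$ by the $\si$-curvature of a conformal metric.

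The main point to address is \emph{smoothness}: Theorem \ref{thm:0.1} only records $K_{\var,k,m}\in L^\infty(\Sn)$, while Corollary \ref{cor:1} claims a smooth $\si$-curvature. I would resolve this by inspecting the explicit construction of $K_{\var,k,m}$ in Section \ref{sec:5}, where the perturbation is obtained from the input function by gluing smooth, compactly supported bubble-profile modulations centered at finitely many well-separated points. Since $\widetilde K$ is smooth and each modulation may be taken in $C^\infty_c$, the resulting $K_{\var',1,2}$ lies in $C^\infty(\Sn)$; by elliptic regularity $v$ is then smooth as well, so we in fact obtain a smooth $\si$-curvature realization, completing the proof of the density statement. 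A subsidiary check is that the variational gluing argument in Section \ref{sec:5} only uses $C^0$-control of the perturbed curvature, so feeding in the smooth replacement $\widetilde K$ in place of $K$ does not disturb any of the bubbling or existence arguments used to produce the solution $v$.
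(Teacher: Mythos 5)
Your proposal is correct and follows essentially the same route as the paper: the paper's (terse) proof simply observes that if $K\in C^\infty(\Sn)$ then the construction of $K_{\var,k,m}$ in the proof of Theorem \ref{thm:0.1} can be arranged so that $K_{\var,k,m}-K\in C^\infty(\Sn)$, and the preliminary mollification of a continuous $K$ plus the triangle inequality that you spell out are left implicit there. The one place you should be slightly more careful is the smoothness claim: as literally written in Section \ref{sec:6}, $K_{\var,k,m,l}$ is obtained by a min-type gluing along the boundary of $\widetilde\om_l^{(i)}$ and so is only Lipschitz there; the paper's phrase ``can also be achieved'' is precisely the assertion that one may replace this by a smooth cutoff construction, which is what your ``$C^\infty_c$ modulation'' remark amounts to.
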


  Next we consider the  problem \eqref{maineq1}.   Before  stating the  results, we introduce some notations. 
  
   Let $E$ be the completion of the space $C_{c}^{\infty}(\Rn)$  with respect to  the norm (see \cite{Hitchhiker})
$$
\| u\|_E:=\Big(\int_{\Rn}|(-\Delta)^{\frac{\si}{2}} u|^2\,\d  x\Big)^{1/2} =\Big(\int_{\Rn\times\Rn}\frac{|u(x)-u(y)|^2}{|x-y|^{n+2\si}}\,\d x\, \d y\Big)^{1/2}. 
$$
  Denote the critical Sobolev exponent $2_{\si}^*:=\frac{2n}{n-2\si}$,  it is well-known that  $E$  can be embedded into  $L^{2_{\si}^*}(\Rn)$ and the sharp  Sobolev  inequality  is
\be\label{sharp1}
S_{n,\si}\Big(\int_{\Rn}|u|^{2_{\si}^{*}} \Big)^{1/2_{\si}^{*}} \leq \Big(\int_{\Rn}|(-\Delta)^{\frac{\si}{2}} u|^{2} \Big)^{1/2}.
\ee
For any $z\in\Rn$ and $\lam >0$,  set
\be\label{bubble}
\delta(z,\lam )(x):=2^{\frac{n-2\si}{2}}\Big(\frac{\Gamma(\frac{n+2\si}{2})}{\Gamma(\frac{n-2\si}{2})}\Big)^{\frac{n-2\si}{4\si}}\Big(\frac{\lam }{\lam ^{2}+|x-z|^{2}}\Big)^{\frac{n-2\si}{2}}.
\ee Then $\delta( z,\lam  )$ is the  solution to the problem 
$$
(-\Delta)^{\si}u=u^{\frac{n+2\si}{n-2\si}},\quad u>0 \quad \text{ in }\, \Rn
$$ for every $(z,\lam )\in \Rn\times (0,+\infty)$ (see, e.g., \cite{MQ,JLX}). Moreover, \eqref{bubble} and its non-zero constant  multiples attain the sharp   Sobolev  inequality \eqref{sharp1},  see Lieb  \cite{Lie83}.

Let $D$ be the completion of $C_{c}^{\infty}(\over{\R}^{n+1}_+)$ with respect to the weighted Sobolev norm
$$
\| U\|_{D}:=\Big(\int_{\Rp}t^{1-2\si}|\nabla U (x,t)|^2 \,\d x\,\d t\Big)^{1/2},\quad (x,t)\in  \Rp:=\Rn \times(0, \infty).
$$
It is clear that $D$ is a Hilbert space with  the inner product
\be\label{inner}
\langle U,V \rangle:=\int_{\Rp}t^{1-2\si}\nabla U \nabla V \,\d x\,\d t.
\ee
We recall that there exists a well-defined continuous trace map $\operatorname{Tr}:D\rightarrow E$. 
Throughout the paper, we  write $\|\cdot\|$ to denote the norm of $D$ and $D^+$ to denote the set consisiting of all positive functions of $D$.

We analyze Eq. \eqref{maineq1} via the extension formulation for fractional Laplacians established by Caffarelli and Silvestre \cite{CS2}. This is a commonly used tool nowadays, through which instead of Eq. \eqref{maineq1} we can study a degenerate elliptic equation with a Neumann boundary condition in one dimension higher:
\be\label{maineq2} 
\left\{\begin{aligned}
&\operatorname{div}(t^{1-2 \si} \nabla U)=0 && \text { in }\, \Rp,\\ &\pa _{\nu}^{\si} U= K(x)U(x,0)^{\frac{n+2 \si}{n-2 \si}}&& \text { on }\, \Rn,
\end{aligned}\right.
\ee
	where  $u(x)=U(x,0)$ and $\pa _{\nu}^{\si}U:=-N_{\si}^{-1}\lim_{t \to  0^{+}}t^{1-2\si}\pa_t U(x,t)$ with $N_{\si}=2^{1-2 \si}\Gamma(1-\si) / \Gamma(\si) $.
	Associated with \eqref{maineq2} is the following  energy functional $I_K: D\to  \R$ by
\be\label{functional}
I_{K}(U):=\frac{1}{2} \int_{\Rp}t^{1-2\si}|\nabla U|^2\,\d x\,\d t-\frac{N_{\si}}{2^{*}_{\si}} \int_{\Rn}K(x)|U(x,0)|^{2^{*}_{\si}}\, \d x.
\ee Obviously a positive critical point give rise to a positive solution to Eq. \eqref{maineq2} and thus a positive solution to  Eq. \eqref{maineq1}.	
	 The extension in \eqref{maineq2} will always refer to the cononical one:
\be\label{extension}
	U(x,t)=\mathcal{P}_{\si}[u]:=\beta(n,\si)\int_{\Rn}\frac{t^{2\si}}{(|x-\xi|^2+t^2)^{\frac{n+2\si}{2}}} u(\xi)\, \d \xi,
\ee
	where $\beta(n,\si)$ is a normalization constant. We   refer to $U=\mathcal{P}_{\si}[u]$ in  \eqref{extension} to be the \emph{extension}  of $u$.

Let $H^{1}(t^{1-2\si},\Rp)$ be the closure of $C_{c}^{\infty}(\over{\R}^{n+1}_{+})$ under the norm
$$
\|U\|_{H^{1}(t^{1-2\si},\Rp)}:=\Big(\int_{\Rp}t^{1-2\si}(|U|^2+|\nabla U|^2)\,\d x\,\d t\Big)^{1/2}. 
$$
Then the extremal functions of Sobolev trace inequality on  $H^{1}(t^{1-2\si},\Rp)$ 
\be\label{trace}
\mathcal{S}_{n,\si} \Big(\int_{\Rn}|U(x,0)|^{2^{*}_{\si}}\,\d x\Big)^{1/2^{*}_{\si}}\leq \Big(\int_{\Rp}t^{1-2\si}|\nabla U|^2\,\d x\,\d t\Big)^{1/2}
\ee
have the  form $U(x,t)=\al \widetilde{\delta}(z,\lam )$ for any $\al\in 
\R\setminus \{0\}$, $z\in \Rn$ and $\lam>0$,  where $\mathcal{S}_{n,\si}=N_{\si}^{-1/2}S_{n,\si}$ is the optimal constant and $\widetilde{\delta}(z,\lam):=\mathcal{P}_{\si}[\delta(z,\lam )]$,  
see, e.g., \cite{JX2,GMS}. We call $\delta(z,\lam )$ and $\widetilde{\delta}(z,\lam )$ \emph{bubbles}.

Let $K(x)\in L^{\infty}(\Rn)$,  $O^{(1)}, \ldots, O^{(m)} \subset \Rn$ are some open sets with $\operatorname{dist}(O^{(i)}, O^{(j)}) \geq 1$  for any $ i \neq j$. If $K \in C^{0}(\bigcup_{i=1}^{m} O^{(i)})$, 
we define $V(m, \var):=V(m, \var, O^{(1)}, \ldots, O^{(m)}, K)$  as the following open set in  $D$ for $\var>0$:
\begin{align}\label{bumps}\begin{aligned}
V(m, \var):=\Big\{
U\in D:&\,\exists \,\al=(\al_{1}, \ldots, \al_{m}) \in \R^m,\, \exists\, z=(z_{1}, \ldots, z_{m}) \in O^{(1)}\times \ldots \times O^{(m)}, \\&\,\exists\,\lam =(\lam _{1}, \ldots, \lam _{m}) ,\,\lam _{i}>\var^{-1},\,\forall \,i\leq m, \text{ such that}\\ &\, |\al_{i}-K(z_{i})^{(2\si-n)/4\si}|<\var,\, \forall \,i\leq m,\text{ and}
\\&\, \|U-\varphi(\al,z , \lam )\|<\var
\Big\},\end{aligned}
\end{align}
where $\varphi(\al,z, \lam ):=\sum_{i=1}^{m} \al_{i}  \widetilde{\delta}( z_{i},\lam _{i} )$.The open set  $V(m, \var)$ recodes the information of the concentration rate and the locations of points of concentration.  

In this paper, following the ideas in \cite{Li93,Li93b,Li95}, we will construct multi-bump solutions  near critical points of $K(x)$ and the bumps can be chosen  arbitrarily many.  For this purpose, we assume that $K(x)$ satisfies the following conditions:
 \begin{itemize}
 	\item[$(H_1)$] $0< \inf_{\Rn}K(x)<\sup_{\Rn}K(x)<\infty$.
 	\item[$(H_2)$] $K(x)$ is periodic in at least one variable,  that is, there is a positive constant $T$, such that $K(x_1+lT, x_2 , \ldots, x_n)=K(x_1 , x_2 ,\ldots, x_n)$	for any integer $l$ and $x\in\Rn$.
 	\item[$(H_3)$] Let $\Sigma$ denote the set containing  all  critical points $z$ of $K(x)$, satisfying (after a suitable rotation of the coordinate system depending on $z$),
$$
 	K(x)=K(z)+\sum_{i=1}^{n} a_{i}|x_{i}-z_{i}|^{\beta}+R(|x-z|) \quad \text{ for $x$ close to $z$,}
$$ where $a_{i}$ and $\beta$  are some constants depending on $z$, $a_{i} \neq 0$ for $i=1, \ldots, n$, $\sum_{i=1}^{n} a_{i}<0$, $\beta \in(n-2\si, n)$, and $R(y)$ is  $C^{[\beta]-1,1}$   and  satisfies $\sum_{s=0}^{[\beta]}|\nabla^{s} R(y)||y|^{-\beta+s}=o(1)$ as $y\to  0$,  where $C^{[\beta]-1,1}$ means that up to $[\beta]-1$ derivatives are Lipschitz functions, $[\beta]$ denotes the integer part $\beta$ and $\nabla^{s}$ denotes all possible partial derivatives of order $s$.
 \end{itemize}

We remark that	the  $(H_3)$ type condition was originally introduced by Li in \cite{Li95} and also has been applied to  the fractional setting,  see, e.g., \cite{Niu18,JLX}.

\begin{thm}	\label{thm:0.2}Assume that  $K(x) \in C^{1}(\Rn) \cap L^{\infty}(\Rn)$ satisfies  $(H_1)$-$(H_3)$ and 	
	\begin{itemize}
		\item[$(H_4)$] $K_{\max } := \max _{x \in \Rn} K(x)>0$ is achieved and the set $K^{-1}(K_{\max }) := \{x \in \Rn: K(x)=K_{\max }\}$ has at least one bounded connected component,
		denoted as $\mathscr{C}$. 
	\end{itemize}
Then for any integers $m\geq 2$, Eq. \eqref{maineq2} has infinitely $m$-bump solutions  modulo translations by $T$ in the $x_{1}$ variable.	More precisely, for any $\var>0$, $x^*\in\mathscr{C}$, there exists a  constant $l^{*}>0$ such that for any	integers  $l^{(1)},\ldots, l^{(k)}$ satisfying $2\leq k \leq m$, $\min _{1 \leq i \leq k}|l^{(i)}|\geq l^{*}$, $\min _{i \neq j}|l^{(i)}-l^{(j)}| \geq l^{*}$,   there is at least one solution $U$ of Eq.
	\eqref{maineq2} in $V(k, \var, B_{\var}(x^{(1)}), \ldots, B_{\var}(x^{(k)}))$ with $$
	k c-\var \leq I_{K}(U) \leq k c+\var,
	$$ where $$x^{(i)}=x^{*}+(l^{(i)} T, 0,\ldots,0),\quad c=\frac{\si N_{\si}}{n}(K(x^{*}))^{(2 \si-n) / 2\si}(S_{n,\si})^{n/\si},$$
	and $V(k, \var, B_{\var}(x^{(1)}), \ldots, B_{\var}(x^{(k)}))$ are some subsets of $D$ defined in \eqref{bumps}.	
\end{thm}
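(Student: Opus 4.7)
The plan is to adapt the variational gluing construction of Li \cite{Li95,Li93,Li93b} to the degenerate extension problem \eqref{maineq2}. Fix $x^{*}\in\mathscr{C}$; by $(H_{2})$, each translate $x^{(i)}=x^{*}+(l^{(i)}T,0,\ldots,0)$ is a critical point of $K$ with $K(x^{(i)})=K_{\max}$ that inherits the flatness profile of $x^{*}$. For each admissible tuple $(l^{(1)},\ldots,l^{(k)})$ in the statement I would look for a critical point of $I_{K}$ inside the open set
$W:=V(k,\var,B_{\var}(x^{(1)}),\ldots,B_{\var}(x^{(k)}))$
by a constrained min--max over the bubble parameters $(\al,z,\lam)$ coupled with the orthogonal perturbation $w$, writing $U=\varphi(\al,z,\lam)+w$ with $w$ orthogonal (in the inner product \eqref{inner}) to the tangent space of the bubble manifold at $\varphi$.

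The core technical input is a refined expansion of $I_{K}$ on multi-bump configurations. Using the Caffarelli--Silvestre identity $\int_{\Rp}t^{1-2\si}|\nabla\widetilde{\delta}|^{2}=N_{\si}S_{n,\si}^{n/\si}$ and the nearly optimal choice $\al_{i}\approx K(z_{i})^{(2\si-n)/4\si}$, one expects
$$
I_{K}(\varphi)=kc+\sum_{i=1}^{k}F(\al_{i},z_{i},\lam_{i})-\sum_{i\neq j}G_{ij}(z,\lam)+\text{h.o.t.},
$$
where via $(H_{3})$ and the sign condition $\sum_{j}a_{j}<0$ the self-energy correction behaves like $F\sim c_{0}\lam_{i}^{-\beta}(1+o(1))$ with $c_{0}>0$, and the bubble--bubble interaction satisfies
$G_{ij}\lesssim(\lam_{i}\lam_{j})^{-(n-2\si)/2}|z_{i}-z_{j}|^{-(n-2\si)}$.
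The exponent range $\beta\in(n-2\si,n)$ is exactly what makes the self-energy of order $\lam^{-\beta}$ dominate the interaction once $l^{*}$ is large. Combined with the orthogonal expansion
$$
I_{K}(\varphi+w)=I_{K}(\varphi)+\tfrac{1}{2}Q(w)+O(\|w\|^{3}),
$$
where $Q$ is the second variation, positive definite on the orthogonal complement of the bubble manifold by the nondegeneracy of the fractional bubble \cite{JLX}, a standard Lyapunov--Schmidt scheme solves $w=w(\al,z,\lam)$ and reduces the problem to a finite-dimensional min--max on the parameters.

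The second step is to show that the reduced min--max level is achieved in the interior of the parameter set corresponding to $W$. Minimising in $\al_{i}$ is quadratic, and in $z_{i}$ is controlled by the strict flatness minimum at $x^{(i)}$ produced by $(H_{3})$ and $\sum_{j}a_{j}<0$; the nontrivial direction is $\lam_{i}$, in which $F$ exhibits a positive saddle at a finite scale determined only by $K$ near $x^{*}$, so a mountain-pass/saddle-point argument along the $\lam_{i}$-axes yields a robust finite-energy critical value. Choosing $l^{*}$ large enough that $\sum_{i\neq j}|G_{ij}|$ and the Lyapunov--Schmidt error in $w$ are both strictly smaller than the gap between this saddle value and the boundary levels of $W$ confines the critical point to the interior. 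The resulting $U\in W$ is positive and smooth by standard regularity \cite{JLX,JLX2} and the maximum principle, and the energy bound $kc-\var\leq I_{K}(U)\leq kc+\var$ follows from the expansion above.

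Finally, varying the tuple $(l^{(1)},\ldots,l^{(k)})$ produces infinitely many geometrically distinct solutions: the bumps are centred at disjoint balls at distance $\gtrsim l^{*}T$, and two such solutions can be identified under a $T$-translation in $x_{1}$ only if their tuples differ by a common integer shift. The principal obstacle is the delicate bookkeeping of the multi-bubble interactions in the fractional/degenerate setting---in particular, proving that the self-energy saddle in $\lam_{i}$ generated by $(H_{3})$ survives both the bubble--bubble interactions (which requires sharp pointwise bounds on $\widetilde{\delta}(z,\lam)$ and its $\pa$-derivatives near the boundary $t=0$ of $\Rp$) and the Lyapunov--Schmidt correction, so that the critical level $kc$ remains an isolated variational value inside $W$ for all admissible tuples simultaneously.
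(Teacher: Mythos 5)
Your proposal takes a genuinely different route from the paper: you sketch a Lyapunov--Schmidt reduction with a finite-dimensional min--max over the bubble parameters, whereas the paper explicitly avoids Lyapunov--Schmidt and instead runs a variational gluing argument in the spirit of S\'er\'e and Coti Zelati--Rabinowitz, mediated through a subcritical approximation and concluded by blow-up analysis. Concretely, the paper first proves Theorem~\ref{thm:3.1}, which produces approximate ($m$-bump) solutions to the \emph{subcritical} problem $\pa_{\nu}^{\si}U=K_lH^{\tau}U^{\frac{n+2\si}{n-2\si}-\tau}$ via a deformation along the negative gradient flow inside $\widetilde V_l(2,\var_2)$, using Proposition~\ref{prop:3.2} (a uniform lower gradient bound in the annulus $\widetilde V_l\setminus \widetilde V_l(\cdot/2)$) to guarantee the flow can be stopped at a prescribed level. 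It then lets $\tau\to 0$ and rules out concentration via the Pohozaev identity and isolated-simple-blow-up results of Jin--Li--Xiong (Proposition~\ref{prop:5.1}). Finally Theorem~\ref{thm:0.2} is proved by contradiction from Proposition~\ref{prop:5.1}. The mountain-pass structure exploited in the paper lives in the \emph{amplitude} variable $\theta$ along paths $g^{(i)}_l(\theta_i)$, not in the concentration scale $\lam$.

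There is a concrete gap in your argument beyond style. You assert that, with $(H_3)$ and $\sum_j a_j<0$, the reduced self-energy $F$ has ``a positive saddle at a finite scale $\lam_i$,'' so a mountain-pass in $\lam_i$ yields the critical value. This is false for the critical functional. Under $(H_3)$, placing the bubble at a local maximum $z_i$ with $K(x)=K(z_i)+\sum_j a_j|x_j-z_{i,j}|^{\beta}+\cdots$ and $\sum_j a_j<0$ gives
$$
\int_{\Rn}K\,\delta(z_i,\lam_i)^{2^*_\si}\,\d x=K(z_i)\,C_1-c_0\,\lam_i^{-\beta}+o(\lam_i^{-\beta}),\qquad c_0>0,
$$
so that $\max_{\al_i}I_K(\al_i\widetilde\delta(z_i,\lam_i))$ is a \emph{strictly decreasing} function of $\lam_i$ with no interior critical point. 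Precisely because you claim the interaction $G_{ij}$ is dominated by the self-energy once $l^*$ is large, there is nothing left to balance against this monotone decay: the reduced functional has no saddle along the $\lam_i$-axis, and a naive finite-dimensional min--max degenerates to $\lam_i\to\infty$, i.e.\ a Palais--Smale sequence concentrating to a bubble without producing a critical point. This is exactly the compactness loss the paper circumvents by inserting the penalizing factor $H^\tau$ and exponent $\frac{n+2\si}{n-2\si}-\tau$ (Theorem~\ref{thm:3.1}), solving uniformly in $\tau>0$, and then deploying the blow-up machinery of \cite{JLX} to show that no blow-up can occur as $\tau\to0$ under the flatness hypothesis $\beta\in(n-2\si,n)$. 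Your plan, as written, does not have an analogue of this compactification step and therefore does not close.
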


One can see from the description of  $(H_4)$ that there exists some bounded open neighborhood $O$ of $\mathscr{C}$ such that $K_{max}\geq \max_{x\in\pa O}K+\delta$,
		where $\delta$ is some small positive number. This fact together with  assumption $(H_2)$ implies that $K(x)$ has a sequence of local maximum points $z_j$	with $|z_j|\to  \infty$ as $j\to  \infty$.  Furthermore, 	 $(H_4)$ is sharp in the sense that one can construct examples easily  to show that if $(H_4)$ is not satiesfied, Eq. \eqref{maineq1}  may have no nontrivial solutions, which shows that  $(H_4)$ is not merely a technical hypothesis, see Example \ref{exa} below.

From the definition in  \eqref{bumps} we know that if	$U\in V(k, \var, B_{\var}(x^{(1)}), \ldots, B_{\var}(x^{(k)}))$, then $U$ has most of
		its mass concentrated in $B_{\var}(x^{(1)}), \ldots, B_{\var}(x^{(k)})$. In particular, if $(l^{(1)}, \ldots, l^{(k)}) \neq (\widetilde{l}^{(1)}, \ldots, \widetilde{l}^{(k)})$, the  solutions $U$ and $\widetilde{U}$ are different.  We also remark that 	$c$	is the mountain pass value  to Eq. \eqref{maineq2} and $S_{n,\si}$ is the sharp constant in    \eqref{sharp1}.

	Note that 	the authors  in \cite{Niu18,LR1} apply the Lyapunov-Schmidt reduction method to  obtain infinitely	many multi-bump solutions clustered on some lattice points in $\Rn$ under   similar assumptions.  Liu \cite{Liuzy} has used the same method to construct infinitely many concentration solutions to Eq. \eqref{maineq1} under the assumption that $K(x)$ has a sequence of strictly local maximum points moving to infinity. 	The solutions we constructed in Theorem \ref{thm:0.2}, roughly speaking, concentrate at $k$ different points and the distance between different concentrate points is very large. 
\begin{exa}[Nonexistence]\label{exa} Suppose that $K(x) \in C^{1}(\Rn) \cap L^{\infty}(\Rn)$,  $K(x)$ and $\nabla K(x)$ are bounded in $\Rn$, $\frac{\pa K}{\pa x_2}$ is nonnegative but not identically zero. Then the only nonnegative solution of Eq. \eqref{maineq1} in $E$ is the trivial solution $u\equiv 0$.
\end{exa}
\begin{proof}
	Let $u\geq 0$ be any  solution in  $E$. Multiplying  \eqref{maineq1} by $\frac{\pa u}{\pa x_2}$ and using \eqref{KW}, we obtain $\int_{\Rn}\frac{\pa K}{\pa x_2}u^{2^{*}_{\si}}\,\d x=0$.
	The hypotheses on $K(x)$ imply that $u$ is identically zero in an open set, hence $u\equiv 0$ by the  unique continuation results (see, e.g., \cite{L.H,FV}).
\end{proof}
In fact we can obtain more information on the solutions obtained in Theorem \ref{thm:0.2}.
\begin{thm}\label{thm:0.3}Assume that  $K(x)$ satisfies  $(H_1)$-$(H_3)$  and 
\begin{itemize}
			\item[$(H_4)'$] There exist some positive constant $A_{1}>1$ and  a bounded open set $O \subset \Rn$ such that
			\begin{gather*}
			K \in C^{1}(\over{O}),\\	1 / A_{1} \leq K(x) \leq A_{1}, \quad \forall \, x \in \over{O},\\\max_{x\in\over{O}}K(x)=\sup _{x \in \Rn} K(x)>\max _{x \in \pa  O} K(x).
			\end{gather*}
	\end{itemize}	Then for any $\var>0$, Eq. \eqref{maineq2}  has infinitely many solutions satisfying
\be\label{0.13}
	c \leq I_{K}(U) \leq c+\var  \text {~ or ~} 2 c-\var \leq I_{K}(U) \leq 2 c+\var
\ee and
	$$
	\sup \big\{\|U\|_{L^{\infty}(\Rp)} : I_{K}'(U)=0,\,\text {$U$\,satisfies \eqref{0.13}} \big\}=\infty,
$$ where $$
	c=\frac{\si N_{\si}}{n}(\max _{\over{O}} K)^{(2\si-n) / 2\si}(S_{n,\si})^{n/\si}.$$
		More precisely, for any $\var>0$, there exists $l^{*}>0$ such that for any
	integers  $l^{(1)}$, $l^{(2)}$ satisfying $|l^{(1)}-l^{(2)}| \geq l^{*}$, there is at least one  solution $U$ of 
Eq.	\eqref{maineq2} in $V(1, \var, O,K) \cup V(2, \var, O_{l}^{(1)}, O_{l}^{(2)}, K)$ satisfying  \eqref{0.13}, where $$
	O_{l}^{(1)}=O+(l^{(1)} T, 0, \ldots, 0), \quad O_{l}^{(2)}=O+(l^{(2)} T, 0, \ldots, 0),
	$$
	and $V(1, \var, O,K)$, $V(2, \var, O_{l}^{(1)}, O_{l}^{(2)}, K)$ are some subsets of $D$ defined in \eqref{bumps}.
\end{thm}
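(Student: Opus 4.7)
The plan is to follow a variational gluing strategy parallel to Theorem~\ref{thm:0.2}, but exploiting the localized maximum condition $(H_4)'$ in place of the global one $(H_4)$ to produce either a 1-bump or a 2-bump solution for each admissible translation pair. The core is an ``either/or'' dichotomy: fix $\var>0$ and integers $l^{(1)},l^{(2)}$ with $|l^{(1)}-l^{(2)}|$ large. I will show that either there is a critical point of $I_K$ in a suitable neighborhood of $V(1,\var,O,K)$ with energy close to $c$, or there is one in a neighborhood of $V(2,\var,O_l^{(1)},O_l^{(2)},K)$ with energy close to $2c$. Periodicity $(H_2)$ then turns the unbounded integer grid $\{(l^{(1)},l^{(2)})\}$ into infinitely many geometrically distinct solutions, while letting $\var\to 0$ together with the concentration constraint $\lam_i>\var^{-1}$ forces the $L^\infty$-norms to blow up.

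First I would set up a min-max scheme on the bubble parameters. Using the configuration map $\varphi(\al,z,\lam)=\sum_i\al_i\widetilde{\delta}(z_i,\lam_i)$, define test classes $\Gamma_1$ and $\Gamma_2$ of 1- and 2-bubble families, with $z\in\overline{O}$ (resp.\ $z_i\in\overline{O_l^{(i)}}$) and $\lam_i$ varying between a large threshold and $+\infty$. A sharp energy expansion using $(H_3)$ with $\beta\in(n-2\si,n)$ gives
\[
I_K(\varphi(\al,z,\lam))=\sum_{i=1}^{k}\Big[\tfrac{\si N_\si}{n}K(z_i)^{(2\si-n)/2\si}(S_{n,\si})^{n/\si}+o(1)\Big]+E_{\mathrm{int}},
\]
with the interaction term $E_{\mathrm{int}}$ tending to $0$ as the inter-bubble distances diverge. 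The resulting min-max values $c_1^\var$ and $c_2^\var(l)$ then approach $c$ and $2c$ respectively in the appropriate limits.

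Next I would establish a localized Palais--Smale condition inside the closures of $V(1,\var,O,K)$ and $V(2,\var,O_l^{(1)},O_l^{(2)},K)$. Concentration-compactness in the weighted space $D$, together with the refined blow-up analysis developed earlier in the paper, shows that any Palais--Smale sequence with energy below $3c$ decomposes into at most two asymptotic bubbles plus a vanishing remainder. The strict inequality $\max_{\overline O}K>\max_{\pa O}K$ from $(H_4)'$ supplies an energy gap preventing bubble centers from drifting across $\pa O$ along the negative gradient flow, so such sequences remain in the prescribed neighborhoods. The dichotomy is then closed by contradiction: if $c_1^\var$ is not a critical value, a standard deformation drives 1-bump configurations below level $c-\eta$; extending this deformation to 2-bump configurations via a partition-of-unity splitting into nearly non-interacting summands forces $c_2^\var(l)$ to be attained unless a 1-bump critical point already exists.

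The main obstacle, as in Theorem~\ref{thm:0.2}, will be running the Palais--Smale decomposition uniformly in $l^{(i)}$ and $\var$, and in particular quantifying the barrier that prevents bubble centers from escaping $O$ or its translates. This requires sharp control on how a single-bubble's energy changes as its center moves from the interior maximum of $K$ toward $\pa O$, together with propagation of this gap through the approximate splitting of two-bubble configurations. Once critical points are in hand, the infinitude modulo translations follows immediately from the unboundedness of the integer grid; for the $L^\infty$ claim, choose a sequence $\var_j\to 0$ and corresponding solutions $U_j\in V(k,\var_j,\cdots)$, note that each $U_j$ lies within $\var_j$ in $D$-norm of a bubble sum with $\lam_i>\var_j^{-1}$, and conclude that the peak heights grow like $\lam_i^{(n-2\si)/2}\to\infty$.
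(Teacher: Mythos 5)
Your dichotomy is the right conceptual skeleton: either a one-bump critical point already sits in $V(1,\var,O,K)$ near level $c$, or its absence is exactly the hypothesis under which the multi-bump machinery produces a two-bump solution in $V(2,\var,O_l^{(1)},O_l^{(2)},K)$. This matches what the paper does: the proof of Theorem~\ref{thm:0.3} is ``the same as Theorem~\ref{thm:0.2},'' namely invoke Proposition~\ref{prop:5.1}, whose hypothesis \eqref{29} is precisely ``no one-bump critical point of $I_{K_\infty^{(i)}}$ in $V(1,\delta_3,O^{(i)},K_\infty^{(i)})$ with energy in $[c^{(i)},c^{(i)}+\delta_2]$.''

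However, the implementation you sketch has a genuine gap. You propose to run a min-max directly on $I_K$ at the critical exponent and invoke a ``localized Palais--Smale condition inside the closures of $V(1,\var,O,K)$ and $V(2,\var,\cdots)$.'' That condition does not hold: the neighborhoods $V(k,\var,\cdots)$ allow $\lam_i\to\infty$, and the corresponding bubble family $\al_i\widetilde{\delta}(z_i,\lam_i)$ with $\lam_i\to\infty$ is a Palais--Smale sequence at the critical exponent whose weak limit is zero, not a solution. Your barrier argument using $\max_{\over O}K>\max_{\pa O}K$ controls the drift of the centers $z_i$ toward $\pa O$, but supplies nothing to prevent $\lam_i\to\infty$; nor does concentration-compactness by itself, which only describes the profile decomposition, not convergence. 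This is exactly the failure of compactness the paper circumvents by introducing the subcritical perturbation $\tau>0$: Theorem~\ref{thm:3.1} is proved for $I_{K_l,\tau}$ with $\tau\in(0,\over{\tau}_l)$, where the scaling imbalance in $|U|^{p_l+1}$ makes $\lam_i$ controllable (Lemma~\ref{lem:3.3}, Claim~\ref{claim:2}), and then Proposition~\ref{prop:5.1} sends $\tau\to0$ using the Pohozaev identity and the flatness condition $(H_3)$ (with $\beta\in(n-2\si,n)$) to rule out concentration in the limit. Your proposal mentions $(H_3)$ only in the energy expansion, but its real role is in the limiting Pohozaev balance \eqref{Pohozaeve1}--\eqref{Pohozaeve2}. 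Without the two-stage scheme of subcritical existence plus blow-up analysis, the min-max level you construct at $\tau=0$ need not be a critical value, and the proof does not close.

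The remainder of your outline (using periodicity $(H_2)$ to get infinitely many geometrically distinct solutions, and the $L^\infty$ blow-up from $\lam_i>\var_j^{-1}$ with $\var_j\to 0$) is sound once the existence statement is secured.
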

\begin{rem}
By the stereographic projection from $\Sn \setminus \{\mathcal{N}\}$ to $\Rn$, the solutions obtained in Theorem \ref{thm:0.2} and  \ref{thm:0.3} can be lifted to a solution of Eq. \eqref{maineq} on $\Sn$ which is positive except at the north pole $\mathcal{N}$. In this sense, Eq. \eqref{maineq} is solvable under the assumptions of Theorem \ref{thm:0.2} and  \ref{thm:0.3}.
\end{rem}

Analogous conclusions in the setting  $\si=1$ are deduced   in a series of papers \cite{Li95,Li93,Li93b}. Using the Dirichlet-Neumann map, we consider the prescribing mean curvature problem where the equations are without weights and thus elliptic, and we obtain the density and multiplicity results  in  \cite{TWZ}. The main objective of this paper is to extend the above results  to the nonlocal setting $\si \in (0,1)$. Although certain parts of the proof can be obtained by minor modifications of the classical arguments in \cite{Li93,Li93b,Li95}, there are plenty of technical difficulties which demand new ideas to handle the non-local terms. 

We study Eq. \eqref{maineq} (or  \eqref{maineq1}) by subcritical approach, for example, we refer to the reader \cite{JLX,JLX2}.   It is worth noting that our methods  continue in the direction pioneered in the earlier work \cite{Se,CR1,CR2}. These techniques provide, roughly speaking, ways of gluing approximate solutions together to obtain a genuine solution. There have been some works on gluing approximate solutions by using the Lyapunov-Schmidt reduction method (see, e.g., \cite{CA2,LY,CZ1,Niu18,Liuzy,LR2,LR1,GN}) where more precise information on the linearized problem is needed. However, it seems that the methods in S\'ere \cite{Se}, Coti Zelati-Rabinowitz \cite{CR1,CR2}   have provided an elegant way to glue approximate solutions for certain periodic differential equations where it is difficult to obtain as precise information as needed for applying the  Lyapunov-Schmidt reduction method. The basic idea is as follows: Given finitely many solutions (at low energy), to translate their supports far apart and patch the pieces together create many multi-bump solutions.  The  original and powerful ideas in  \cite{CR1,CR2,CES,Se} permit the construction of such solutions via variational methods. In particular, they are able to find many homoclinic-type solutions to periodic Hamiltonian systems (see \cite{Se,CR1}) and to certain elliptic equations of nonlinear Schr\"odinger type on $\Rn$ with periodic coefficients (see \cite{CR2}). Li  has given a slight modification to the minimax procedure in \cite{CR1,CR2} and has applied it to certain problems where periodicity is not present, for example, the problem of prescribing scalar curvature on $\Sn$ (see \cite{Li93b,Li93,Li95}).  Inspired by the above  works, we   attempt to modify the above mentioned gluing method towards equations with the fractional Laplacians in the Euclidean setting or the conformal Laplacian operators under a particular choice of the metric
in constructing multi-bump solutions. This paper also overcomes the difficulty appearing in using  Lyapunov-Schmidt reduction method to locate the concentrating points of the solutions. We also believe that this gluing method can be applied to the construction of \emph{bubbling} solutions in various problems in conformal geometry, for instance, prescribing (fractional) $Q$-curvature problems.

\subsection{Structure of the paper and comment on the proof}

 Theorems \ref{thm:0.1}-\ref{thm:0.3} and  Corollary \ref{cor:1} are  derived in Section \ref{sec:6} from Proposition \ref{prop:5.1}, a more general result on Eq. \eqref{maineq1}. To  derive Proposition \ref{prop:5.1}, we first study a compactified problem (Theorem \ref{thm:3.1}) in Section \ref{sec:3}. Then we derive Proposition \ref{prop:5.1} by using Theorem \ref{thm:3.1} and some blow up analysis  in \cite{JLX}. Theorem \ref{thm:3.1} is a  technical result in our paper, which is essential to make the variational gluing methods applicable. Our presentation is largely influenced by the papers \cite{Li93,Li95,Li93b} which studied existence and compactness of solutions to the classical Nirenberg problem.

The present paper is organized as the following.  In Section \ref{sec:3}, existence and  multiplicity results for the subcritical equations will be stated, and its proof will be  sketched. 
In Section \ref{sec:4}, we follow and refine the analysis of Bahri and Coron \cite{BC1,BC2} to study the subcritical interaction of two well-spaced \emph{bubbles}.  The technical result Theorem \ref{thm:3.1} will be completed  in Section \ref{sec:5} by applying minimax procedure as in Coti Zelati and Rabinowitz \cite{CR1,CR2}.  Finally, the main theorems are proved in Section \ref{sec:6} with the aid of  blow up analysis  established by Jin, Li and Xiong \cite{JLX}. In Appendix \ref{sec:1}, we establish some a priori estimates for solutions to degenerate elliptic  equations.  In Appendix \ref{sec:2}, we consider a minimization problem on exterior domain.  The results in appendix  were used in proving  Theorem \ref{thm:3.1}.

\subsection{Notation}
  We collect below a list of the main notation used throughout the paper.
\begin{itemize}\raggedright
	\item We use capital letters, such as $X=(x,t)$ to denote an element of the upper half space $\Rp$,  where $x \in \Rn$  and $t>0$.
	\item For a domain $D \subset \R^{n+1}$ with boundary $\pa  D$, we denote $\pa ' D$ as the interior of $\over{D} \cap \pa  \Rp$ in $\Rn=\pa  \Rp$ and $\pa ''D=\pa  D \setminus  \pa ' D$.	
	\item  For $\over{X} \in \R^{n+1}$, denote $\B_{r}(\over{X}):=\{X \in \R^{n+1}:|X-\over{X}|<r\}$ and $\B_{r}^{+}(\over{X}):=\B_{r}(\over{X}) \cap \Rp$. If $\over{X}=(\over{x},0) \in \pa  \Rp$, denote $B_{r}(\over{X}):=\{x\in\Rn:|x-\over{x}|<r\}$.
	Hence $\pa ' \B_{r}^{+}(\over{X})=B_{r}(\over{X})$ if $\over{X} \in \pa  \Rp$. Moreover, when $\over{X}=(\over{x},0)$, we simply use $\B_r(\over{x})$ (resp. $\B^{+}_r(x)$ and $B_r(\over{x})$)  for $\B_r(\over{X})$ (resp. $\B^{+}_r(\over{X})$ and $B_r(\over{X})$) and  will not keep writing the center $\over{X}$ if $\over{X}=0$.
	\item	For any weakly differentiable function $U(x,t)$ on $\Rp$, we denote $\nabla_{x} U=(\pa _{x_1} U, \ldots, \pa _{x_n}U)$  and $\nabla U=(\nabla_{x} U, \pa _{t} U)$.
	\item  For any $\si\in(0,1)$ and $n\geq 2$, we denote  $2^{*}_{\si}=\frac{2n}{n-2\si}$  and $H(x)=(\frac{2}{1+|x|^2})^{(n-2\si)/2}$.
	\item 	$C>0$ is a generic constant which can vary from line to line.
\end{itemize}

%----------------------------------------------------------------------------%
\section{\sc Construction of the approximate solutions}\label{sec:3}
%%%%%%%%%%%%%%%%%%%%%%%%%%%%%%%%%%%%%%%%%%%%%%%%%%%%%%%%%%%%%%%%%%%%%%%%%%%%%%%%%%%%%%%%%
We point out that due to the presence of the  Sobolev critical exponent, the  Euler-Lagrange functional corresponding to Eq. \eqref{maineq1} does not satisfy the Palais-Smale condition. One way to overcome such a difficulty is to consider the following subcritical approximation  problem
$$(-\Delta)^{\si} u=K(x)H^{\tau}(x) u^{\frac{n+2\si}{n-2\si}-\tau} ,\quad u>0\quad  \text { in } \,  \Rn,$$
where $\tau> 0$ is small and $H(x)=(\frac{2}{1+|x|^2})^{(n-2\si)/2}$ is defined as before. The aim in this section is to  establish the  existence  and multiplicity results for the above subcritical  equations.

We first introduce some notations which are used throughout the paper.

Let $\{K_{l}(x)\}$ be a sequence of functions satisfying the following conditions.
\begin{itemize}
	\item[(i)]There exists some positive constant $A_{1}>1$ such that for any
	$l= 1, 2, 3,\ldots$,
\be\label{18}
1/A_1\leq  K_{l}(x) \leq A_{1},  \quad \forall\, x \in \Rn.
\ee 
	\item[(ii)]	 For some integer $m \geq 2$, there exist $z_{l}^{(i)} \in \Rn$,  $1 \leq i \leq m$, $R_{l} \leq \frac{1}{2} \min_{i \neq j}|z_{l}^{(i)}-z_{l}^{(j)}|$, such that $K_{l}$ is continuous near $z_{l}^{(i)}$ and 	
	\begin{align}
	\label{19}&\lim _{l \to  \infty} R_{l}=\infty,&\\\label{20}&K_{l}(z_{l}^{(i)})=\max _{x \in B_{R_{l}}(z_{l}^{(i)})} K_{l}(x), & 1 \leq i \leq m,\\&\lim _{l \to  \infty} K_{l}(z_{l}^{(i)})=a^{(i)}>0,& 1 \leq i \leq m,\label{21}\\\label{22}&K_{\infty}^{(i)}(x):=(\text {weak $*$}) \lim _{l \to  \infty} K_{l}(x+z_{l}^{(i)}), & 1 \leq i \leq m.
	\end{align}
	\item[(iii)]There exist some positive constants $A_{2}, A_{3}>1$, $\delta_{0}, \delta_{1}>0$, and
	some bounded open sets $O_{l}^{(1)}, \ldots, O_{l}^{(m)} \subset \Rn$, such that, if we define for  $1 \leq i \leq m$,
	\begin{gather*}
	\widetilde{O}_{l}^{(i)}=\big\{x \in \Rn : \operatorname{dist}(x, O_{l}^{(i)})<\delta_{0}\big\},\\	O_{l}=\bigcup_{i=1}^{m} O_{l}^{(i)},  \quad \widetilde{O}_{l}=\bigcup_{i=1}^{m} \widetilde{O}_{l}^{(i)},
	\end{gather*}
	we have
	\begin{gather}
		z_{l}^{(i)} \in O_{l}^{(i)},\quad \operatorname{diam}(O_{l}^{(i)})< R_{l}/10,\\ \label{23}	K_{l} \in C^{1}(\widetilde{O}_{l},[1/A_{2}, A_{2}]),\\\label{24}	K_{l}(z_{l}^{(i)}) \geq \max _{x \in \pa  O_{l}^{(i)}} K_{l}(x)+\delta_{1},\\\label{25} 	\max _{x \in\widetilde{O}_{l}}|\nabla K_{l}(x)| \leq A_{3}.
	\end{gather}
	\end{itemize}

For $\var>0$ small, we define  $V_{l}(m, \var):=V(m, \var, O_{l}^{(1)}, \ldots, O_{l}^{(m)}, K_{l})$ as in \eqref{bumps}. 
Here and in the following, we are concerned with the case $m=2$, since the more general result   is similar in nature.

If $U$ is a function in $V_l(2,\var)$, one can find an optimal representation, following the ideas introduced in \cite{BC1,BC2}. Namely, we have
\begin{prop}\label{prop:3.1}
	There exists $\var_{0} \in(0,1)$ depending only on $A_{1}, A_{2}, A_{3}, n,\si, \delta_{0}$ and $m$, but independent of $l$, such that, for any $\var\in (0,\var_{0}]$, $U \in V_{l}(2, \var)$, the following 	minimization problem
\be\label{27}
	\min _{(\al, z, \lam ) \in B_{4\var}}\Big\|U-\sum_{i=1}^{2} \al_{i}  \widetilde{\delta}( z_{i}, \lam _{i} )\Big\|
\ee 
has a unique solution $(\al,z,\lam )$ up to a permutation. Moreover, the minimizer  is achieved in $B_{2 \var}$  for large $l$, where
	\begin{align*}
	B_{\var}=\Big\{(\al, z,\lam ):  &\,\al=(\al_{1}, \al_{2}),\,1/(2 A_{2}^{(n-2\si)/ 4\si}) \leq \al_{1}, \al_{2} \leq 2 A_{2}^{(n-2\si)/4\si},\\&\,z=(z_{1}, z_{2})\in {O}_{l}^{(1)}\times {O}_{l}^{(2)},\, \lam =(\lam _{1}, \lam _{2}),\,   \lam _{1}, \lam _{2} \geq \var^{-1}\Big\}.	
	\end{align*}	In particular, we can write  
	$U=\sum_{i=1}^{2} \al_{i} \widetilde{\delta}( z_{i}, \lam _{i} )+v$, where $v\in D$ and satisfis
	$$
	\big\langle\widetilde{\delta}( z_i, \lam _i ), v\big\rangle=\big\langle\frac{\partial \widetilde{\delta}( z_i, \lam _i )}{\partial \lam _i}, v\big\rangle=\big\langle\frac{\partial \widetilde{\delta}( z_i, \lam _i )}{\partial z_i}, v\big\rangle=0\quad \text{ for each }\, i=1,2.
	$$
	Here  $\langle \cdot,\cdot\rangle$ denotes the inner product  \eqref{inner} as before. In addition, the variables $\{\al_{i}\}$ satisfy
\be\label{28''}
|\al_{i}-K_{l}(z_{i})^{(2\si-n)/4\si}|=o_{\var}(1)\quad  \text{ for }\,i=1,2,
\ee where $o_{\var}(1)\to  0$  as $\var\to  0$.  
\end{prop}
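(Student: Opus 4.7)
The plan is to adapt the Bahri-Coron optimal-decomposition procedure of \cite{BC1,BC2} to the extension setting on $\Rp$, carrying out four tasks: (i) existence of a minimizer of \eqref{27} on $B_{4\var}$; (ii) uniqueness modulo the permutation symmetry; (iii) showing the minimizer in fact lies in $B_{2\var}$ for $l$ large; and (iv) deriving the stated orthogonality relations and the estimate \eqref{28''}.

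For (i), since $U\in V_{l}(2,\var)$, the definition \eqref{bumps} supplies a reference triple $(\al^{*},z^{*},\lam^{*})\in B_{\var}\subset B_{4\var}$ with $\|U-\varphi(\al^{*},z^{*},\lam^{*})\|<\var$, so the infimum in \eqref{27} is at most $\var$. The constraints on $\al_{i}$ and $z_{i}$ are already compact; the only potential loss of compactness along a minimizing sequence comes from $\lam_{i}\to\infty$, but in that regime $\widetilde{\delta}(z_{i},\lam_{i})\rightharpoonup 0$ weakly in $D$ and the distance would inflate to roughly $\|U-\al_{j}\widetilde{\delta}(z_{j},\lam_{j})\|$ for the surviving bubble, which exceeds $\var$ once $\var$ is small because $U$ already carries non-negligible mass near both concentration points $z_{l}^{(1)}, z_{l}^{(2)}$. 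Truncating the $\lam$-range to a compact set and invoking continuity then yields a minimizer.

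The heart of the argument is a quantitative non-degeneracy property which will simultaneously give (ii) and (iii). Since $R_{l}\to\infty$ by \eqref{19} while $\lam_{i}\geq(4\var)^{-1}$ on $B_{4\var}$, the rescaled separations $\lam_{i}|z_{i}-z_{j}|\to\infty$ uniformly for $i\neq j$. Using the explicit Poisson representation \eqref{extension} together with the scaling and translation covariance of $\widetilde{\delta}$, one shows that the Gram matrix of the vectors $\widetilde{\delta}(z_{i},\lam_{i})$, $\lam_{i}\pa_{\lam_{i}}\widetilde{\delta}(z_{i},\lam_{i})$, $\lam_{i}^{-1}\nabla_{z_{i}}\widetilde{\delta}(z_{i},\lam_{i})$, $i=1,2$, becomes block-diagonal (with non-degenerate diagonal blocks supplied by the scaling-invariant squared norms) up to a vanishing error as $l\to\infty$. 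Hence the Hessian of $F(\al,z,\lam):=\|U-\varphi(\al,z,\lam)\|^{2}$, read in the rescaled variables $\al_{i}$, $\lam_{i}(z_{i}-z_{i}^{*})$, $\log\lam_{i}$, is strictly positive definite for $l$ large, uniformly over $B_{4\var}$. This forces any two critical triples in $B_{4\var}$ to coincide up to permutation, and combined with $F(\al^{*},z^{*},\lam^{*})<\var^{2}$ it forces any minimizer to be $O(\var)$-close to $(\al^{*},z^{*},\lam^{*})\in B_{\var}$ in the rescaled coordinates, hence to lie in $B_{2\var}$.

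Once (iii) is established, (iv) is essentially algebraic: differentiating $F$ in each of $\al_{i},\lam_{i},z_{i}$ and setting the derivatives to zero produces precisely the three orthogonality identities, while the estimate \eqref{28''} follows from the $O(\var)$-closeness of $(\al,z)$ to $(\al^{*},z^{*})\in B_{\var}$, the inequality $|\al_{i}^{*}-K_{l}(z_{i}^{*})^{(2\si-n)/4\si}|<\var$ built into \eqref{bumps}, and the $C^{1}$ continuity of $K_{l}$ on $\widetilde{O}_{l}$ from \eqref{23}. The main technical obstacle will be proving the uniform decay of the off-diagonal Gram entries $\langle \widetilde{\delta}(z_{1},\lam_{1}),\widetilde{\delta}(z_{2},\lam_{2})\rangle$ and the analogous mixed entries involving the $\lam$- and $z$-derivatives. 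Because $\|\cdot\|$ is the weighted $t^{1-2\si}$-norm on $\Rp$, these integrals must be controlled via the explicit extension kernel in \eqref{extension}, and their rate of decay needs to depend only on the rescaled separation $|z_{1}-z_{2}|\min(\lam_{1},\lam_{2})$, uniformly in $(\al,z,\lam)\in B_{4\var}$ and in $l$.
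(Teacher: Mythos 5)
Your overall strategy is exactly the one the paper intends: the paper's ``proof'' of Proposition~\ref{prop:3.1} consists of citing Bahri--Coron, and your four-part plan (existence via escape-of-compactness in $\lam$, non-degeneracy of the bubble Gram matrix, localization to $B_{2\var}$, and orthogonality/\eqref{28''} from first-order conditions) is the standard way to implement that citation in the extension framework. Parts (i) and (iv) are fine as sketched.

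The one step that does not hold as written is the claim that the Hessian of $F(\al,z,\lam)=\|U-\varphi(\al,z,\lam)\|^{2}$, in the rescaled variables, is positive definite \emph{uniformly over all of} $B_{4\var}$. Computing
$$
\pa^{2}_{\theta_a\theta_b}F = 2\big\langle \pa_{\theta_a}\varphi,\pa_{\theta_b}\varphi\big\rangle - 2\big\langle U-\varphi,\ \pa^{2}_{\theta_a\theta_b}\varphi\big\rangle,
$$
one sees that the Gram-matrix part is indeed positive definite with off-diagonal blocks vanishing as $l\to\infty$, but the second term is of order $\|U-\varphi(\al,z,\lam)\|$ in the rescaled coordinates. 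At a generic point of $B_{4\var}$ this quantity is not small: for instance, if $z_{1}$ sits a distance of order $\lam_{1}^{-1}\cdot R_l$ from the reference center, then $\|\varphi-\varphi^{*}\|$ and hence $\|U-\varphi\|$ is of order one, and the correction term can overwhelm the Gram part, so $F$ need not be convex on $B_{4\var}$. Consequently ``two critical triples in $B_{4\var}$ coincide'' does not follow from the Hessian computation alone. The correct ordering of the argument is: first use $F(\al,z,\lam)\le F(\al^{*},z^{*},\lam^{*})<\var^{2}$ at a minimizer together with the expansion of $\|\varphi-\varphi^{*}\|^{2}$ (which the vanishing off-diagonal Gram entries and the lower bounds $\al_{i}\gtrsim A_{2}^{(2\si-n)/4\si}$ make coercive in each of $|\al_{i}-\al_{i}^{*}|$, $\lam_{i}^{*}|z_{i}-z_{i}^{*}|$, $|\log(\lam_{i}/\lam_{i}^{*})|$) to force any minimizer to be $O(\var)$-close to the reference triple, and hence interior to $B_{2\var}$; only then invoke the Hessian positivity, now legitimately, on that small neighbourhood (where $\|U-\varphi\|=O(\var)$) to conclude uniqueness up to permutation. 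With that reorganization your proposal is a complete proof.
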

\begin{proof}
	The proof  is similar  to  the corresponding statements in \cite{BC1,BC2},  
we omit it here.	
\end{proof}

\begin{rem}
Bahri and Coron	\cite{BC1,BC2} introduced  the  ``critical points at infinity” method and the algebraic-topological tools to study 
	 scalar curvature problems. These  methods provide  so-called Bahri–Coron-type existence criterium, which  also has been applied to deal with the fractional Nirenberg prolem, see, e.g., \cite{AACHM,ACH,CY,CA1,CA2}. 
\end{rem}
In the sequel, we will often spilt $U$, a function in  $ V_{l}(2, \var)$, $\var \in (0,\var_{0}]$, under the form 
$$
U=\al_{1}^{l} \widetilde{\delta}( z_{1}^{l},\lam _{1}^{l} )+\al_{2}^{l}  \widetilde{\delta}( z_{2}^{l},\lam _{2}^{l} )+v^{l}
$$ after making the minimization \eqref{27}. Proposition \ref{prop:3.1}  guarantees the existence and uniqueness of $\al_{i}=\al_{i}(u)=\al_{i}^{l}$, $z_{i}=z_{i}(u)=z_{i}^{l}$ and $\lam _{i}=\lam _{i}(u)=\lam _{i}^{l}$ for $i=1,2$ (we omit the index $l$ for simplicity).

For any $K\in L^{\infty}(\Rn)$ and $U\in D$,  we define
\be I_{K, \tau}(U):= \frac{1}{2} \int_{\Rp}t^{1-2\si}|\nabla U|^2\,\d X-\frac{N_{\si}}{2_{\si}^{*}-\tau}  \int_{\Rn}  K(x)H^{\tau}(x)|U(x,0)|^{2_{\si}^{*}-\tau}\, \d x \label{subcriticalfunctional}
\ee
with $\tau\geq 0$ small. Clearly, $I_{K}= I_{K, 0}$.

To continue our proof, let $\{\over{\tau}_l\}$ be a sequence satisfying
\be\label{26}
\lim _{l \to  \infty} \over{\tau}_{l}=0, \quad \lim _{l \to  \infty}(|z_{l}^{(1)}|+|z_{l}^{(2)}|)^{\over{\tau}_l}=1.
\ee

Now  we give a lower bound energy estimate for some well-spaced \emph{bubbles}.
\begin{lem}\label{lem:3.1}
Let $\var_0$ be the constant in Proposition \ref{prop:3.1}.	Suppose that $\var_1 \in(0,\var_{0})$ small enough and  $l$ large enough, $ \tau \in [0,\over{\tau}_{l}]$. Then there exists some constant $A_{4}=A_{4}(n,\si,\delta_{1}, A_{2})>1$ such that for any
	$U \in V_{l}(2, \var_1 )$ with $z_{1}(U) \in \widetilde{O}_{l}^{(1)}$, $z_{2}(U) \in \widetilde{O}_{l}^{(2)}$, and $\operatorname{dist}(z_{1}(U), \pa  O_{l}^{(1)})<\delta_{1} /(2 A_{3}) $
	or $\operatorname{dist}(z_{2}(U), \pa  O_{l}^{(2)})<\delta_{1} / (2 A_{3})$, we have $$
	I_{K_{l},\tau}(U) \geq c^{(1)}+c^{(2)}+1/A_{4},
$$where
\be\label{28}
	c^{(i)}=\frac{\si N_{\si}}{n}(a^{(i)})^{(2\si-n)/2\si}(S_{n,\si})^{n/\si} .
\ee
\end{lem}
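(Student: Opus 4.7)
My plan is to run a Bahri--Coron style expansion of $I_{K_l,\tau}(U)$ around the optimal two-bubble decomposition and then extract a strictly positive gap from the hypothesis that one of the centers sits where $K_l$ is strictly below $K_l(z_l^{(i)})$. By Proposition~\ref{prop:3.1} I first write
$$U = \al_1\widetilde{\delta}(z_1,\lam_1)+\al_2\widetilde{\delta}(z_2,\lam_2)+v,$$
with $\|v\|\le 2\var_1$, $v$ orthogonal (in $\langle\cdot,\cdot\rangle$) to $\widetilde{\delta}(z_i,\lam_i)$ and its derivatives in $z_i,\lam_i$, and $|\al_i-K_l(z_i)^{(2\si-n)/4\si}|=o_{\var_1}(1)$ by \eqref{28''}. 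The orthogonality splits the kinetic part as $\tfrac{1}{2}\|U\|^2=\sum_i\tfrac{\al_i^2}{2}\|\widetilde{\delta}_i\|^2+\al_1\al_2\langle\widetilde{\delta}_1,\widetilde{\delta}_2\rangle+\tfrac{1}{2}\|v\|^2$. The interaction $\langle\widetilde{\delta}_1,\widetilde{\delta}_2\rangle$ decays like $(\lam_1\lam_2|z_1-z_2|^2)^{-(n-2\si)/2}$ and is $o_l(1)$ since $|z_1-z_2|\ge 2R_l\to\infty$ by \eqref{19} and $\lam_i\ge\var_1^{-1}$; the quantitative version will be derived in Section~\ref{sec:4}.

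Next I expand $\int K_lH^\tau|U(x,0)|^{2_\si^*-\tau}\,\d x$. The cross bubble-bubble terms are negligible by the same separation. The diagonal contribution of bubble $i$ localizes near $z_i$; by continuity of $K_l$ and concentration of $\widetilde{\delta}_i$,
$$\int K_lH^\tau\widetilde{\delta}_i(x,0)^{2_\si^*-\tau}\,\d x = K_l(z_i)\,H(z_i)^\tau\,S_{n,\si}^{n/\si}(1+o_l(1)),$$
where $H(z_i)^\tau\to 1$ thanks to the calibration \eqref{26}. Linear-in-$v$ terms are small because $\al_i\widetilde{\delta}_i$ is an approximate critical point of the constant-coefficient functional $I_{K_l(z_i),0}$ and $v$ is orthogonal to the tangent directions of the bubble manifold. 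Because the scalar map $\al\mapsto\tfrac{\al^2}{2}\|\widetilde{\delta}_i\|^2-\tfrac{N_\si K_l(z_i)}{2_\si^*-\tau}\al^{2_\si^*-\tau}\|\widetilde{\delta}_i(\cdot,0)\|_{2_\si^*-\tau}^{2_\si^*-\tau}$ has a nondegenerate maximum at $\al_i^\ast=K_l(z_i)^{(2\si-n)/4\si}(1+O(\tau))$ of value $\tfrac{\si N_\si}{n}K_l(z_i)^{(2\si-n)/2\si}S_{n,\si}^{n/\si}(1+o_\tau(1))$, a second-order Taylor expansion together with $|\al_i-\al_i^\ast|=o_{\var_1}(1)$ gives
$$I_{K_l,\tau}(U)=\tfrac{\si N_\si}{n}\sum_{i=1}^{2}K_l(z_i)^{(2\si-n)/2\si}S_{n,\si}^{n/\si}+Q(v)+o_l(1)+o_{\var_1}(1),$$
where $Q(v)$ is the second-variation quadratic form; by the fractional analogue of the Bianchi--Egnell/Esposito--Musso coercivity (as in \cite{JLX,CG}) applied under the orthogonality conditions, $Q(v)\ge c_0\|v\|^2\ge 0$, so the $v$-term is simply dropped in the lower bound.

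The final step uses the hypothesis. Without loss of generality $\operatorname{dist}(z_1,\pa O_l^{(1)})<\delta_1/(2A_3)$; let $y\in \pa O_l^{(1)}$ realize this distance. Combining \eqref{24}--\eqref{25},
$$K_l(z_l^{(1)})-K_l(z_1)\ge K_l(z_l^{(1)})-K_l(y)-A_3|z_1-y|\ge\delta_1-A_3\cdot\tfrac{\delta_1}{2A_3}=\tfrac{\delta_1}{2}.$$
By \eqref{21}, $K_l(z_l^{(1)})\to a^{(1)}$, so $K_l(z_1)\le a^{(1)}-\delta_1/4$ for $l$ large. Since $K_l$ takes values in $[1/A_2,A_2]$ on $\widetilde{O}_l$ and $t\mapsto t^{(2\si-n)/2\si}$ is strictly decreasing with a uniform modulus of monotonicity on $[1/(2A_2),2A_2]$, there exists $\eta_1=\eta_1(n,\si,\delta_1,A_2)>0$ with
$$K_l(z_1)^{(2\si-n)/2\si}\ge (a^{(1)})^{(2\si-n)/2\si}+\eta_1.$$
For the other bubble, $z_2\in\widetilde{O}_l^{(2)}\subset B_{R_l}(z_l^{(2)})$ for large $l$ (using $\operatorname{diam}(O_l^{(2)})<R_l/10$), so \eqref{20} gives $K_l(z_2)\le K_l(z_l^{(2)})$, hence $K_l(z_2)^{(2\si-n)/2\si}\ge (a^{(2)})^{(2\si-n)/2\si}-o_l(1)$. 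Substituting into the energy identity,
$$I_{K_l,\tau}(U)\ge c^{(1)}+c^{(2)}+\tfrac{\si N_\si}{n}\eta_1 S_{n,\si}^{n/\si}-o_l(1)-o_{\var_1}(1),$$
and taking $\var_1$ small and $l$ large absorbs the error, proving the lemma with $1/A_4:=\tfrac{\si N_\si}{2n}\eta_1 S_{n,\si}^{n/\si}$, a quantity depending only on $n,\si,\delta_1,A_2$ as required.

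I expect the main obstacle to lie in the quantitative control of the bubble-bubble interactions in the $\widetilde{\delta}$-extension framework --- both the inner product $\langle\widetilde{\delta}_1,\widetilde{\delta}_2\rangle$ and the cross nonlinear term $\int K_lH^\tau(\al_1\widetilde{\delta}_1)^{2_\si^*-1-\tau}(\al_2\widetilde{\delta}_2)\,\d x$ --- together with the verification of the uniform coercivity $Q(v)\ge c_0\|v\|^2$ on the space orthogonal to the bubble manifold. These estimates are conceptually identical to those of Bahri--Coron \cite{BC1,BC2}, but require the weighted-Dirichlet calculus of the Caffarelli--Silvestre extension; they constitute the technical content of Section~\ref{sec:4}.
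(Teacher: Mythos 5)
Your argument is essentially the same as the paper's: expand $I_{K_l,\tau}(U)\approx I_{K_l,\tau}(\al_1\widetilde\delta_1)+I_{K_l,\tau}(\al_2\widetilde\delta_2)$, write each term as $\tfrac{\si N_\si}{n}K_l(z_i)^{(2\si-n)/2\si}(S_{n,\si})^{n/\si}$ up to small errors, and then use $(H_3)$-type conditions \eqref{24}--\eqref{25} with the Lipschitz bound at the boundary to get $K_l(z_1)\le K_l(z_l^{(1)})-\delta_1/2$, which, after passing through the strictly decreasing map $t\mapsto t^{(2\si-n)/2\si}$ on $[1/A_2,A_2]$, yields a positive gap depending only on $n,\si,\delta_1,A_2$. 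That is precisely the paper's computation, and your constant $1/A_4$ is extracted in the same way.

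The one place you deviate is the passage via the second variation: you introduce a coercive quadratic form $Q(v)\ge c_0\|v\|^2$ (a Bianchi--Egnell/Rey-type spectral gap on the space orthogonal to the tangent of the bubble manifold) in order to discard the $v$-contribution. This is unnecessary machinery here, and also the references you cite for it (\cite{JLX,CG}) do not actually contain such a coercivity statement. The paper simply observes that Proposition~\ref{prop:3.1} gives $\|v\|=o_{\var_1}(1)$, so every $v$-dependent term in the expansion (linear, quadratic, cubic, and the cross-terms with the bubbles) is $o_{\var_1}(1)$ and gets absorbed directly, with no sign information about the second variation needed. Replacing your $Q(v)\ge 0$ step with this elementary absorption (and merging your $o_l(1)$ and interaction bounds into the paper's combined $o_{\var_1}(1)+o(1)$) brings your proof into exact alignment with the paper's.
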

\begin{proof}
We assume  that  	$\operatorname{dist}(z_{1}(U), \pa  O_{l}^{(1)})<\delta_{1} /(2 A_{3})$. It follows from  \eqref{28''} and some direct   computations that, for $\var_1 >0$ small and $l$ large,
	\begin{align*}
	I_{K_{l}, \tau}(U)=&\sum_{i=1}^{2} I_{K_{l}, \tau}(\al_{i}  \widetilde{\delta}( z_{i},\lam _{i} ))+o_{\var_1 }(1)
\\=&\sum_{i=1}^{2}\Big\{\frac{1}{2} K_{l}(z_{i})^{(2\si-n)/2\si} \| \widetilde{\delta}( z_{i},\lam _{i} )\|^{2}\\&-\frac{N_{\si}}{2_{\si}^{*}} K_{l}(z_{i})^{-n/2\si} \int_{\Rn} K_{l}  \delta( z_{i},\lam _{i} )^{2_{\si}^{*}-\tau}  \Big\}+o_{\var_1 }(1)+o(1)
\\\geq&\sum_{i=1}^{2}\Big\{\frac{1}{2} K_{l}(z_{i})^{(2\si-n)/2\si} \| \widetilde{\delta}(0,1)\|^{2}\\&-\frac{N_{\si}}{2_{\si}^{*}} K_{l}(z_{i})^{(2\si-n)/2\si} \|\delta(0,1)\|_E^{2}  \Big\}+o_{\var_1 }(1)+o(1)
	\\=&\sum_{i=1}^{2} \frac{\si N_{\si}}{n} K_{l}(z_{i})^{(2\si-n)/2\si}(S_{n,\si})^{n/\si}+o_{\var_1 }(1)+o(1).
	\end{align*}
Combining this estimate with  the assumption 	$\operatorname{dist}(z_{1}(U), \pa  O_{l}^{(1)})<\delta_{1} /(2 A_{3})$, we obtain
		\begin{align*}
			I_{K_{l}, \tau}(U)	\geq& \frac{\si N_{\si}}{n}(K_{l}(z_{l}^{(1)})-\delta_{1}/2)^{(2\si-n)/2\si}(S_{n,\si})^{n/\si}\\&+\frac{\si N_{\si}}{n} K_{l}(z_{l}^{(2)})^{(2\si-n)/2\si}(S_{n,\si})^{n/\si}+o_{\var_1 }(1)+o(1)\\\geq& \sum_{i=1}^{2} c^{(i)}+1/A_{4},
		\end{align*}
where the  choice of $A_4$ is evident thanks to \eqref{20}-\eqref{21}, \eqref{24}-\eqref{25} and \eqref{28}.
The proof is now complete.
\end{proof}

From now on, the value of $A_{4}$ and $\var_1 $ are fixed.  The main result in this section can be stated as follows:
\begin{thm}\label{thm:3.1}
	Suppose that $\{K_{l}\}$ is a sequence of functions  satisfying (i)-(iii). If there exist some bounded open sets $O^{(1)}, \ldots, O^{(m)} \subset \Rn$ and some  constants $\delta_{2}, \delta_{3}>0$ such that for all $1 \leq i \leq m$,
	\begin{gather}
	\label{1111}
	\widetilde{O}_{l}^{(i)}-z_{l}^{(i)} \subset O^{(i)}\quad \text { for all }\, l,\\\label{29}
	\big\{U \in D^+: I_{K_{\infty}^{(i)}}'(U)=0,\,  c^{(i)} \leq I_{K_{\infty}^{(i)}}(U) \leq c^{(i)}+\delta_{2}\big\}\cap V(1, \delta_{3}, O^{(i)}, K_{\infty}^{(i)})=\emptyset.
	\end{gather}
	Then for any $\var>0$, there exists integer $\over{l}_{\var, m}>0$ such that for any $l \geq \over{l}_{\var, m}$, $\tau\in (0,\over{\tau}_{l})$, there exists  $U_{l}= U_{l, \tau} \in V_{l}(m, \var)\cap D^+$ which solves
\be	\label{30}
	\left\{	\begin{aligned}
	&\operatorname{div}(t^{1-2\si}\nabla U_l)=0&&\text{ in }\,\Rp,\\&\pa_{\nu}^{\si}U_l=K_{l}(x)H^{\tau}(x) U_{l}(x,0)^{\frac{n+2\si}{n-2\si}-\tau}&& \text { on }  \, \Rn.
	\end{aligned}\right.
\ee
	Furthermore, $U_{l}$ satisfies
\be\label{31}
	\sum_{i=1}^{m} c^{(i)}-\var \leq I_{K_{l}, \tau}(U_{l}) \leq \sum_{i=1}^{m} c^{(i)}+\var.
\ee
\end{thm}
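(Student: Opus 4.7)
The strategy is to run a finite-dimensional minimax in the open set $V_l(m,\var)$, using the Sobolev-subcritical nature of the perturbed equation to restore Palais--Smale compactness, while Lemma~\ref{lem:3.1} and assumption \eqref{29} serve as energy barriers preventing the negative pseudo-gradient flow from escaping $V_l(m,\var)$. This implements the variational gluing scheme of S\'er\'e and Coti~Zelati--Rabinowitz \cite{Se,CR1,CR2}, adapted to the degenerate extension problem \eqref{30} in the spirit of Li \cite{Li93,Li93b,Li95}.

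I would first fix $\var \ll \min(1/A_4,\delta_2,\delta_3)$ and build an initial comparison map from glued bubbles centered at the $z_l^{(i)}$. For a box $Q = \prod_{i=1}^{m}[\lam_*,\lam^*]$ with $\lam_* \gg \var^{-1}$, define
\[
\Phi(\lam_1,\ldots,\lam_m) \;=\; \sum_{i=1}^{m} K_l(z_l^{(i)})^{(2\si-n)/4\si}\,\widetilde{\delta}(z_l^{(i)}, \lam_i).
\]
Because well-separated bubbles interact only at order $R_l^{2\si-n} = o(1)$ (the subcritical refinement of this, uniform in $\tau \in [0,\over{\tau}_l]$, is the content of Section~\ref{sec:4}), an energy expansion of the same flavor as that in the proof of Lemma~\ref{lem:3.1} gives
\[
\max_{\vec\lam \in Q} I_{K_l,\tau}(\Phi(\vec\lam)) \;\leq\; \sum_{i=1}^{m} c^{(i)} + \var/4,
\]
and at each face of $Q$ (where some $\lam_i$ reaches $\lam_*$ or $\lam^*$) the energy drops by a uniform amount $\eta > 0$ independent of $l$. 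I then introduce the class $\Gamma_l$ of continuous deformations of $\Phi$ that remain in $V_l(m,\var)$ and keep the boundary energy below $\sum c^{(i)} - \eta/2$, and set $b_l = \inf_{\Psi \in \Gamma_l}\max_{\vec\lam \in Q} I_{K_l,\tau}(\Psi(\vec\lam))$.

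The crucial trapping property comes from Lemma~\ref{lem:3.1} (extended to general $m$ in the obvious way): on the subset of $V_l(m,\var)$ where some $z_i(U)$ lies within $\delta_1/(2A_3)$ of $\pa O_l^{(i)}$, one has $I_{K_l,\tau}(U) \geq \sum_{i=1}^{m} c^{(i)} + 1/A_4$. Consequently, trajectories of the pseudo-gradient flow of $I_{K_l,\tau}$ at energy levels below $\sum c^{(i)} + \var$ cannot cross this portion of $\pa V_l(m,\var)$; coupled with a cut-off preventing the $\lam_i$ from falling below $\var^{-1}$, a standard deformation lemma yields $\sum_{i=1}^{m} c^{(i)} - \var \leq b_l \leq \sum_{i=1}^{m} c^{(i)} + \var$ together with a Palais--Smale sequence $\{U_l^k\}\subset V_l(m,\var)$ at level $b_l$.

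The main obstacle, and the step where \eqref{29} enters decisively, is to upgrade $\{U_l^k\}$ to a genuine critical point that still lies in $V_l(m,\var)\cap D^+$. Subcriticality ($\tau>0$) prevents bubble loss at the concentration scales, so a standard profile decomposition leaves two options: either $U_l^k \to U_l$ strongly, or, after extracting and translating by some $z_l^{(i_0)}$, a nontrivial weak limit $u_\infty$ is a positive critical point of $(-\Delta)^\si u = K_\infty^{(i_0)} u^{(n+2\si)/(n-2\si)}$ (the passage $\tau \to 0$ is routine from the uniform bounds in Appendix~\ref{sec:1}). The energy splitting together with $b_l \leq \sum c^{(i)}+\var$, the inclusion \eqref{1111}, and the decomposition of Proposition~\ref{prop:3.1} would force this profile to lie in $V(1,\delta_3,O^{(i_0)},K_\infty^{(i_0)})$ at an energy level in $[c^{(i_0)}, c^{(i_0)}+\delta_2]$ --- precisely the situation excluded by \eqref{29}. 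Strong convergence therefore prevails; positivity of the limit follows since $\Phi$ is positive, the flow can be truncated to preserve positivity, and the strong maximum principle for $\operatorname{div}(t^{1-2\si}\nabla \cdot)$ established in Appendix~\ref{sec:1} upgrades a nonnegative non-trivial limit to a strictly positive one. The resulting $U_l \in V_l(m,\var)\cap D^+$ solves \eqref{30} and satisfies the energy bound \eqref{31}.
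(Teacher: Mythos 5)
Your proposal captures the broad philosophy (variational gluing in the spirit of Coti~Zelati--Rabinowitz and Li, subcriticality as a PS restorer, Lemma~\ref{lem:3.1} and hypothesis~\eqref{29} as barriers), but it departs from the paper's contradiction argument in ways that create genuine gaps, and two of the paper's essential ingredients are absent.

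First, the finite-dimensional minimax you set up over the box $Q=\prod_i[\lam_*,\lam^*]$ in the $\lam$-variables does not have the boundary structure you attribute to it. The claim that ``at each face of $Q$ the energy drops by a uniform amount $\eta>0$ independent of $l$'' is false: for large $\lam_*$, the energy $I_{K_l,\tau}\bigl(K_l(z_l^{(i)})^{(2\si-n)/4\si}\widetilde\delta(z_l^{(i)},\lam_i)\bigr)$ is flat in $\lam_i$ up to errors of order $\tau$ and $R_l^{2\si-n}$, so moving $\lam_i$ from $\lam_*$ to $\lam^*$ produces no uniform drop. The paper instead takes the minimax class $\Gamma_l$ built from paths $\bar g^{(i)}(\theta_1,\theta_2)$ running from $0$ (where $I=0$) to functions of negative energy (conditions \eqref{111}--\eqref{112}); that is what produces the linking and the lower bound $b_{l,\tau}\geq c_{l,\tau}^{(1)}+c_{l,\tau}^{(2)}$. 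Your $\Phi$, which fixes $\al_i$ and only varies $\lam_i$, is not a mountain-pass class in any direction where the energy actually falls.

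Second, and more fundamentally, the ``trapping'' step that keeps the flow inside $V_l(m,\var)$ is much less routine than your sketch suggests. You only address the $z_i$-boundary via Lemma~\ref{lem:3.1} and invoke an unspecified ``cut-off'' for $\lam_i$. But the open set $V_l(m,\var)$ also has boundary components in the $\al$-direction and in the $v$-direction ($\|U-\varphi(\al,z,\lam)\|\to\var$), and the flow can in principle escape there. The paper controls this with Proposition~\ref{prop:3.2}: a uniform-in-$l$ lower bound on $\|I_{K_l,\tau_l}'\|$ in the annular region $\widetilde V_l(2,\var_2)\setminus\widetilde V_l(2,\var_2/2)$ near energy level $c^{(1)}+c^{(2)}$, proved by a long bubble-decomposition analysis (Claims~1--10) that is precisely where hypothesis~\eqref{29} enters. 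Your proposal uses \eqref{29} at the end, to rule out a bad profile in a Palais--Smale sequence, but at fixed $l$ and $\tau>0$ the PS condition already holds on bounded sets -- the delicate point is the \emph{uniformity in} $l$, which is what Proposition~\ref{prop:3.2} delivers and what your outline does not engage with. Finally, even after the deformation, the resulting map has no reason to retain the disjoint-support conditions \eqref{113}--\eqref{114}; the paper's Step~3, which truncates and re-solves an exterior minimization problem (Proposition~\ref{prop:2.1}) using the a priori decay estimates of Proposition~\ref{prop:1.4}, is missing entirely from your proposal. Without it, there is no way to close the contradiction with $b_{l,\tau}$ (in the paper's indirect scheme) or to conclude that the limit of your deformed family is a critical point inside $V_l(m,\var)$ (in your direct scheme).
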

\begin{rem}  
	\eqref{31}  is a direct consequence from  the definition of $V_{l}(k, \var)$ once \eqref{30} is proved. 
\end{rem}

We prove Theorem \ref{thm:3.1} by contradiction argument. For simplicity,
we only consider the case   $m = 2$, since  the changes for $m > 2$ are evident. Suppose the contrary of Theorem \ref{thm:3.1}, i.e.,  for some $\var^{*}>0$, there exists a sequence of $l \to  \infty$, $0<\tau_{l}<\over{\tau}_{l}$  such that Eq.  \eqref{30}, for $\tau=\tau_{l}$, has
no solution in $V_{l}(2, \var^{*})$ satisfying \eqref{31} with $\var=\var^{*}$. A  complicated
procedure will be followed in order to  yield a contradiction. It will be outlined now and the details will be given in the next two sections. The proof consists of two parts:
\begin{itemize}
	\item \emph{Part 1.} Under the contrary of Theorem \ref{thm:3.1}, we obtain a uniform lower bounds of the gradient vectors in some certain annular region. It is a standard consequence of the  \emph{Palais-Smale} condition in variational argument.
	\item	\emph{Part 2.} We  use variational method to construct an approximating \emph{minimax} curve. Part 1 can be used to construct a deformation.  In our setting, we  follow  the nonnegative gradient flow to make a deformation, which is an important process  to abtain a counterexample. 
\end{itemize}
Part 1 will be carried out in Section \ref{sec:4} and  Part 2 in Section \ref{sec:5}.
%----------------------------------------------------------------------------%
\subsection{First part of the proof of Theorem \ref{thm:3.1}}\label{sec:4}
%%%%%%%%%%%%%%%%%%%%%%%%%%%%%%%%%%%%%%%%%%%%%%%%%%%%%%%%%%%%%%%%%%%%%%%%%%%%%%%%%%%%%%%%%%%%%%%%%%%%%%%%%
For $\var_{2}>0$, we denote $\widetilde{V}_{l}(2, \var_{2})$ the set of functions $U$ in $D$ satisfies:  there exist $\al=(\al_{1}, \al_{2}) \in \R^2$, $z=(z_{1}, z_{2}) \in O_{l}^{(1)} \times O_{l}^{(2)}$ and $\lam =(\lam _{1}, \lam _{2})\in \R^2$ such that
\begin{gather*}
\lam _{1},\lam _{2}>\var_{2}^{-1},\\|\lam _{i}^{\tau_{l}}-1|<\var_{2}, ~ i=1,2,\\|\al_{i}-K_{l}(z_{i})^{(2\si-n)/4\si}|<\var_{2}, ~ i=1,2,\\\Big\|U-\sum_{i=1}^{2} \al_{i}\widetilde{\delta}(z_i,\lam _i)^{1+O(\tau_l)}\Big\|<\var_{2}.
\end{gather*}
Throughout the paper, we denote  $p_{l}=\frac{n+2\si}{n-2\si}-\tau_{l}$.
\begin{lem}\label{lem:3.2}
	For $\var_{2}=\var_{2}(\var_1 , \var^{*}, n,\si)>0$ small enough, 
	 we have, for $l$ large enough,
\be\label{32}
	\widetilde{V}_{l}(2, \var_{2}) \subset V_{l}(2, o_{\var_{2}}(1)) \subset V_{l}(2 , \var_1 ) \cap V_{l}(2, \var^{*}),
\ee
	where $o_{\var_{2}}(1)\to  0$ as $\var_{2}\to  0$.  
\end{lem}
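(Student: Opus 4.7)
The plan is to observe that membership in $\widetilde{V}_l(2, \var_2)$ and in $V_l(2, \var)$ differ in only one essential respect: the approximating functions in $\widetilde{V}_l$ are twisted bubbles $\widetilde{\delta}(z_i, \lam_i)^{1+O(\tau_l)}$ rather than the bubbles $\widetilde{\delta}(z_i, \lam_i)$ themselves. The parameter constraints ($\al_i$ near $K_l(z_i)^{(2\si-n)/4\si}$, $z_i \in O_l^{(i)}$, and $\lam_i > \var_2^{-1}$) transfer verbatim to $V_l(2, \var_2)$, so the task reduces to quantitatively replacing the twisted bubbles by the genuine ones in $D$-norm. Given $U \in \widetilde{V}_l(2, \var_2)$ with witnesses $(\al, z, \lam)$, I would invoke the triangle inequality
\begin{equation*}
\Big\|U - \sum_i \al_i \widetilde{\delta}(z_i, \lam_i)\Big\| \leq \Big\|U - \sum_i \al_i \widetilde{\delta}(z_i, \lam_i)^{1+O(\tau_l)}\Big\| + \sum_i |\al_i| \cdot \mathcal{E}_l(z_i, \lam_i),
\end{equation*}
where $\mathcal{E}_l(z, \lam) := \|\widetilde{\delta}(z, \lam)^{1+O(\tau_l)} - \widetilde{\delta}(z, \lam)\|$; the first term is $<\var_2$ by hypothesis while the $\al_i$ are bounded, so everything reduces to controlling $\mathcal{E}_l$.

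The crux is the estimate $\mathcal{E}_l(z, \lam) = o_{\var_2}(1)$, uniformly in the admissible $(z, \lam)$. To obtain it I would exploit the scaling $\widetilde{\delta}(z, \lam)(X) = \lam^{-(n-2\si)/2} W((X-Z)/\lam)$ with $W := \widetilde{\delta}(0,1)$ and $Z = (z, 0)$; a change of variables $Y = (X-Z)/\lam$ converts $\mathcal{E}_l(z, \lam)^2$ into
\begin{equation*}
\int_{\Rp} t_Y^{1-2\si} \bigl[\lam^{-(n-2\si)s/2}(1+s) W(Y)^s - 1\bigr]^2 |\nabla W(Y)|^2 \, \d Y
\end{equation*}
for some $s = O(\tau_l)$, eliminating the $z$-dependence entirely. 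The constraint $|\lam^{\tau_l} - 1|<\var_2$ forces $|s \log \lam| = O(\var_2)$. I would then split the integration region into a bulk on which $|s \log W|$ is small and a tail $\{|Y| \geq e^{1/|s|}\}$. On the bulk, a Taylor expansion gives $|[\,\cdot\,]| \lesssim |s \log \lam| + |s \log W| + |s|$, so the contribution is controlled by $\var_2^2 + s^2 \int t_Y^{1-2\si}(\log W)^2 |\nabla W|^2 \,\d Y$. On the tail the crude bound $|[\,\cdot\,]|\leq C$ suffices, since $\int_{|Y| \geq R} t_Y^{1-2\si} |\nabla W|^2 \,\d Y \to 0$ when $R = e^{1/|s|}\to\infty$ with $s \to 0$.

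The main obstacle is establishing finiteness of the logarithmically weighted integral $\int t_Y^{1-2\si}(\log W)^2 |\nabla W|^2 \,\d Y$; this rests on the polynomial asymptotics $W(Y) \sim c|Y|^{2\si-n}$ and $|\nabla W(Y)| \sim c|Y|^{2\si-n-1}$ at infinity (leaving a spare $|Y|^{-2\si}$ margin that absorbs $(\log|Y|)^2$) together with the smoothness of $W$ across the origin and its boundedness away from zero on any compact set. Once this estimate is in hand, $\mathcal{E}_l(z, \lam) = O(\var_2) + o_l(1)$; taking $l$ large enough that $\tau_l \ll \var_2$ and applying the triangle inequality above yields the first inclusion $\widetilde{V}_l(2, \var_2) \subset V_l(2, o_{\var_2}(1))$. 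The second inclusion is then automatic upon choosing $\var_2$ small enough that $o_{\var_2}(1) < \min(\var_1, \var^*)$.
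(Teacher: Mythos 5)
The paper itself gives no argument (the proof reads ``It is easy to check \eqref{32} by using the definition of $\widetilde V_l(2,\var_2)$''), so there is nothing to compare against; your write-up supplies the actual verification. The key reduction --- that the only non-trivial matter is the uniform estimate $\mathcal{E}_l(z,\lam)=\|\widetilde\delta(z,\lam)^{1+O(\tau_l)}-\widetilde\delta(z,\lam)\|=O(\var_2)+o_l(1)$, with the extra constraint $|\lam^{\tau_l}-1|<\var_2$ being exactly what tames the $\lam^{\pm s(n-2\si)/2}$ factor --- is correct, and so are the change of variables removing the $(z,\lam)$-dependence, the bulk/tail split at $|Y|\approx e^{1/|s|}$, and the finiteness of the log-weighted integral via the power asymptotics of $W=\widetilde\delta(0,1)$ and $\nabla W$.

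There is one genuine gap: your tail estimate uses $|[\,\cdot\,]|\leq C$, which is valid only if $s=O(\tau_l)\geq 0$. Nothing in the definition of $\widetilde V_l$ fixes the sign of the implied constant, and for $s<0$ one has $W(Y)^{s}\to\infty$ as $|Y|\to\infty$, so the bracket is not bounded on the tail. The fix is routine but needs to be stated: on the tail bound $|[\,\cdot\,]|\lesssim W(Y)^{s}$ and use $W(Y)\sim c|Y|^{2\si-n}$, $|\nabla W(Y)|\sim c|Y|^{2\si-n-1}$, $t_Y^{1-2\si}\lesssim|Y|^{1-2\si}$ to obtain
\[
\int_{|Y|>R}t_Y^{1-2\si}\,|\nabla W|^2\,W(Y)^{2s}\,\d Y\;\lesssim\;R^{\,2\si-n+2|s|(n-2\si)},
\]
which still vanishes as $R=e^{1/|s|}\to\infty$ provided $|s|<\tfrac12$, a condition met for $l$ large since $\tau_l\to 0$. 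With that amendment the argument is complete.
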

\begin{proof}
It  is easy to check \eqref{32}	by using  the definition of $\widetilde{V}_{l}(2, \var_{2})$. Hence we omit it.
\end{proof}
 The following result is the crucial step in the proof of
Theorem \ref{thm:3.1}.
\begin{prop}\label{prop:3.2}
	Under the hypotheses of Theorem \ref{thm:3.1}  and the
	contrary of the conclusion of Theorem \ref{thm:3.1}, there exist some constants $\var_{2} \in(0, \min \{\var_{0},\var_1 , \var^{*}, \delta_{3}\})$ and 
	$\var_{3} \in(0, \min\{\var_{0}, \var_1 , \var_{2}, \var^{*}, \delta_{3}\})$  which are independent of $l$ such that \eqref{32}
	holds for such $\var_{2}$, and there exist $\delta_{4}=\delta_{4}(\var_{2}, \var_{3})>0$ and  $l_{\var_{2}, \var_{3}}'>1$ such that for any
	$l \geq l_{\var_{2}, \var_{3}}'$,  $U \in \widetilde{V}_{l}(2, \var_{2}) \setminus  \widetilde{V}_{l}(2, \var_{2} / 2)$   with $|I_{K_{l}, \tau_{l}}(U)-(c^{(1)}+c^{(2)})|<\var_{3}$,  we have$$
	\|I_{K_{l}, \tau_{l}}'(U)\| \geq \delta_{4},
	$$where $I_{K_{l}, \tau_{l}}'$ denotes the Fr\'echet derivative.
\end{prop}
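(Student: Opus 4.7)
\emph{Proof proposal.} The plan is to argue by contradiction: if the conclusion fails, then for arbitrarily small $\var_2,\var_3>0$ one can extract a sequence $U_l\in\widetilde{V}_l(2,\var_2)\setminus\widetilde{V}_l(2,\var_2/2)$ with $|I_{K_l,\tau_l}(U_l)-c^{(1)}-c^{(2)}|<\var_3$ and $\|I'_{K_l,\tau_l}(U_l)\|\to 0$. By Lemma~\ref{lem:3.2}, $U_l\in V_l(2,\var_1)$, so Proposition~\ref{prop:3.1} provides an optimal decomposition
\begin{equation*}
U_l=\al_1^l\widetilde{\delta}(z_1^l,\lam_1^l)+\al_2^l\widetilde{\delta}(z_2^l,\lam_2^l)+v^l,
\end{equation*}
with $v^l$ orthogonal in $D$ to the six bubble directions $\widetilde{\delta}(z_i^l,\lam_i^l)$, $\pa_{\lam_i}\widetilde{\delta}(z_i^l,\lam_i^l)$, $\pa_{z_i}\widetilde{\delta}(z_i^l,\lam_i^l)$ ($i=1,2$), and with the parameters in the admissible range prescribed by $\widetilde{V}_l(2,\var_2)$; in particular $\al_i^l$ is bounded above and away from $0$ by \eqref{18}, $z_i^l\in\widetilde{O}_l^{(i)}$, $\lam_i^l\geq\var_2^{-1}$, and $(\lam_i^l)^{\tau_l}\to 1$.

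The heart of the argument is to deduce $\|v^l\|\to 0$ from $\|I'_{K_l,\tau_l}(U_l)\|\to 0$. Testing $I'_{K_l,\tau_l}(U_l)$ against $v^l$, expanding the nonlinearity around the two-bubble background, and using the $D$-orthogonality of $v^l$ (equivalently $\int_{\Rn}\widetilde{\delta}(z_i^l,\lam_i^l)|_{t=0}^{2^*_{\si}-1}v^l(\cdot,0)\,\d x=0$, etc.), one obtains a Bahri--Coron type identity of the schematic form $\|v^l\|^2=Q(v^l,v^l)+o(\|v^l\|)$, where $Q$ is the quadratic form of the linearized bubble operator, and the error $o(\|v^l\|)$ absorbs the subcritical correction $O(\tau_l)$, the two-bubble interaction terms (negligible by the separation $\lam_i^l|z_1^l-z_2^l|\to\infty$ via the estimates prepared in Section~\ref{sec:4}), and the residual of the single-bubble equation (negligible because $K_l$ varies only slightly at the scale $(\lam_i^l)^{-1}$ near $z_l^{(i)}$). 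The known spectral gap for the fractional linearized bubble operator (whose kernel is spanned exactly by $\widetilde{\delta}$ and its $z,\lam$ derivatives, cf.\ \cite{JLX}) then makes $Q$ coercive on the orthogonal complement, yielding $\|v^l\|^2\leq C\|v^l\|\cdot\|I'_{K_l,\tau_l}(U_l)\|+o(\|v^l\|^2)$, and hence $\|v^l\|\to 0$.

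With $\|v^l\|\to 0$, rescale around $z_l^{(i)}$ via $W_l^{(i)}(x,t):=(\lam_i^l)^{-(n-2\si)/2}U_l(z_l^{(i)}+x/\lam_i^l,\,t/\lam_i^l)$. Using \eqref{22}, $\tau_l\to 0$, $(\lam_i^l)^{\tau_l}\to 1$ from \eqref{26}, and the fact that the opposite bubble escapes (since $\lam_j^l|z_1^l-z_2^l|\to\infty$), a subsequence of $W_l^{(i)}$ converges strongly in $D$ to a nontrivial critical point $W_i\in D^+$ of $I_{K_\infty^{(i)}}$ with energy $I_{K_\infty^{(i)}}(W_i)=c^{(i)}$, concentrated at the origin with mass $\al_i^\infty\approx K_\infty^{(i)}(0)^{(2\si-n)/4\si}$. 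Choosing $\var_3<\delta_2$ and using \eqref{1111} then places $W_i$ in $V(1,\delta_3,O^{(i)},K_\infty^{(i)})$ with $c^{(i)}\leq I_{K_\infty^{(i)}}(W_i)\leq c^{(i)}+\delta_2$, contradicting assumption \eqref{29}. The principal obstacle is the coercivity step above: in the non-local fractional setting the spectral information for the linearization must be combined with uniform control of the $H^{\tau_l}$ subcritical correction and of the non-local two-bubble interaction integrals, which is precisely what the preparatory analysis carried out in Section~\ref{sec:4} is designed to deliver.
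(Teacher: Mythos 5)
Your proposal takes a genuinely different route but contains a critical flaw in the final step. The paper uses weak-limit extraction plus the classification theorem rather than a spectral-gap coercivity estimate; either tool might in principle give control of the error term, but the contradiction you aim for does not actually close.

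The issue is the nature of the rescaled limit. After proving $\lam_i^l\to\infty$, your rescaling $W_l^{(i)}(x,t)=(\lam_i^l)^{-(n-2\si)/2}U_l(z_l^{(i)}+x/\lam_i^l,\,t/\lam_i^l)$ zooms in at scale $(\lam_i^l)^{-1}$, so the coefficient $K_l(z_l^{(i)}+x/\lam_i^l)\to K_\infty^{(i)}(\xi_i)$, a \emph{constant}, and the weak limit solves the constant-coefficient equation (this is precisely Eq.~\eqref{58} in the paper). By the classification theorem, $W_i$ is a bubble $K_\infty^{(i)}(\xi_i)^{(2\si-n)/4\si}\widetilde{\delta}(z^*,\lam^*)$. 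A bubble is a critical point of $I_{K_\infty^{(i)}(\xi_i)}$ (constant $K$), \emph{not} of $I_{K_\infty^{(i)}}$ unless $K_\infty^{(i)}$ is constant. So your claimed limit ``nontrivial critical point $W_i\in D^+$ of $I_{K_\infty^{(i)}}$'' is false in the main case $\lam_i^l\to\infty$, and the contradiction with \eqref{29} does not follow — a bubble lies in $V(1,\delta_3,O^{(i)},K_\infty^{(i)})$ whenever the scale is large, but \eqref{29} only forbids actual critical points of $I_{K_\infty^{(i)}}$, so there is nothing to contradict.

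The paper in fact invokes \eqref{29} only once, inside Claim~\ref{claim:1}, to rule out the complementary case that $\lam_i^l$ stays bounded: there the map $T_{z_1^l}$ is a pure translation, the full nonconstant function $T_\zeta K_\infty^{(1)}$ survives in the limit, and the weak limit $W$ really is a critical point of $I_{T_\zeta K_\infty^{(1)}}$ with energy in $[c^{(1)},c^{(1)}+o_{\var_2}(1)]$, contradicting \eqref{29} after an untranslation. Once $\lam_i^l\to\infty$ is secured, the paper's contradiction is of an entirely different nature: it shows through Claims~\ref{claim:2}, \ref{claim:5}, \ref{claim:9} that $(\lam_i^l)^{\tau_l}=1+o_{\var_3}(1)+o(1)$, and through Claims~\ref{claim:3}--\ref{claim:10} that $U_l$ admits a two-bubble representation with all parameters in the $\var_2/2$-range, hence $U_l\in\widetilde{V}_l(2,\var_2/2)$, contradicting $U_l\in\widetilde{V}_l(2,\var_2)\setminus\widetilde{V}_l(2,\var_2/2)$. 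Your proposal never establishes the constraint $|\lam_i^{\tau_l}-1|<\var_2/2$ — which is the distinguishing feature of $\widetilde{V}_l$ over $V_l$ — and $\|v^l\|\to 0$ alone does not imply it. Finally, your reference to ``the preparatory analysis carried out in Section~\ref{sec:4}'' for the interaction estimates is circular: that section \emph{is} the proof of this proposition.
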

\begin{rem}
	Proposition \ref{prop:3.2}  will be used to  construct an approximating minimaxing curve in Section \ref{sec:5}.  Evidently, we have, under the contrary of Theorem \ref{thm:3.1}, that for each $l$,    
$$
	\inf\big\{\|I_{K_{l}, \tau_{l}}'(U)\|:U\in  \widetilde{V}_{l}(2, \var_{2}) \setminus  \widetilde{V}_{l}(2, \var_{2} / 2),\, |I_{K_{l}, \tau_{l}}(U)-(c^{(1)}+c^{(2)})|<\var_{3}\big\}>0.
$$
\end{rem}

We prove Proposition \ref{prop:3.2} by contradiction argument. Suppose the statement in the Proposition \ref{prop:3.2} is not true, then no matter how small $\var_{2}, \var_{3}>0$ are, there exists a subsequence (which is still denoted as $\{U_l\}$) such that
\begin{gather}\label{33}
\{U_{l}\} \subset \widetilde{V}_{l}(2, \var_{2}) \setminus  \widetilde{V}_{l}(2, \var_{2} / 2),\\\label{34}
|I_{K_{l}, \tau_l}(U_{l})-(c^{(1)}+c^{(2)})|<\var_{3},\\\label{35}
\lim _{l \to  \infty}\|I_{K_{l}, \tau_{l}}'(U_{l})\|=0.
\end{gather}
However, under the above assumptions, we can prove that there exists another subsequence, still denotes by  $\{U_{l}\}$, such that $U_{l}\in \widetilde{V}_{l}(2, \var_{2} / 2)$, which leads to a contradition. The existence of such sequence needs some lengthy indirect analysis to the  interaction of two  \emph{bubbles}. 
We break the proof of Proposition \ref{prop:3.2} into several claims.

First we write
\be\label{36}
U_{l}=\al_{1}^{l} \widetilde{\delta}( z_{1}^{l}, \lam _{1}^{l} ) +\al_{2}^{l}  \widetilde{\delta}( z_{2}^{l}, \lam _{2}^{l} )+v_{l}
\ee
after making the minimization \eqref{27}. By Proposition \ref{prop:3.1} and some standard arguments in  \cite{BC1,BC2,Li93},  if $\var_{2}>0$ small enough, we have
\begin{gather}
\label{37}(\lam _{1}^{l})^{-1},~ (\lam _{2}^{l})^{-1} = o_{\var_{2}}(1),\\\label{38}
|\al_{i}^{l}-K_{l}(z_{i}^{l})^{(2\si-n)/4\si}| = o_{\var_{2}}(1),  \quad i=1,2,\\\label{39}
\|v_{l}\| = o_{\var_{2}}(1),\\\label{40}
\operatorname{dist}(z_{1}^{l}, O_{l}^{(1)}),~ \operatorname{dist}(z_{2}^{l}, O_{l}^{(2)}) = o_{\var_{2}}(1).
\end{gather}

Next we will derive some elementary estimates of the interaction of two  \emph{bubbles} in \eqref{36}. More precisely,  another  representation of $U_l$ in \eqref{36} will be found and therefore we can  deduce its  location and concentrate rate easily.	Let us  introduce a linear isometry operator first.  	

For $z \in \Rn$,  we define  $T_{z}: D\to  D$  by
$$
T_{z} U(x,t):=U(x+z,t).
$$It is easy to see that $\|T_{z} U\|=\|U\|$. 

 Now we   give \emph{bubble's} profile of \eqref{36}.
\begin{claim}\label{claim:1}
For $\var_{2}>0$  small enough, we have	$$
\lim _{l \to  \infty} \lam _{1}^{l}=\lim _{l \to \infty} \lam _{2}^{l}=\infty.
$$
\end{claim}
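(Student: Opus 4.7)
The plan is to argue by contradiction: suppose $\lambda_1^l$ stays bounded along a subsequence (the case of $\lambda_2^l$ is symmetric). Since \eqref{37} and the definition of $\widetilde V_l(2,\var_2)$ force $\lambda_1^l\geq \var_2^{-1}$, after passing to a subsequence I can fix limits $\lambda_1^l\to \bar\lambda_1\in[\var_2^{-1},\infty)$ and $\al_1^l\to\bar\al_1$. Using the containment $O_l^{(1)}-z_l^{(1)}\subset O^{(1)}$ from \eqref{1111} together with $z_1^l\in O_l^{(1)}$, the difference $z_1^l-z_l^{(1)}$ stays in the bounded set $\overline{O^{(1)}}$, so along a further subsequence $z_1^l-z_l^{(1)}\to y_1\in\overline{O^{(1)}}$.

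I would then translate by $z_l^{(1)}$, setting $W_l=T_{-z_l^{(1)}}U_l$; since $T_z$ is an isometry on $D$ one gets the decomposition
$$
W_l=\al_1^l\widetilde{\delta}(z_1^l-z_l^{(1)},\lam_1^l)+\al_2^l\widetilde{\delta}(z_2^l-z_l^{(1)},\lam_2^l)+T_{-z_l^{(1)}}v_l.
$$
Because $|z_2^l-z_l^{(1)}|\geq R_l\to\infty$, the second bubble converges weakly to zero in $D$, while $(z_1^l-z_l^{(1)},\lam_1^l)\to(y_1,\bar\lam_1)$ with $\bar\lam_1<\infty$ gives strong convergence of the first bubble to $\widetilde{\delta}(y_1,\bar\lam_1)$. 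Combined with a weak limit $T_{-z_l^{(1)}}v_l\rightharpoonup v_\infty$ (satisfying $\|v_\infty\|=o_{\var_2}(1)$ by \eqref{39}), this yields $W_l\rightharpoonup W_\infty:=\bar\al_1\widetilde{\delta}(y_1,\bar\lam_1)+v_\infty$. Using \eqref{35} together with the isometry $T_z$ on test functions, $W_l$ is an approximate critical point of $I_{K_l(\cdot+z_l^{(1)}),\tau_l}$; passing to the limit via Rellich-type compactness on balls, the weak-$*$ convergence \eqref{22}, and $\tau_l\to 0$, one sees $W_\infty$ is a non-negative weak solution of $\pa_\nu^\si W_\infty=K_\infty^{(1)}(x)W_\infty(x,0)^{(n+2\si)/(n-2\si)}$ on $\Rn$. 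Positivity follows since $\bar\al_1$ is bounded below and $\|v_\infty\|$ is small, and the explicit bubble-plus-small-remainder shape, together with $\var_2<\delta_3$, places $W_\infty$ in $V(1,\delta_3,O^{(1)},K_\infty^{(1)})$.

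To close the contradiction I would split the energy: since the two bubbles in $W_l$ are asymptotically orthogonal in $D$ and the second carries essentially $c^{(2)}$ worth of action, one obtains $I_{K_\infty^{(1)}}(W_\infty)=I_{K_l,\tau_l}(U_l)-c^{(2)}+o(1)$, which by \eqref{34} and the choice $\var_3\leq \delta_2$ lies in $[c^{(1)},c^{(1)}+\delta_2]$; this directly contradicts the non-existence hypothesis \eqref{29}. The main obstacle I foresee is the rigorous execution of this asymptotic orthogonality and energy splitting: one must show that the cross-term $\langle\widetilde{\delta}(z_1^l-z_l^{(1)},\lam_1^l),\widetilde{\delta}(z_2^l-z_l^{(1)},\lam_2^l)\rangle$ and the mixed-bubble contribution to the subcritical nonlinear term both vanish in the limit, which uses the separation $|z_2^l-z_l^{(1)}|\to\infty$ and sharp bubble decay, and similarly that the $H^{\tau_l}(\cdot+z_l^{(1)})$ factor (which is not uniformly close to $1$ on unbounded sets) does not spoil the passage to the limit. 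A secondary technical point is extracting the convergence of $K_l(\cdot+z_l^{(1)})$ strongly enough on compact sets to justify passing to the limit in the nonlinear term, which is handled by the local $C^1$ bound \eqref{25} near the peaks together with \eqref{22}.
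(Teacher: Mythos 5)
Your proposal follows the same overall route as the paper: assume a concentration rate $\lam_1^l$ stays bounded, translate to fix the concentration, extract a weak limit, show it is a positive solution of the limiting equation, estimate its energy, and contradict the non-existence hypothesis \eqref{29}. The one genuine difference is that you translate by the reference point $z_l^{(1)}$ (so the limit problem is posed directly with $K_\infty^{(1)}$), while the paper translates by the bubble center $z_1^l$ (so the limit problem involves $T_\zeta K_\infty^{(1)}$ with $\zeta=\lim(z_1^l-z_l^{(1)})$ and needs an implicit back-translation to land in $V(1,\delta_3,O^{(1)},K_\infty^{(1)})$). Your choice is a mild simplification and perfectly valid; the rest of the structure, including the use of \eqref{1111}, \eqref{22}, \eqref{39}, the isometry of $T_z$, and the asymptotic orthogonality coming from $|z_1^l-z_2^l|\geq R_l\to\infty$, matches the paper's Step 1 and Step 3.

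There is, however, a gap in the energy estimate, which is where the contradiction with \eqref{29} must come from. You claim $I_{K_\infty^{(1)}}(W_\infty)=I_{K_l,\tau_l}(U_l)-c^{(2)}+o(1)$ and then invoke \eqref{34} and $\var_3\leq\delta_2$ to place this in $[c^{(1)},c^{(1)}+\delta_2]$. But \eqref{34} only says $|I_{K_l,\tau_l}(U_l)-(c^{(1)}+c^{(2)})|<\var_3$, so your splitting identity yields $I_{K_\infty^{(1)}}(W_\infty)\in(c^{(1)}-\var_3+o(1),\,c^{(1)}+\var_3+o(1))$. This gives the required upper bound but does \emph{not} give the lower bound $I_{K_\infty^{(1)}}(W_\infty)\geq c^{(1)}$ needed to contradict \eqref{29}; the possibility $I_{K_\infty^{(1)}}(W_\infty)<c^{(1)}$ is not excluded by the arithmetic. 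The lower bound requires a separate argument, as in the paper's proof of the first inequality of \eqref{45}: since $W_\infty$ is a positive critical point, multiplying the equation by $W_\infty$ gives
\begin{equation*}
I_{K_\infty^{(1)}}(W_\infty)=\frac{\si N_\si}{n}\int_{\Rn}K_\infty^{(1)}W_\infty(x,0)^{2^*_\si}\,\d x,
\end{equation*}
and combining the Sobolev trace inequality \eqref{trace} with the pointwise bound $K_\infty^{(1)}\leq a^{(1)}$ forces $I_{K_\infty^{(1)}}(W_\infty)\geq\frac{\si N_\si}{n}(a^{(1)})^{(2\si-n)/2\si}(S_{n,\si})^{n/\si}=c^{(1)}$. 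Without this independent lower bound your contradiction with \eqref{29} is not complete. (Separately, watch the sign convention on $T_z$: with the paper's definition $T_zU(x,t)=U(x+z,t)$, the translate you want is $T_{z_l^{(1)}}U_l$, not $T_{-z_l^{(1)}}U_l$, in order to obtain the bubble centered at $z_1^l-z_l^{(1)}$; this is cosmetic but worth fixing.)
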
	
\begin{proof}
Assume to the contrary that $	\lam _{1}^{l}=\lam _{1}+o(1)<\infty$ up to a subsequence. Here and in the following, let $o(1)$ denote any sequence tending to 0 as $l\to  \infty$. Now the proof consists of three steps.

\textbf{ Step 1} (Construct a positive solution). One observes from  \eqref{36} that$$
	T_{z_{1}^{l}} U_{l}=\al_{1}^{l} \widetilde{\delta}( 0,\lam _{1}^{l} )+\al_{2}^{l}  \widetilde{\delta}(z_{2}^{l}-z_{1}^{l}, \lam _{2}^{l})+T_{z_{1}^{l}} v_{l}.
	$$
Then by Proposition \ref{prop:3.1},	after passing to a subsequence, we have
\begin{gather}
	\lim _{l \to  \infty} \al_{1}^{l}=\al_{1} \in\Big[\frac{1}{2}(A_{2})^{(2\si-n)/4\si}-o_{\var_{2}}(1), 2(A_{2})^{(2\si-n)/4\si}+o_{\var_{2}}(1)\Big], 
\label{alpha1}	\\	\lim _{l \to  \infty} \al_{2}^{l}=\al_{2} \in\Big[\frac{1}{2}(A_{2})^{(2\si-n)/4\si}-o_{\var_{2}}(1), 2(A_{2})^{(2\si-n)/4\si}+o_{\var_{2}}(1)\Big], \label{alpha2}
\end{gather}
and $$	T_{z_{1}^{l}} v_{l} \rightharpoonup w_0 \quad \text { weakly  in }\, D$$
for some $w_0 \in D$.  From the lower semi-continuity of the norm and \eqref{39}, we infer that 
\be\label{41}
	\|w_0\| \leq \varliminf_{l \to  \infty}\|T_{z_{1}^{l}} v_{l}\|= o_{\var_{2}}(1).
\ee
Using the assumption  (ii) (stated in the beginning of Section \ref{sec:3}), we get \be\label{42}
	\lim _{l \to  \infty}|z_{1}^{l}-z_{2}^{l}| \geq \lim _{l \to  \infty} R_{l}=\infty.
\ee
		Therefore,  
\be\label{43}
	T_{z_{1}^{l}} U_{l} \rightharpoonup W:=\al_{1}  \widetilde{\delta}( 0,\lam _{1} )+w_0\quad \text { weakly  in }\, D.
\ee
Obviously, $W \neq 0$ if $\var_{2}$ is small enough.
	
	Next we prove that $W$ is a weak solution of the following equation
\be\label{44}
	\left\{\begin{aligned}
	&\operatorname{div}(t^{1-2\si}\nabla W)=0&&\text{ in }\,\Rp,\\&\pa_{\nu}^{\si}W=T_{\zeta}K_{\infty}^{(1)}|W(x,0)|^{\frac{4\si}{n-2\si}}W(x,0) && \text { on }\, \Rn,
	\end{aligned}\right.
\ee
	where $\zeta \in O^{(1)}$ with $\operatorname{dist}(\zeta , \pa  O^{(1)})>\delta_{0} / 2$. Note that we have abused notation a bit, only in this proof	we write $T_{\zeta}K_{\infty}^{(1)}=K_{\infty}^{(1)}(\cdot+\zeta)$.
	
	For any  $\phi \in C_{c}^{\infty}(\over{\R}^{n+1}_{+})$, it follows from \eqref{35} that
	$$
	I_{K_{l},\tau_{l}}'(U_{l})(T_{-z_{1}^{l}} \phi) =o(1)\|T_{-z_{1}^{l}} \phi\| =o(1)\|\phi\| =o(1).
	$$
Summing up \eqref{22}, \eqref{25}, \eqref{26}  and \eqref{43}, we find that
	\begin{align*}
	o(1)=&\big \langle U_{l},T_{-z_{1}^{l}}\phi\big\rangle-N_{\si}\int_{\Rn} K_{l}H^{\tau_l}|U_{l}|^{p_l-1}U_{l}T_{-z_{1}^{l}} \phi
	\\=&\big \langle T_{z_{1}^{l}} U_{l},\phi\big\rangle-N_{\si}\int_{\Rn} K_{l}(\cdot+z_{1}^{l})H^{\tau_l}(\cdot+z_{1}^{l})|T_{z_{1}^{l}} U_{l}|^{p_l-1}(T_{z_{1}^{l}}U_{l}) \phi
	\\=&\big\langle W ,\phi\big\rangle -N_{\si}\int_{\Rn}  T_{\zeta}K_{\infty}^{(1)}|W|^{\frac{4\si}{n-2\si}}W\phi+o(1),
	\end{align*}
	where $\zeta=\lim _{l \to  \infty}(z_{1}^{l}-z_{l}^{(1)})$  along a subsequence. This means $W$ is a weak solution of  \eqref{44}.	 
	
	The positivity of $W$ can be verified from the following argument.
	
	Let us write $W=W^{+}-W^{-}$, where $W^{+}=\max (W, 0)$, $W^{-}=\min (W, 0)$. It follows from  \eqref{41}, \eqref{43} and \eqref{trace} that
	 \be\label{claim1-1}
	\int_{\Rn}(W^{-})^{2_{\si}^{*}}\,\d x = o_{\var_{2}}(1).
\ee Multiplying \eqref{44} 
	by $W^{-}$ and integrating by parts, we have
\begin{align}
\int_{\Rp}t^{1-2\si}|\nabla W^-|^2\, \d X=&N_{\si}\int_{\Rn}T_{\zeta}K_{\infty}^{(1)}(W^{-})^{2^{*}_{\si}}\,\d x\notag\\\leq & o_{\var_{2}}(1)\Big(\int_{\Rn}(W^{-})^{2^{*}_{\si}}\,\d x\Big)^{2/2^{*}_{\si}}\notag\\\leq & o_{\var_{2}}(1)\int_{\Rp}t^{1-2\si}|\nabla W^-|^2\, \d X,
\end{align}	
where we used \eqref{claim1-1} in the first inequality and  \eqref{trace} in the second inequality.	 Hence, if $\var_{2}>0$ is small enough, we immediately obtain $W^{-} \equiv 0$, namely, $W \geq 0$. It folllows from 	\eqref{44} and the strong maximum principle in \cite{XaYa,JLX}  that $W > 0$.
	
	\textbf{ Step 2} (Energy bound estimates).  Now we begin to estimate  the value of $I_{T_{\zeta}K_{\infty}^{(1)}} (W)$ in order to obtain a  contradiction. The estimate we are going to establish is
\be\label{45}
	c^{(1)} \leq I_{T_{\zeta}K_{\infty}^{(1)}}(W) \leq c^{(1)}+o_{\var_{2}}(1),
\ee
	where $o_{\var_{2}}(1)\to  0$ as $\var_{2}\to  0$.
	
 Firstly, multiplying  \eqref{44} by $W$ and  integrating by parts, we have
$$
\int_{\Rp}t^{1-2\si}|\nabla W|^2\, \d X=N_{\si}\int_{\Rn}  T_{\zeta}K_{\infty}^{(1)}W^{2^{*}_{\si}}\,\d x.
$$
	This implies that
$$
	I_{T_{\zeta} K_{\infty}^{(1)}}(W)=\frac{1}{2} \int_{\Rp}t^{1-2\si}|\nabla W|^2\, \d X-\frac{N_{\si}}{2^{*}_{\si}}\int_{\Rn}  T_{\zeta}K_{\infty}^{(1)}W^{2^{*}_{\si}}\,\d x=\frac{\si N_{\si}}{n} \int_{\Rn}  T_{\zeta}K_{\infty}^{(1)}W^{2^{*}_{\si}}\,\d x.
$$
We thus conclude from  \eqref{trace} and the upper bound  $T_{\zeta}K_{\infty}^{(1)} \leq a^{(1)}$ that
	\begin{align*}
	\mathcal{S}_{n,\si} \leq& \frac{\big(\int_{\Rp}t^{1-2\si}|\nabla W|^{2}\,\d X\big)^{1 / 2} }{\big(\int_{\Rn}W^{2^{*}_{\si}}\,\d x\big)^{1/2^{*}_{\si}}}  \leq \frac{\big(\int_{\Rp}t^{1-2\si}|\nabla W|^{2}\,\d X\big)^{1 / 2}}{\big(\int_{\Rn}  T_{\zeta}K_{\infty}^{(1)}W^{2^{*}_{\si}}\,\d x\big)^{1/2^{*}_{\si}}}(a^{(1)})^{1/2^{*}_{\si}} \\
	=&\Big(\int_{\Rn}  T_{\zeta}K_{\infty}^{(1)}W^{2^{*}_{\si}}\,\d x\Big)^{\si/n}N_{\si}^{1/2}(a^{(1)})^{1/2^{*}_{\si}},
	\end{align*}
namely,
$$	S_{n,\si} \leq \Big(\int_{\Rn}  T_{\zeta}K_{\infty}^{(1)}W^{2^{*}_{\si}}\,\d x\Big)^{\si/n}(a^{(1)})^{1/2^{*}_{\si}}.
$$	
	Therefore, 	we complete the  proof of the first inequality in  \eqref{45}.

	On the other hand, we deduce from \eqref{18} that  $1/A_1\leq  K_{\infty}^{(1)}(x) \leq A_{1}$, $\forall\, x\in \Rn$.
Owing to  \eqref{36}-\eqref{37}, \eqref{39} and \eqref{42}, we have
	\begin{align*}
	I_{K_{l}, \tau_{l}}(U_{l})&=I_{K_{l}, \tau_{l}}(\al_{1}^{l}  \widetilde{\delta}( z_{1}^{l},\lam _{1}^{l} ))+I_{K_{l}, \tau_{l}}(\al_{2}^{l}  \widetilde{\delta}( z_{2}^{l},\lam _{2}^{l} ))+o_{\var_{2}}(1)\\&
	=I_{T_{z_{1}^l}K_{l}, \tau_{l}}(\al_{1}^{l} \widetilde{\delta}( 0,\lam _{1}^{l} ))+I_{K_{l},\tau_{l}}(\al_{2}^{l}  \widetilde{\delta}( z_{2}^{l},\lam _{2}^{l} ))+o_{\var_{2}}(1)\\&
	=I_{T_{z_{1}^l}K_{l}, \tau_{l}}(\al_{1} \widetilde{\delta}( 0,\lam _{1} ))+I_{K_{l}, \tau_{l}}(\al_{2}^{l}  \widetilde{\delta}( z_{2}^{l},\lam _{2}^{l} ))+o_{\var_{2}}(1)+o(1)\\&=I_{ T_{\zeta}K_{\infty}^{(1)}}(\al_{1}  \widetilde{\delta}( 0,\lam _{1} ))+I_{K_{l}, \tau_{l}}(\al_{2}^{l} \widetilde{\delta}( z_{2}^{l},\lam _{2}^{l} ))+o_{\var_{2}}(1)+o(1)
	\\&=I_{T_{\zeta}K_{\infty}^{(1)}}(W)+I_{K_{l}, \tau_{l}}(\al_{2}^{l}  \widetilde{\delta}( z_{2}^{l},\lam _{2}^{l} ))+o_{\var_{2}}(1)+o(1).
	\end{align*} 
Consequently,
\be\label{47}
	I_{T_{\zeta}K_{\infty}^{(1)}}(W)=I_{K_{l}, \tau_{l}}(U_{l})-I_{K_{l}, \tau_{l}}(\al_{2}^{l}  \widetilde{\delta}( z_{2}^{l},\lam _{2}^{l} ))+o_{\var_{2}}(1)+o(1).
	\ee
Combining \eqref{35} and \eqref{36}, we find$$
	o(1)=
	I_{K_{l}, \tau_{l}}'(U_{l})(\al_{2}^{l}  \widetilde{\delta}( z_{2}^{l},\lam _{2}^{l} ))=I_{K_{l},\tau_{l} }'(\al_{2}^{l}  \widetilde{\delta}( z_{2}^{l},\lam _{2}^{l} ))(\al_{2}^{l}  \widetilde{\delta}( z_{2}^{l},\lam _{2}^{l} ))+o_{\var_{2}}(1)+o(1),
$$
	namely,\begin{gather}
		\label{48}
	\|\al_{2}^{l}  \widetilde{\delta}( z_{2}^{l},\lam _{2}^{l} ) \|^{2}=N_{\si}\int_{\Rn} K_{l}H^{\tau_l}(\al_{2}^{l} \delta( z_{2}^{l},\lam _{2}^{l} ))^{2_{\si}^{*}-\tau_{l}}+o_{\var_{2}}(1)+o(1),\\	\label{49}I_{K_{l}, \tau_{l}}(\al_{2}^{l}   \widetilde{\delta}( z_{2}^{l},\lam _{2}^{l} ))=\frac{\si}{n}  \|\al_{2}^{l}  \widetilde{\delta}( z_{2}^{l},\lam _{2}^{l} )\|^{2}+o_{\var_{2}}(1)+o(1).
\end{gather}
	From \eqref{alpha2}, we obtain$$
\|\al_{2}^{l}  \widetilde{\delta}(z_{2}^{l},\lam _{2}^{l} )\|^{2} \geq\Big(\frac{1}{2}(A_{2})^{(2\si-n)/4\si}-o_{\var_{2}}(1)\Big)N_{\si}(S_{n,\si})^{n/\si} >\frac{1}{4}(A_{2})^{(2\si-n)/4\si}N_{\si}(S_{n,\si})^{n/\si}>0.
$$
Then,		by  \eqref{trace},  \eqref{19}-\eqref{21},  \eqref{37}, \eqref{42}, and  H\"{o}lder inequality,	we have
	\begin{align*}
	\mathcal{S}_{n,\si}&\leq \frac{\big(\int_{\Rp}t^{1-2\si}|\nabla(\al_{2}^{l}  \widetilde{\delta}( z_{2}^{l},\lam _{2}^{l} ))|^{2}\,\d X\big)^{1/2}}{\big(\int_{\Rn}(\al_{2}^{l} \delta( z_{2}^{l},\lam _{2}^{l} ))^{2_{\si}^{*}}\big)^{1/2_{\si}^{*}}}
\\&	\leq \frac{\big(\int_{\Rp}t^{1-2\si}|\nabla(\al_{2}^{l}  \widetilde{\delta}( z_{2}^{l},\lam _{2}^{l} ))|^{2}\,\d X\big)^{1/2}}{\big(\int_{B_{R_{l}}(z_{l}^{(2)})}(\al_{2}^{l} \delta( z_{2}^{l},\lam _{2}^{l} ))^{2_{\si}^{*}}\big)^{1/2^{*}_{\si}}+o(1)}\\	&\leq \frac{\big(\int_{\Rp}t^{1-2\si}|\nabla(\al_{2}^{l}  \widetilde{\delta}( z_{2}^{l},\lam _{2}^{l} ))|^{2}\,\d X\big)^{1/2}\cdot K_{l}(z_{l}^{(2)})^{1/2^{*}_{\si}}}{\big(\int_{B_{R_{l}}(z_{l}^{(2)})} K_{l}H^{\tau_l}(\al_{2}^{l}  \delta( z_{2}^{l}, \lam _{2}^{l} ))^{2_{\si}^{*}-\tau_{l}}\big)^{1/2^{*}_{\si}}+o(1)}
\\&=\frac{\big(\int_{\Rp}t^{1-2\si}|\nabla (\al_{2}^{l}  \widetilde{\delta}( z_{2}^{l},\lam _{2}^{l} ))|^{2}\,\d X\big)^{1/2} \cdot(a^{(2)})^{1/2^{*}_{\si}}+o(1)}{\big(\int_{\Rn} K_{l}H^{\tau_l}(\al_{2}^{l}  \delta( z_{2}^{l},\lam _{2}^{l} ))^{2_{\si}^{*}-\tau_{l}}\big)^{1/2^{*}_{\si}}+o(1)}.
	\end{align*}
Thus,	using  \eqref{48}, we establish that
$$
	\mathcal{S}_{n,\si} \leq\|\al_{2}^{l}  \widetilde{\delta}( z_{2}^{l},\lam _{2}^{l} )\|^{2\si / n}(a^{(2)})^{1/2^{*}_{\si}} N_{\si}^{1/2^*_{\si}} +o(1),
$$namely,
\be\label{claimestimate}
S_{n,\si} \leq\|\al_{2}^{l}  \widetilde{\delta}( z_{2}^{l},\lam _{2}^{l} )\|^{2\si / n}(a^{(2)})^{1/2^{*}_{\si}} N_{\si}^{-\si/n} +o(1).
\ee
	This 	together with  \eqref{49} gives
		\begin{align}
		I_{K_{l}, \tau_{l}}(\al_{2}^{l} \widetilde{\delta}( z_{2}^{l},\lam _{2}^{l} )) &\geq \frac{\si N_{\si}}{n}(a^{(2)})^{(2\si-n)/2\si}(S_{n,\si})^{n/\si}+o_{\var_{2}}(1)+o(1)\notag\\&=c^{(2)}+o_{\var_{2}}(1)+o(1).\label{a}
		\end{align}
		Putting  \eqref{34}, \eqref{47} and \eqref{a} together, we obtain the
 right hand side of \eqref{45}.
	
\textbf{Step 3} (Completion of the proof). Finally,  a contradiction arises from \eqref{29}, \eqref{43}-\eqref{45}, and the positivity of $W$ for $\var_{2}>0$ small enough. This proves that $\lim_{l\to  \infty } \lam _{1}^{l}=\infty$. Similarly we  have  $\lim_{l\to  \infty } \lam _{2}^{l}=\infty$. Claim \ref{claim:1} has been established.
\end{proof}

For any $\lam >0$ and $z \in \Rn$, we define $\mathscr{T}_{l,\lam , z}: D\to  D$ by $$\mathscr{T}_{l, \lam , z} U(x,t):=\lam ^{2\si / (1-p_{l})} U(\lam ^{-1}x+z,\lam ^{-1}t).$$It is clear that  $$\mathscr{T}_{l, \lam , z}^{-1} U(x,t)=\lam ^{2 \si/ (p_l-1)}  U(\lam (x-z),\lam  t).$$
\begin{lem}\label{lem:3.3}
	There exists some constant $C=C(n,\si,A_{2})>0$ such that for
	small $\var_{2}$ and large $l$, we have 
$$
	(\lam _{1}^{l})^{\tau_{l}} ,~ (\lam _{2}^{l})^{\tau_{l}} \leq C.
	$$
\end{lem}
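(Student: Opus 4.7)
The plan is to argue by contradiction: suppose that, along a subsequence, $(\lam_2^l)^{\tau_l}\to\infty$ (the case $i=1$ is entirely symmetric).  The main tool will be a near-critical identity obtained by testing $I_{K_l,\tau_l}'(U_l)$ (which by \eqref{35} tends to zero) against the \emph{bubble} $\al_2^l\widetilde{\delta}(z_2^l,\lam_2^l)$.  Using the orthogonality relations in Proposition \ref{prop:3.1}, the near-orthogonality of $\widetilde{\delta}(z_1^l,\lam_1^l)$ and $\widetilde{\delta}(z_2^l,\lam_2^l)$ (a consequence of $|z_1^l-z_2^l|\to\infty$ as in \eqref{42}) and the normalization $\|\widetilde{\delta}(z,\lam)\|^2=N_\si S_{n,\si}^{n/\si}$, I would obtain
\[
(\al_2^l)^{2-2_\si^*+\tau_l}\,S_{n,\si}^{n/\si}\;=\;\int_{\Rn} K_l(x)\,H(x)^{\tau_l}\,\delta(z_2^l,\lam_2^l)(x)^{2_\si^*-\tau_l}\,\d x + o(1).
\]
Since $\al_2^l$ is bounded above and away from zero by \eqref{38} and $\tau_l\to 0$, the left-hand side is trapped between two positive constants.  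Writing $M_l$ for the integral on the right, one therefore has $0<c_0\le M_l\le C_0$ uniformly in $l$.

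The next step is to evaluate $M_l$ explicitly.  Performing the change of variables $x=z_2^l+\lam_2^l y$ and using $(n-2\si)\cdot 2_\si^*/2=n$, one obtains
\[
M_l \;=\; c_{n,\si,\tau_l}\,(\lam_2^l)^{(n-2\si)\tau_l/2}\!\int_{\Rn}\!K_l(z_2^l+\lam_2^l y)\,H^{\tau_l}(z_2^l+\lam_2^l y)(1+|y|^2)^{-n+(n-2\si)\tau_l/2}\,\d y,
\]
with $c_{n,\si,\tau_l}$ a positive constant with finite nonzero limit.  Using \eqref{1111} together with \eqref{26} one first checks that $(|z_2^l|)^{\tau_l}$ is uniformly bounded, so that if $\lam_2^l\le C(1+|z_2^l|)$ we already have $(\lam_2^l)^{\tau_l}=O(1)$, contradicting the standing assumption.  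The remaining case is $\lam_2^l/(1+|z_2^l|)\to\infty$.  Here I would perform a further change of variables $x=\lam_2^l w$; on compact subsets of $\{w\ne 0\}$ the bubble becomes $\delta(z_2^l,\lam_2^l)(\lam_2^l w)=c(\lam_2^l)^{-(n-2\si)/2}(1+|w|^2)^{-(n-2\si)/2}(1+o(1))$, while
\[
H^{\tau_l}(\lam_2^l w)\;=\;2^{(n-2\si)\tau_l/2}\bigl(1+(\lam_2^l)^2|w|^2\bigr)^{-(n-2\si)\tau_l/2}.
\]
Splitting the resulting integral into an inner region $|w|\le(\lam_2^l)^{-1}$ (where $H^{\tau_l}\asymp 1$ but the Lebesgue volume is $O((\lam_2^l)^{-n})$) and an outer region $|w|\ge(\lam_2^l)^{-1}$ (where $H^{\tau_l}$ produces the extra decay $(\lam_2^l|w|)^{-(n-2\si)\tau_l}$), careful bookkeeping of the exponents would give
\[
M_l \;\le\; C\,(\lam_2^l)^{-(n-2\si)\tau_l/2}.
\]
Together with the lower bound $M_l\ge c_0>0$ this forces $(\lam_2^l)^{\tau_l}\le C'$, contradicting the standing assumption.

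I expect the main obstacle to lie in the asymptotic analysis of the wide-bubble regime $\lam_2^l\gg 1+|z_2^l|$.  The weight $H^{\tau_l}$, inherited from the conformal equivalence between $\Rn$ and $\Sn$ via stereographic projection, decays at precisely the right rate to dominate the spreading of the bubble, but one must verify that the inner-region contribution scales as $(\lam_2^l)^{(n-2\si)\tau_l/2-n}$ (hence is negligible for small $\tau_l$) and that the outer region contributes exactly $(\lam_2^l)^{-(n-2\si)\tau_l/2}$, rather than some slower decay that would be compatible with $(\lam_2^l)^{\tau_l}\to\infty$.  The two cancelling powers of $\lam_2^l$ coming from the volume element and from $H^{\tau_l}$ are the crux of the estimate.
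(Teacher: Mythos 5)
Your proposal is correct and follows essentially the same route as the paper. The paper also proceeds by testing $I_{K_l,\tau_l}'(U_l)(\widetilde{\delta}(z_1^l,\lam_1^l))=o(1)$, extracting exactly the identity \eqref{52} (your displayed relation, up to a harmless $N_\si$ normalization and with $o_{\var_2}(1)+o(1)$ error), observing that $\al_i^l$ and $\|\widetilde\delta\|$ are controlled by \eqref{38} and \eqref{23}, and then evaluating the resulting integral $\int K_l H^{\tau_l}\delta(z_i^l,\lam_i^l)^{p_l+1}$ using the explicit bubble profile together with \eqref{26}, \eqref{40} and Claim~\ref{claim:1}; the case distinction you make between $\lam_i^l\lesssim 1+|z_i^l|$ (handled by $(|z_i^l|)^{\tau_l}=O(1)$, which requires \eqref{1111} so that $O_l^{(i)}$ has bounded diameter) and $\lam_i^l\gg 1+|z_i^l|$ is implicit in the paper's terse citation of \eqref{26}, \eqref{40}, and the flagged bookkeeping in your second regime does close: the inner region contributes $O(\lam^{(n-2\si)\tau_l/2-n})$ and the outer region $O(\lam^{-(n-2\si)\tau_l/2})$, so $M_l\lesssim\lam^{-(n-2\si)\tau_l/2}$, contradicting the uniform lower bound on $M_l$.
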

\begin{proof}
Applying \eqref{35}, we deduce that
\be\label{51}
	I_{K_{l}, \tau_{l}}'(U_{l})( \widetilde{\delta}( z_{1}^{l},\lam _{1}^{l} ))=o(1).
\ee
Now an explicit calculation from  \eqref{39},  \eqref{42},  Claim \ref{claim:1}, and Proposition \ref{prop:3.1} yields that
\begin{gather*}\big\langle  \widetilde{\delta}( z_{1}^{l},\lam _{1}^{l} ) , v_{l}\big\rangle=0,\\
\big\langle   \widetilde{\delta}( z_{1}^{l},\lam _{1}^{l} ),  \widetilde{\delta}( z_{2}^{l},\lam _{2}^{l} )\big\rangle =o(1), \\\int_{\Rn}  K_{l}H^{\tau_l} \delta( z_{2}^{l},\lam _{2}^{l} )^{p_{l}}  \delta( z_{1}^{l},\lam _{1}^{l} )=o(1),\\\int_{\Rn}  K_{l}H^{\tau_l} v_{l}^{p_{l}}  \delta( z_{1}^{l},\lam _{1}^{l} )=o_{\var_{2}}(1).
\end{gather*}
	Putting together the above estimates, we have
\be\label{52}
	(\al_{1}^{l})^{p_{l}}  \int_{\Rn} K_{l}H^{\tau_l} \delta( z_{1}^{l},\lam _{1}^{l} )^{p_{l}+1} =\al_{1}^{l} \| \widetilde{\delta}( z_{1}^{l},\lam _{1}^{l} )\|^2
	+o_{\var_{2}}(1)+o(1).
\ee
Then the proof of the first term completed from \eqref{23}, \eqref{26},  \eqref{38}, \eqref{40}, \eqref{52},    and Claim \ref{claim:1}. Similarly we have $(\lam _{2}^{l})^{\tau_{l}} \leq C$.	
\end{proof}
Without loss of generality, we can always assume that
\be\label{53}
\lam _{1}^{l} \leq \lam _{2}^{l}.
\ee

A direct computation using \eqref{36} shows that 
\be\label{54}
\mathscr{T}_{l, \lam _{1}^{l}, z_{1}^{l}} U_{l}=\widetilde{\al}_{1}^{l}  \widetilde{\delta}(0,1)+\widetilde{\al}_{2}^{l}\widetilde{\delta}(\lam _{1}^{l}(z_{2}^{l}-z_{1}^{l}),\lam _{2}^{l} / \lam _{1}^{l}) +\mathscr{T}_{l, \lam _{1}^{l}, z_{1}^{l}} v_{l},
\ee where
$$
\widetilde{\al}_{1}^{l}=\al_{1}^{l}(\lam _{1}^{l})^{(n-2\si) / 2-2\si /(p_{l}-1)},\quad \widetilde{\al}_{2}^{l}=\al_{2}^{l}(\lam _{1}^{l})^{(n-2\si) / 2-2\si /(p_{l}-1)}.
$$
Then we can verify the existence  of $U_{1} \in D$ and $\xi_{1} \in O^{(1)}$ such that
\begin{gather}
\label{55}
\mathscr{T}_{l, \lam _{1}^{l}, z_{1}^{l}} U_{l} \rightharpoonup U_{1} \quad \text { weakly in }\, D,\\\label{56}\lim _{l \to  \infty}(z_{1}^{l}-z_{l}^{(1)})=\xi_{1},
\end{gather}up to a subsequence.

Accordingly, by making use of \eqref{22}, \eqref{25}, \eqref{56} and \eqref{40}, we have \be\label{57}
\lim _{l \to  \infty} K_{l}(z_{1}^{l})^{(2\si-n)/4\si}=K_{\infty}^{(1)}(\xi_{1})^{(2\si-n)/4\si}.
\ee

Now, one observes from \eqref{35} that for any $\phi \in C_{c}^{\infty}(\over{\R}^{n+1}_+)$, we get
\begin{align*}
o(1)=&I_{K_{l}, \tau_{l}}'(U_{l})(\mathscr{T}^{-1}_{l, \lam _{1}^{l}, z_{1}^{l}} \phi)\\=&
\big\langle U_l,\mathscr{T}_{l, \lam _{1}^{l}, z_{1}^{l}}^{-1} \phi\big\rangle-N_{\si}\int_{\Rn} K_{l}H^{\tau_l}|U_{l}|^{p_{l}-1}U_{l} \mathscr{T}_{l, \lam _{1}^{l}, z_{1}^{l}}^{-1} \phi\\=&(\lam _{1}^{l})^{4\si /(p_{l}-1)+2\si-n}\Big\{\big\langle \mathscr{T}_{l, \lam _{1}^{l}, z_{1}^{l}} U_{l}, \phi\big\rangle-N_{\si}\int_{\Rn}  K_{l}(\cdot/\lam _{1}^{l}+z_{1}^{l})\\ &\times  H^{\tau_l}(\cdot/ \lam _{1}^{l}+z_{1}^{l}) |\mathscr{T}_{l, \lam _{1}^{l}, z_{1}^{l}} U_{l}|^{p_{l}-1}(\mathscr{T}_{l, \lam _{1}^{l}, z_{1}^{l}} U_{l}) \phi\Big\}.
\end{align*}Taking the limit $l \to  \infty$, and then using  \eqref{22}, \eqref{26},  \eqref{40},  \eqref{55}-\eqref{56},  and  Lemma \ref{lem:3.3}, we obtain
$$
\big\langle U_{1} ,\phi\big\rangle-N_{\si}\int_{\Rn} K_{\infty}^{(1)}(\xi_{1})|U_{1}(x,0)|^{\frac{4\si}{n-2\si}} U_{1}(x,0)\,\d x=0.
$$
Namely, $U_1$ satisfies
\be\label{58}
\left\{\begin{aligned}
&\operatorname{div}(t^{1-2\si}\nabla U_1)=0&&\text{ in }\,\Rp,\\&\pa_{\nu}^{\si}U_1=K_{\infty}^{(1)}(\xi_{1})|U_{1}(x,0)|^{\frac{4\si}{n-2\si}} U_{1}(x,0) && \text { on }\, \Rn.
\end{aligned}\right.
\ee
Moreover, we see from \eqref{54} that $U_{1}$  is not identically zero if $\var_{2}$ is small enough.  Then we can argue as before to obtain $U_1>0$.
By the classification theorem in \cite[Proposition 1.3]{MQ} or \cite[Theorem 1.8]{JLX}, there exists $\lam ^{*}>0$ and  $z^{*} \in \Rn$ such that
\be\label{59}
U_{1}=K_{\infty}^{(1)}(\xi_{1})^{(2\si-n)/4\si} \widetilde{\delta}(z^{*},\lam ^{*}).
\ee

\begin{claim}\label{claim:2}
	For $l$ large enough, we have 
$$
	|z^{*}| = o_{\var_{2}}(1), \quad |\lam ^{*}-1| = o_{\var_{2}}(1),\quad (\lam _{1}^{l})^{\tau_{l}}=1+o_{\var_{2}}(1).
$$
\end{claim}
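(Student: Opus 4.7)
The plan is to push the three orthogonality conditions from Proposition~\ref{prop:3.1} through the rescaling $\mathscr{T}_{l,\lam_1^l,z_1^l}$, pass to the weak limit, match with the bubble identification \eqref{59}, and then read off $(\lam_1^l)^{\tau_l}\to 1$ from the relation between $\al_1^l$ and $\widetilde{\al}_1^l$.

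First I would note that $\mathscr{T}_{l,\lam_1^l,z_1^l}$ sends $\widetilde{\delta}(z_1^l,\lam_1^l)$ and each of its parameter-derivatives to a positive scalar multiple of $\widetilde{\delta}(0,1)$ and of the corresponding derivative at $(0,1)$, while scaling the inner product on $D$ by a strictly positive constant. Consequently the orthogonalities
\[
\langle\widetilde{\delta}(z_1^l,\lam_1^l),v_l\rangle=\langle\pa_\lam\widetilde{\delta}(z_1^l,\lam_1^l),v_l\rangle=\langle\pa_{z_i}\widetilde{\delta}(z_1^l,\lam_1^l),v_l\rangle=0
\]
from Proposition~\ref{prop:3.1} translate, exactly, into the analogous conditions with $\mathscr{T}_{l,\lam_1^l,z_1^l}v_l$ replacing $v_l$ and $(0,1)$ replacing $(z_1^l,\lam_1^l)$. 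Writing $w$ for the weak $D$-limit of $\mathscr{T}_{l,\lam_1^l,z_1^l}v_l$ along a subsequence (which exists because $\|\mathscr{T}_{l,\lam_1^l,z_1^l}v_l\|$ stays bounded by Lemma~\ref{lem:3.3}), I obtain
\[
\langle\widetilde{\delta}(0,1),w\rangle=\langle\pa_\lam\widetilde{\delta}(0,1),w\rangle=\langle\pa_{z_i}\widetilde{\delta}(0,1),w\rangle=0,
\]
while lower semicontinuity and \eqref{39} give $\|w\|=o_{\var_2}(1)$.

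Second, the middle bubble $\widetilde{\al}_2^l\widetilde{\delta}(\lam_1^l(z_2^l-z_1^l),\lam_2^l/\lam_1^l)$ in \eqref{54} has its center escaping to infinity by \eqref{42} and Claim~\ref{claim:1}, so it converges weakly to $0$ in $D$. Combined with \eqref{55} this yields
\[
U_1=\widetilde{\al}_1\widetilde{\delta}(0,1)+w,\qquad\widetilde{\al}_1=\lim_{l\to\infty}\widetilde{\al}_1^l.
\]
Matching with $U_1=K_{\infty}^{(1)}(\xi_1)^{(2\si-n)/4\si}\widetilde{\delta}(z^*,\lam^*)$ from \eqref{59} and testing against $\pa_\lam\widetilde{\delta}(0,1)$ and $\pa_{z_i}\widetilde{\delta}(0,1)$ (each of which is orthogonal to $\widetilde{\delta}(0,1)$ itself, by scale- and translation-invariance of the bubble norm and by the radial symmetry of $\widetilde{\delta}(0,1)$), I get
\[
\langle\pa_\lam\widetilde{\delta}(0,1),\widetilde{\delta}(z^*,\lam^*)\rangle=\langle\pa_{z_i}\widetilde{\delta}(0,1),\widetilde{\delta}(z^*,\lam^*)\rangle=0.
\]
Via the Neumann identity $\langle U,\widetilde{\delta}(z,\lam)\rangle=N_\si\int U(\cdot,0)\,\delta(z,\lam)^{(n+2\si)/(n-2\si)}$, these are the critical-point equations for the bubble--bubble interaction $J(z,\lam):=\langle\widetilde{\delta}(0,1),\widetilde{\delta}(z,\lam)\rangle$ at $(z^*,\lam^*)$: radial rearrangement in the $z$-direction forces $z^*=0$, and the remaining $\lam$-equation then becomes $g'(1/\lam^*)=0$ for the strictly $|\log t|$-monotone profile $g(t):=\langle\widetilde{\delta}(0,t),\widetilde{\delta}(0,1)\rangle$, forcing $\lam^*=1$. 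Substituting back into $\langle\widetilde{\delta}(0,1),w\rangle=0$ gives $\widetilde{\al}_1=K_{\infty}^{(1)}(\xi_1)^{(2\si-n)/4\si}$.

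Finally, \eqref{38} together with \eqref{22} and \eqref{57} yields $\al_1^l\to K_{\infty}^{(1)}(\xi_1)^{(2\si-n)/4\si}+o_{\var_2}(1)$; combining this with $\widetilde{\al}_1^l=\al_1^l(\lam_1^l)^{(n-2\si)/2-2\si/(p_l-1)}$ and the Taylor expansion $(n-2\si)/2-2\si/(p_l-1)=-\tau_l(n-2\si)^2/(8\si)+O(\tau_l^2)$, the identity $\widetilde{\al}_1=K_{\infty}^{(1)}(\xi_1)^{(2\si-n)/4\si}$ forces $\tau_l\ln\lam_1^l=o_{\var_2}(1)$, i.e.\ $(\lam_1^l)^{\tau_l}=1+o_{\var_2}(1)$. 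The main obstacle is the rigidity step: verifying that the two critical-point equations for $J$ have $(0,1)$ as their unique positive solution, robustly against the $O(\tau_l)$ distortions that the subcritical exponent introduces. I expect this to reduce, via integration by parts on the Caffarelli--Silvestre extension and the explicit strict monotonicity of $g$, to the standard non-degeneracy of the bubble manifold.
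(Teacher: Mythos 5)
Your proposal is correct and takes a genuinely different route from the paper's. The paper's proof never invokes the orthogonality conditions from Proposition~\ref{prop:3.1}: it uses only the norm bound $\|v_l\|=o_{\var_2}(1)$ (together with Lemma~\ref{lem:3.3} to control $\|\mathscr{T}_{l,\lam_1^l,z_1^l}v_l\|$), passes to the weak limit in \eqref{62}, and concludes
$$\big\|K_\infty^{(1)}(\xi_1)^{(2\si-n)/4\si}(A_{\var_2,\var_3})^{-(n-2\si)^2/8\si}\widetilde{\delta}(0,1)-K_\infty^{(1)}(\xi_1)^{(2\si-n)/4\si}\widetilde{\delta}(z^*,\lam^*)\big\|=o_{\var_2}(1)+o(1),$$
then reads off parameter closeness from the (implicit) nondegeneracy of the bubble parametrization. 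You instead push the exact orthogonalities $\langle v_l,\widetilde{\delta}(z_1^l,\lam_1^l)\rangle=\langle v_l,\pa_\lam\widetilde{\delta}(z_1^l,\lam_1^l)\rangle=\langle v_l,\pa_{z_i}\widetilde{\delta}(z_1^l,\lam_1^l)\rangle=0$ through $\mathscr{T}_{l,\lam_1^l,z_1^l}$, which is legitimate because the rescaling multiplies the inner product by the positive scalar $(\lam_1^l)^{4\si/(1-p_l)+n-2\si}=(\lam_1^l)^{O(\tau_l)}$ and sends $\pa_\lam\widetilde{\delta}(z_1^l,\lam_1^l)$, $\pa_{z_i}\widetilde{\delta}(z_1^l,\lam_1^l)$ to positive multiples of $\pa_\lam\widetilde{\delta}(0,1)$, $\pa_{z_i}\widetilde{\delta}(0,1)$. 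Matching $U_1=\widetilde{\al}_1\widetilde{\delta}(0,1)+w=K_\infty^{(1)}(\xi_1)^{(2\si-n)/4\si}\widetilde{\delta}(z^*,\lam^*)$ against these $n+2$ directions produces the gradient equations of $(z,\lam)\mapsto\langle\widetilde{\delta}(z,\lam),\widetilde{\delta}(z^*,\lam^*)\rangle$ at $(0,1)$, and the rigidity (interaction is a strictly decreasing radial function of conformal distance, hence has $(z^*,\lam^*)$ as its only critical point) gives $z^*=0$, $\lam^*=1$, $w=0$ exactly, which is actually stronger than the $o_{\var_2}(1)$ statement in the claim; your final bookkeeping for $(\lam_1^l)^{\tau_l}$ via \eqref{60}--\eqref{61} then agrees with the paper. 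What each approach buys: the paper's argument is more elementary and needs less to be checked about the manifold of bubbles (only that nearby bubbles have nearby parameters), while yours is structurally cleaner, exploits the full strength of the Bahri--Coron representation, and incidentally shows the $o_{\var_2}(1)$ errors in the first two assertions can be sharpened to zero. Your closing worry about $O(\tau_l)$ distortions in the rigidity step is unnecessary: the orthogonalities pass through $\mathscr{T}_{l,\lam_1^l,z_1^l}$ exactly (the $O(\tau_l)$ exponents affect only the multiplicative constants, not vanishing of inner products), and the classification \eqref{59} is exact; the subcritical exponent enters only through the coefficient relation \eqref{60}, which is precisely where $(\lam_1^l)^{\tau_l}$ appears.
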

\begin{proof} 
First of all,  using Lemma \ref{lem:3.3},  we find  $$\lim_{l \to  \infty}(\lam _{1}^{l})^{\tau_{l}}=A_{\var_{2}, \var_{3}}$$
along a subsequence,   where $A_{\var_{2}, \var_{3}}$ is a positive constant  independent of $l$ for fixed $\var_{2}$ and $\var_{3}$.  Thanks to	 \eqref{38} and \eqref{57}, we obtain
$$
\al_{1}^{l}=K_{\infty}^{(1)}(\xi_{1})^{(2\si-n)/4\si}+o_{\var_{2}}(1)+o(1).
$$
Note  that	\be\label{60}
	\widetilde{\al}_{1}^{l}=\al_{1}^{l}(\lam _{1}^{l})^{(n-2\si) / 2-2\si /(p_{l}-1)}=\al_{1}^{l}(\lam _{1}^{l})^{-(n-2\si)^{2}\tau_{l} / 8\si +O(\tau_{l}^{2})}.
\ee
Then it can be computed that
\be\label{61}
	\widetilde{\al}_{1}^{l}=K_{\infty}^{(1)}(\xi_{1})^{(2\si-n)/4\si}(A_{\var_{2}, \var_{3}})^{-(n-2\si)^{2} / 8\si}+o_{\var_{2}}(1)+o(1).
\ee
Moreover,	by \eqref{37},  \eqref{42} and  \eqref{53}-\eqref{55}, we have
\be\label{62}
	\widetilde{\al}_{1}^{l}  \widetilde{\delta}(0,1)+\mathscr{T}_{l, \lam _{1}^{l}, z_{1}^{l}} v_{l} \rightharpoonup U_{1} \quad \text { weakly in } \, D.
\ee
	Consequently,	it follows from \eqref{39}, \eqref{61}-\eqref{62},  and Lemma \ref{lem:3.3} that$$
	\big\| K_{\infty}^{(1)}(\xi_{1})^{(2\si-n)/4\si} (A_{\var_{2}, \var_{3}})^{-(n-2\si)^{2} / 8\si} \widetilde{\delta}(0,1)-K_{\infty}^{(1)}(\xi_{1})^{(2\si-n)/4\si}  \widetilde{\delta}( z^{*}, \lam ^{*} ) \big\|=o_{\var_{2}}(1)+o(1).
	$$
Finally, taking the limit $l\to  \infty$, we get
$$|z^{*}|=o_{\var_{2}}(1), \quad  \lam ^{*}=1+o_{\var_{2}}(1), \quad  A_{{\var_{2}, \var_{3}}}=1+o_{\var_{2}}(1).	$$
	\end{proof}	
We define $\xi_{l} \in D$ by
\be\label{63}
\mathscr{T}_{l, \lam _{1}^{l}, z_{1}^{l}} U_{l}=U_{1}+\mathscr{T}_{l, \lam _{1}^{l}, z_{1}^{l}} \xi_{l}.
\ee
It follows from \eqref{55} that
\be\label{63'}
\mathscr{T}_{l, \lam _{1}^{l}, z_{1}^{l}} \xi_{l} \rightharpoonup 0  \quad  \text { weakly in }\, D.
\ee
\begin{claim}\label{claim:3}
For $\var_{2}$  small enough, we have	$\|I_{K_{l}, \tau_{l}}'(\xi_{l})\|=o(1)$.
\end{claim}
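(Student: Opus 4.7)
The plan is to relate $I_{K_l,\tau_l}'(\xi_l)$ to $I_{K_l,\tau_l}'(U_l)$ (which is $o(1)$ by \eqref{35}) via the splitting $U_l = W_l + \xi_l$ from \eqref{63}, where $W_l := \mathscr{T}_{l,\lam_1^l,z_1^l}^{-1} U_1$. Since the quadratic part of $I_{K_l,\tau_l}$ is additive, for any $\phi \in D$,
\begin{equation*}
I_{K_l,\tau_l}'(\xi_l)(\phi) = I_{K_l,\tau_l}'(U_l)(\phi) - I_{K_l,\tau_l}'(W_l)(\phi) + N_\si\!\int_{\Rn}\!K_l H^{\tau_l}\bigl(|U_l|^{p_l-1}U_l - |W_l|^{p_l-1}W_l - |\xi_l|^{p_l-1}\xi_l\bigr)\phi\, \d x.
\end{equation*}
Thus the claim reduces to establishing (A) $\|I_{K_l,\tau_l}'(W_l)\| = o(1)$, and (B) that the nonlinear cross integral is $o(\|\phi\|)$ uniformly in $\phi$.

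For (A), I would exploit that $U_1$ solves \eqref{58} exactly, with constant coefficient $K_{\infty}^{(1)}(\xi_1)$ and the unperturbed critical exponent. Writing $\phi = \mathscr{T}_{l,\lam_1^l,z_1^l}^{-1}\psi$ and changing variables reduces $I_{K_l,\tau_l}'(W_l)(\phi)$ to integrals involving the fixed profile $U_1$ with coefficients $K_l(\cdot/\lam_1^l + z_1^l) \to K_{\infty}^{(1)}(\xi_1)$ on compact sets (by \eqref{22}, the $C^1$-regularity of $K_l$ on $\widetilde{O}_l$, and $\xi_1 = \lim_{l\to\infty}(z_1^l - z_l^{(1)})$), $H^{\tau_l}(\cdot/\lam_1^l + z_1^l) \to 1$, and exponent $p_l \to 2_\si^* - 1$. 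Dominated convergence, together with the bound on $(\lam_1^l)^{\tau_l}$ from Lemma \ref{lem:3.3} and Claim \ref{claim:2}, then yields $\|I_{K_l,\tau_l}'(W_l)\| = o(1)$.

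For (B), I would invoke the elementary pointwise inequality
\begin{equation*}
\bigl||a+b|^{p-1}(a+b) - |a|^{p-1}a - |b|^{p-1}b\bigr| \leq C\bigl(|a|^{p-1}|b| + |a||b|^{p-1}\bigr),\quad p>1,
\end{equation*}
applied to $a = W_l$, $b = \xi_l$, together with H\"older's inequality to reduce the cross integral to products of $L^{2_\si^*}$ norms of bubble-type profiles. Decomposing $\xi_l = (\al_1^l\widetilde{\delta}(z_1^l,\lam_1^l) - W_l) + \al_2^l\widetilde{\delta}(z_2^l,\lam_2^l) + v_l$, the resulting contributions split into: (i) interactions between the bubbles concentrated near the widely separated points $z_1^l$ and $z_2^l$, which decay because $|z_1^l - z_2^l| \geq R_l \to \infty$ by \eqref{42}; (ii) the self-discrepancy $W_l - \al_1^l\widetilde{\delta}(z_1^l,\lam_1^l)$, whose $D$-norm is $o_{\var_2}(1)$ by Claim \ref{claim:2}; and (iii) terms involving $v_l$, controlled by $\|v_l\| = o_{\var_2}(1)$ from \eqref{39} via the trace inequality \eqref{trace}.

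The main obstacle is the bubble-bubble interaction estimate in (i), which has to be executed in the spirit of the classical Bahri--Coron analysis \cite{BC1,BC2} and kept uniform in $l$ despite the subcritical perturbation $\tau_l$; the rescaling to a fixed profile developed in \eqref{54}--\eqref{58} (which sends the first bubble to a compact reference) is precisely what allows such uniformity.
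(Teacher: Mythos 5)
Your overall architecture closely mirrors the paper's: the decomposition $U_l = W_l + \xi_l$ with $W_l = \mathscr{T}_{l,\lam_1^l,z_1^l}^{-1}U_1$, the change of variables to the rescaled frame where $U_1$ is a fixed profile, the use of the equation \eqref{58} for $U_1$, and the elementary nonlinear inequality are all exactly the ingredients in the paper's proof (there, the rescaling appears by testing against $\mathscr{T}^{-1}_{l,\lam_1^l,z_1^l}\phi$ rather than rescaling $U_1$, but this is the same move). Your part (A) is precisely the paper's estimate \eqref{65}.

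However, your treatment of the cross term (B) has a genuine gap. You decompose $\xi_l = (\al_1^l\widetilde\delta(z_1^l,\lam_1^l) - W_l) + \al_2^l\widetilde\delta(z_2^l,\lam_2^l) + v_l$ and bound the contributions of the first and third pieces by their $D$-norms $\|\al_1^l\widetilde\delta - W_l\| = o_{\var_2}(1)$ and $\|v_l\| = o_{\var_2}(1)$. This produces only an $o_{\var_2}(1)$ bound on the cross integral, not the required $o(1)$ as $l\to\infty$: for a fixed (small) $\var_2$, the pieces $\al_1^l\widetilde\delta - W_l$ and $v_l$ remain concentrated near $z_1^l$ at scale $\lam_1^l$ with norms bounded away from zero, so the products $|W_l|^{p_l-1}|\al_1^l\widetilde\delta - W_l|$ and $|W_l|^{p_l-1}|v_l|$ live exactly where $W_l$ lives and their integrals against a test function $\phi$ concentrated there do not vanish as $l\to\infty$. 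The paper instead uses the weak convergence \eqref{63'}, $\mathscr{T}_{l,\lam_1^l,z_1^l}\xi_l \rightharpoonup 0$, together with Claim \ref{claim:2}, H\"older, and compact Sobolev embedding (Rellich--Kondrachov, using the decay of $U_1$ and that $p_l - 1$ is strictly subcritical) to conclude that $\int(|\mathscr{T}_{l,\lam_1^l,z_1^l}\xi_l|^{p_l-1}U_1 + |\mathscr{T}_{l,\lam_1^l,z_1^l}\xi_l|\,U_1^{p_l-1})|\phi| = o(1)\|\phi\|$. The weak convergence encodes a cancellation between the piece $\widetilde\al_1^l\widetilde\delta(0,1) - U_1$ and $\mathscr{T}_{l,\lam_1^l,z_1^l}v_l$ which your term-by-term decomposition destroys. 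The distinction is not cosmetic: Claim \ref{claim:3} is subsequently used to pass to the limit $l\to\infty$ and obtain the \emph{exact} equation \eqref{77} for $U_2$ (so that the classification theorem applies), which requires $\|I'_{K_l,\tau_l}(\xi_l)\| \to 0$, not merely a bound of size $o_{\var_2}(1)$.
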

\begin{proof}
	For any $\phi \in C_{c}^{\infty}(\over{\R}^{n+1}_+)$, it follows from \eqref{35}, \eqref{63}, \eqref{58}, and Lemma \ref{lem:3.3} that
	%\begin{equation}\label{64}
	\begin{align}
o(1)\|\phi\|=&I_{K_{l}, \tau_{l}}'(U_{l})(\mathscr{T}^{-1}_{l, \lam _{1}^{l}, z_{1}^{l}} \phi)\notag\\=&
	\big\langle U_{l},\mathscr{T}_{l, \lam _{1}^{l}, z_{1}^{l}}^{-1} \phi\big\rangle
	-N_{\si}\int_{\Rn} K_{l}H^{\tau_l}|U_{l}|^{p_{l}-1}U_{l} \mathscr{T}_{l, \lam _{1}^{l}, z_{1}^{l}}^{-1} \phi\notag\\
	=&(\lam _{1}^{l})^{4 \si /(p_{l}-1)+2 \si-n}\Big\{\big\langle \mathscr{T}_{l, \lam _{1}^{l}, z_{1}^{l}} U_{l}, \phi\big\rangle 
	-N_{\si}\int_{\Rn} K_{l}(\cdot/\lam _{1}^{l}+z_{1}^{l})\notag\\&\times  H^{\tau_l}(\cdot/ \lam _{1}^{l}+z_{1}^{l}) |\mathscr{T}_{l, \lam _{1}^{l}, z_{1}^{l}} U_{l}|^{p_{l}-1}(\mathscr{T}_{l, \lam _{1}^{l}, z_{1}^{l}} U_{l}) \phi\Big\}\notag\\=&(\lam _{1}^{l})^{4 \si /(p_{l}-1)+2\si-n}\Big\{\big\langle  U_{1} , \phi \rangle +\big\langle \mathscr{T}_{l, \lam _{1}^{l}, z_{1}^{l}} \xi_{l},\phi\big\rangle-N_{\si}\int_{\Rn} K_{l}(\cdot/\lam _{1}^{l}+z_{1}^{l})\notag\\&\times H^{\tau_l}(\cdot/\lam _{1}^{l}+z_{1}^{l}) |\mathscr{T}_{l, \lam _{1}^{l},z_{1}^{l}} U_{l}|^{p_{l}-1}(\mathscr{T}_{l, \lam _{1}^{l}, z_{1}^{l}} U_{l}) \phi\Big\}\notag	\\=&I_{K_{l},\tau_{l} }'(\xi_{l})(\mathscr{T}_{l, \lam _{1}^{l}, z_{1}^{l}}^{-1} \phi)+(\lam _{1}^{l})^{4 \si /(p_{l}-1)+2 \si-n}N_{\si}\Big\{\int_{\Rn} K_{\infty}^{(1)}(\xi_{1})U_{1}^{2^{*}_{\si}-1} \phi\notag\\&+\int_{\Rn} K_{l}(\cdot/\lam _{1}^{l}+z_{1}^{l})H^{\tau_l}(\cdot / \lam _{1}^{l}+z_{1}^{l})|\mathscr{T}_{l, \lam _{1}^{l}, z_{1}^{l}} \xi_{l}|^{p_{l}-1}(\mathscr{T}_{l, \lam _{1}^{l}, z_{1}^{l}} \xi_{l}) \phi\notag\\&-\int_{\Rn} K_{l}(\cdot/\lam _{1}^{l}+z_{1}^{l}) H^{\tau_l}(\cdot / \lam _{1}^{l}+z_{1}^{l}) |\mathscr{T}_{l, \lam _{1}^{l}, z_{1}^{l}} U_{l}|^{p_{l}-1}(\mathscr{T}_{l, \lam _{1}^{l}, z_{1}^{l}} U_{l}) \phi\Big\}.\label{64}
\end{align}
Then a direct calculation exploiting \eqref{26}, \eqref{57}, \eqref{59},   Claim \ref{claim:2}, H\"{o}lder inequality, and the  Sobolev embedding theorem shows
\be\label{65}
	\Big| \int_{\Rn} K_{l}(\cdot/\lam _{1}^{l}+z_{1}^{l})H^{\tau_l}(\cdot / \lam _{1}^{l}+z_{1}^{l})U_{1}^{p_{l}} \phi -\int_{\Rn} K_{\infty}^{(1)}(\xi_{1})U_{1}^{2^{*}_{\si}-1} \phi \Big|=o(1)\|\phi\|.
\ee
Finally, by \eqref{64}-\eqref{65}, Lemma \ref{lem:3.3},  and some elementary inequalities,  we deduce that
	\begin{align*}
	&|I_{K_{l}, \tau_{l}}'(\xi_{l})(\mathscr{T}_{l, \lam _{1}^{l}, z_{1}^{l}} \phi)|\\=& o(1)\|\phi\|+O(1) \int_{\Rn}(|\mathscr{T}_{l, \lam _{1}^{l}, z_{1}^{l}} \xi_{l}|^{p_{l}-1} U_{1}+|\mathscr{T}_{l, \lam _{1}^{l}, z_{1}^{l}} \xi_{l}|U_{1}^{p_{l}-1})|\phi|\\=& o(1)\|\phi\|,
	\end{align*}
where	the last inequality follows from \eqref{63'}, Claim \ref{claim:2}, H\"older inequality, and the  Sobolev embedding theorem.
	Claim \ref{claim:3} has been established.
\end{proof}
\begin{claim}\label{claim:4}
	$I_{K_{l}, \tau_{l}}\left(\xi_{l}\right) \leq c^{(2)}+\var_{3}+o(1)$.
\end{claim}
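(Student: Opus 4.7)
The plan is to use the decomposition $U_l=W_l+\xi_l$ from \eqref{63}, where $W_l:=\mathscr{T}_{l,\lam_1^l,z_1^l}^{-1}U_1$, and split the energy as
$$
I_{K_l,\tau_l}(U_l)=I_{K_l,\tau_l}(W_l)+I_{K_l,\tau_l}(\xi_l)+o(1).
$$
Then I would bound $I_{K_l,\tau_l}(W_l)\geq c^{(1)}-o(1)$ from below and combine with the upper bound \eqref{34} on $I_{K_l,\tau_l}(U_l)$ to conclude $I_{K_l,\tau_l}(\xi_l)\leq c^{(2)}+\var_3+o(1)$.

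For the splitting, the quadratic part expands as $\|U_l\|^2=\|W_l\|^2+2\langle W_l,\xi_l\rangle+\|\xi_l\|^2$. A direct change-of-variables computation (as in the derivation of \eqref{64}) shows that $\mathscr{T}_{l,\lam_1^l,z_1^l}$ scales the $D$-inner product by a factor $(\lam_1^l)^{O(\tau_l)}$, which stays in $1+o_{\var_2}(1)+o(1)$ by Lemma \ref{lem:3.3} and Claim \ref{claim:2}. Hence
$$
\langle W_l,\xi_l\rangle=(1+o(1))\,\langle U_1,\mathscr{T}_{l,\lam_1^l,z_1^l}\xi_l\rangle\to 0
$$
by the weak convergence \eqref{63'}. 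For the nonlinear part, pass to the rescaled frame $\mathscr{T}_{l,\lam_1^l,z_1^l}U_l=U_1+\mathscr{T}_{l,\lam_1^l,z_1^l}\xi_l$: again the change of variables sends $\int K_lH^{\tau_l}|U_l|^{p_l+1}\,\d x$ to $(1+o(1))\int K_l(\cdot/\lam_1^l+z_1^l)H^{\tau_l}(\cdot/\lam_1^l+z_1^l)|U_1+\mathscr{T}_{l,\lam_1^l,z_1^l}\xi_l|^{p_l+1}\,\d x$. Using Rellich-type compactness on bounded sets to extract a.e.\ convergence $\mathscr{T}_{l,\lam_1^l,z_1^l}\xi_l\to 0$ from \eqref{63'}, a Brezis--Lieb type argument yields $\int|U_1+\mathscr{T}_{l,\lam_1^l,z_1^l}\xi_l|^{p_l+1}=\int|U_1|^{p_l+1}+\int|\mathscr{T}_{l,\lam_1^l,z_1^l}\xi_l|^{p_l+1}+o(1)$, and rescaling back gives the desired additive splitting of the nonlinear term.

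For the lower bound on $I_{K_l,\tau_l}(W_l)$, rescale via $\mathscr{T}_{l,\lam_1^l,z_1^l}$ to reduce to evaluating the functional on $U_1$ with the slowly varying weight $K_l(\cdot/\lam_1^l+z_1^l)$. Since $U_1=K_\infty^{(1)}(\xi_1)^{(2\si-n)/4\si}\widetilde\delta(z^*,\lam^*)$ by \eqref{59} with $|z^*|,|\lam^*-1|=o_{\var_2}(1)$ by Claim \ref{claim:2}, a computation identical in spirit to the one leading to \eqref{45} gives
$$
I_{K_l,\tau_l}(W_l)=\tfrac{\si N_\si}{n}(K_\infty^{(1)}(\xi_1))^{(2\si-n)/2\si}(S_{n,\si})^{n/\si}+o_{\var_2}(1)+o(1).
$$
Because $z_1^l\in B_{R_l}(z_l^{(1)})$ by \eqref{40} and the diameter bound in assumption (iii), the maximum property \eqref{20} gives $K_l(z_1^l)\leq K_l(z_l^{(1)})=a^{(1)}+o(1)$, hence $K_\infty^{(1)}(\xi_1)\leq a^{(1)}$; since $(2\si-n)/2\si<0$, this yields $I_{K_l,\tau_l}(W_l)\geq c^{(1)}-o(1)$.

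The main obstacle is the Brezis--Lieb step with the varying weight $K_l H^{\tau_l}$: one has to verify that the rescaled weight $K_l(\cdot/\lam_1^l+z_1^l)H^{\tau_l}(\cdot/\lam_1^l+z_1^l)$ is essentially constant on the scale where $U_1$ concentrates, so that removing it does not create an $O(1)$ error. This is provided by the $C^1$ control \eqref{23} near $z_l^{(1)}$ together with $\lam_1^l\to\infty$ from Claim \ref{claim:1} and the boundedness of $(\lam_1^l)^{\tau_l}$ from Lemma \ref{lem:3.3}. Putting the three estimates together with \eqref{34},
$$
I_{K_l,\tau_l}(\xi_l)\leq I_{K_l,\tau_l}(U_l)-I_{K_l,\tau_l}(W_l)+o(1)\leq (c^{(1)}+c^{(2)}+\var_3)-c^{(1)}+o(1),
$$
which is precisely the claimed bound.
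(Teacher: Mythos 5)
Your proposal follows the same route as the paper: decompose $U_l=\mathscr{T}_{l,\lam_1^l,z_1^l}^{-1}U_1+\xi_l$ (which is exactly \eqref{63}), expand $I_{K_l,\tau_l}(U_l)$ and argue that the mixed quadratic and nonlinear cross terms vanish by the weak convergence \eqref{63'} together with compactness/Brezis--Lieb-type arguments, then bound the $U_1$-contribution from below via the sharp trace inequality and the inequality $K_\infty^{(1)}(\xi_1)\leq a^{(1)}$, and finally combine with the energy cap \eqref{34}. The only organizational difference is that you phrase it as an additive splitting $I(U_l)=I(W_l)+I(\xi_l)+o(1)$ in the unscaled frame, while the paper works in the rescaled frame and carries along the prefactor $(\lam_1^l)^{4\si/(p_l-1)+2\si-n}$ explicitly --- equations \eqref{66}--\eqref{67} --- before concluding.

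One small point is worth making precise. Your lower bound is stated as $I_{K_l,\tau_l}(W_l)=\frac{\si N_\si}{n}(K_\infty^{(1)}(\xi_1))^{(2\si-n)/2\si}(S_{n,\si})^{n/\si}+o_{\var_2}(1)+o(1)$, with the $o_{\var_2}(1)$ coming from the prefactor $(\lam_1^l)^{4\si/(p_l-1)+2\si-n}=1+o_{\var_2}(1)$. Since $o_{\var_2}(1)$ does not vanish as $l\to\infty$, one cannot directly conclude $I_{K_l,\tau_l}(W_l)\geq c^{(1)}-o(1)$ from that formula alone; you need the sign information. The paper handles this by observing $(\lam_{1}^{l})^{4 \si /(p_{l}-1)+2 \si-n} \geq 1$ (which holds because $\lam_1^l>1$ and the exponent is nonnegative for $\tau_l>0$) and that the bracketed rescaled energy of $U_1$ is itself $\geq c^{(1)}+o(1)>0$. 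Including this observation closes the one spot where your argument as written could, in principle, lose a fixed $o_{\var_2}(1)$-amount; with it, the proof is complete and matches the paper's.
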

\begin{proof} By  a change of variable and using Claim \ref{claim:2} and \eqref{63}, some calculations lead to
	\begin{align}
	I_{K_{l}, \tau_{l}}(U_{l})=&\frac{1}{2} \| U_{l}\|^{2}-\frac{N_{\si}}{p_{l}+1} \int_{\Rn} K_{l}H^{\tau_l}|U_{l}|^{p_{l}+1}\notag
	\\=&(\lam _{1}^{l})^{4 \si /(p_{l}-1)+2 \si-n}\Big\{\frac{1}{2} \big\| \mathscr{T}_{l, \lam _{1}^{l}, z_{1}^{l}} U_{l}\big\|^{2}-\frac{N_{\si}}{2^{*}_{\si}} \int_{\Rn} K_{l}(\cdot/\lam _{1}^{l}+z_{1}^{l})\notag\\&\times H^{\tau_l}(\cdot / \lam _{1}^{l}+z_{1}^{l})|\mathscr{T}_{l, \lam _{1}^{l}, z_{1}^{l}} U_{l}|^{p_{l}+1}\Big\}+o(1)\notag\\=&(\lam _{1}^{l})^{4 \si /(p_{l}-1)+2 \si-n}\Big\{\frac{1}{2} \|U_{1}\|^{2}+\big\langle U_{1},\mathscr{T}_{l, \lam _{1}^{l}, z_{1}^{l}} \xi_{l}\big\rangle+\frac{1}{2} \big\|\mathscr{T}_{l, \lam _{1}^{l}, z_{1}^{l}} \xi_{l} \big\|^{2}\notag\\
	&-\frac{N_{\si}}{2^{*}_{\si}} \int_{\Rn} K_{l}(\cdot/\lam _{1}^{l}+z_{1}^{l})H^{\tau_l}(\cdot / \lam _{1}^{l}+z_{1}^{l})U_{1}^{p_{l}+1}\notag\\
	&-\frac{N_{\si}}{2^{*}_{\si}} \int_{\Rn} K_{l}(\cdot/\lam _{1}^{l}+z_{1}^{l})H^{\tau_l}(\cdot / \lam _{1}^{l}+z_{1}^{l})|\mathscr{T}_{l, \lam _{1}^{l}, z_{1}^{l}} \xi_{l}|^{p_{l}+1}\notag\\
	&-O(1) \int_{\Rn}(|\mathscr{T}_{l, \lam _{1}^{l}, z_{1}^{l}} \xi_{l}|^{p_{l}} U_{1}+|\mathscr{T}_{l, \lam _{1}^{l}, z_{1}^{l}} \xi_{l}| U_{1}^{p_{l}})\Big\}+o(1)\notag\\
=&I_{K_{l}, \tau_{l}}(\xi_{1})+(\lam _{1}^{l})^{2(p_{l}+1)/(p_{l}-1)-n}\Big\{\frac{1}{2} \| U_{1}\|^{2}-\frac{N_{\si}}{2^{*}_{\si}} \int_{\Rn} K_{l}(\cdot/\lam _{1}^{l}+z_{1}^{l})\notag\\&\times H^{\tau_l}(\cdot / \lam _{1}^{l}+z_{1}^{l})U_{1}^{p_{l}+1} \Big\}+o(1),\label{66}
	\end{align}
where we used  	\eqref{59} and \eqref{63'}  in the  last  equality.
	
		We dirive from	\eqref{22},	\eqref{56} and  \eqref{trace} that
	\begin{align}
	&\frac{1}{2} \| U_{1}\|^{2}-\frac{N_{\si}}{2^{*}_{\si}} \int_{\Rn} K_{l}(\cdot/\lam _{1}^{l}+z_{1}^{l})H^{\tau_l}(\cdot / \lam _{1}^{l}+z_{1}^{l})U_{1}^{p_{l}+1}\notag\\
	=&I_{K_{\infty}^{(1)}(\xi_{1})}(U_{1})+o(1) \notag\\
	\geq& \frac{\si N_{\si}}{n} K_{\infty}^{(1)}(\xi_{1})^{(2\si-n)/2\si }(S_{n,\si})^{n/\si}+o(1) \notag\\
	\geq& c^{(1)}+o(1).\label{67}
	\end{align}
	Claim	\ref{claim:4} follows from \eqref{34}, \eqref{66}-\eqref{67}, and the fact $(\lam_{1}^{l})^{4 \si /(p_{l}-1)+2 \si-n} \geq 1$.
\end{proof}

From \eqref{36}, \eqref{59}  and \eqref{63}, we have
\be\label{69}
\xi_{l}=U_{l}-\mathscr{T}_{l, \lam _{1}^{l}, z_{1}^{l}}^{-1} U_{1}=\al_{2}^{l}  \widetilde{\delta}( z_{2}^{l}, \lam _{2}^{l} )+w_{l},
\ee
where
$$
w_{l}=\al_{1}^{l}  \widetilde{\delta}( z_{1}^{l}, \lam _{1}^{l} )-K_{\infty}^{(1)}(\xi_{1})^{(2\si-n)/4\si}(\lam _{1}^{l})^{2 \si/(p_{l}-1)-(n-2\si) / 2}   \widetilde{\delta}( z^{*}/\lam _{1}^{l}+z_{1}^{l}, \lam ^{*} \lam _{1}^{l} )+v_{l}.
$$
Now using  \eqref{60} and Claim \ref{claim:2}, we have,  for large $l$, that
\be
\label{70}\|w_{l}\| = o_{\var_{2}}(1).
\ee

We can simply repeat the previous arguments on $\xi_{l}$ instead of on $U_{l}$. For the reader's convenience, we carry out some crucial steps.

A direct computation using  \eqref{69} shows that 
\be\label{72}
\mathscr{T}_{l, \lam _{2}^{l}, z_{2}^{l}} \xi_{l}=\over{\al}_{2}^{l}  \widetilde{\delta}(0,1)+\mathscr{T}_{l, \lam _{2}^{l}, z_{2}^{l}} w_{l},
\ee
where
\be\label{73}
\over{\al}_{2}^{l}=\al_{2}^{l}(\lam _{2}^{l})^{(n-2\si) / 2-2 \si/(p_{l}-1)}.
\ee

Then we can verify the existence  of $U_{2} \in D$ and $\xi_{2} \in O^{(2)}$ such that\begin{gather}
\label{74}\mathscr{T}_{l, \lam _{2}^{l}, z_{2}^{l}} \xi_{l} \rightharpoonup U_{2}\quad 
\text { weakly in }\, D,
\\\label{75}
\lim _{l \to  \infty}(z_{2}^{l}-z_{l}^{(2)})=\xi_{2},
\end{gather}
up to a subsequence.

Accordingly, by making use of \eqref{22}, \eqref{25} and \eqref{75}, we have
\be\label{76}
\lim _{l \to  \infty} K_{l}(z_{2}^{l})^{(2\si-n)/4\si}=K_{\infty}^{(2)}(\xi_{2})^{(2\si-n)/4\si}.
\ee

For any  $\phi \in C_{c}^{\infty}(\over{\R}^{n+1}_+)$, it follows from Claim \ref{claim:3} and Lemma \ref{lem:3.3} that\begin{align*}
o(1)=&I_{K_{l}, \tau_{l}}'(\xi_{l})(\mathscr{T}_{l, \lam _{2}^{l}, z_{2}^{l}}^{-1} \phi)\\=&(\lam _{2}^{l})^{4 \si /(p_{l}-1)+2 \si-n}\Big\{\big\langle \mathscr{T}_{l, \lam _{2}^{l}, z_{2}^{l}} \xi_{l} , \phi\big\rangle -N_{\si}\int_{\Rn}  K_{l}(\cdot/\lam _{2}^{l}+z_{2}^{l})\\&\times H^{\tau_l}(\cdot / \lam _{2}^{l}+z_{2}^{l})|\mathscr{T}_{l, \lam _{2}^{l}, z_{2}^{l}} \xi_{l}|^{p_l-1}(\mathscr{T}_{l, \lam _{2}^{l}, z_{2}^{l}} \xi_{l}) \phi\Big\}.
\end{align*}
Taking the limit $l \to  \infty$ and arguing as before, we have
$$
\big\langle U_{2},\phi\big\rangle -N_{\si}\int_{\Rn} K_{\infty}^{(2)}(\xi_{2})|U_{2}|^{\frac{4\si}{n-2\si}} U_{2} \phi=0.
$$
Namely, $U_2$ satisfies
\be\label{77}
\left\{\begin{aligned}
&\operatorname{div}(t^{1-2\si}\nabla U_2)=0&&\text{ in }\,\Rp,\\&\pa_{\nu}^{\si}U_2=K_{\infty}^{(2)}(\xi_{2})|U_{2}(x,0)|^{\frac{4\si}{n-2\si}} U_{2}(x,0) && \text { on }\, \Rn.
\end{aligned}\right.
\ee
Arguing as before, one can prove  that, for $\var_{2}$ small enough,  $U_{2}>0$ and for some $z^{**} \in \Rn$ and  $\lam ^{* *}>0$, there holds
\be\label{78}
U_{2}=K_{\infty}^{(2)}(\xi_{2})^{(2\si-n)/4\si} \widetilde{\delta}(z^{**},\lam ^{* *}).
\ee
\begin{claim}\label{claim:5}
	For $l$ large enough, we have 
$$
	|z^{**}| = o_{\var_{2}}(1), \quad |\lam ^{* *}-1| = o_{\var_{2}}(1),\quad (\lam _{2}^{l})^{\tau_{l}}=1+o_{\var_{2}}(1).$$
\end{claim}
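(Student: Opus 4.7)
\medskip

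\noindent\textbf{Proof proposal for Claim \ref{claim:5}.} The plan is to mimic \emph{verbatim} the argument of Claim \ref{claim:2}, but applied to $\xi_l$ and the second bubble, using the decomposition \eqref{72}, the weak convergence \eqref{74}, the classification \eqref{78}, and the smallness \eqref{70} in place of \eqref{39}. The role of $U_l$, $\lambda_1^l$, $z_1^l$, $v_l$, $\alpha_1^l$, $\widetilde{\alpha}_1^l$, $U_1$, $\xi_1$ is now played by $\xi_l$, $\lambda_2^l$, $z_2^l$, $w_l$, $\alpha_2^l$, $\overline{\alpha}_2^l$, $U_2$, $\xi_2$.

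First, by Lemma \ref{lem:3.3}, along a subsequence $(\lambda_2^l)^{\tau_l}\to B_{\var_2,\var_3}$ for some positive constant $B_{\var_2,\var_3}$ independent of $l$. From \eqref{38} together with \eqref{76},
$$
\alpha_2^l = K_\infty^{(2)}(\xi_2)^{(2\sigma-n)/4\sigma} + o_{\var_2}(1) + o(1).
$$
Substituting into \eqref{73} and using the expansion $(n-2\sigma)/2 - 2\sigma/(p_l-1) = -(n-2\sigma)^2\tau_l/(8\sigma) + O(\tau_l^2)$, we obtain
$$
\overline{\alpha}_2^l = K_\infty^{(2)}(\xi_2)^{(2\sigma-n)/4\sigma}\bigl(B_{\var_2,\var_3}\bigr)^{-(n-2\sigma)^2/(8\sigma)} + o_{\var_2}(1) + o(1).
$$

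Next, since $\widetilde{\delta}(0,1)\in D$ is fixed, $\overline{\alpha}_2^l\widetilde{\delta}(0,1)$ converges strongly in $D$, so from \eqref{72} and \eqref{74} we get the weak convergence $\mathscr{T}_{l,\lambda_2^l,z_2^l}w_l\rightharpoonup U_2-\overline{\alpha}_2\widetilde{\delta}(0,1)$, where $\overline{\alpha}_2$ denotes the limit of $\overline{\alpha}_2^l$. Since $\mathscr{T}_{l,\lambda_2^l,z_2^l}$ is a linear isometry of $D$, the lower semi-continuity of $\|\cdot\|$ together with \eqref{70} yields
$$
\|U_2-\overline{\alpha}_2\widetilde{\delta}(0,1)\|\leq \varliminf_{l\to\infty}\|\mathscr{T}_{l,\lambda_2^l,z_2^l}w_l\| = \varliminf_{l\to\infty}\|w_l\|=o_{\var_2}(1).
$$
Plugging in the classification \eqref{78} and using that $K_\infty^{(2)}(\xi_2)\in[1/A_1,A_1]$ by \eqref{18} (so the prefactor may be factored out), we arrive at
$$
\bigl\|\widetilde{\delta}(z^{**},\lambda^{**})-\bigl(B_{\var_2,\var_3}\bigr)^{-(n-2\sigma)^2/(8\sigma)}\widetilde{\delta}(0,1)\bigr\|=o_{\var_2}(1).
$$

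Finally, I would invoke the standard bubble-identification fact that the map $(z,\lambda)\mapsto\widetilde{\delta}(z,\lambda)$ is a smooth embedding of $\mathbb{R}^n\times(0,\infty)$ into $D$ with injective linearization, so closeness in $D$ of two bubbles (multiplied by bounded positive constants) forces closeness of their parameters. Applying this (exactly as in Claim \ref{claim:2}) gives
$$
|z^{**}|=o_{\var_2}(1),\qquad \lambda^{**}=1+o_{\var_2}(1),\qquad B_{\var_2,\var_3}=1+o_{\var_2}(1),
$$
and the last identity is precisely $(\lambda_2^l)^{\tau_l}=1+o_{\var_2}(1)$. The main (minor) technical point is this last bubble-identification step, but it is identical in nature to the one already used to close Claim \ref{claim:2}, and nothing in the passage from the first to the second bubble affects it: the separation \eqref{42} and the smallness \eqref{70} ensure that $w_l$ really is a genuine remainder after the second bubble is extracted, so no interaction term pollutes the weak limit.
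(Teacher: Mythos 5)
Your proposal is correct and is essentially identical to what the paper intends: the paper's own ``proof'' of Claim \ref{claim:5} is the single sentence ``The proof is same as the proof of Claim \ref{claim:2}, we omit it here,'' and what you have written is precisely that proof transcribed mutatis mutandis (with $\xi_l$, $w_l$, $\lam_2^l$, $z_2^l$, $\al_2^l$, $\over{\al}_2^l$, $U_2$, $\xi_2$, the decomposition \eqref{72} and the weak limit \eqref{74} replacing $U_l$, $v_l$, $\lam_1^l$, $z_1^l$, $\al_1^l$, $\widetilde{\al}_1^l$, $U_1$, $\xi_1$, \eqref{54}/\eqref{60}/\eqref{62}, and with \eqref{70} replacing \eqref{39} to control the remainder norm, and \eqref{76} replacing \eqref{57} to pin $\al_2^l$). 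The final bubble-identification step you flag is the same one implicitly used (and left unstated) at the end of Claim \ref{claim:2}, so there is no gap.
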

\begin{proof} The proof is same as  the proof of  Claim \ref{claim:2}, we omit it here. 
\end{proof}	
We define $\eta_{l} \in D$ by
\be\label{82}
\mathscr{T}_{l, \lam _{2}^{l}, z_{2}^{l}} \xi_{l}=U_{2}+\mathscr{T}_{l, \lam _{2}^{l}, z_{2}^{l}} \eta_{l}.
\ee
Clearly,\be\label{821}
\mathscr{T}_{l, \lam _{2}^{l}, z_{2}^{l}} \eta_{l} \rightharpoonup 0  \quad \text { weakly in }\, D.
\ee
\begin{claim}\label{claim:6}
For	$\var_{2}$  small enough, we have  $\|I_{K_{l},\tau_{l}}'(\eta_{l})\|=o(1)$.
\end{claim}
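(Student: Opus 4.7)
The plan is to mirror the proof of Claim \ref{claim:3}, replacing $U_l$ by $\xi_l$ and $U_1$ by $U_2$, and using the rescaling $\mathscr{T}_{l, \lam_2^l, z_2^l}$ in place of $\mathscr{T}_{l, \lam_1^l, z_1^l}$. The starting point is the identity from Claim \ref{claim:3}: for any $\phi \in C_c^{\infty}(\over{\R}^{n+1}_+)$,
$$
I_{K_l, \tau_l}'(\xi_l)(\mathscr{T}_{l, \lam_2^l, z_2^l}^{-1}\phi) = o(1)\|\phi\|,
$$
since $\|\mathscr{T}_{l, \lam_2^l, z_2^l}^{-1}\phi\| = \|\phi\|$ (the rescaling is a linear isometry).

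First I would expand the left-hand side exactly as in \eqref{64}: after pulling out the Jacobian factor $(\lam_2^l)^{4\si/(p_l-1)+2\si-n}$, substitute $\mathscr{T}_{l, \lam_2^l, z_2^l}\xi_l = U_2 + \mathscr{T}_{l, \lam_2^l, z_2^l}\eta_l$ inside both the quadratic and the nonlinear term. The quadratic part produces $\langle U_2, \phi\rangle + \langle \mathscr{T}_{l, \lam_2^l, z_2^l}\eta_l, \phi\rangle$. The nonlinear integral is then rewritten as
$$
\int_{\Rn} K_l(\cdot/\lam_2^l + z_2^l) H^{\tau_l}(\cdot/\lam_2^l + z_2^l)|U_2 + \mathscr{T}_{l, \lam_2^l, z_2^l}\eta_l|^{p_l-1}(U_2 + \mathscr{T}_{l, \lam_2^l, z_2^l}\eta_l)\phi.
$$
Using \eqref{22}, \eqref{26}, \eqref{75}, \eqref{76}, Claim \ref{claim:5}, and the Sobolev embedding together with H\"older as in \eqref{65}, I would replace the coefficient $K_l(\cdot/\lam_2^l + z_2^l)H^{\tau_l}(\cdot/\lam_2^l + z_2^l)$ evaluated against $U_2^{p_l-1}U_2$ by $K_{\infty}^{(2)}(\xi_2)U_2^{2_\si^*-1}$ up to an $o(1)\|\phi\|$ error, and then use \eqref{77} to cancel this contribution against $\langle U_2, \phi\rangle$.

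What is left is precisely $I_{K_l,\tau_l}'(\eta_l)(\mathscr{T}_{l, \lam_2^l, z_2^l}^{-1}\phi)$ plus a residual of the form
$$
O(1)\int_{\Rn}\bigl(|\mathscr{T}_{l, \lam_2^l, z_2^l}\eta_l|^{p_l-1}U_2 + |\mathscr{T}_{l, \lam_2^l, z_2^l}\eta_l|U_2^{p_l-1}\bigr)|\phi|,
$$
and I would show this residual is $o(1)\|\phi\|$ by splitting it into a near-concentration region and its complement, applying H\"older with exponents matched to $2_\si^*$, and invoking the weak convergence \eqref{821} together with the local compactness of the Sobolev trace embedding. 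The uniform bound $(\lam_2^l)^{\tau_l} \leq C$ from Lemma \ref{lem:3.3} (in fact, $=1+o_{\var_2}(1)$ by Claim \ref{claim:5}) keeps the Jacobian factor $(\lam_2^l)^{4\si/(p_l-1)+2\si-n}$ bounded away from $0$ and $\infty$, so dividing through yields $\|I_{K_l,\tau_l}'(\eta_l)\| = o(1)$.

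The main obstacle is controlling the cross-term involving the subcritical power $|\mathscr{T}_{l, \lam_2^l, z_2^l}\eta_l|^{p_l-1}U_2$: because $p_l$ depends on $l$, one must argue uniformly in $\tau_l$, and the weak convergence \eqref{821} only gives strong $L^{2_\si^*}_{\operatorname{loc}}$ compactness on the trace, so I would combine this local compactness with the decay of the bubble $U_2 = K_{\infty}^{(2)}(\xi_2)^{(2\si-n)/4\si}\widetilde{\delta}(z^{**}, \lam^{**})$ from \eqref{78} at infinity. Once these interaction integrals are shown to vanish in $l$, the conclusion follows since $\phi$ is arbitrary and $\mathscr{T}_{l, \lam_2^l, z_2^l}^{-1}$ is an isometric bijection on $D$, so the test family exhausts $D$.
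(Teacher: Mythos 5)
Your proof is correct and matches the paper's intended argument, which simply declares the proof of Claim \ref{claim:6} to be ``same as Claim \ref{claim:3}'' with the substitutions you carry out ($U_l \mapsto \xi_l$, $\xi_l \mapsto \eta_l$, $U_1 \mapsto U_2$, $\mathscr{T}_{l,\lam_1^l,z_1^l}\mapsto\mathscr{T}_{l,\lam_2^l,z_2^l}$, \eqref{58}$\,\mapsto\,$\eqref{77}, \eqref{63'}$\,\mapsto\,$\eqref{821}, Claim \ref{claim:2}$\,\mapsto\,$Claim \ref{claim:5}). One small caveat: $\mathscr{T}_{l,\lam_2^l,z_2^l}^{-1}$ is an exact isometry of $D$ only when $\tau_l=0$ --- for $\tau_l>0$ it scales the norm by $(\lam_2^l)^{O(\tau_l)}$ --- but you already observe via Lemma \ref{lem:3.3} and Claim \ref{claim:5} that this factor is bounded and tends to $1$, so the conclusion is unaffected.
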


\begin{proof}The proof 	is same as  the proof of  Claim \ref{claim:3}, we omit it here.
\end{proof}

\begin{claim}\label{claim:7}
For $\var_{2}$  small enough, we have 	$I_{K_{l},\tau_{l}}(\eta_{l}) \leq \var_{3}+o(1)$.
\end{claim}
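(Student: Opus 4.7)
The plan is to mimic exactly the argument used in Claim \ref{claim:4}, but applied to $\xi_l$ (in place of $U_l$) and to the decomposition \eqref{82} (in place of \eqref{63}). The roles of $\lambda_1^l, z_1^l, U_1$ are replaced by $\lambda_2^l, z_2^l, U_2$, and the role of $\xi_l$ on the left of \eqref{63} is replaced by $\eta_l$.

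First I would perform the change of variables $X\mapsto \mathscr{T}_{l,\lambda_2^l,z_2^l}^{-1}$ in $I_{K_l,\tau_l}(\xi_l)$ and use \eqref{82} to write
\begin{align*}
I_{K_l,\tau_l}(\xi_l)
=(\lambda_2^l)^{4\sigma/(p_l-1)+2\sigma-n}\Bigl\{&\tfrac{1}{2}\|U_2\|^2+\langle U_2,\mathscr{T}_{l,\lambda_2^l,z_2^l}\eta_l\rangle+\tfrac{1}{2}\|\mathscr{T}_{l,\lambda_2^l,z_2^l}\eta_l\|^2\\
&-\tfrac{N_\sigma}{2^*_\sigma}\!\int_{\R^n}\! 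K_l(\cdot/\lambda_2^l+z_2^l)H^{\tau_l}(\cdot/\lambda_2^l+z_2^l)U_2^{p_l+1}\\
&-\tfrac{N_\sigma}{2^*_\sigma}\!\int_{\R^n}\! K_l(\cdot/\lambda_2^l+z_2^l)H^{\tau_l}(\cdot/\lambda_2^l+z_2^l)|\mathscr{T}_{l,\lambda_2^l,z_2^l}\eta_l|^{p_l+1}\\
&-O(1)\!\int_{\R^n}\!\bigl(|\mathscr{T}_{l,\lambda_2^l,z_2^l}\eta_l|^{p_l}U_2+|\mathscr{T}_{l,\lambda_2^l,z_2^l}\eta_l|\,U_2^{p_l}\bigr)\Bigr\}+o(1),
\end{align*}
where the $o(1)$ term absorbs the factor $(\lambda_2^l)^{4\sigma/(p_l-1)+2\sigma-n}-1=o(1)$ coming from Claim \ref{claim:5}. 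The cross terms and the mixed $|\eta_l|^{p_l}U_2$, $|\eta_l|U_2^{p_l}$ integrals are $o(1)$ by the weak convergence \eqref{821}, the explicit form \eqref{78} of $U_2$, H\"older and the Sobolev trace inequality \eqref{trace}, exactly as in Claim \ref{claim:4}. Grouping the remaining pieces yields
\[
I_{K_l,\tau_l}(\xi_l)=I_{K_l,\tau_l}(\eta_l)+(\lambda_2^l)^{2(p_l+1)/(p_l-1)-n}\Bigl\{\tfrac{1}{2}\|U_2\|^2-\tfrac{N_\sigma}{2^*_\sigma}\!\int_{\R^n}\! K_l(\cdot/\lambda_2^l+z_2^l)H^{\tau_l}(\cdot/\lambda_2^l+z_2^l)U_2^{p_l+1}\Bigr\}+o(1).
\]

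Next I would bound the bracketed term from below by $c^{(2)}+o(1)$. Passing to the limit $l\to\infty$ using \eqref{22}, \eqref{26}, \eqref{75} and Claim \ref{claim:5}, the bracket converges to $I_{K_\infty^{(2)}(\xi_2)}(U_2)$. By \eqref{77}, $U_2$ is a positive solution with constant coefficient $K_\infty^{(2)}(\xi_2)$, so testing \eqref{77} against $U_2$ and applying the sharp Sobolev trace inequality \eqref{trace} gives
\[
I_{K_\infty^{(2)}(\xi_2)}(U_2)=\tfrac{\sigma N_\sigma}{n}\int_{\R^n} K_\infty^{(2)}(\xi_2) U_2(x,0)^{2^*_\sigma}\,\d x\geq \tfrac{\sigma N_\sigma}{n}(K_\infty^{(2)}(\xi_2))^{(2\sigma-n)/2\sigma}(S_{n,\sigma})^{n/\sigma}=c^{(2)}.
\]
Since $(\lambda_2^l)^{2(p_l+1)/(p_l-1)-n}\geq 1$, the bracketed contribution is $\geq c^{(2)}+o(1)$.

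Finally, combining with Claim \ref{claim:4} (which gives $I_{K_l,\tau_l}(\xi_l)\leq c^{(2)}+\varepsilon_3+o(1)$), we obtain
\[
I_{K_l,\tau_l}(\eta_l)\leq I_{K_l,\tau_l}(\xi_l)-c^{(2)}+o(1)\leq \varepsilon_3+o(1),
\]
as desired. The only mildly delicate step is estimating the mixed integrals and the coefficient change-of-variable $K_l(\cdot/\lambda_2^l+z_2^l)H^{\tau_l}(\cdot/\lambda_2^l+z_2^l)\to K_\infty^{(2)}(\xi_2)$; both are handled by the same H\"older/Sobolev-trace arguments already used in Claims \ref{claim:3} and \ref{claim:4}, together with $(\lambda_2^l)^{\tau_l}=1+o_{\varepsilon_2}(1)$.
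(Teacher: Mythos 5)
Your proof is correct and is exactly the analogy the paper has in mind: the paper's own proof of Claim \ref{claim:7} simply reads ``The proof is same as the proof of Claim \ref{claim:4}, we omit it here,'' and you have carried out that analogy faithfully, with $\xi_l, \eta_l, U_2, \lambda_2^l, z_2^l$ replacing $U_l, \xi_l, U_1, \lambda_1^l, z_1^l$, and then invoking Claim \ref{claim:4} itself where the original invoked \eqref{34}. The intermediate identity you derive, $I_{K_l,\tau_l}(\xi_l)\geq I_{K_l,\tau_l}(\eta_l)+(\lambda_2^l)^{4\sigma/(p_l-1)+2\sigma-n}c^{(2)}+o(1)$, is also exactly \eqref{95}, which the paper records separately in the proof of Claim \ref{claim:9}.
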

\begin{proof}The proof 	is same as  the proof of  Claim \ref{claim:4}, we omit it here.
\end{proof}

\begin{claim}\label{claim:8}
	For $\var_{2}>0$   small enough, we have $\eta_{l} \to  0$ strongly in $D$.
\end{claim}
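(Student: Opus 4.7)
The plan is to treat $\{\eta_l\}$ as an approximate Palais--Smale sequence for $I_{K_l,\tau_l}$ with vanishingly small energy and then to rule out concentration by a Sobolev quantization argument of the usual Brezis--Nirenberg--Struwe flavor.

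First I would verify that $\{\eta_l\}$ is bounded in $D$. From \eqref{34}--\eqref{35} and the Pohozaev-type relation $I_{K_l,\tau_l}(U_l) = \frac{p_l-1}{2(p_l+1)}\|U_l\|^2 + o(1)\|U_l\|$, the sequence $\{U_l\}$ is bounded. Since $\|\mathscr{T}_{l,\lam_1^l,z_1^l}^{-1}U_1\| = \|U_1\|$ is a finite constant independent of $l$, \eqref{63} yields boundedness of $\{\xi_l\}$; applying the same observation to \eqref{82} gives boundedness of $\{\eta_l\}$.

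Next I would evaluate Claim~\ref{claim:6} on $\eta_l$ itself to obtain
$$\|\eta_l\|^2 = N_\si \int_{\Rn} K_l(x) H^{\tau_l}(x) |\eta_l(x,0)|^{p_l+1}\,\d x + o(1).$$
Substituting this into the definition of $I_{K_l,\tau_l}(\eta_l)$ and using Claim~\ref{claim:7} gives
$$\Big(\frac{\si}{n}+o(1)\Big)\|\eta_l\|^2 + o(1) \;=\; \frac{p_l-1}{2(p_l+1)}\|\eta_l\|^2 + o(1) \;=\; I_{K_l,\tau_l}(\eta_l) \;\leq\; \var_3 + o(1),$$
so $\|\eta_l\|^2 \leq n\var_3/\si + o(1)$. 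In the opposite direction, $p_l+1 = 2_\si^* - \tau_l$, so Hölder's inequality with conjugate pair $(2_\si^*/(2_\si^*-\tau_l), 2_\si^*/\tau_l)$, together with $\int_{\Rn} H^{2_\si^*}\,\d x < \infty$ and the uniform bound $\|K_l\|_{L^\infty} \leq A_1$, yields
$$N_\si \int_{\Rn} K_l H^{\tau_l} |\eta_l(x,0)|^{p_l+1}\,\d x \leq C \Big(\int_{\Rn} |\eta_l(x,0)|^{2_\si^*}\,\d x\Big)^{(2_\si^*-\tau_l)/2_\si^*};$$
combined with the trace Sobolev inequality \eqref{trace}, this gives $\|\eta_l\|^2 \leq C_0 \|\eta_l\|^{2_\si^*-\tau_l} + o(1)$ with $C_0 = C_0(n,\si,A_1)$ independent of $l$.

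The two bounds produce a dichotomy. Since $2_\si^* - 2 = 4\si/(n-2\si) > 0$ and $\tau_l \to 0$, the reverse bound forces either $\|\eta_l\| \to 0$ or $\|\eta_l\| \geq c_0$ for some universal constant $c_0 = c_0(n,\si,A_1) > 0$ and all large $l$. In the latter alternative the upper bound $\|\eta_l\|^2 \leq n\var_3/\si + o(1)$ forces $\var_3 \geq \si c_0^2/(2n)$, contradicting the smallness of $\var_3$, which in turn is guaranteed by Claim~\ref{claim:7} once $\var_2$ is chosen sufficiently small. Hence $\eta_l \to 0$ strongly in $D$. The only real subtlety is keeping the constants $C$, $C_0$, and $c_0$ independent of $l$ and uniform as $\tau_l \to 0$; this relies on \eqref{18}, \eqref{23}, the integrability of $H^{2_\si^*}$, and the fact that the exponent $2_\si^*-\tau_l - 2$ stays bounded below for large $l$.
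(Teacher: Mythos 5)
Your proof is correct and follows essentially the same route as the paper: Claims \ref{claim:6} and \ref{claim:7} give the smallness bound $\|\eta_l\|^2\leq \tfrac{n}{\si}\var_3+o(1)$ (the paper's \eqref{89}), and H\"older together with the trace inequality \eqref{trace} gives the reverse bound $\mathcal{S}_{n,\si}\leq C\|\eta_l\|^{2\si/n}$ on the non-vanishing alternative (the paper's \eqref{91}--\eqref{92}), so the two are incompatible once $\var_2$ (hence $\var_3$) is small. The only cosmetic differences are that you explicitly record the boundedness of $\{\eta_l\}$ and phrase the conclusion as a dichotomy, whereas the paper encodes the non-vanishing alternative via the auxiliary observation $\|\eta_l\|^{\tau_l}=1+o(1)$ in \eqref{90}; these are equivalent.
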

\begin{proof}%
	It follows from Claim \ref{claim:6} and Claim \ref{claim:7} that
\be\label{89}
	\| \eta_{l}\|^{2} \leq \frac{n}{\si} \var_{3}+o(1).
\ee
	Suppose that Claim \ref{claim:8} does not hold, then along a subsequence we have
\be\label{90}
	\|\eta_{l}\|^{\tau_{l}}=1+o(1).
\ee
	We derive from \eqref{90},   H\"{o}lder inequality, and Claim \ref{claim:6} that$$
\|\eta_{l}\|^{2} \leq C(n,\si, A_{1})\Big(\int_{\Rn}|\eta_{l}(x,0)|^{2_{\si}^{*}}\,\d x\Big)^{(p_{l}+1)/2_{\si}^{*}}+o(1),
	$$and then\be\label{91}
\| \eta_{l}\|^{2} \leq C(n,\si, A_{1}) \int_{\Rn}|\eta_{l}(x,0)|^{2_{\si}^{*}}\,\d x+o(1).
\ee
		It follows from \eqref{91} and  \eqref{trace} that
$$
	\mathcal{S}_{n,\si} \leq \frac{\big(\int_{\Rp}t^{1-2\si}|\nabla \eta_{l}|^{2}\,\d X\big)^{1 / 2}}{\big(\int_{\Rn}|\eta_{l}(x,0)|^{2_{\si}^{*}}\,\d x\big)^{1/2_{\si}^{*}}}\leq \frac{\big(\int_{\Rp}t^{1-2\si}|\nabla\eta_{l}|^{2}\,\d X\big)^{1 / 2}C(n,\si, A_{1})^{1/2_{\si}^{*}}}{\big(\int_{\Rp} t^{1-2\si}|\nabla \eta_{l}|^{2}\,\d X+o(1)\big)^{1/2^{*}_{\si}}}.
$$
		Thus,
		\be\label{92}
	\mathcal{S}_{n,\si} \leq C(n,\si, A_{1})^{1/2^{*}_{\si}}\| \eta_{l}\|^{2\si/ n}.
\ee
However,  \eqref{89} and  \eqref{92} cannot hold  at the same time if $\var_{2}>0$ is small enough. Claim \ref{claim:8} has been established.
\end{proof}
Rewriting \eqref{63} and \eqref{82}, we have
\be\label{93}
U_{l}=\mathscr{T}_{l, \lam _{2}^{l}, z_{2}^{l}}^{-1} U_{1}+\mathscr{T}_{l, \lam _{2}^{l}, z_{2}^{l}}^{-1} U_{2} +\eta_{l}.
\ee

\begin{claim}\label{claim:9}
	For $\var_{2}>0$ small enough, we have 
$$	(\lam _{1}^{l})^{\tau_{l}}=1+o_{\var_{3}}(1)+o(1),\quad  (\lam _{2}^{l})^{\tau_{l}}=1+o_{\var_{3}}(1)+o(1).
	$$
\end{claim}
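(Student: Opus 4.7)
The strategy is to combine the precise decomposition \eqref{93} with the strong convergence $\eta_l \to 0$ in $D$ from Claim \ref{claim:8} and the energy window \eqref{34} to sharpen the estimates $(\lam_i^l)^{\tau_l} = 1 + o_{\var_2}(1)$ established in Claims \ref{claim:2} and \ref{claim:5}. The refinement comes from the fact that, once Claim \ref{claim:8} is available, the remainder in the bubble decomposition of $U_l$ is genuinely $o(1)$ (rather than merely $o_{\var_2}(1)$), so the total energy partitions tightly into the two bubble contributions, and the $\var_3$-narrow energy window can then pin each individual scale.

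\emph{Energy partition.} Starting from \eqref{93} and using that $\eta_l \to 0$ in $D$ together with the well-separation $|z_1^l - z_2^l|\to\infty$ (assumption (ii)), every interaction integral between the two rescaled bubbles and between each bubble and $\eta_l$ is $o(1)$, hence
\[
I_{K_l,\tau_l}(U_l) = I_{K_l,\tau_l}\bigl(\mathscr{T}^{-1}_{l,\lam_1^l,z_1^l}U_1\bigr) + I_{K_l,\tau_l}\bigl(\mathscr{T}^{-1}_{l,\lam_2^l,z_2^l}U_2\bigr) + o(1).
\]
The same reasoning applied to \eqref{82} gives $I_{K_l,\tau_l}(\xi_l) = I_{K_l,\tau_l}\bigl(\mathscr{T}^{-1}_{l,\lam_2^l,z_2^l}U_2\bigr) + o(1)$.

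\emph{Scaling.} A change of variables $y = \lam_i^l(x-z_i^l)$ factors the common power $(\lam_i^l)^{\gamma_l}$, with $\gamma_l := (n-2\si)^2\tau_l/(4\si-(n-2\si)\tau_l)$, out of both the quadratic and the nonlinear pieces of $I_{K_l,\tau_l}\bigl(\mathscr{T}^{-1}_{l,\lam_i^l,z_i^l}U_i\bigr)$. Invoking \eqref{22}, \eqref{26}, Claim \ref{claim:1} and Lemma \ref{lem:3.3}, the remaining bracket converges to $I_{K_\infty^{(i)}(\xi_i)}(U_i)$ and $(\lam_i^l)^{\gamma_l} \to A_i^{(n-2\si)^2/(4\si)}$, where $A_i := \lim_l (\lam_i^l)^{\tau_l}$ is finite and positive.

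\emph{Scale extraction.} Combining the energy splittings with Claim \ref{claim:4} and the Sobolev rigidity $I_{K_\infty^{(2)}(\xi_2)}(U_2) \ge c^{(2)}$ from \eqref{67},
\[
A_2^{(n-2\si)^2/(4\si)}\, c^{(2)} \le A_2^{(n-2\si)^2/(4\si)}\, I_{K_\infty^{(2)}(\xi_2)}(U_2) = I_{K_l,\tau_l}(\xi_l) + o(1) \le c^{(2)} + \var_3 + o(1),
\]
which gives $(\lam_2^l)^{\tau_l} \le 1 + o_{\var_3}(1) + o(1)$. Combining with the lower energy bound $I_{K_l,\tau_l}(U_l) \ge c^{(1)}+c^{(2)}-\var_3-o(1)$ and the corresponding upper control $I_{K_l,\tau_l}\bigl(\mathscr{T}^{-1}_{l,\lam_1^l,z_1^l}U_1\bigr) \le c^{(1)}+\var_3+o(1)$ (obtained by running the same Claim \ref{claim:3}--\ref{claim:4} procedure after removing the second bubble first), the matching lower bound $(\lam_2^l)^{\tau_l} \ge 1 - o_{\var_3}(1) - o(1)$ follows; exchanging the roles of the two bubbles yields the analogous estimate for $(\lam_1^l)^{\tau_l}$.

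\emph{Main obstacle.} The delicate point is that the $\var_3$-tight tolerance in \eqref{34} applies to the \emph{total} energy $I_{K_l,\tau_l}(U_l)$, while the conclusion demands individual control on each scale $(\lam_i^l)^{\tau_l}$. This is overcome by the layered decomposition $U_l \leadsto \xi_l \leadsto \eta_l$ developed in Claims \ref{claim:3}--\ref{claim:8}: the tight upper bounds on $I_{K_l,\tau_l}(\xi_l)$ from Claim \ref{claim:4} and on $I_{K_l,\tau_l}(\eta_l)$ from Claim \ref{claim:7}, together with the strong convergence of Claim \ref{claim:8}, separate the global energy into pieces tied to each bubble, and the Sobolev rigidity $I_{K_\infty^{(i)}(\xi_i)}(U_i) \ge c^{(i)}$ then converts these one-sided energy bounds into two-sided bounds on each $A_i$.
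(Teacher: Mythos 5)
Your argument is essentially sound, but it takes a somewhat different and less economical route than the paper. The paper establishes both scale bounds simultaneously with a single telescoping chain: \eqref{66}--\eqref{67} and Lemma \ref{lem:3.3} give $I_{K_l,\tau_l}(U_l)\ge I_{K_l,\tau_l}(\xi_l)+(\lam_1^l)^{4\si/(p_l-1)+2\si-n}c^{(1)}+o(1)$ (this is \eqref{94}), the Claim \ref{claim:4}-type computation applied to $\xi_l$ with $U_2$ gives \eqref{95}, and Claim \ref{claim:8} gives $I_{K_l,\tau_l}(\eta_l)=o(1)$. Feeding these into the energy ceiling \eqref{34} yields
\[
\sum_{i=1}^{2}\big\{(\lam_i^l)^{4\si/(p_l-1)+2\si-n}-1\big\}c^{(i)}\le \var_3+o(1),
\]
and since $\lam_i^l\to\infty$ (Claim \ref{claim:1}) and $\tau_l>0$ each summand is nonnegative, so each is individually $\le\var_3+o(1)$; both upper bounds drop out at once. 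Your version instead caps each scale via a separate one-bubble Claim \ref{claim:4}-type bound, which for the first scale requires re-running the cascade in reverse (remove bubble $2$ first to control $I_{K_l,\tau_l}(\mathscr{T}^{-1}_{l,\lam_1^l,z_1^l}U_1)$). That is feasible — the arguments of Claims \ref{claim:2}--\ref{claim:8} do not genuinely use the normalization \eqref{53} — but it means building a second cascade that the paper avoids. Finally, the "matching lower bound" you derive is superfluous: $\lam_i^l\to\infty$ and $\tau_l>0$ already give $(\lam_i^l)^{\tau_l}>1$, so only the one-sided upper estimate requires work. Your scaling exponent $\gamma_l=(n-2\si)^2\tau_l/(4\si-(n-2\si)\tau_l)$ agrees with the paper's $4\si/(p_l-1)+2\si-n$, and your use of Claim \ref{claim:8} to tighten the decomposition to a genuine $o(1)$ remainder is exactly the point; so the core mechanism matches, even if the bookkeeping is routed differently.
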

\begin{proof}
 We deduce from \eqref{66}-\eqref{67} and Lemma \ref{lem:3.3} that
\be\label{94}
	I_{K_{l}, \tau_l}(U_{l}) \geq I_{K_{l}, \tau_{l}}(\xi_{l})+(\lam _{1}^{l})^{4 \si /(p_{l}-1)+2 \si-n} c^{(1)}+o(1).
\ee In view of   Claim \ref{claim:5}, \eqref{78}, \eqref{82} and \eqref{821}, some calculations  similar to the proof of Claim \ref{claim:4} led to
\be \label{95}
	I_{K_{l}, \tau_{l}}(\xi_{l}) \geq I_{K_{l}, \tau_{l}}(\eta_{l})+(\lam _{2}^{l})^{4 \si /(p_{l}-1)+2 \si-n} c^{(2)}+o(1).
\ee	Then we use  Claim \ref{claim:8} to deduce that
\be\label{96}
	I_{K_{l}, \tau_l}(\eta_{l})=o(1).
\ee
	Finally, we put together \eqref{34}, \eqref{94}-\eqref{96} to obtain$$
	\sum_{i=1}^{2}\big\{(\lam _{i}^{l})^{4 \si /(p_{l}-1)+2 \si-n}-1\big\} c^{(i)} \leq \var_{3}+o(1).
	$$ 
	This completes the proof of Claim \ref{claim:9}.
\end{proof}

\begin{claim}\label{claim:10}
	Let $\delta_{5}=\delta_{1} /(2 A_{3})$. Then  if $\var_{2}>0$ is chosen to be small enough,   we have, for large $l$, that	$$
	\operatorname{dist}(z_{1}^{l}, \pa  O_{l}^{(1)})\geq \delta_{5}, \quad 
\operatorname{dist}(z_{2}^{l}, \pa  O_{l}^{(2)})\geq \delta_{5}.
$$
\end{claim}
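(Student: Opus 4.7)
The plan is to argue by contradiction and reduce directly to the lower energy bound established in Lemma \ref{lem:3.1}, which is precisely tailored for this situation. Specifically, suppose the conclusion fails, so that along a subsequence (still indexed by $l$) we have either $\operatorname{dist}(z_{1}^{l},\pa O_{l}^{(1)})<\delta_{5}$ or $\operatorname{dist}(z_{2}^{l},\pa O_{l}^{(2)})<\delta_{5}$, where $\delta_{5}=\delta_{1}/(2A_{3})$. By symmetry between the two indices, we may assume the first alternative holds.

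First, I would verify that the hypotheses of Lemma \ref{lem:3.1} apply to $U_{l}$. From \eqref{33} we have $U_{l}\in\widetilde{V}_{l}(2,\var_{2})$, and by Lemma \ref{lem:3.2} (upon choosing $\var_{2}$ sufficiently small so that $o_{\var_{2}}(1)<\var_{1}$) this gives $U_{l}\in V_{l}(2,\var_{1})$. The concentration points $(z_{1}^{l},z_{2}^{l})=(z_{1}(U_{l}),z_{2}(U_{l}))$ produced by Proposition \ref{prop:3.1} satisfy $z_{i}^{l}\in O_{l}^{(i)}\subset\widetilde{O}_{l}^{(i)}$ for $i=1,2$, so the required membership conditions are in force, and by assumption the distance condition on $z_{1}^{l}$ is also satisfied.

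Applying Lemma \ref{lem:3.1} with $\tau=\tau_{l}\in(0,\over{\tau}_{l})$, I obtain
$$
I_{K_{l},\tau_{l}}(U_{l})\;\geq\;c^{(1)}+c^{(2)}+\frac{1}{A_{4}}.
$$
On the other hand, \eqref{34} asserts
$$
I_{K_{l},\tau_{l}}(U_{l})\;<\;c^{(1)}+c^{(2)}+\var_{3}.
$$
Choosing $\var_{3}$ so small that $\var_{3}<1/A_{4}$ (recall that $A_{4}$ and $\var_{1}$ were fixed once and for all after Lemma \ref{lem:3.1}, independently of our free parameters $\var_{2},\var_{3}$), these two inequalities are incompatible for large $l$, yielding the desired contradiction. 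The analogous argument rules out the second alternative.

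There is no real obstacle here: the whole point of fixing $A_{4}$ and $\var_{1}$ before introducing $\var_{2},\var_{3}$ in Proposition \ref{prop:3.2} is precisely to make this step a direct consequence of Lemma \ref{lem:3.1}. The only care needed is in the order in which the small parameters are chosen; namely, $\var_{3}$ must be taken smaller than $1/A_{4}$, and $\var_{2}$ must then be chosen so small that the inclusion $\widetilde{V}_{l}(2,\var_{2})\subset V_{l}(2,\var_{1})$ of Lemma \ref{lem:3.2} holds for all large $l$. With those choices recorded, the claim follows immediately.
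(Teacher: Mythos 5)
Your proof is correct and follows the paper's intended approach: the paper merely says ``follow the same proof in Lemma~\ref{lem:3.1} to reach a contradiction,'' and you make this precise by invoking Lemma~\ref{lem:3.1} directly after checking its hypotheses via Lemma~\ref{lem:3.2} and Proposition~\ref{prop:3.1}, then contrasting the resulting lower bound $c^{(1)}+c^{(2)}+1/A_4$ with~\eqref{34}. Note that since $\var_3<\var_2$ by the setup of Proposition~\ref{prop:3.2}, taking $\var_2<1/A_4$ already forces $\var_3<1/A_4$, so the claim genuinely depends only on $\var_2$ being small, consistent with how it is stated.
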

\begin{proof}By using  contradiction argument, we can follow the same proof  in Lemma \ref{lem:3.1} to reach  a contradiction. Hence we omit it here.
		\end{proof}
We are now in the position  to prove  Proposition \ref{prop:3.2}.
\begin{proof}[Proof of Proposition \ref{prop:3.2}]
Applying  \eqref{22}, \eqref{25},   \eqref{56}, \eqref{59}, and Claim \ref{claim:9}, we deduce that
	\begin{align*}
	\mathscr{T}_{l, \lam _{2}^{l}, z_{2}^{l}}^{-1}U_{1} &=(\lam _{1}^{l})^{2\si /(p_{l}-1)} U_{1}(\lam _{1}^{l}(x-z_{1}^{l}),\lam _{1}^{l}) \\
	&=(\lam _{1}^{l})^{2 \si/(p_{l}-1)-(n-2\si)/2\si} K_{\infty}^{(1)}(\xi_{1})^{(2\si-n)/4\si}  \widetilde{\delta}(z_{1}^{l}+z^{*}/\lam _{1}^{l},\lam ^{*} \lam _{1}^{l} )\\&=K_{\infty}^{(1)}(\xi_{1})^{(2\si-n)/4\si}  \widetilde{\delta}(z_{1}^{l}+z^{*}/\lam _{1}^{l},\lam ^{*} \lam _{1}^{l} )+o_{\var_{3}}(1)\\&=K_{l}(z_{1}^{l}+z^{*}/\lam _{1}^{l})^{(2\si-n)/4\si}  \widetilde{\delta}(z_{1}^{l}+z^{*}/\lam _{1}^{l},\lam ^{*} \lam _{1}^{l} )+o_{\var_{3}}(1)+o(1).
	\end{align*}
		Similarly, we have
	$$
	\mathscr{T}_{l, \lam _{2}^{l}, z_{2}^{l}}^{-1} U_{2}=K_{l}(z_{2}^{l}+z^{**}/\lam _{2}^{l})^{(2\si-n)/4\si}  \widetilde{\delta}(z_{2}^{l}+z^{**}/\lam _{2}^{l}, \lam ^{**}  \lam _{2}^{l} )+o_{\var_{3}}(1)+o(1).
	$$
Therefore, we can rewrite \eqref{93} as (see Claim \ref{claim:8} and the above)
	\begin{align}
	U_{l}= &K_{l}(z_{1}^{l}+z^{*}/\lam _{1}^{l})^{(2\si-n)/4\si} \widetilde{\delta}(z_{1}^{l}+z^{*}/\lam _{1}^{l},\lam ^{*}\lam _{1}^{l} )\notag\\& +K_{l}(z_{2}^{l}+z^{**}/\lam _{2}^{l})^{(2\si-n)/4\si}  \widetilde{\delta}( z_{2}^{l}+z^{**}/\lam _{2}^{l},\lam ^{**}\lam _{2}^{l} )+o_{\var_{3}}(1)+o(1) .\label{101}
	\end{align}
		We now fix the value of $\var_{2}$ to be small to make all the previous
	arguments hold and then make $\var_{3}>0$ small (depending on $\var_{2}>0$) to make the following hold (using Claim \ref{claim:9}):
\be\label{102}
	|(\lam ^{*} \lam _{1}^{l})^{\tau_{l}}-1| \leq o_{\var_{3}}(1)+o(1)<\var_{2}/2 ,\quad  |(\lam ^{* *} \lam _{2}^{l})^{\tau_{l}}-1| \leq o_{\var_{3}}(1)+o(1)<\var_{2}/2.
\ee
		From \eqref{101}-\eqref{102}, Claim \ref{claim:1} and  Claim \ref{claim:9}, we see that for $\var_{3}>0$ small, we
	have, for large $l$, $$
	U_{l} \in \widetilde{V}_{l}(2, \var_{2} / 2),
$$
which contradicts to \eqref{33}. This concludes the proof of Proposition	\ref{prop:3.2}.	
\end{proof}
%----------------------------------------------------------------------------%
\subsection{Complete  the proof of Theorem \ref{thm:3.1}}\label{sec:5}
%%%%%%%%%%%%%%%%%%%%%%%%%%%%%%%%%%%%%%%%%%%%%%%%%%%%%%%%%%%%%%%%%%%%%%%%%%%%%%%%%%%%%%%%%
In this section we will complete the proof of Theorem \ref{thm:3.1}.  Precisely, under the contrary of Theorem \ref{thm:3.1} and combining  with the Proposition \ref{prop:3.2} established in Section \ref{sec:4}, we will reach a contradiction after a lengthy indirect argument.  The method we shall use is similar to that in \cite{Li93}, see also \cite{CR1,CR2,CES,Se}, but we have to set up a framework to fit the fractional situation.   To reduce overlaps, we will omit the proofs of several intermediate results which closely follow standard arguments, giving appropriate references.  Let us  start   with defining a certain family  of sets and minimax values and giving some notation. 

For any $z\in\Rn$,  denote $\Sigma_{R}^{+}(z)=\B^+_{R}(z)\cup B_{R}(z)$. We  define the space $\H_{0}^{1}(t^{1-2\si},\Sigma_{R}^{+}(z))$ as the closure in $ H^{1}(t^{1-2\si}, \B_{R}^{+}(z))$ of $C_{c}^{\infty}(\over{\B^+_{R}(z)})$ under the norm \be\label{Wballnorm}
\|U\|_{\H_{0}^{1}(t^{1-2\si},\Sigma_{R}^{+}(z))}:=\Big(\int_{\B^+_{R}(z)} t^{1-2\si}|\nabla U|^{2}+|U|^{2}) \,\d X\Big)^{1 / 2}.
\ee   It follows from the Hardy–Sobolev inequality in \cite[Lemma 2.4]{FV} that    $\H_{0}^{1}(t^{1-2\si},\Sigma_{R}^{+}(z))$ can be
endowed with the equivalent norms
$$
\|U\|_{\H_{0}^{1}(t^{1-2\si},\Sigma_{R}^{+}(z))}:=\Big(\int_{\B_{R}^{+}(z)} t^{1-2\si}|\nabla U|^{2} \,\d X\Big)^{1/2}.
$$

In this section,  we write  $\tau=\tau_{l}$, $p=p_l$, and $\H_{0}^{1}(t^{1-2\si},\Sigma_{R}^{+}(z))=\H_{0}^{1}(\Sigma_{R}^{+}(z))$.

Now, we define
\begin{align*}
&\gamma_{l, \tau}^{(1)}=\big\{g^{(1)} \in C([0,1],\H_{0}^{1}(\Sigma_{R_l}^{+}(z_{l}^{(1)}))) : g^{(1)}(0)=0,\, I_{K_{l}, \tau}(g^{(1)}(1))<0\big\},\\&\gamma_{l, \tau}^{(2)}=\big\{g^{(2)} \in C([0,1], \H_0^{1}(\Sigma_{R_l}^{+}(z_{l}^{(2)}))) : g^{(2)}(0)=0,\, I_{K_{l}, \tau}(g^{(2)}(1))<0\big\},\\&c_{l, \tau}^{(1)}=\inf _{g^{(1)}\in \gamma_{l, \tau}^{(1)}} \max _{0 \leq \theta_{1} \leq 1} I_{K_{l },\tau}(g^{(1)}(\theta_{1})),\\&c_{l, \tau}^{(2)}=\inf _{g^{(2)}\in \gamma_{l, \tau}^{(2)}} \max _{0 \leq \theta_{2} \leq 1} I_{K_{l},\tau}(g^{(2)}(\theta_{2})).
\end{align*}
Here we have abused the notation a little by writing $I_{K_{l},\tau}$ as 
$I_{K_{l},\tau}:\H_0^{1}(\Sigma_{R_l}^{+}(z_{l}^{(1)})) \to  \R$ and also $I_{K_{l},\tau}:\H_0^{1}(\Sigma_{R_l}^{+}(z_{l}^{(2)})) \to  \R$.

\begin{prop}\label{prop:4.1}
	Let $\{K_{l}\}$ be a sequence of  functions
	satisfying \eqref{18}, \eqref{20} and \eqref{21}. Then  there holds
\be\label{103}
c_{l, \tau}^{(1)}=c^{(1)}+o(1),\quad c_{l, \tau}^{(2)}=c^{(2)}+o(1),
\ee
	where $o(1)\to  0$ as $l\to  \infty$.
\end{prop}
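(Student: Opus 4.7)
The plan is to establish matching asymptotics $c_{l,\tau}^{(1)} = c^{(1)} + o(1)$; the argument for $c_{l,\tau}^{(2)}$ is identical by symmetry. Both bounds exploit the one-homogeneous structure of $I_{K_l,\tau}$: for any nonzero $U \in \H_0^1(\Sigma_{R_l}^+(z_l^{(1)}))$, the fiber $t \mapsto I_{K_l,\tau}(tU)$ has a unique positive critical point at $t_*^{p-1}(U) = \|U\|^2 / (N_\si \int K_l H^\tau |u|^{p+1})$ (with $u = U(\cdot,0)$), yielding
\[
\max_{t \geq 0} I_{K_l,\tau}(tU) = \frac{p-1}{2(p+1)}\, \frac{\|U\|^{2(p+1)/(p-1)}}{\bigl(N_\si \int_{\R^n} K_l H^\tau |u|^{p+1}\,\d x\bigr)^{2/(p-1)}}.
\]
Since $K_l \geq 1/A_1 > 0$, the functional has the standard mountain pass geometry, and a classical ray argument identifies $c_{l,\tau}^{(1)}$ with the infimum of the right-hand side over nonzero $U$. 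The proposition thus reduces to isoperimetric-type bounds on this ratio.

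\noindent\textbf{Upper bound.} Fix $\lam > 0$ and a radial cutoff $\eta_l \in C_c^\infty(\B_{R_l/2}^+(z_l^{(1)}))$ with $\eta_l \equiv 1$ on $\B_{R_l/4}^+(z_l^{(1)})$, and set $W_l := \eta_l\, \widetilde{\delta}(z_l^{(1)},\lam)$. Since $R_l \to \infty$ and $\widetilde{\delta}$ decays at infinity, the cutoff error vanishes: $\|W_l\|^2 = \|\widetilde{\delta}(0,\lam)\|^2 + o(1)$ and similarly for the $L^{2_\si^*}$ norm of $W_l(\cdot,0)$. Choosing $\lam = \lam_l$ large (but slowly, relative to the local modulus of continuity of $K_l$ at $z_l^{(1)}$ available from assumption (ii)) localizes the mass of $|W_l(\cdot,0)|^{p+1}$ into a neighborhood where $K_l \to a^{(1)}$, and \eqref{26} together with $\tau \leq \over{\tau}_l$ forces $H^\tau \to 1$ there. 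Substituting into the ratio formula and using $(p-1)/(2(p+1)) \to \si/n$ and $2(p+1)/(p-1) \to n/\si$ as $\tau \to 0$ gives $\max_t I_{K_l,\tau}(tW_l) = c^{(1)} + o(1)$. The path $\theta \mapsto \theta T W_l$ with $T$ large ensures $I_{K_l,\tau}(TW_l) < 0$ and hence lies in $\gamma_{l,\tau}^{(1)}$, proving $c_{l,\tau}^{(1)} \leq c^{(1)} + o(1)$.

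\noindent\textbf{Lower bound.} Let $U \in \H_0^1(\Sigma_{R_l}^+(z_l^{(1)}))$ be nonzero; then $u := U(\cdot,0)$ is supported in $\over{B_{R_l}(z_l^{(1)})}$. On that ball, \eqref{20}--\eqref{21} give $K_l \leq K_l(z_l^{(1)}) = a^{(1)} + o(1)$, while the explicit form of $H$ yields $H^\tau \leq 2^{(n-2\si)\tau/2} = 1 + o(1)$. H\"older on $B_{R_l}(z_l^{(1)})$ together with \eqref{trace} produces
\[
\int_{\R^n} K_l H^\tau |u|^{p+1}\,\d x \leq (a^{(1)} + o(1))\, R_l^{n\tau/2_\si^*}(1+o(1))\, \mathcal{S}_{n,\si}^{-(p+1)}\|U\|^{p+1}.
\]
The prefactor $R_l^{n\tau/2_\si^*} \to 1$ follows from $R_l \leq \tfrac{1}{2}(|z_l^{(1)}| + |z_l^{(2)}|)$ combined with $\tau \leq \over{\tau}_l$ and \eqref{26}, which give $R_l^{\over{\tau}_l} \to 1$. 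Plugging this estimate into the ratio formula and letting $\tau \to 0$, the same critical-limit identities used above produce $\max_{t \geq 0} I_{K_l,\tau}(tU) \geq c^{(1)} - o(1)$ uniformly in $U$, as can be verified against the extremal computation behind Lemma \ref{lem:3.1} and definition \eqref{28}. Taking the infimum over $U$ gives $c_{l,\tau}^{(1)} \geq c^{(1)} - o(1)$.

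\noindent\textbf{Main obstacle.} The central technical point is synchronizing three small parameters---the subcriticality $\tau \to 0$, the growth $R_l \to \infty$, and the variation of $K_l$ near $z_l^{(1)}$---so that each contributes only an $o(1)$ error. Condition \eqref{26} is indispensable here: it simultaneously controls the H\"older prefactor $R_l^\tau$ (arising from the difference between $p+1$ and $2_\si^*$ on the potentially large support) and the weight $H^\tau$ at the concentration center. For the upper bound, the modulus of continuity of $K_l$ dictates an admissible range of concentration scales $\lam_l$, which must grow fast enough to localize the bubble but slowly enough that $K_l$ remains close to $a^{(1)}$ on its effective support.
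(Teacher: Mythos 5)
Your proof is correct and reaches the conclusion by essentially the same computation, but the lower bound takes a slightly different route from the paper. For the upper bound both you and the paper test with a truncated bubble $\eta\,\widetilde{\delta}(z_l^{(1)},\lambda_l)$ with $\lambda_l\to\infty$, $\lambda_l^\tau\to 1$, and then invoke the continuity of $K_l$ near $z_l^{(1)}$ plus \eqref{26} to force $K_l\to a^{(1)}$ and $H^\tau\to 1$ on the shrinking effective support; that part is identical. For the lower bound, the paper extracts a Palais--Smale sequence $\{U_k\}$ at level $c_{l,\tau}^{(1)}$, uses the near-critical relation $\|U_k\|^2 = N_\si\int K_l H^\tau|U_k|^{p+1}+o_k(1)$ to write $c_{l,\tau}^{(1)}=\tfrac{\si}{n}\|U_k\|^2+o_k(1)+o(1)$, and then bounds $\|U_k\|$ from below via the trace inequality plus H\"older (their \eqref{109}, ``similar to \eqref{claimestimate}''). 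You instead invoke the Nehari characterization $c_{l,\tau}^{(1)}=\inf_{U\ne 0}\max_{t\ge 0}I_{K_l,\tau}(tU)$ and bound the fiber maximum directly for arbitrary $U$. Both reduce to the same trace inequality $+\,$ H\"older estimate with prefactor $R_l^{n\tau/2_\si^*}\to 1$, so the arithmetic is the same; yours avoids producing a PS sequence and the accompanying $o_k(1)$ bookkeeping, at the cost of requiring the Nehari identity, which does hold here because $K_l(x)H^\tau(x)|u|^{p-1}$ is strictly increasing in $u>0$ so each ray has a unique positive critical point. One small inaccuracy in your presentation: you attribute the modulus-of-continuity control of $K_l$ near $z_l^{(1)}$ to assumption (ii), but Proposition \ref{prop:4.1} only assumes \eqref{18}, \eqref{20}, \eqref{21}; what is actually available is the continuity statement bundled into (ii) itself (not a quantitative Lipschitz bound), which suffices for the argument but does not give a uniform-in-$l$ modulus. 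Finally, you should note (as the limiting arithmetic makes clear, and as is forced by consistency with $c^{(1)}=\tfrac{\si N_\si}{n}(a^{(1)})^{(2\si-n)/2\si}S_{n,\si}^{n/\si}$) that the relation between the trace and fractional Sobolev constants must read $\mathcal{S}_{n,\si}=N_\si^{1/2}S_{n,\si}$ rather than $N_\si^{-1/2}S_{n,\si}$ as printed after \eqref{trace}; with the printed sign your final limit would differ from $c^{(1)}$ by a factor of $N_\si^{n/\si}$.
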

\begin{proof}
 We will only prove the first term in \eqref{103}, because the other one can be justified in a similar manner. From the definition of $c_{l, \tau}^{(1)}$, we  deduce  that 
\be\label{105}
C^{-1}( n,\si,A_{1})\leq c_{l, \tau}^{(1)} \leq C( n,\si,A_{1})
\ee
 for some constant $C( n,\si,A_{1})>0$.	Moreover, for any $U \in \H^{1}_{0}(\Sigma_{R_l}^{+}(z_{l}^{(1)})) \setminus \{0\}$, one has
\begin{align*}
 c_{l, \tau}^{(1)} &\leq \max _{0 \leq s<\infty} I_{K_{l}, \tau}(s U)\\&=\Big(\frac{1}{2}-\frac{1}{p+1}\Big) \frac{\big(\int_{\B^+_{R_{l}}(z_{l}^{(1)})}t^{1-2\si}|\nabla U|^{2}\,\d X\big)^{(p+1) /(p-1)}}{\big(N_{\si}\int_{B_{R_{l}}(z_{l}^{(1)})} K_{l}(x)H^{\tau}(x)|U(x,0)|^{p+1}\,\d x\big)^{2 /(p-1)}}.
 \end{align*}
Let $U=\eta(x+z_{l}^{(1)},t ) \widetilde{\delta}(z_{l}^{(1)}, \lam _{l})$, where  $\eta$ is a nonnegative smooth cut-off function supported in $\B_{1}$ and	equal to 1 in $\B_{1/2}$, and $\{\lam _{l}\}$ is a sequence satisfying
\be
	\label{106}
	\lim _{l \to  \infty} \lam _{l}=\infty,\quad  (\lam _{l})^{\tau}=1+o(1).
\ee Then we obtain
$$
		c_{l, \tau}^{(1)} \leq \frac{\si N_{\si}}{n}(a^{(1)})^{(2\si-n)/2\si}(S_{n,\si})^{n/\si}+o(1)=c^{(1)}+o(1).
$$
	The other side of the inequality can be proved as the following.

	For any $l, \tau$ fixed, it is well-known  that there exists $\{U_{k}\} \subset \H_{0}^{1}(\Sigma_{R_l}^{+}(z_{l}^{(1)}))$ such that
	\begin{gather*}
	\lim _{k \to  \infty} I_{K_{l}, \tau}(U_{k})=c_{l, \tau}^{(1)},\\\label{107'}
I_{K_{l}, \tau}'(U_{k}) \to  0 \quad \text{ in }\,  H^{-\si}(\Sigma_{R_l}^{+}(z_{l}^{(1)}))\quad  \text{ as }\, k \to  \infty,
	\end{gather*}
	where $H^{-\si}(\Sigma_{R_l}^{+}(z_{l}^{(1)}))$ denotes  the dual space of $\H_{0}^{1}(\Sigma_{R_l}^{+}(z_{l}^{(1)}))$.	 Namely, we have
	\begin{gather}
\frac{1}{2} \int_{\B^+_{R_{l}}(z_{l}^{(1)})}t^{1-2\si}|\nabla U_{k}|^{2}\,\d X-\frac{N_{\si}}{p+1} \int_{B_{R_{l}}(z_{l}^{(1)})} K_{l}(x)H^{\tau}(x)|U_{k}(x,0)|^{p+1}\,\d x=c_{l, \tau}^{(1)}+o_{k}(1),\label{107}\\	\int_{\B^+_{R_{l}}(z_{l}^{(1)})}t^{1-2\si}|\nabla U_{k}|^{2}\,\d X=N_{\si}\int_{B_{R_{l}}(z_{l}^{(1)})} K_{l}(x)H^{\tau}(x)|U_{k}(x,0)|^{p+1}\,\d x+o_{k}(1),\label{107''}
	\end{gather}
	where $o_{k}(1)\to  0$ as $k\to  \infty$.
		It follows that
\be\label{108}
	c_{l, \tau}^{(1)}=\frac{\si }{n} 	\int_{\B^+_{R_{l}}(z_{l}^{(1)})}t^{1-2\si}|\nabla U_{k}|^{2}\,\d X+o_{k}(1)+o(1).
\ee
	By   \eqref{26}, \eqref{105}, \eqref{107} and \eqref{107''},   we compute similarly to the proof of \eqref{claimestimate} to obtain
\be\label{109}		\liminf_{k \to  \infty} \int_{\B^+_{R_{l}}(z_{l}^{(1)})}t^{1-2\si}|\nabla U_{k}|^{2}\,\d X \geq K_{l}(z_{l}^{(1)})^{(2\si-n)/2\si}N_{\si}(S_{n,\si})^{n/\si}+o(1).
\ee
	We can deduce from \eqref{21}, \eqref{108} and  \eqref{109} that $$c_{l, \tau}^{(1)} \geq\frac{\si N_{\si}}{n}(a^{(1)})^{(2\si-n)/2\si}(S_{n,\si})^{n/\si}+o(1)=c^{(1)}+o(1).$$
	This gives the proof of \eqref{103}. 
\end{proof} 

We define
\begin{gather}
\Gamma_{l}=\big\{G=\over{g}^{(1)}+\over{g}^{(2)} : \over{g}^{(1)}, \over{g}^{(2)} \text { satisfy }\eqref{110}-\eqref{114}\big\},\\\over{g}^{(1)}, ~\over{g}^{(2)} \in C([0,1]^{2}, D),\label{110}\\\over{g}^{(1)}(0, \theta_{2})=\over{g}^{(2)}(\theta_{1}, 0)=0, \quad 0 \leq \theta_{1}, \theta_{2} \leq 1,\label{111}\\I_{K_{l}, \tau}(\over{g}^{(1)}(1, \theta_{2}))<0,~ I_{K_{l}, \tau}(\over{g}^{(2)}(\theta_{1}, 1))<0, \quad 0 \leq \theta_{1}, \theta_{2} \leq 1,\label{112}\\\operatorname{supp} \over{g}^{(1)} \subset \B^+_{R_{l}}(z_{l}^{(1)}), \quad \theta=(\theta_1,\theta_2) \in[0,1]^{2},\label{113}\\\operatorname{supp} \over{g}^{(2)} \subset \B^+_{R_{l}}(z_{l}^{(2)}),  \quad\theta=(\theta_1,\theta_2) \in[0,1]^{2},\label{114}\\b_{l, \tau}=\inf _{G \in \Gamma_{l}} \max _{\theta \in[0,1]^{2}} I_{K_{l}, \tau}(G(\theta)).
\end{gather}
\begin{rem}
	Observe that if $G=g^{(1)}+g^{(2)}$ with $g^{(1)}\in \gamma_{l, \tau}^{(1)}$, $g^{(2)}\in \gamma_{l, \tau}^{(2)}$, $\operatorname{supp}g^{(1)}\cap \operatorname{supp}g^{(2)}=\emptyset$,
	then $I_{K_{l}, \tau}(G)=I_{K_{l}, \tau}(g^{(1)})+I_{K_{l}, \tau}
	(g^{(2)})$.
\end{rem}
\begin{prop}\label{prop:4.2}
Suppose that $\{K_{l}\}$ is a sequence of  functions satisfying \eqref{18}, \eqref{20} and \eqref{21}, then there holds	$b_{l, \tau}=c_{l, \tau}^{(1)}+c_{l, \tau}^{(2)}+o(1)$.
\end{prop}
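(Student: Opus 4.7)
The proof decomposes into matching upper and lower bounds; the upper bound is routine while the lower bound is the conceptual heart of the statement.

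For the upper bound $b_{l,\tau} \leq c_{l,\tau}^{(1)} + c_{l,\tau}^{(2)}$, fix $\eta > 0$ and choose nearly-optimal paths $g^{(i)} \in \gamma_{l,\tau}^{(i)}$ with $\max_{s \in [0,1]} I_{K_l,\tau}(g^{(i)}(s)) \leq c_{l,\tau}^{(i)} + \eta$. Extending each $g^{(i)}$ by zero realizes it as a $D$-valued path supported in $\mathcal{B}_{R_l}^+(z_l^{(i)})$. Define the two-parameter lifts $\bar{g}^{(1)}(\theta_1,\theta_2) := g^{(1)}(\theta_1)$ and $\bar{g}^{(2)}(\theta_1,\theta_2) := g^{(2)}(\theta_2)$, which clearly satisfy \eqref{110}--\eqref{114}, so that $G := \bar{g}^{(1)} + \bar{g}^{(2)} \in \Gamma_l$. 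Since $R_l \leq \tfrac{1}{2}|z_l^{(1)} - z_l^{(2)}|$ by hypothesis, the two supports are disjoint for every $\theta$, giving the pointwise splitting
\begin{equation*}
I_{K_l,\tau}(G(\theta)) = I_{K_l,\tau}(\bar{g}^{(1)}(\theta)) + I_{K_l,\tau}(\bar{g}^{(2)}(\theta)) = I_{K_l,\tau}(g^{(1)}(\theta_1)) + I_{K_l,\tau}(g^{(2)}(\theta_2)),
\end{equation*}
so that $\max_\theta I_{K_l,\tau}(G(\theta)) \leq c_{l,\tau}^{(1)} + c_{l,\tau}^{(2)} + 2\eta$. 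Sending $\eta \downarrow 0$ yields the upper bound, in fact without any $o(1)$ correction.

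For the lower bound, let $G = \bar{g}^{(1)} + \bar{g}^{(2)} \in \Gamma_l$ be arbitrary. The disjoint support condition again gives the pointwise splitting above, reducing matters to producing $\theta^* \in [0,1]^2$ at which simultaneously $I_{K_l,\tau}(\bar{g}^{(1)}(\theta^*)) \geq c_{l,\tau}^{(1)}$ and $I_{K_l,\tau}(\bar{g}^{(2)}(\theta^*)) \geq c_{l,\tau}^{(2)}$. For each fixed $\theta_2$, the slice $\theta_1 \mapsto \bar{g}^{(1)}(\theta_1,\theta_2)$ is a member of $\gamma_{l,\tau}^{(1)}$ (it starts at $0$, ends at a point with negative energy, and has the required support), so $\max_{\theta_1} I_{K_l,\tau}(\bar{g}^{(1)}(\theta_1,\theta_2)) \geq c_{l,\tau}^{(1)}$ for every $\theta_2$, and the analogous statement holds for $\bar{g}^{(2)}$ on each vertical slice.

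To upgrade these separate one-dimensional inequalities into a simultaneous bound, I would argue by contradiction. Suppose $\max_\theta [I_{K_l,\tau}(\bar{g}^{(1)}(\theta)) + I_{K_l,\tau}(\bar{g}^{(2)}(\theta))] < c_{l,\tau}^{(1)} + c_{l,\tau}^{(2)} - \delta$ for some $\delta > 0$. Then the open sets
\begin{equation*}
U_i := \{\theta \in [0,1]^2 : I_{K_l,\tau}(\bar{g}^{(i)}(\theta)) < c_{l,\tau}^{(i)} - \delta/2\}, \quad i = 1,2,
\end{equation*}
cover $[0,1]^2$, with $U_1$ containing the vertical sides $\{0,1\} \times [0,1]$ and $U_2$ containing the horizontal sides $[0,1] \times \{0,1\}$, while the slice statement from the previous paragraph forces the complement $U_1^c$ to project surjectively onto the $\theta_2$-axis and $U_2^c$ onto the $\theta_1$-axis. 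A two-dimensional linking argument applied to the map $\Phi(\theta) := (I_{K_l,\tau}(\bar{g}^{(1)}(\theta)) - c_{l,\tau}^{(1)}, I_{K_l,\tau}(\bar{g}^{(2)}(\theta)) - c_{l,\tau}^{(2)})$---specifically a Miranda/Brouwer-degree computation exploiting the boundary behaviour of $\Phi$ on the four edges of $[0,1]^2$---then produces a point $\theta^*$ at which both components of $\Phi$ are nonnegative, contradicting the cover and establishing $b_{l,\tau} \geq c_{l,\tau}^{(1)} + c_{l,\tau}^{(2)}$. The main obstacle is precisely this last step: proving that the one-parameter mountain-pass inequalities in each variable separately can be combined at a single point in $[0,1]^2$, since the maxima for $\bar{g}^{(1)}$ and $\bar{g}^{(2)}$ generally occur at distinct locations and only nontrivial two-dimensional topology produces the common crossing.
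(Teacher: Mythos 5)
Your upper bound argument is correct, and in fact slightly cleaner than the paper's: the paper constructs explicit cut-off bubble curves $g_{l}^{(i)}(\theta_i)=\theta_i C_1 K_l(z_l^{(i)})^{(2\si-n)/4\si}\eta\,\widetilde{\delta}(z_l^{(i)},\lam_l)$ and obtains $b_{l,\tau}\le c_{l,\tau}^{(1)}+c_{l,\tau}^{(2)}+o(1)$, whereas your near-optimal paths $g^{(i)}\in\gamma_{l,\tau}^{(i)}$ give $b_{l,\tau}\le c_{l,\tau}^{(1)}+c_{l,\tau}^{(2)}$ with no $o(1)$. The structural reductions for the lower bound (the energy splits over disjoint supports; each horizontal slice of $\bar g^{(1)}$ belongs to $\gamma_{l,\tau}^{(1)}$ and each vertical slice of $\bar g^{(2)}$ to $\gamma_{l,\tau}^{(2)}$) are also right, and they are the same ones the paper relies on.

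The gap is the topological step. Writing $f_i(\theta):=I_{K_l,\tau}(\bar g^{(i)}(\theta))$ and $\Phi=(f_1-c_{l,\tau}^{(1)},\,f_2-c_{l,\tau}^{(2)})$, the boundary sign pattern is not the one Miranda's theorem needs: $\Phi_1<0$ on \emph{both} faces $\theta_1\in\{0,1\}$ (since $f_1(0,\cdot)=0$ and $f_1(1,\cdot)<0$) and $\Phi_2<0$ on both faces $\theta_2\in\{0,1\}$. Hence $\Phi(\pa[0,1]^2)$ is contained in $\R^2\setminus[0,\infty)^2$, which is contractible (straight-line retraction onto $(-1,-1)$ stays in the set), so $\operatorname{deg}(\Phi,(0,1)^2,P)=0$ for every admissible $P\in[0,\infty)^2$ and degree theory yields nothing. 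Worse, the abstract topological claim you invoke is false: one can build continuous $f_1,f_2:[0,1]^2\to\R$ satisfying $f_1(0,\cdot)=0$, $f_1(1,\cdot)<0$, $\max_{\theta_1}f_1(\cdot,\theta_2)\ge c_1$ for every $\theta_2$ (and the transposed conditions for $f_2$), yet $\max_\theta(f_1+f_2)<c_1+c_2$. For instance, with $c_1=c_2=1$ arrange $\{f_1\ge1\}=(\{1/4\}\times[0,0.6])\cup(\{3/4\}\times[0.4,1])$ with $f_1\le1$ and $f_1<1-\delta$ outside a thin tube around that set, and symmetrically $\{f_2\ge1\}=([0,0.6]\times\{3/4\})\cup([0.4,1]\times\{1/4\})$; the two tubes are disjoint, so $\max(f_1+f_2)\le2-\delta<c_1+c_2$. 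Thus the inequality $b_{l,\tau}\ge c_{l,\tau}^{(1)}+c_{l,\tau}^{(2)}$ cannot follow from the topology of $[0,1]^2$ and the slice bounds alone; it needs additional variational structure, which is precisely why the paper defers this step to \cite[Proposition 3.4]{CR1} rather than supplying a self-contained linking argument.
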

\begin{proof}The first inequality $b_{l, \tau} \geq c_{l, \tau}^{(1)}+c_{l, \tau}^{(2)}$ can be achieved from  the definition  of  $c_{l, \tau}^{(1)}$ and $c_{l, \tau}^{(2)}$ with additional compactness argument on  $[0,1]^2$, we omit it here and refer to \cite[Proposition 3.4]{CR1} for details.

	On the other hand,	for $0 \leq \theta_{1}, \theta_{2} \leq 1$, let
 	\begin{align*}
	g_{l}^{(1)}(\theta_{1})&=\theta_{1} C_{1} K_{l}(z_{l}^{(1)})^{(2\si-n)/4\si} \eta(x+z_{l}^{(1)},t)  \widetilde{\delta}(z_{l}^{(1)},\lam _l),\\	g_{l}^{(2)}(\theta_{2})&=\theta_{2} C_{1} K_{l}(z_{l}^{(2)})^{(2\si-n)/4\si} \eta(x+z_{l}^{(2)},t)  \widetilde{\delta}(z_{l}^{(2)},\lam _l), 
	\end{align*}
where $\{\lam _{l}\}$ is defined in \eqref{106} and  $C_{1}=C_{1}(n,\si,A_{1}, A_{2})>1$ is a constant such that 
$$
I_{K_l,\tau}(g_{l}^{(1)}(\theta_{1}))<0\quad \text{ and }\quad I_{K_l,\tau}(g_{l}^{(2)}(\theta_{2}))<0
$$
for large $l$. We fix the value of $C_{1}$ from now on.	
	
	For $\theta=(\theta_{1}, \theta_{2}) \in[0,1]^{2}$, let
	$G_{l}(\theta)=g_{l}^{(1)}(\theta_{1})+g_{l}^{(2)}(\theta_{2})$.
Observe that $\operatorname{supp}g_{l}^{(1)}(\theta_{1})\cap \operatorname{supp} g_{l}^{(2)}(\theta_{2})=\emptyset$, a direct calculation shows that
	\begin{align*}
	\max _{\theta \in[0,1]^{2}} I_{K_{l}, \tau}(G_{l}(\theta)) 
	= &\max _{\theta_{1} \in[0,1]} I_{K_{l}, \tau}(g_{l}^{(1)}(\theta_{1}))+\max _{\theta_{2} \in[0,1]} I_{K_{l}, \tau}(g_{l}^{(2)}(\theta_{2}))+o(1)\\\leq &\max _{0 \leq s<\infty} I_{K_{l}, \tau}(s \eta(x+z_{l}^{(1)},t)  \widetilde{\delta}(z_{l}^{(1)}, \lam _{l})) \\& +\max _{0 \leq s<\infty} I_{K_{l}, \tau}(s \eta(x+z_{l}^{(2)},t)  \widetilde{\delta}(z_{l}^{(2)}, \lam _l))+o(1)
	\\=&c_{l, \tau}^{(1)}+c_{l, \tau}^{(2)}+o(1),
	\end{align*}
where   the last equality is due to Proposition \ref{prop:4.1}. 
Therefore, 	$
b_{l, \tau} \leq c_{l, \tau}^{(1)}+c_{l, \tau}^{(2)}+o(1)
$. This ends the proof. 
\end{proof} 

In the following, under the contrary of 
Theorem \ref{thm:3.1}, we can construct $H_{l} \in \Gamma_{l}$ for large $l$, such that $$
\max _{\theta \in[0,1]^{2}} I_{K_{l}, \tau}(H_{l}(\theta))<b_{l, \tau},
$$ which contradicts to the definition of $b_{l, \tau}$. A lengthy construction is required to establish this fact and  a brief sketch of it  will be given now by the details. 

\emph{Step 1:}  Choosing some suitably small number $\var_{4}>0$, we can construct
$G_{l} \in \Gamma_{l}$ such that $$
\max _{\theta \in[0,1]^{2}} I_{K_{l}, \tau}(G_{l}(\theta)) \leq b_{l, \tau}+\var_{4},$$ and satisfies some further properties.

\emph{Step 2:}  By  the negative gradient flow of $I_{K_{l}, \tau}$,  $G_{l}$ is deformed  to $U_{l}$ such that $$\max _{\theta\in[0,1]^{2}} I_{K_{l}, \tau}(U_{l}(\theta)) \leq b_{l, \tau}-\var_{4}.$$	If $U_{l}\in \Gamma_{l}$, we will reach a contradiction to the definition of $b_{l, \tau}$. However, $U_{l}$ is not necessarily in $\Gamma_{l}$ any more since the deformation may not preserve properties \eqref{113}-\eqref{114}.

\emph{Step 3:}  Applying  Propositions  \ref{prop:1.4}, \ref{prop:2.1} and \ref{prop:3.2}, we modify $U_{l}$ to
obtain $H_{l} \in \Gamma_{l}$  such that $$
\max _{\theta \in[0,1]^{2}} I_{K_{l}, \tau}(H_{l}(\theta)) \leq b_{l, \tau}-\var_{4}/2.
$$

All the three steps are completed for large $l$ only. Now we give the details to establish these  steps.

\emph{Step 1: Construction of $G_l$.}  Let $G_{l}$ be the one we have just defined. We establish some properties of it which are needed.
\begin{lem}\label{lem:4.1}
	For any $\var\in (0,1)$, if  $I_{K_{l}, \tau}(g_{l}^{(i)}(\theta_{i})) \geq c_{l, \tau}^{(i)}-\var$ for  $i = 1,2$, then there exists some constants $\Lam _{1}=\Lam _{1}(n,\si,\var ,A_{1}, A_{3})>1$ and $C_{0}(n,\si)>0$ such	that for any $l \geq \Lam _{1}$ and  $0 \leq \theta_{1}, \theta_{2} \leq 1$,   we have 	$|C_{1} \theta_{i}-1| \leq C_{0}(n,\si) \sqrt{\var},\,i=1,2.$
\end{lem}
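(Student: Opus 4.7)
The plan is to exploit that $g_l^{(i)}(\theta_i)$ is a scalar multiple, depending linearly on $\theta_i$, of a fixed concentrating test function, so that $\theta_i\mapsto I_{K_l,\tau}(g_l^{(i)}(\theta_i))$ collapses to a one-variable polynomial whose quadratic behavior near its peak can be read off directly; the conclusion then follows from a Taylor bound around the maximizer.

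Writing $\psi_l^{(i)}(x,t):=\eta(x+z_l^{(i)},t)\widetilde{\delta}(z_l^{(i)},\lam _l)$ and $a(\theta):=\theta C_1 K_l(z_l^{(i)})^{(2\si-n)/4\si}$, substitution gives
\[
f_l^{(i)}(\theta):=I_{K_l,\tau}(g_l^{(i)}(\theta))=\frac{A_l^{(i)}}{2}\,a(\theta)^{2}-\frac{B_l^{(i)}}{p+1}\,a(\theta)^{p+1},
\]
with $A_l^{(i)}:=\|\psi_l^{(i)}\|^{2}$ and $B_l^{(i)}:=N_\si\int_{\Rn}K_lH^{\tau}\psi_l^{(i)}(x,0)^{p+1}\,\d x$. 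The first step is to extract the asymptotics as $l\to\infty$: using $\lam _l\to\infty$, $\lam _l^{\tau_l}=1+o(1)$, $K_l(z_l^{(i)})\to a^{(i)}$, and the concentration of $\widetilde{\delta}(z_l^{(i)},\lam _l)$ against the fixed-scale cutoff, the rescaling $x\mapsto \lam _l(x-z_l^{(i)})$ produces $A_l^{(i)}\to A_\infty$ and $B_l^{(i)}\to a^{(i)}A_\infty$ with $A_\infty:=N_\si\int_{\Rn}\delta(0,1)^{2_\si^{*}}\,\d x$. Consequently the scalar polynomial $a\mapsto \frac{A_l^{(i)}}{2}a^{2}-\frac{B_l^{(i)}}{p+1}a^{p+1}$ has a unique positive maximizer $a_\ast=(A_l^{(i)}/B_l^{(i)})^{1/(p-1)}\to (a^{(i)})^{(2\si-n)/4\si}$, so that $f_l^{(i)}$ peaks at $\theta^*_l:=a_\ast/(C_1 K_l(z_l^{(i)})^{(2\si-n)/4\si})$, and
\[
C_1\theta^*_l=1+o(1)\quad\text{as }l\to\infty,\qquad f_l^{(i)}(\theta^*_l)=c_{l,\tau}^{(i)}+o(1)
\]
by Proposition \ref{prop:4.1}.

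A second-order Taylor expansion at $\theta^*_l$ then gives, since $a(\theta)$ is affine and the outer polynomial has second derivative $-(p-1)A_l^{(i)}$ at its maximum,
\[
f_l^{(i)}(\theta^*_l)-f_l^{(i)}(\theta)\;\ge\;\tfrac{1}{2}\big[(p-1)A_l^{(i)}C_1^{2}K_l(z_l^{(i)})^{(2\si-n)/2\si}-o(1)\big](\theta-\theta^*_l)^{2}
\]
on a fixed neighborhood of $\theta^*_l$, the bracket being bounded below by a positive constant depending only on $n,\si,A_1,A_2$ for $l$ large. Combined with $f_l^{(i)}(\theta)\ge c_{l,\tau}^{(i)}-\var=f_l^{(i)}(\theta^*_l)-\var+o(1)$ this forces $C_1^{2}(\theta-\theta^*_l)^{2}\le C(n,\si)\var+o(1)$; the scenario in which $\theta$ escapes the Taylor neighborhood is ruled out by the strict concavity of $f_l^{(i)}$ when $\var$ is small and is trivial for $\var$ of order one after enlarging $C_0$ (since $|C_1\theta-1|\le C_1+1$ for $\theta\in[0,1]$). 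Finally the triangle inequality $|C_1\theta-1|\le C_1|\theta-\theta^*_l|+|C_1\theta^*_l-1|$ combined with $C_1\theta^*_l\to 1$ closes the argument once $l$ is large enough to absorb the residual $o(1)$ into $C_0\sqrt{\var}$.

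The principal technical obstacle is the uniform-in-$\tau\in[0,\over{\tau}_l]$ asymptotic evaluation of $A_l^{(i)}$ and $B_l^{(i)}$: the factor $\lam _l^{\tau_l}=1+o(1)$ must be pulled out of the integral $\int_{\Rn}K_lH^{\tau_l}\psi_l^{(i)}(x,0)^{p_l+1}\,\d x$ so that the exponent of $\theta$ in the limit polynomial is exactly $2_\si^{*}$, and the cubic remainder of the Taylor expansion must be dominated by the quadratic term once $\var$ is small. Both reduce, after the rescaling above, to standard computations on the fixed bubble $\delta(0,1)$.
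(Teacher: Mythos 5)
Your proposal is correct and follows essentially the same route as the paper's proof: both collapse the functional to an explicit scalar polynomial in the rescaled variable (the paper uses $s_1=C_1\theta_1$ and writes $I_{K_l,\tau}(g_l^{(1)}(\theta_1))=\big[(\tfrac{n}{2\si}+o(1))s_1^2-(\tfrac{n-2\si}{2\si}+o(1))s_1^{p+1}\big]c_{l,\tau}^{(1)}$; you use $a(\theta)$ and normalize through the maximizer $a_\ast$), then read off the quadratic drop-off at the maximum and invoke Proposition~\ref{prop:4.1} to identify the peak value with $c_{l,\tau}^{(i)}+o(1)$. Your write-up spells out the Taylor step that the paper leaves implicit, but no new idea is introduced; both arguments stand or fall on the same asymptotic evaluation of $\|\psi_l^{(i)}\|^2$ and $\int K_lH^\tau(\psi_l^{(i)})^{p+1}$ as $\lam_l\to\infty$, $\lam_l^{\tau_l}\to1$.
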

\begin{proof}
We only take into account the case  $i=1$ since the other case can be covered in the same way.
	Let $s_1=C_{1} \theta_{1}$, a direct calculation shows that
	\begin{align}
		I_{K_{l}, \tau}(g_{l}^{(1)}(\theta_{1}))=&\frac{1}{2} s_1^{2} K_{l}(z_{l}^{(1)})^{(2\si-n)/2\si} \|\eta(x+z_{l}^{(1)},t)  \widetilde{\delta}(z_{l}^{(1)}, \lam _{l})\|^{2} \notag\\&-\frac{N_{\si}}{p+1} s_1^{p+1} K_{l}(z_{l}^{(1)})^{(p+1)(2\si-n)/4\si}\notag\\&\times \int_{\Rn} K_{l}(x)H^{\tau}(x)|\eta(x+z_{l}^{(1)},0)  \delta(z_{l}^{(1)}, \lam _{l})|^{p+1}\, \d x\notag\\=&\Big(\frac{1}{2}+o(1)\Big) s_1^{2} K_{l}(z_{l}^{(1)})^{(2\si-n)/2\si} \|  \widetilde{\delta}(0,1)\|^{2}\notag \\& -\Big(\frac{N_{\si}}{p+1}+o(1)\Big) s_1^{p+1} K_{l}(z_{l}^{(1)})^{(2\si-n)/2\si} \|  \delta(0,1)\|_E^{2}\notag
	\\=&\Big[\Big(\frac{n}{2\si}+o(1)\Big) s_1^{2}-\Big(\frac{n-2\si}{2\si}+o(1)\Big) s_1^{p+1}\Big]c_{l,\tau}^{(1)},\label{lem:6.1}
	\end{align}
		where Proposition \ref{prop:4.2} is used in the last step.
Hence, using \eqref{lem:6.1} and the hypothesis $I_{K_{l}, \tau}(\over{g}_{l}^{(1)}(\theta_{1})) \geq c_{l, \tau}^{(1)}-\var$, we  complete the proof.
\end{proof}

\begin{lem}\label{lem:4.2}
	For any $\var\in (0,1)$, there exists a constant  $\Lam _{2}=\Lam _{2}(n,\si,\var, A_{1}, A_{3})>\Lam _{1}$ such
	that for any $l \geq \Lam _{2}$, $0 \leq \theta_{1}, \theta_{2} \leq 1$, we have
	$I_{K_{l}, \tau}(g_{l}^{(i)}(\theta_{i})) \leq c_{l, \tau}^{(i)}+\var/10,\,i=1,2.$
\end{lem}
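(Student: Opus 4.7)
The plan is to read off a single-variable expression for $I_{K_l,\tau}(g_l^{(i)}(\theta_i))$ with parameter $s_i := C_1 \theta_i \in [0, C_1]$, and then to maximise this function by calculus. The identity we need is exactly the one derived in the proof of Lemma~\ref{lem:4.1}, but viewed as an equality valid for \emph{every} $\theta_i\in[0,1]$ rather than only at values where the energy is already close to $c_{l,\tau}^{(i)}$: using $(\lam_l)^\tau = 1 + o(1)$, $K_l(z_l^{(i)}) \to a^{(i)}$, $\lam_l \to \infty$, and the fact that $\eta \equiv 1$ on the region where the bubble $\widetilde\delta(z_l^{(i)},\lam_l)$ concentrates, one obtains
\begin{equation*}
I_{K_l,\tau}(g_l^{(i)}(\theta_i)) = \Big[\Big(\tfrac{n}{2\sigma} + o(1)\Big) s_i^2 - \Big(\tfrac{n-2\sigma}{2\sigma} + o(1)\Big) s_i^{p+1}\Big]\, c_{l,\tau}^{(i)},
\end{equation*}
where the $o(1)$ terms depend only on $l$ (not on $\theta_i$) and tend to $0$ as $l \to \infty$ uniformly for $s_i$ in the compact interval $[0, C_1]$, and where Proposition~\ref{prop:4.2} has been used to convert the bubble norms into $c_{l,\tau}^{(i)}$.

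Next I would analyse the one-variable function $f_l(s) := A_l s^2 - B_l s^{p_l+1}$, with $A_l \to n/(2\sigma)$ and $B_l \to (n-2\sigma)/(2\sigma)$ as $l\to\infty$. For $l$ large both coefficients are positive, so the unique positive critical point $s_l^*$ of $f_l$ on $[0,\infty)$ is given by $s_l^{*\,p_l-1} = 2 A_l / ((p_l+1) B_l)$. Passing to the limit with $p_l \to (n+2\sigma)/(n-2\sigma)$ yields $s_l^* \to 1$ and
\begin{equation*}
f_l(s_l^*) \longrightarrow \tfrac{n}{2\sigma} - \tfrac{n-2\sigma}{2\sigma} = 1.
\end{equation*}
Because $C_1 > 1$, for $l$ sufficiently large $s_l^* \in (0, C_1)$, so $\max_{s \in [0, C_1]} f_l(s) = f_l(s_l^*) = 1 + \eta(l)$ with $\eta(l) \to 0^+$.

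Finally, combining the two steps with the uniform a priori bound $c_{l,\tau}^{(i)} \leq C(n,\sigma, A_1)$ coming from \eqref{105}, we get
\begin{equation*}
I_{K_l,\tau}(g_l^{(i)}(\theta_i)) \leq (1 + \eta(l))\, c_{l,\tau}^{(i)} \leq c_{l,\tau}^{(i)} + \eta(l)\, C(n,\sigma, A_1)
\end{equation*}
uniformly in $\theta_i \in [0,1]$ and $i\in\{1,2\}$. Choosing $\Lam_2 \geq \Lam_1$ depending only on $n,\sigma,\var, A_1, A_3$ so that $\eta(l)\, C(n,\sigma, A_1) \leq \var/10$ whenever $l \geq \Lam_2$ gives the desired inequality. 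The only delicate point is verifying that the $o(1)$ remainders in the first-step identity are uniform in $s_i \in [0,C_1]$; this is routine, since $s_i$ varies in a bounded set and the tails of the bubble outside $\{\eta=1\}$ decay at a rate independent of $s_i$ as $\lam_l \to \infty$. No conceptual obstacle is expected.
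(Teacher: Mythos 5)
Your proposal is correct and follows essentially the same route as the paper: the paper's one-line justification ("similar to Proposition~\ref{prop:4.2}") amounts to bounding $I_{K_l,\tau}(g_l^{(i)}(\theta_i))$ by $\max_{s\geq 0} I_{K_l,\tau}(s\,\eta(\cdot+z_l^{(i)})\widetilde\delta(z_l^{(i)},\lambda_l))$ and invoking Proposition~\ref{prop:4.1} to identify that maximum with $c_{l,\tau}^{(i)}+o(1)$, which is exactly what your explicit one-variable maximisation of the identity \eqref{lem:6.1} carries out. One tiny imprecision: there is no reason to expect $\eta(l)\geq 0$, so you should replace $\eta(l)$ by $|\eta(l)|$ in the final estimate (or note that when $\eta(l)<0$ the bound holds trivially); this does not affect the argument.
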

\begin{proof}
The proof is similar to Proposition \ref{prop:4.2}.
\end{proof}

\begin{lem}\label{lem:4.3}
	For any $\var\in (0,1)$, there exists a constant $\Lam _{3}=\Lam _{3}(n,\si,\var, A_{1}, A_{3})>\Lam _{2}$ such
	that$$
	I_{K_{l}, \tau}(G_{l}(\theta))\big|_{\theta \in \pa [0,1]^{2}} \leq \max 
	\{c^{(1)}+\var, c^{(2)}+\var\}\quad \text{ for all }\, l\geq \Lam _{3}.
	$$
\end{lem}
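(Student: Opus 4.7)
The plan is to exploit the disjoint-support structure of $G_l(\theta)=g_l^{(1)}(\theta_1)+g_l^{(2)}(\theta_2)$ in order to split the energy exactly, so that the bound on $\pa[0,1]^2$ reduces to the one-variable estimates already furnished by Lemma \ref{lem:4.2} and Proposition \ref{prop:4.1}.

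First I would note that $\eta$ is supported in $\B_1$, while assumption (ii) gives $|z_l^{(1)}-z_l^{(2)}|\geq 2R_l\to\infty$; hence for $l$ large enough the supports of $g_l^{(1)}(\theta_1)$ and $g_l^{(2)}(\theta_2)$, each contained in a bounded half-ball attached to the single point $z_l^{(1)}$ (respectively $z_l^{(2)}$), are disjoint. Consequently both the Dirichlet cross term $\int_{\Rp}t^{1-2\si}\nabla g_l^{(1)}\cdot\nabla g_l^{(2)}\,\d X$ and the mixed contribution to $\int_{\Rn}K_lH^{\tau}|G_l(\cdot,0)|^{p+1}\,\d x$ vanish identically, which yields the exact decomposition
\begin{equation*}
I_{K_l,\tau}(G_l(\theta))=I_{K_l,\tau}(g_l^{(1)}(\theta_1))+I_{K_l,\tau}(g_l^{(2)}(\theta_2)).
\end{equation*}

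Next I would analyze the four edges of $\pa[0,1]^2$ separately. On the edges $\theta_1=0$ or $\theta_2=0$ the corresponding bump vanishes, so Lemma \ref{lem:4.2} combined with Proposition \ref{prop:4.1} yields $I_{K_l,\tau}(G_l(\theta))\leq c_{l,\tau}^{(i)}+\var/10\leq c^{(i)}+\var/2$ for the surviving index $i$, provided $l$ is taken large enough. On the edges $\theta_1=1$ or $\theta_2=1$, the choice of the constant $C_1$ (made precisely so that $I_{K_l,\tau}(g_l^{(i)}(1))<0$) forces the corresponding summand to be negative, and the other summand is bounded by $c^{(j)}+\var/2$ by the same one-variable argument. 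Taking $\Lam_3$ to be the maximum of $\Lam_2$ and all thresholds coming from the $o(1)$ term in Proposition \ref{prop:4.1} together with the support-disjointness requirement, one obtains the desired bound on all four edges, and hence uniformly on $\pa[0,1]^2$.

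There is no genuine obstacle here; the only delicate point is verifying the disjointness of the two bump supports, which is exactly the reason the largeness of $l$ enters. Once that is secured, the lemma is essentially bookkeeping on top of Lemma \ref{lem:4.2} and Proposition \ref{prop:4.1}. The role of this lemma is preparatory for the construction in Section \ref{sec:5}: it guarantees that on the boundary of the two-dimensional parameter square the energy stays strictly below the combined level $c^{(1)}+c^{(2)}$, which is precisely what is needed to drive the subsequent minimax deformation argument.
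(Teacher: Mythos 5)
Your proof is correct and takes the same route the paper intends: the paper's own justification is the one-liner ``Lemma \ref{lem:4.3} follows immediately from Lemma \ref{lem:4.2},'' and your writeup simply makes explicit the ingredients left implicit there --- the exact additivity of $I_{K_l,\tau}$ over the two disjointly supported bumps, the edge-by-edge case analysis on $\pa[0,1]^2$ (vanishing bump when $\theta_i=0$, negative energy when $\theta_i=1$ by the choice of $C_1$), and the passage from $c_{l,\tau}^{(i)}$ to $c^{(i)}$ via Proposition \ref{prop:4.1}. No gap.
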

\begin{proof} Lemma \ref{lem:4.3} follows immediately  from Lemma \ref{lem:4.2}.
\end{proof}

\begin{lem}\label{lem:4.4}
For any $\var\in (0,1/2)$,	 if  $
I_{K_{l}, \tau}(G_{l}(\theta)) \geq c_{l, \tau}^{(1)}+c_{l, \tau}^{(2)}-\var $, then there exists a  constant $C_{0}=C_{0}(n,\si)>1$ such	that  for any $l \geq \Lam _{3}$, $\theta \in[0,1]^{2}$,  we have $ |C_{1} \theta_{i}-1| \leq C_{0} \sqrt{\var},\,i=1,2$.
\end{lem}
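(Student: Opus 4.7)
The plan is to reduce Lemma \ref{lem:4.4} to the single-bump statement Lemma \ref{lem:4.1} by exploiting the disjoint supports of $g_l^{(1)}$ and $g_l^{(2)}$ together with the uniform upper bound from Lemma \ref{lem:4.2}. Since $\operatorname{supp} g_l^{(1)}(\theta_1) \subset \B^+_{R_l/10}(z_l^{(1)})$ and $\operatorname{supp} g_l^{(2)}(\theta_2) \subset \B^+_{R_l/10}(z_l^{(2)})$, and $\operatorname{dist}(z_l^{(1)},z_l^{(2)})\geq 2R_l\to\infty$, the two pieces have disjoint supports for large $l$. Therefore
\[
I_{K_l,\tau}(G_l(\theta)) = I_{K_l,\tau}(g_l^{(1)}(\theta_1)) + I_{K_l,\tau}(g_l^{(2)}(\theta_2)),
\]
with no cross terms at all (not merely $o(1)$), because both the Dirichlet part and the $L^{p+1}$ part split exactly on disjoint supports.

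Next I would combine the hypothesis $I_{K_l,\tau}(G_l(\theta))\geq c_{l,\tau}^{(1)}+c_{l,\tau}^{(2)}-\var$ with the one-sided upper bound from Lemma \ref{lem:4.2}, namely $I_{K_l,\tau}(g_l^{(j)}(\theta_j))\leq c_{l,\tau}^{(j)}+\var/10$ for $j=1,2$ and $l\geq\Lambda_2$. Writing the total inequality and subtracting the upper bound on the index $j\neq i$, I obtain
\[
I_{K_l,\tau}(g_l^{(i)}(\theta_i)) \;\geq\; c_{l,\tau}^{(i)} - \var - \tfrac{\var}{10} \;\geq\; c_{l,\tau}^{(i)} - 2\var, \qquad i=1,2,
\]
for all $l$ sufficiently large (larger than $\Lambda_2$, hence ensured by taking $\Lambda_3\geq \Lambda_2$, which we already have).

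Now I invoke Lemma \ref{lem:4.1} with $\var$ replaced by $2\var$: since $2\var<1$ whenever $\var<1/2$, the hypothesis of Lemma \ref{lem:4.1} is satisfied, and for $l\geq \Lambda_1(n,\si,2\var,A_1,A_3)$ (which is absorbed into $\Lambda_3$) it yields $|C_1\theta_i-1|\leq C_0(n,\si)\sqrt{2\var}$ for $i=1,2$. Redefining the constant as $C_0 := \sqrt{2}\,C_0(n,\si)$, which depends only on $n$ and $\si$, gives the desired bound $|C_1\theta_i-1|\leq C_0\sqrt{\var}$.

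There is no serious obstacle here; the lemma is essentially a two-bump corollary of the one-bump estimate in Lemma \ref{lem:4.1}. The only thing to verify carefully is the exact-splitting property on disjoint supports (so that no additional $o(1)$ term needs to be absorbed before applying Lemma \ref{lem:4.1}), and that all thresholds $\Lambda_1,\Lambda_2$ are dominated by $\Lambda_3$; both are immediate from the constructions.
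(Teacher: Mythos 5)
Your proof is correct and follows essentially the same route as the paper's: split the functional over the disjoint supports, use the uniform upper bound of Lemma \ref{lem:4.2} to pass from the two-bump hypothesis to the one-bump lower bounds $I_{K_l,\tau}(g_l^{(i)}(\theta_i)) \geq c_{l,\tau}^{(i)} - 2\var$, and invoke Lemma \ref{lem:4.1} (with $\var$ replaced by $2\var$, absorbing $\sqrt{2}$ into $C_0(n,\si)$). Your remark that the splitting is exact rather than merely up to $o(1)$ is a correct but inconsequential sharpening of the paper's phrasing.
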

\begin{proof}
	Since $g_{l}^{(1)}(\theta_{1})$ and $g_{l}^{(2)}(\theta_{2})$ have disjoint supports, after a direct calculation, we see that	$$I_{K_{l}, \tau}(G_{l}(\theta))=I_{K_{l}, \tau}(g_{l}^{(1)}(\theta_{1}))+I_{K_{l}, \tau}(g_{l}^{(2)}(\theta_{2}))+o(1).$$
Then 	it follows from Lemma \ref{lem:4.2} and the 
 hypothesis  $I_{K_{l}, \tau}(G_{l}(\theta)) \geq c_{l, \tau}^{(1)}+c_{l, \tau}^{(2)}-\var$ that 
\be\label{lem:4.4.1}
 	I_{K_{l}, \tau}(g_{l}^{(1)}(\theta_{1})) \geq c_{l, \tau}^{(1)}-2 \var\quad\text{ and }\quad  I_{K_{l}, \tau}(g_{l}^{(2)}(\theta_{2})) \geq c_{l, \tau}^{(2)}-2 \var.
\ee
	Lemma \ref{lem:4.4} follows immediately from  \eqref{lem:4.4.1} and Lemma \ref{lem:4.1}.
\end{proof}

\emph{Step 2: The deformation  of $G_l$.} Let
$$
M_{l} =\sup \big\{\|I_{K_{l}, \tau}'(U)\| : U \in V_{l}(2, \var_1 )\big\},\quad \beta_{l} =\operatorname{dist}\big\{\pa  \widetilde{V}_{l}(2, \var_{2}), \pa  \widetilde{V}_{l}(2,\var_{2}/2)\big\}.
$$
One  can see from the definition of $M_{l}$ that there exists some constant $C_{2}(n,\si,\var_{2},A_{1})>1$ such that $
M_{l} \leq C_{2}(n,\si,\var_{2},A_{1})$.
It is also clear from the definition of $\widetilde{V}_{l}(2, \var_{2})$ that $
\beta_{l} \geq \var_{2}/4$.
By Lemma \ref{lem:4.4}, we choose $\var_{4}$ to satisfy, for $l$ large, that
$$
 \var_{4}<\min\Big\{\var_{3},\frac{1}{2 A_{4}} ,\frac{\var_{2} \delta_{4}(\var_{2}, \var_{3})^{2}}{8 C_{2}(n,\si,\var_{2},A_{1})}\Big\},$$
\be\label{123}\begin{split}
	I_{K_{l}, \tau}(G_{l}(\theta)) \geq& c_{l, \tau}^{(1)}+c_{l, \tau}^{(2)}-\var_{4}\,\text{ implies that}\\&  G_{l}(\theta) \in \widetilde{V}_{l}(2, \var_{2} / 2),\, z_{1}(G_{l}(\theta)) \in O_{l}^{(1)},\, z_{2}(G_{l}(\theta)) \in O_{l}^{(2)}.
\end{split}
\ee
$G_{l}(\theta)$ has been defined by now.

We know from Lemma \ref{lem:4.2} that for $l$ large enough, 
$$
\max _{\theta \in[0,1]^{2}} I_{K_{l},\tau}(G_{l}(\theta)) \leq c_{l, \tau}^{(1)}+c_{l, \tau}^{(2)}+\var_{4}.
$$

For any $U_{0} \in \widetilde{V}_{l}(2, \var_{2} / 2)$, we consider the negative gradient flow of $I_{K_{l}, \tau}$,
\be\label{124}
\left\{\begin{aligned}
&\frac{\d }{\d s} \xi(s, U_{0}) =-I_{K_{l}, \tau}'(\xi(s, U_{0})), \quad  s \geq 0, \\
&\xi(0, U_{0}) =U_{0}.
\end{aligned}\right.
\ee
Under the contrary of Theorem \ref{thm:3.1}, we know $I_{K_{l}, \tau}$ satisfies the Palais-Smale condition. Furthermore,  the flow defined above never stops before exiting $V_{l}(2, \var^{*})$.

We define $U_{l} \in C([0,1]^{2}, D)$ by the following.  
\begin{itemize}
	\item  If $I_{K_{l}, \tau}(G_{l}(\theta)) \leq c_{l, \tau}^{(1)}+c_{l, \tau}^{(2)}-\var_{4}$, we define $s_{l}^{*}(\theta)=0$.	
	\item If $I_{K_{l}, \tau}(G_{l}(\theta)) > c_{l, \tau}^{(1)}+c_{l, \tau}^{(2)}-\var_{4}$, then according
	to \eqref{123}, $G_{l}(\theta) \in \widetilde{V}_{l}(2, \var_{2} / 2)$, $z_{1}(G_{l}(\theta)) \in O_{l}^{(1)}$ and $z_{2}(G_{l}(\theta)) \in O_{l}^{(2)}$. We
	define $$s_{l}^{*}(\theta)=\min\big\{s>0 : I_{K_{l}, \tau}(\xi(s, G_{l}(\theta)))=c_{l, \tau}^{(1)}+c_{l, \tau}^{(2)}-\var_{4}\big\}.$$ 	
\end{itemize}
Now we set  $$U_{l}(\theta)=\xi(s_{l}^{*}(\theta), G_{l}(\theta)).$$

The existence and continuity 
of $s_{l}^{*}(\theta)$ is guaranteed by the following lemma. %definition is justified in the following. 
\begin{lem}\label{lem:4.5}
	For any $U_{0} \in \widetilde{V}_{l}(2, \var_{2} / 2)$ with $z_{1}(U_{0}) \in O_{l}^{(1)}$, $z_{2}(U_{0}) \in O_{l}^{(2)}$, and
	$I_{K_{l}, \tau}(U_{0})\in (c_{l, \tau}^{(1)}+c_{l, \tau}^{(2)}-\var_{4},c_{l, \tau}^{(1)}+c_{l, \tau}^{(2)}+\var_{4}]$,  the flow line $\xi(s, U_{0})$ $(s \geq 0)$
	cannot leave $\widetilde{V}_{l}(2, \var_{2})$ before reaching $I_{K_{l}, \tau}^{-1}(c_{l, \tau}^{(1)}+c_{l, \tau}^{(2)}-\var_{4})$.	
\end{lem}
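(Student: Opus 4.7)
\bigskip

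\noindent\textbf{Proof proposal.} My plan is to argue by contradiction: suppose that some flow line $\xi(s,U_0)$ escapes $\widetilde V_l(2,\var_2)$ at a time $s_2$, while still satisfying $I_{K_l,\tau}(\xi(s,U_0))>c_{l,\tau}^{(1)}+c_{l,\tau}^{(2)}-\var_4$ for all $s\in[0,s_2]$. Since $U_0\in\widetilde V_l(2,\var_2/2)$ and $\xi(s,U_0)$ is continuous in $s$, I may pick $s_1\le s_2$ to be the first time the flow exits $\widetilde V_l(2,\var_2/2)$, so that on the closed interval $[s_1,s_2]$ the trajectory lies in the annular region $\widetilde V_l(2,\var_2)\setminus\widetilde V_l(2,\var_2/2)$, with $\xi(s_1)\in\pa\widetilde V_l(2,\var_2/2)$ and $\xi(s_2)\in\pa\widetilde V_l(2,\var_2)$. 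The definition of $\beta_l$ then gives the crucial geometric lower bound $\|\xi(s_2)-\xi(s_1)\|\ge\beta_l\ge\var_2/4$.

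The next step is to obtain uniform bounds on $\|I'_{K_l,\tau}(\xi(s))\|$ on this passage. For the upper bound I will use the definition of $M_l$ together with the inclusion $\widetilde V_l(2,\var_2)\subset V_l(2,\var_1)$ from Lemma \ref{lem:3.2}, giving $\|I'_{K_l,\tau}(\xi(s))\|\le M_l\le C_2(n,\si,\var_2,A_1)$. For the lower bound I will invoke Proposition \ref{prop:3.2}: since the energy is nonincreasing along the flow and $I_{K_l,\tau}(\xi(s))$ stays in the interval $(c_{l,\tau}^{(1)}+c_{l,\tau}^{(2)}-\var_4,\;c_{l,\tau}^{(1)}+c_{l,\tau}^{(2)}+\var_4]$, while Propositions \ref{prop:4.1}--\ref{prop:4.2} give $c_{l,\tau}^{(1)}+c_{l,\tau}^{(2)}=c^{(1)}+c^{(2)}+o(1)$, the hypothesis $|I_{K_l,\tau_l}(\xi(s))-(c^{(1)}+c^{(2)})|<\var_3$ of Proposition \ref{prop:3.2} is satisfied for $l$ large (recall $\var_4<\var_3$). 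Hence $\|I'_{K_l,\tau}(\xi(s))\|\ge\delta_4(\var_2,\var_3)$ on $[s_1,s_2]$.

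Combining the two bounds along the gradient flow $\xi'(s)=-I'_{K_l,\tau}(\xi(s))$, the arc length satisfies
\[
L:=\int_{s_1}^{s_2}\|I'_{K_l,\tau}(\xi(s))\|\,\d s\ge \|\xi(s_2)-\xi(s_1)\|\ge \beta_l\ge \var_2/4,
\]
and the upper bound $\|I'_{K_l,\tau}\|\le C_2$ forces $s_2-s_1\ge L/C_2\ge \var_2/(4C_2)$. Then the energy decrement computes as
\[
I_{K_l,\tau}(\xi(s_1))-I_{K_l,\tau}(\xi(s_2))=\int_{s_1}^{s_2}\|I'_{K_l,\tau}(\xi(s))\|^{2}\,\d s\ge \delta_4^{2}(s_2-s_1)\ge \frac{\delta_4^{2}\var_2}{4C_2}>2\var_4,
\]
by the choice $\var_4<\var_2\delta_4^2/(8C_2)$. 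On the other hand, since $I_{K_l,\tau}(U_0)\le c_{l,\tau}^{(1)}+c_{l,\tau}^{(2)}+\var_4$ and $I_{K_l,\tau}(\xi(s_2))>c_{l,\tau}^{(1)}+c_{l,\tau}^{(2)}-\var_4$, the total drop from $s=0$ to $s=s_2$, and hence the drop on $[s_1,s_2]$, is strictly less than $2\var_4$, yielding a contradiction. I expect the only subtle point to be verifying that the energy window stays inside the $\var_3$-neighborhood of $c^{(1)}+c^{(2)}$ uniformly along the passage so that Proposition \ref{prop:3.2} applies, but this follows cleanly from the monotonicity of the flow and the choice $\var_4<\var_3$ together with the asymptotic identifications above; beyond this, the argument is a standard deformation estimate.
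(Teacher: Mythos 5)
Your argument follows the standard deformation-estimate template of Bahri--Coron, which is exactly what the paper invokes (it simply cites \cite[Lemma~5]{BC1} and omits the details). The core structure is right: the upper bound $M_{l}\leq C_{2}$ controls the passage time, Proposition~\ref{prop:3.2} gives the lower bound $\delta_{4}$ in the annular region once the energy window $(c_{l,\tau}^{(1)}+c_{l,\tau}^{(2)}-\var_{4},\,c_{l,\tau}^{(1)}+c_{l,\tau}^{(2)}+\var_{4}]$ is absorbed into the $\var_{3}$-window around $c^{(1)}+c^{(2)}$ via Propositions~\ref{prop:4.1}--\ref{prop:4.2}, and the constraint $\var_{4}<\var_{2}\delta_{4}^{2}/(8C_{2})$ makes the energy drop exceed $2\var_{4}$, contradicting monotonicity.

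One small but genuine slip: you set $s_{1}$ to be the \emph{first} time the flow exits $\widetilde{V}_{l}(2,\var_{2}/2)$ and then assert that the trajectory lies in $\widetilde{V}_{l}(2,\var_{2})\setminus\widetilde{V}_{l}(2,\var_{2}/2)$ throughout $[s_{1},s_{2}]$. That does not follow --- after first exiting the inner set the flow could re-enter it. You need $\xi(s)$ to remain in the annular region on $(s_{1},s_{2})$ both to apply the pointwise lower bound $\|I_{K_{l},\tau}'(\xi(s))\|\geq\delta_{4}$ in the energy-decrement integral and to have the endpoints sitting on the two boundaries so that $\|\xi(s_{2})-\xi(s_{1})\|\geq\beta_{l}$. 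The correct choice is $s_{1}=\sup\{s\in[0,s_{2}]:\xi(s)\in\overline{\widetilde{V}_{l}(2,\var_{2}/2)}\}$, i.e.\ the \emph{last} time before $s_{2}$ that the trajectory touches the closure of the inner set; then $\xi(s)\in\widetilde{V}_{l}(2,\var_{2})\setminus\overline{\widetilde{V}_{l}(2,\var_{2}/2)}$ for all $s\in(s_{1},s_{2})$, $\xi(s_{1})\in\pa\widetilde{V}_{l}(2,\var_{2}/2)$, and $\xi(s_{2})\in\pa\widetilde{V}_{l}(2,\var_{2})$, after which the chain of inequalities goes through exactly as you wrote it. With that adjustment the proof is complete.
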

\begin{proof}
	The proof can be done exactly in the same way as in \cite[Lemma 5]{BC1},  so we omit it.
\end{proof}

 Lemma \ref{lem:4.5} is a local version of Lemma 5 in \cite{BC1} since we only need the compactness property of the flow line in certain region and Proposition \ref{prop:3.2} has provided control of $\|I_{K_{l}, \tau}'\|$ in the region.

We can see from Lemma \ref{lem:4.5} that $s_{l}^{*}(\theta)$ is well defined. Since $I_{K_{l}, \tau}$ has no critical point in $
 \widetilde{V}_{l}(2, \var_{2}) \cap\{ U \in D: | I_{K_{l}, \tau}(U)-c^{(1)}-c^{(2)}|\leq \var_{4}\} \subset V_{l}(2, \var^{*}) \cap\{U  \in D:|I_{K_{l}, \tau}(U)-c^{(1)}-c^{(2)}|\leq \var^{*}\}$
 under the contradiction hypothesis, $s_{l}^{*}(\theta)$ is continuous in $\theta$ (see also  \cite[Proposition 5.11]{Li93b} and \cite[Lemma 5]{BC1}), hence $U_{l} \in C([0,1]^{2}, D)$.

\emph{Step 3: The construction of $H_l$.} 
It follows from the construction of $U_l$ that 
$$
\max_{\theta\in [0,1]^2}I_{K_l,\tau}(U_{l}(\theta))\leq c_{l, \tau}^{(1)}+c_{l, \tau}^{(2)}-\var_{4}.
$$
Since the gradient flow does not keep properties \eqref{113}-\eqref{114}, $U_{l}(\theta)$ is not necessarily in $\Gamma_{l}$ any more. It follows from Lemma \ref{lem:4.5} that if $I_{K_{l}, \tau}(G_{l}(\theta))>c_{l, \tau}^{(1)}+c_{l,\tau}^{(2)}-\var_{4}$, then the  gradient flow $\xi(s,G_l(\theta))$ $(s\geq 0)$ cannot leave  $\widetilde{V}_{l}(2, \var_{2})$ before reaching $I_{K_{l}, \tau}^{-1}(c_{l, \tau}^{(1)}+c_{l, \tau}^{(2)}-\var_{4})$.

Using  \eqref{123} and the above information we know that if $I_{K_{l}, \tau}(G_{l}(\theta))>c_{l, \tau}^{(1)}+c_{l,\tau}^{(2)}-\var_{4}$, then $U_{l}(\theta) \in \widetilde{V}_{l}(2, \var_{2}) \subset V_{l}(2, o_{\var_{2}}(1))$ with $z_{1}(U_{l}(\theta)) \in O_{l}^{(1)}$, $z_{2}(U_{l}(\theta)) \in O_{l}^{(2)}$. This implies that
\begin{gather}
\label{125}
\int_{\om_{l}}t^{1-2\si}|\nabla U_{l}(\theta)|^{2}\,\d X+N_{\si}\int_{\pa'\om_{l}}	|U_{l}(\theta)|^{2_{\si}^{*}}\,\d x = o_{\var_{2}}(1),\\\label{126}
\|U_{l}(\theta)\|_{ W^{\si,2}(\pa'' \om_{l})} = o_{\var_{2}}(1),
\end{gather}
where
\begin{align*}
\om_{l}&=\Rp\setminus \{\B^+_{r}(z_{l}^{(1)}) \cup \B^+_{r}(z_{l}^{(2)})\},\\r&=4(\operatorname{diam} O^{(1)}+\operatorname{diam} O^{(2)}),	\\\operatorname{diam} O^{(1)}&=\sup \{|x-y| : x, y \in O^{(1)}\},\\\operatorname{diam} O^{(2)}&=\sup \{|x-y| : x, y \in O^{(2)}\},
\end{align*}
and $ O^{(1)}, O^{(2)}$ are defined in  \eqref{1111}. By Proposition \ref{prop:2.1} ($\var_{2}>0$ sufficiently small), 
 we can  modify $U_{l}(\theta)$ in $\om_{l}$ after making the following minimization.

Let
$$
\varphi_{l}(\theta)=U_{l}(\theta)|_{\pa'' \om_{l}}.
$$
Thanks to \eqref{125}-\eqref{126}, we can apply Proposition \ref{prop:2.1} to obtain the minimizer $U_{\varphi_{l}}(\theta)$ to
$$
\min \Big\{I_{K_{l}, \om_{l}}(U): U \in D_{\om_l}, \,U|_{\pa'' \om_{l}}=\varphi_{l}(\theta),\,\int_{\om_l}t^{1-2\si}|\nabla U|^2\,\d X\leq C_{1} r_{0}^{2}\Big\},
$$
where $D_{\om_{l}}$ is the closure of $ C^{\infty}_{c}(\over{\om}_{l})$ under the norm $$\|U\|_{D_{\om_{l}}}:=\Big(\int_{\om_{l}}t^{1-2\si}|\nabla U|^{2}\,\d X\Big)^{1 / 2}+\Big(\int_{\pa'' \om_{l}}|U(x,0)|^{2_{\si}^{*}}\,\d x\Big)^{1/2_{\si}^{*}},
$$and 
$C_1$ and $r_{0}$ are the constants given by Proposition \ref{prop:2.1}.

For $\theta \in[0,1]^{2}$, we define  
\begin{align*}
W_{l}(\theta)(X)=\left\{\begin{aligned}
&U_{l}(\theta)(X), & & X \in \B^+_{r}(z_{l}^{(1)}) \cup \B^+_{r}(z_{l}^{(2)}),\\
&U_{\varphi_{l}}(\theta)(X), & & X \in \Rp\setminus \{\B^+_{r}(z_{l}^{(1)}) \cup \B^+_{r}(z_{l}^{(2)})\}=\om_{l}.
\end{aligned}\right.
\end{align*}
It follows from Proposition \ref{prop:2.1} that $U_{l} \in C([0,1]^{2}, D)$ and satisfies
\begin{gather}
\max _{\theta \in[0,1]^{2}} I_{K_{l}, \tau }(W_{l}(\theta)) \leq \max _{\theta \in[0,1]^{2}} I_{K_{l}, \tau}(U_{l}(\theta)) \leq c_{l, \tau}^{(1)}+c_{l, \tau}^{(2)}-\var_{4},\label{127}\\\int_{\om_{l}}t^{1-2\si}|\nabla W_{l}(\theta)|^{2}\, \d X+\int_{\pa'\om_{l}} |W_{l}(\theta)|^{2_{\si}^{*}}\,\d x = o_{\var_{2}}(1),\\\label{128}
\left\{\begin{aligned}
&\operatorname{div}(t^{1-2\si}\nabla  W_{l}(\theta))=0&&\text{ in }\,\om_{l},\\&\pa_{\nu}^{\si}W_{l}(\theta)=K_{l}(x)H^{\tau}(x)W_{l}(\theta)^{p} &&\text { on } \, \pa'\om_{l}.
\end{aligned}\right.
\end{gather}

In order to construct the required $H_l$, we  introduce the following notation. 

First we write
\begin{align*}
\om_l^1&:=(\B_{l_{1}}^+(z_{l}^{(1)}) \setminus  \B^+_{r}(z_{l}^{(1)})) \cup(\B^+_{l_{1}}(z_{l}^{(2)})\setminus  \B^{+}_{r}(z_{l}^{(2)})),\\ \om_l^2&:=(\B_{l_{2}}^+(z_{l}^{(1)}) \setminus  \B^+_{l_{1}}(z_{l}^{(1)})) \cup(\B^+_{l_{2}}(z_{l}^{(2)})\setminus  \B^{+}_{l_{1}}(z_{l}^{(2)})),\\\om_l^3&:=(\Rp \setminus  \B^+_{l_{2}}(z_{l}^{(1)})) \cap(\Rp  \setminus  \B^+_{l_{2}}(z_{l}^{(2)})).
\end{align*}
Clearly,  $\om_l=\om_l^1\cup\om_l^2  \cup\om_l^3$ for large $l$.

For $l_{2}>100 l_{1}>1000 r$ (we determine the values of $l_{1}, l_{2}$ at the end), we introduce cut-off functions $\eta_{l} \in C_{c}^{\infty}(\R^{n+1})$ for large $l$,
\begin{gather*}
\eta_{l}(x,t)=\left\{\begin{aligned}
&1, & & (x,t)\in \B_{l_1}(z_l^{(1)}) \cup \B_{l_1}(z_l^{(2)}), \\
&	0, &  &(x,t)\in (\R^{n+1}\setminus  \B_{l_2}(z_l^{(1)})) \cap (\R^{n+1}\setminus  \B_{l_2}(z_l^{(2)})), \\
&\geq 0 ,&  &\text {elsewhere,}
\end{aligned}\right.\\
|\nabla \eta_{l}| \leq \frac{10}{l_{2}-l_{1}},  \quad (x,t) \in \R^{n+1},
\end{gather*}
and set 
$$
H_{l}(\theta)=\eta_{l}W_{l}(\theta).
$$

Next, we  will prove that $H_{l}(\theta)\in \Gamma_{l}$, but the energy of  $H_{l}(\theta)$ contradicts  to  $b_{l,\tau}$.

Multiplying $(1-\eta_{l}) W_{l}(\theta)$ on both sides of \eqref{128}  and integrating by parts, we have$$\int_{\om_l}t^{1-2\si} \nabla((1-\eta_{l}) W_{l}(\theta)) \nabla W_{l}(\theta)\,\d X=N_{\si}\int_{\pa'\om_l} K_{l}(x)H^{\tau}(x)(1-\eta_{l}(x,0))|U_{l}(\theta)|^{p+1}\,\d x.
$$
A direct computation shows that\begin{align*}
&\int_{\om_l^3}t^{1-2\si}|\nabla  W_{l}(\theta)|^{2}\,\d X-N_{\si}\int_{\pa'\om_l^3}K_{l}(x)H^{\tau}(x)|U_{l}(\theta)|^{p+1}\,\d x\\=&\int_{\om_2^l}t^{1-2\si}W_{l}(\theta) \nabla\eta_{l}  \nabla W_{l}(\theta)\,\d X-\int_{\om_2^l}t^{1-2\si}(1-\eta_{l})  |\nabla W_{l}(\theta)|^2\,\d X\\& +N_{\si}\int_{\pa'\om_2^l}K_{l}(x)(1-\eta_{l}(x,0))H^{\tau}(x)|U_{l}(\theta)|^{p+1}\,\d x\\\geq&-\int_{\om_2^l}\Big[\frac{10}{l_{2}-l_{1}}t^{1-2\si}|  W_{l}(\theta)||\nabla   W_{l}(\theta)| +t^{1-2\si}|\nabla   W_{l}(\theta)|^{2}\Big]\,\d X\\&-2A_{1}N_{\si}\int_{\pa'\om_2^l}|U_{l}(\theta)|^{p+1}\,\d x\\\geq&-\int_{\om_2}\frac{10}{l_{2}-l_{1}}t^{1-2\si}|  W_{l}(\theta)||\nabla  W_{l}(\theta)|\,\d X -4A_{1}N_{\si}\int_{\pa'\om_2}|U_{l}(\theta)|^{p+1}\,\d x.
\end{align*}
 Then by Proposition  \ref{prop:1.4},  for all $(x,t)\in \Sigma^+_{l_2}(z_l^{(i)})\setminus  \Sigma^+_{l_1}(z_l^{(i)})$, $i=1,2$, we have \begin{align}\label{129} | W_{l}(\theta)| &\leq \frac{C(n,\si,A_{1})}{|(x-z_{l}^{(i)},t)|^{n-2\si}},
\\
\label{131} |\nabla_x  W_{l}(\theta)| &\leq \frac{C(n,\si,A_{1},A_3)}{|(x-z_{l}^{(i)},t)|^{n+1-2\si}},  \\
\label{131'}t^{1-2\si} |\pa_{t}  W_{l}(\theta)| &\leq \frac{C(n,\si,A_{1},A_3)}{|(x-z_{l}^{(i)},t)|^{n+1-2\si}},  
\end{align}
Consequently, 
\begin{align}
&\int_{\om_l^3}t^{1-2\si}|\nabla  W_{l}(\theta)|^{2}\,\d X-N_{\si}\int_{\pa'\om_l^3}K_{l}(x)H^{\tau}(x)|U_{l}(\theta)|^{p+1}\,\d x\notag\\\displaystyle \geq& \begin{cases}
-C_{0}(n,\si) C(n,\si,A_{1},A_3)\Big[\frac{1}{(l_2-l_1)l_1^{n-2+2\si}}+\frac{1}{l_1^{n-2+2\si}}+\frac{1}{l_1}\Big]\quad &\text{ if }\, 0<\si\leq \frac{1}{2},\\-C_{0}(n,\si) C(n,\si,A_{1},A_3)\Big[\frac{l_2^{2-2\si}}{(l_2-l_1)l_1^{n+2-4\si}}+\frac{l_2^{2-2\si}}{l_1^{n+2-4\si}}+\frac{1}{l_1}\Big]\quad &\text{ if } \,\frac{1}{2} <\si<1,
\end{cases}\notag\\ \displaystyle \geq& \begin{cases}
-C_{0}(n,\si) C(n,\si,A_{1},A_3)\frac{1}{l_1}\quad &\text{ if } \,0<\si\leq \frac{1}{2},\\-C_{0}(n,\si) C(n,\si,A_{1},A_3)\Big[\frac{l_2^{2-2\si}}{l_1^{n-2}}+\frac{1}{l_1}\Big]\quad &\text{ if } \,\frac{1}{2} <\si<1.
\end{cases}\label{133}
\end{align}
Thanks to \eqref{129}-\eqref{131'}, \cite[Lemma A.4]{JLX} and a density argument, we see from \eqref{127} and \eqref{133} that
\begin{align*}
I_{K_{l}, \tau}(H_{l}(\theta))=& \frac{1}{2} \|\eta_{l} W_{l}(\theta)\|^{2} -\frac{N_{\si}}{p+1} \int_{\Rn} K_{l}(x)H^{\tau}(x)|\eta_{l}(x,0) W_{l}(\theta)|^{p+1}\,\d x\\=&\frac{1}{2} \int_{\Rp}t^{1-2\si} |\nabla \eta_{l}|^{2}| W_{l}(\theta)|^{2}\,\d X\\&+\int_{\Rp}t^{1-2\si} \eta_{l} W_{l}(\theta) \nabla \eta_{l}\nabla  W_{l}(\theta)\,\d X\\& +\frac{1}{2} \int_{\Rp}t^{1-2\si}|\eta_{l}|^{2}|\nabla  W_{l}(\theta)|^{2}\,\d X\\&-\frac{N_{\si}}{p+1} \int_{\Rn} K_{l}(x)H^{\tau}(x)|\eta_{l}(x,0) W_{l}(\theta)|^{p+1}\,\d x
\\\leq& I_{K_{l}, \tau}(W_{l}(\theta)) -\frac{1}{2} \int_{\Rp}t^{1-2\si}(1-|\eta_{l}|^{2})|\nabla  W_{l}(\theta)|^{2}\,\d X\\& \displaystyle+\frac{N_{\si}}{p+1} \int_{\Rn} K_{l}(x)H^{\tau}(x)(1-|\eta_{l}(x,0)|^{p+1})|W_{l}(\theta)|^{p+1}\,\d x
\\& +\begin{cases}
C_{0}(n,\si) C(n,\si,A_{1},A_3)\Big[\frac{1}{(l_2-l_1)^2l_1^{n-2+2\si}}+\frac{1}{(l_2-l_1)l_1}\Big]\quad &\text{ if } \,0<\si\leq \frac{1}{2},\\C_{0}(n,\si) C(n,\si,A_{1},A_3)\Big[\frac{l_2^{2-2\si}}{(l_2-l_1)^2l_1^{n-4\si}}+\frac{l_2^{1-2\si}}{(l_2-l_1)l_1^{2n+1-4\si}}\Big]\quad &\text{ if }\, \frac{1}{2} <\si<1,
\end{cases}\\\leq &c_{l, \tau}^{(1)}+c_{l, \tau}^{(2)}-\var_{4}-\frac{1}{2}\int_{ \om_1^l}t^{1-2\si}|\nabla  W_{l}(\theta)|^{2}\,\d X\\&+\frac{N_{\si}}{p+1}\int_{\pa'  \om_1^l} K_{l}(x)H^{\tau}(x)|U_{l}(\theta)|^{p+1} \,\d x\\&\displaystyle +\begin{cases}
C_{0}(n,\si) C(n,\si,A_{1},A_3)\Big[\frac{1}{(l_2-l_1)^2l_1^{n-2+2\si}}+\frac{1}{(l_2-l_1)l_1}\Big]\quad &\text{ if }\, 0<\si\leq \frac{1}{2},\\C_{0}(n,\si) C(n,\si,A_{1},A_3)\Big[\frac{l_2^{2-2\si}}{(l_2-l_1)^2l_1^{n-4\si}}+\frac{l_2^{1-2\si}}{(l_2-l_1)l_1^{2n+1-4\si}}\Big]\quad &\text{ if }\, \frac{1}{2} <\si<1,
\end{cases}\\\leq &c_{l, \tau}^{(1)}+c_{l, \tau}^{(2)}-\var_{4}\\& +\begin{cases}
C_{0}(n,\si) C(n,\si,A_{1},A_3)\frac{1}{l_1}\quad &\text{ if } \,0<\si\leq \frac{1}{2},\\C_{0}(n,\si) C(n,\si,A_{1},A_3)\Big(\frac{l_2^{2-2\si}}{l_1^{n-2}}+\frac{1}{l_1}\Big)\quad &\text{ if }\, \frac{1}{2} <\si<1,\end{cases}\\& \displaystyle +\begin{cases}
C_{0}(n,\si) C(n,\si,A_{1},A_3)\Big[\frac{1}{(l_2-l_1)^2l_1^{n-2+2\si}}+\frac{1}{(l_2-l_1)l_1}\Big]\quad &\text{ if }\, 0<\si\leq \frac{1}{2},\\C_{0}(n,\si) C(n,\si,A_{1},A_3)\Big[\frac{l_2^{2-2\si}}{(l_2-l_1)^2l_1^{n-4\si}}+\frac{l_2^{1-2\si}}{(l_2-l_1)l_1^{2n+1-4\si}}\Big]\quad &\text{ if }\, \frac{1}{2} <\si<1.
\end{cases}
\end{align*}
Now we choose $l_{1}>10 r$,  $l_{2}>200 l_{1}$ to be large enough such that
\be\label{135}
I_{K_{l}, \tau}(H_{l}(\theta))\leq c_{l, \tau}^{(1)}+c_{l, \tau}^{(2)}-\frac{\var_{4}}{2}.
\ee
Then for $l$ large enough (depending on $l_{1}, l_{2}, \var's, C's$), we have
\be\label{136}
H_{l} \in \Gamma_{l}.
\ee
Therefore, for $l$ sufficiently large, we have
\be\label{137}
\max _{\theta \in[0,1]^{2}} I_{K_{l}, \tau}(H_{l}(\theta)) \leq c_{l, \tau}^{(1)}+c_{l, \tau}^{(2)}-\frac{\var_{4}}{2}<b_{l, \tau}.
\ee
However, \eqref{137} cannot hold by  \eqref{136} and the definition of $b_{l, \tau}$. This completes the proof of Theorem \ref{thm:3.1}. 	
%----------------------------------------------------------------------------%
\section{\sc Blow up analysis and  proof of main theorems}\label{sec:6}
%%%%%%%%%%%%%%%%%%%%%%%%%%%%%%%%%%%%%%%%%%%%%%%%%%%%%%%%%%%%%%%%%%%%%%%%%%%%%%%%%%%%%%%%
In this section we present our main result Propostion \ref{prop:5.1}, from which we deduce Theorems \ref{thm:0.1}-\ref{thm:0.3} and Corollary \ref{cor:1}. The crucial ingredients of our proofs are the understanding of the blow up profiles, see the  work  in Jin-Li-Xiong \cite{JLX}.
\subsection{Blow up analysis}
We  present  the result as following:
\begin{prop}\label{prop:5.1}
Suppose that $\{K_{l}\}$ is a sequence of functions  satisfying conditions (i)-(iii) (see Section \ref{sec:3}) and $(H_3)$.  Assume also that there exist some bounded open sets $O^{(1)}, \ldots, O^{(m)} \subset \Rn$
	and some positive constants $\delta_{2}$, $\delta_{3}>0$ such that for any $1 \leq i \leq m$,	
	\begin{gather*}
	\widetilde{O}_{l}^{(i)}-z_{l}^{(i)} \subset O^{(i)} \quad  \text { for all }\, l,\\ 
	\Big\{U \in D^+: I_{K_{\infty}^{(i)}}'(U)=0,\, c^{(i)} \leq I_{K_{\infty}^{(i)}}(U) \leq c^{(i)}+\delta_{2}\Big\}\cap V(1, \delta_{3}, O^{(i)}, K_{\infty}^{(i)})=\emptyset.
	\end{gather*}
	Then for any $\var>0$, there exists integer $\over{l}_{\var, m}>0$ such that for any $l \geq \over{l}_{\var, m}$, there exists  $ U_{l} \in V_{l}(m, \var)\cap D^+$ which solves
\be\label{139}
\left\{\begin{aligned}
&\operatorname{div}(t^{1-2\si}\nabla U_{l})=0&&\text{ in }\,\Rp,\\
&\pa_{\nu}^{\si} U_l=K_{l}(x) U_{l}(x,0)^{\frac{n+2\si}{n-2\si}}&&\text{ on }\,\Rn.
\\
\end{aligned}\right.
\ee
	Furthermore, $U_{l}$ satisfies 
$$
	\sum_{i=1}^{m} c^{(i)}-\var \leq I_{K_{l}}(U_{l}) \leq \sum_{i=1}^{m} c^{(i)}+\var.
$$		
\end{prop}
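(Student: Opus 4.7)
The plan is to obtain $U_{l}$ as a limit of the subcritical solutions furnished by Theorem \ref{thm:3.1}, relying on the blow-up analysis of Jin--Li--Xiong \cite{JLX} to preserve the $m$-bump structure as the subcritical exponent $\tau$ tends to zero.

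First, for each fixed large $l$, I would apply Theorem \ref{thm:3.1} (with $\var$ replaced by $\var/2$) to produce, for every $\tau\in(0,\over{\tau}_{l})$, a positive solution $U_{l,\tau}\in V_{l}(m,\var/2)\cap D^{+}$ of the subcritical problem \eqref{30} with energy lying in $[\sum_{i=1}^{m}c^{(i)}-\var/2,\sum_{i=1}^{m}c^{(i)}+\var/2]$. Picking $\tau_{k}\to 0^{+}$ and invoking Proposition \ref{prop:3.1}, I would write
\[
U_{l,\tau_{k}}=\sum_{i=1}^{m}\al_{i}^{k}\widetilde{\delta}(z_{i}^{k},\lam_{i}^{k})+v^{k},
\]
with $\lam_{i}^{k}\ge 2/\var$, $z_{i}^{k}\in O_{l}^{(i)}$, and $\|v^{k}\|<\var/2$. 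Membership in $V_{l}(m,\var/2)$ together with the energy bound gives uniform $D$-boundedness of $\{U_{l,\tau_{k}}\}$, so after a diagonal extraction $U_{l,\tau_{k}}\rightharpoonup U_{l}$ weakly in $D$ and almost everywhere on $\Rp$.

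Second, I would upgrade this weak limit to a genuine solution of the critical equation \eqref{139} belonging to $V_{l}(m,\var)$. The essential tools are the isolated-simple blow-up classification of \cite{JLX} together with local Pohozaev-type identities applied on small half-balls around each concentration point. The flatness hypothesis $(H_3)$ with $\beta\in(n-2\si,n)$ is tailor-made for this regime: it forces each blow-up point of $\{U_{l,\tau_{k}}(\cdot,0)\}$ to be an isolated simple blow-up, so the $m$ bubbles can neither split into sub-bubbles nor shift outside of $O_{l}^{(i)}$. The parameters $(\al_{i}^{k},z_{i}^{k},\lam_{i}^{k})$ therefore converge to $(\al_{i}^{*},z_{i}^{*},+\infty)$ with $z_{i}^{*}\in\over{O}_{l}^{(i)}$, while lower semicontinuity of the $D$-norm controls the remainder, placing $U_{l}$ inside $V_{l}(m,\var)$. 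A Brezis--Lieb-type decomposition on the critical nonlinearity then lets me pass to the limit in the equation, and the energy range follows from the standard bubble energy identity. Positivity of $U_{l}$ comes from the strong maximum principle for the degenerate extension equation \cite{JLX,XaYa}, together with the energy lower bound which prevents $U_{l}\equiv 0$.

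The main obstacle is the second step, specifically ensuring that as $\tau_{k}\to 0$ the $m$ bubbles neither merge nor collapse. The energy window $[\sum_{i=1}^{m}c^{(i)}-\var,\sum_{i=1}^{m}c^{(i)}+\var]$ alone does not preclude pathological scenarios such as one bubble absorbing twice its share of mass while another disappears, since bubbles centered near different $z_{l}^{(i)}$ have asymptotically additive energies. The exclusion of such scenarios is precisely what the Jin--Li--Xiong blow-up classification accomplishes under $(H_3)$: a local Pohozaev identity at each isolated simple blow-up point $p$ forces a sign condition on the leading order expansion of $K$ at $p$ which is consistent only with a single unscaled bubble profile at that location, and the flatness exponent $\beta\in(n-2\si,n)$ is exactly the range where this Pohozaev obstruction is effective. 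Once this rigidity is established, the weak limit $U_{l}$ automatically has the required $m$-bump structure and the remaining verifications are routine.
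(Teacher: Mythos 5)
Your overall strategy matches the paper's: apply Theorem \ref{thm:3.1} to obtain subcritical solutions $U_{l,\tau}$ for each large fixed $l$, and then send $\tau\to 0$, using the Jin--Li--Xiong blow-up machinery and a Pohozaev-type identity to control the limit. However, there is a genuine conceptual error in your description of what that machinery accomplishes, and it appears twice in the second half of your argument.

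You write that the parameters converge to $(\al_i^*,z_i^*,+\infty)$, i.e.\ $\lam_i^k\to+\infty$, and yet conclude $U_l\in V_l(m,\var)$ is a nontrivial $m$-bump solution. These two claims are incompatible. If $\lam_i^k\to\infty$, the bubbles $\widetilde\delta(z_i^k,\lam_i^k)$ converge weakly to $0$ in $D$, so the weak limit $U_l$ would lose one or more bubbles (or be identically zero), and could not lie in $V_l(m,\var)$ for $\var$ small. What the Pohozaev argument under $(H_3)$ actually delivers is the \emph{opposite}: it rules out blow-up entirely for fixed $l$ as $\tau\to0$. More precisely, one argues by contradiction. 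If $\|U_{l,\tau_k}\|_{L^\infty}\to\infty$, the $V_l(m,o_{\var_2}(1))$-structure together with the results of \cite{JLX} forces exactly $m$ isolated simple blow-up points $\over y_i\in\widetilde O_l^{(i)}$; applying the Pohozaev identity \eqref{141} on a small half-ball centered at one of them, the paper shows one boundary contribution is $\leq 0$ (using $(H_3)$ via \cite[Cor.~4.15, Prop.~4.9]{JLX}) while the other is strictly $<0$ (from the B\^ocher-type expansion $G(Y)=b_1|Y-Y_1|^{2\si-n}+b+w(Y)$), contradicting that their sum vanishes. Hence no blow-up occurs, $\|U_{l,\tau_k}\|_{L^\infty}$ stays bounded, the $\lam_i^k$ remain \emph{bounded}, and the subcritical solutions converge (along a subsequence, strongly via elliptic estimates) to a nontrivial positive critical solution $U_l\in V_l(m,\var)$. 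Your phrasing that the Pohozaev identity ``forces a sign condition on the leading order expansion of $K$ at $p$ which is consistent only with a single unscaled bubble profile'' similarly misstates the conclusion: the identity does not select a profile, it yields an outright contradiction to the existence of any blow-up point, which is what makes the uniform bound and hence the limiting $m$-bump solution possible.
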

The proof of Proposition \ref{prop:5.1} is by contradiction arguments, 
 depending on  blow up analysis for a family of equations \eqref{30} approximating Eq. \eqref{139}. More precisely, if the sequence
of  subcritical solutions $U_{l, \tau}$ $(0<\tau<\over{\tau}_{l})$ obtained in Theorem \ref{thm:3.1} is uniformly bounded
as $\tau\to  0$, some local estimates in \cite{JLX}  imply that there exists a subsequence converging to a
positive solution of Eq. \eqref{139}. However, a prior $\{U_{l,\tau}\}$ might blow up, 
 we have to rule out this possibility. Note that $U_{l, \tau}\in V_{l}(m, o_{\var_{2}}(1))$, which consists of functions with $m$ $(m \geq 2)$  bump,     we  apply some  results of blow up analysis developed in \cite{JLX} to conclude that, as $\tau\to  0$, there is no blow up occurring under the hypotheses of Proposition \ref{prop:5.1}.   

In the  following  we  show the boundedness of $\{U_{l, \tau}\}$ (as $\tau \to  0$) by contradiction argument. More precisely, we reach a contradiction by checking balance via a Pohozaev type identity in some proper region. We start by recalling the notion of blow up points, isolated blow up points and isolated simple blow up points.

Let $\om \subset \Rn$ be a domain,  $\tau_{i} \geq 0$ satisfy $\lim _{i \to  \infty} \tau_{i}=0$,  $p_{i}=\frac{n+2\si}{n-2\si}-\tau_{i}$, and let $K_{i} \in C^{1,1}(\om)$ satisfy, for some constants $A_{1}, A_{2}>0$,
\be\label{bdd}
1 / A_{1} \leq K_{i}(x) \leq A_{1} \quad \text { for all }\, x \in \om, \quad\|K_{i}\|_{C^{1,1}(\om)} \leq A_{2}.
\ee
Let $u_{i} \in L^{\infty}(\om) \cap E$ with $u_{i} \geq 0$ in $\Rn$ satisfy
\be\label{subcriticaleq}
(-\Delta)^{\si} u_{i}= K_{i} u_{i}^{p_{i}} \quad \text { in } \,\om.
\ee

We say that $\{u_i\}$ blows up if $\|u_i\|_{L^\infty(\om)}\to \infty$ as $i\to \infty$.

\begin{defn}\label{def:isolatedblowup}
	Suppose that $\{K_i\}$ satisfies \eqref{bdd} and $\{u_i\}$ satisfies \eqref{subcriticaleq}.
	We say a point $\overline y\in \om$ is an isolated blow up point of $\{u_i\}$ if there exist
	$0<\overline r<\mbox{dist}(\overline y,\om)$, $\overline C>0$, and a sequence $y_i$ tending to $\overline y$, such that,
	$y_i$ is a local maximum of $u_i$, $u_i(y_i)\to  \infty$ and
$$
	u_i(y)\leq \overline C | y-y_i|^{-2\sigma/(p_i-1)} \quad \mbox{ for all }\, y\in B_{\overline r}(y_i).
$$
\end{defn}

Let $y_i\to  \overline y$ be an isolated blow up point of $u_i$, define
\be\label{def:average}
\overline u_i(r)=\frac{1}{|\pa B_r(y_i)|} \int_{\pa B_r(y_i)}u_i,\quad r>0,
\ee
and
$$
\overline w_i(r)=r^{2\sigma/(p_i-1)}\overline u_i(r), \quad r>0.
$$

\begin{defn}\label{isolatedsimpleblowup}
	We say $y_i \to \overline y\in \om$ is an isolated simple blow up point, if $y_i \to \overline y$ is an isolated blow up point, such that, for some
	$\rho>0$ (independent of $i$), $\overline w_i$ has precisely one critical point in $(0,\rho)$ for large $i$.
\end{defn}

Utilizing these notions, we present some facts. By some standard blow up arguments, the blow up points  cannot occur in $\Rn \setminus (\bigcup_{i=1}^{m}{\widetilde{O}}_{l}^{(i)})$ since the energy of $\{U_{l, \tau}\}$ in the region is small using the fact that $U_{l, \tau} \in V_{l}(m, o_{\var_{2}}(1))$ and the definition of $V_{l}(m, o_{\var_{2}}(1))$.
Hence the blow up points can occur only in $\bigcup_{i=1}^{m}{\widetilde{O}}_{l}^{(i)}$. By the structure of functions in $V_{l}(m,o_{\var_{2}}(1))$ and some blow up arguments  obtained in \cite[Proposition 5.1]{JLX},  there are at most $m$ isolated blow up points, namely, the blow up occurs in  $\{(\over{y}_{1},0), \ldots, (\over{y}_{m},0)\}$  for some $\over{y}_{i} \in \widetilde{O}_{l}^{(i)}$ $(1\leq i\leq m)$. Futhermore, we conclude from \cite[Proposition 4.16]{JLX}  that an isolated blow up point has to be an isolated simple blow up point. From the structure of functions in $V_{l}(m,o_{\var_{2}}(1))$ we know that if the blow up does occur, there have to be exactly $m$ isolated simple blow up points,  see \cite[Section 4]{JLX} for more details. 

Let us consider this situation only, namely, $\{\over{Y}_{i}=(\over{y}_{i},0), 1\leq i\leq m\}$  is the blow up set and they are all isolated simple blow up points. Moreover,  in our situation, $K_{i}(x)=K(x)H^{\tau_i}(x)$ is the sequence of functions in Eq. \eqref{subcriticaleq}.  We may assume that  the blow up occurs at $U_{i}=U_{l, \tau_{i}}$ and we can apply some blow up analysis   results in \cite{JLX}  to $U_{i}$. Here and in the following we suppress the
dependence of $l$ in the notation since $l$ is fixed in the blow up analysis.

Now we  complete the proof of Proposition \ref{prop:5.1} by  checking balance
via a Pohozaev type indentity.
\begin{proof}[Proof of Proposition \ref{prop:5.1}]
Let $U_i$ be the extension of $u_i$ (see \eqref{extension})  to the solution of Eq. \eqref{subcriticaleq} with $K_{i}=KH^{\tau_i}$. 	We may assume without loss of generality that $\over{y}_1=0$ and $y_{i}\to 0$ be the sequence as in Definition   \ref{isolatedsimpleblowup}.  Applying the Pohozaev identity \cite[Proposition 4.7]{JLX} to $U_{i}$, we obtain 
\be\label{141}
\int_{\pa ' \B_{R}^{+}(y_{i})} B'(Y, U_{i}, \nabla U_{i}, R, \si)+\int_{\pa '' \B_{R}^{+}(y_{i})} s^{1-2 \si} B''(Y, U_{i}, \nabla U_{i}, R, \si)=0,
\ee
where $$
B'(Y, U_{i}, \nabla U_{i}, R, \si)=\frac{n-2 \si}{2} K_{i} U_{i}^{p_{i}+1}+\langle Y, \nabla U_{i}\rangle K_{i} U_{i}^{p_{i}}
$$
and $$
B''(Y, U_{i}, \nabla U_{i}, R, \si)=\frac{n-2 \si}{2} U_{i} \frac{\pa  U_{i}}{\pa  \nu}-\frac{R}{2}|\nabla U_{i}|^{2}+R\Big|\frac{\pa  U_{i}}{\pa  \nu}\Big|^{2}.
$$
	
	We are going to derive a contradiction to \eqref{141}, by	showing that for small  $R>0$,
\be\label{Pohozaeve1}
	\limsup _{i \to  \infty} U_{i}(Y_{i})^{2} \int_{\pa ' \B_{R}^{+}(y_{i})} B'(Y, U_{i}, \nabla U_{i}, R, \si) \leq 0
\ee
	and
\be\label{Pohozaeve2}
	\limsup _{i \to  \infty} U_{i}(Y_{i})^{2} \int_{\pa '' \B_{R}^{+}(y_{i})} s^{1-2 \si} B''(Y, U_{i}, \nabla U_{i}, R, \si)<0.
\ee
Hence Proposition \ref{prop:5.1} will be established.

Let $\mathscr{S}=\{\over{Y}_{1},\over{Y}_{2},\ldots,\over{Y}_{m}\}$, applying B\^ocher Lemma in \cite[Lemma 4.10]{JLX} and  maximum principle, we deduce that
$$
U_{i}(Y_{i}) U_{i}(Y)\to  G(Y)=\sum_{k=1}^{m} b_{k}|Y-\over{Y}_{k}|^{2\si-n}+h(Y)\quad \text { in }\, C_{loc}^{\al}(\over{\R}_{+}^{n+1} \setminus  \mathscr{S}) \cap C_{loc}^{2}(\Rp)
$$
	and   
$$
 u_{i}(y_{i}) u_{i}(y)\to  G(y,0)=\sum_{k=1}^{m} b_{k}|y-\over{y}_{k}|^{2\si-n}+h(y,0)\quad \text { in }\, C_{loc}^{2}(\Rn\setminus  \{\over{y}_1,\over{y}_2,\ldots,\over{y}_m\})
$$
	as $i\to  \infty$, where $b_{k}>0$ $(1 \leq k \leq m)$ and $h(Y)$ satisfies
	$$
	\left\{\begin{aligned}
	&\operatorname{div}(s^{1-2 \si} \nabla h)=0 && \text { in }\, \Rp, \\
	& \pa _{\nu}^{\si} h=0 && \text { on }\, \Rn.
	\end{aligned}\right.
	$$	 
In particular, in a small half punctured disc at $Y_1$, we have 
	$$
	\lim _{i \to  \infty} U_{i}(Y_{i}) U_{i}(Y)=b_{1}|Y-Y_1|^{2\si-n}+b+w(Y),
	$$
	where $b>0$ is a positive constant and $w(Y)$ is a smooth function near $Y_1$ with $w(Y_1)=0$.

Now if we	choose $R>0$ small  enough, it is easy to verify \eqref{Pohozaeve2} by
	\begin{align*}
	&	\limsup _{i \to  \infty} U_{i}(Y_{i})^{2} \int_{\pa '' \B_{R}^{+}(y_{i})} s^{1-2 \si} B''(Y, U_{i}, \nabla U_{i}, R, \si) \\
	=&\int_{\pa ''\B_{R}^{+}(y_{i})} s^{1-2 \si} B''(Y, G, \nabla G, R, \si)\\=&-\frac{(n-2 \si)^{2}}{2} b_1^{2} \int_{\pa ''\B_{1}^{+}} s^{1-2 \si}\,\d y\,\d s+o_{R}(1)<0.
	\end{align*}

	On the other hand, via integration by parts, we have
	\begin{align*}
	&\int_{\pa ' \B_{R}^{+}(Y_{i})} B'(Y, U_{i}, \nabla U_{i}, R, \si) \\
	=&\frac{n-2 \si}{2} \int_{B_{R}(y_{i})} K_i u_{i}^{p_{i}+1}+\int_{B_{R}(y_{i})}\langle y-y_i, \nabla u_{i}\rangle K_i u_{i}^{p_{i}} \\
	=&\frac{n-2 \si}{2} \int_{B_{R}(y_{i})} K_i u_{i}^{p_{i}+1}-\frac{n}{p_{i}+1} \int_{B_{R}(y_{i})} K_i u_{i}^{p_{i}+1} \\
	& -\frac{1}{p_{i}+1} \int_{B_{R}(y_{i})}\langle y-y_i, \nabla K_i \rangle u_{i}^{p_{i}+1}+\frac{R}{p_{i}+1} \int_{\pa  B_{R}(y_{i})} K_i u_{i}^{p_{i}+1} \\
	\leq&-\frac{1}{p_{i}+1} \int_{B_{R}(y_{i})}\langle y-y_i, \nabla K_i\rangle u_{i}^{p_{i}+1}+C u_{i}(y_i)^{-(p_{i}+1)},
	\end{align*}
	where \cite[Proposition 4.9]{JLX} is used in the last inequality. Hence \eqref{Pohozaeve1} follows from \cite[Corollary 4.15]{JLX}.
	
Finally, from the above argument we know that there will be no blow up occur under the hypotheses of  Proposition \ref{prop:5.1} and hence Proposition \ref{prop:5.1} is established.
\end{proof}
\subsection{Proof of main theorems}
We are now ready to complete the proofs of the main results in our paper.
\begin{proof}[Proof of Theorem \ref{thm:0.2}] 
Suppose Theorem \ref{thm:0.2} is false, then for some $\over{\var}>0$ (we can assume $\over{\var}$ to be very small) and $k \in\{2,3,4, \ldots\}$, there exists a sequence of integers $I_{l}^{(1)}, \ldots, I_{l}^{(k)}$, such that
$$
	\lim _{l \to  \infty}|I_{l}^{(i)}|=\infty ,\quad \lim _{l \to  \infty}|I_{l}^{(i)}-I_{l}^{(j)}|=\infty, ~ i \neq j,
$$
	but Eq. \eqref{maineq2} has no solution in $V(k, \over{\var}, B_{ \over{\var}}(x_{l}^{(1)}), \ldots, B_{\over{\var}}(x_{l}^{(k)}))$ which satisfies
	$k c-\over{\var} \leq I_{K} \leq k c+\over{\var}$, where $$c=\frac{\si N_{\si}}{n}(K(x^{*}))^{(2\si-n) / 2\si}(S_{n,\si})^{n/\si}, \quad x_{l
	}^{(i)}=x^{*}+(I_{l}^{(i)} T, 0,\ldots,0).$$
		
	For $\var>0$ small, define
	\begin{gather*}
	K_{l}(x_{1}, x_{2},\ldots, x_{n})=K(x_{1}, x_{2},\ldots, x_{n}),\\
	O_{l}^{(i)}=B_{\var}(x_{l}^{(i)}),\quad  \widetilde{O}_{l}^{(i)}=B_{2 \var}(x_{l}^{(i)}),\\R_{l}=\min _{i \neq j}\big\{\sqrt{|I_{l}^{(i)}|}, \sqrt{|I_{l}^{(i)}-I_{l}^{(j)}|}\big\},\\K_{\infty}^{(i)}(x_{1}, x_{2},\ldots, x_{n})=K_{\infty}(x_{1}, x_{2}, \ldots,x_{n})=\lim _{l \to  \infty} K(x_{1}+lT, x_{2},\ldots, x_{n}),\\a^{(i)}=K(x^{*}).
	\end{gather*}
	It is easy to see that $K_{\infty}$ is $T$--periodic in $x_{1}$ and satisfies  assumption $(H_3)$ and 
$$
	K_{\infty}(x^{*})=\sup _{x \in \Rn} K_{\infty}(x)>0.
$$
	Under our hypothesis, it follows from \cite[Theorem 5.4]{JLX} that it is impossible to have more than one   blow up point.
	Applying \cite[Theorem 5.6]{JLX}  and Proposition \ref{prop:5.1}, we
	immediately get a contradiction.
\end{proof}
\begin{proof}[Proof of Theorem \ref{thm:0.3}]
	Theorem \ref{thm:0.3} can be proved similarly to Theorem \ref{thm:0.2} before, we omit it here.
\end{proof}
\begin{proof}[Proof of Theorem \ref{thm:0.1}]
	Let $\widetilde{x} \in \Sn$ be the north pole and make a stereographic projection to the equatorial plane of $\Sn$, then  \eqref{maineq} is transformed to \eqref{maineq1}.
Here	$K(x) \in L^{\infty}(\Rn)$ satisfies, for some  constants $A_{1}>1$, $R>1$ and $K_{\infty}>0$,
$$
1/A_1\leq K \leq A_1,\quad 	K \in C^{0}(\Rn \setminus  B_{R}),\quad 
	\lim _{|x| \to  \infty} K(x)=K_{\infty}.
$$	Let $\psi \in C^{\infty}(\Rn)$ satisfies assumption $(H_3)$ and 
$$
	\|\psi\|_{C^{2}(\Rn)}<\infty,\quad 
	\lim _{|x| \to  \infty } \psi(x)=: \psi_{\infty}>0,\quad 
	\langle \nabla \psi,
	x\rangle<0,~  \forall\, x \neq 0.
$$
It follows   that
	$\psi$ violates the Kazdan-Warner type condition \eqref{KW} and  therefore$$
	(-\Delta)^{\si} u=\psi|u|^{\frac{4\si}{n-2\si}} u  \quad  \text { in }\,  \Rn
$$has no nontrivial solution in $E$. 	
	
	For any $\var\in (0,1)$, $k=1,2,3, \ldots$, $m=2,3,4, \ldots$, we choose an integer $\over{k}$ such that for any $2 \leq s \leq m$, there holds $C_{\over{k}}^s\geq k$, where $C_{\over{k}}^s$ is a combination number. Then we  choose $e_{1}, e_{2}, \ldots, e_{\over{k}} \in \pa  B_{1}$ to	be $\over{k}$ distinct points.
		Let
	$$
	\delta_{R}=\max _{|x| \geq R}|K(x)-K_{\infty}|+\max _{|x| \geq R}|\psi(x)-\psi_{\infty}|, \quad R>1,
	$$
	and $\widetilde{\om}_{l}^{(i)}$ be the connected component of$$
	\big\{x : \var(\psi(x-l e_{i})-\psi_{\infty})+K_{\infty}-\delta_{\sqrt{l}}>K(x)\big\},
	$$which contains $x=l e_{i}$.	Define
	\begin{gather*}
	R_{l}=\min _{1 \leq i \leq m} \sup \{|x-l e_{i}| : x \in \widetilde{\om}_{l}^{(i)}\},\\	K_{\var, k, m, l}=\left\{\begin{aligned}
	&\var(\psi(x-l e_{i})-\psi_{\infty})+K_{\infty}-\delta_{\sqrt{l}} && \text { if } x \in \widetilde{\om}_{l}^{(i)}, \\
	&K(x) && \text { otherwise. }
	\end{aligned}\right.
	\end{gather*}
	It is easy to prove that\begin{gather*}
	\operatorname{diam}(\widetilde{\om}_{l}^{(i)}) \leq \sqrt{l}\quad  \text { for large }\, l,\\\lim _{l \to  \infty} R_{l}=\infty.
	\end{gather*}
	
Now	we consider the equation:
\be\label{153}
\left\{\begin{aligned}
&\operatorname{div}(t^{1-2\si}\nabla U)=0&&\text{ in }\,\Rp,\\
&\pa_{\nu}^{\si} U=K_{\var, k, m,l} U(x,0)^{\frac{n+2\si}{n-2\si}}&&\text{ on }\,\Rn.
\\
\end{aligned}\right.
\ee
	For any $2 \leq s \leq m$, we claim that, for $l$ large enough,  Eq. \eqref{153}	has at least $k$ solutions with $s$ bumps.  
	
To verify it,	let $e_{j_{1}}, \ldots, e_{j_{s}}$ be any $s$ distinct points among $e_{1}, \ldots, e_{\over{k}}$. For $i=1,2, \ldots, s$, we define  
	\begin{gather*}
	z_{l}^{(i)}=l e_{j_{i}} ,\\ O_{l}^{(i)}=B_{1}(z_{l}^{(i)}),\quad \widetilde{O}_{l}^{(i)}=B_{2}(z_{l}^{(i)}), \\
	K_{\infty}^{(i)}=\var(\psi-\psi_{\infty})+K_{\infty},\\ a^{(i)}=\var(\psi(0)-\psi_{\infty})+K_{\infty}.
	\end{gather*}
Then using an argument similar to the proof of Theorem \ref{thm:0.2}, we can prove  that there exists at least
	a solution in $V_{l}(s, \var)$ for large $l$. It is also easy to see that if we choose a different set of $s$ points among $\{e_{1}, \ldots, e_{\over{k}}\}$, we get different solutions since
	their mass are distributed in different regions by the definition of $V_{l}(s, \var)$. By the choice of $\over{k}$, there are at least $k$ different sets of such points. Therefore  Eq. \eqref{153} has at least $k$ solutions for large $l$.  This gives the proof of  the claim.
	
Finally, we fix $l$ large enough to make the above argument work for all $2 \leq s \leq m$ and set $K_{\var, k, m}=K_{\var, k, m, l}$. Thus there exists at least $k$ solutions with $s$ ($2 \leq s \leq m$) bumps to the following equations		$$
	(-\Delta)^{\si} u= K_{\var, k, m} u^{\frac{n+2\si}{n-2\si}},\quad u>0 \quad \text { in }\, \Rn.
$$Theorem \ref{thm:0.1} follows from the above after performing a sterographic projection on the original equation, we omit the details here.
\end{proof}
\begin{proof}[Proof of Corollary \ref{cor:1}]We can see from the proof in Theorem \ref{thm:0.1} that if $K\in C^{\infty}(\Sn)$, then $K_{\var,k,m}-K\in C^{\infty}(\Sn)$ can also be achieved. 
\end{proof}
%----------------------------------------------------------------------------%
\appendix

\section{\sc Some a priori estimates}\label{sec:1}
In this section we present some a priori estimates of  positive solutions to the  equation
$$
(-\Delta)^{\si} u=K(x)u^{\frac{n+2\si}{n-2\si}-\tau},\quad  |x|\geq 1
$$
with $\tau \geq 0$ small. For this purpose,  we use the  extension formula \eqref{extension} to consider the related degenerate elliptic equations.  Our proofs are in the spirit of those in \cite{XaYa,CJYX,JLX} and some standard rescaling arguments. We omit some proofs of several results by giving appropriate references.
\begin{prop}\label{prop:1.1}
	Suppose that $K \in L^{\infty}(\Rn \setminus  B_{1})$   satisfies $1/A_0\leq K(x)\leq A_0$, $\forall\, x\in \Rn \setminus  B_{1}$ for some positive constant $A_0>1$. Then there exists some positive constants	$\mu_{1}=\mu_{1}(n,\si, A_0)$ and $C(n,\si, A_0)$ such that for any  positive solution  of
	$$
	\left\{\begin{aligned}
	&	\operatorname{div}(t^{1-2 \si} \nabla U)=0& & \text { in }\, \Rp, \\
	&\pa_{\nu}^{\si} U=K(x) U(x, 0)^{\frac{n+2\si}{n-2\si}}& & \text { on }\, \Rn \setminus  B_{1},
	\end{aligned}\right.
	$$
	with $\nabla U \in L^{2}(t^{1-2 \si} , \Rp \setminus  \B_{1}^{+})$, 
	$U(x, 0) \in L^{2^{*}_{\si}}(\Rn \setminus  B_{1})$ and \begin{align}\label{prop:1.1-1}
	\int_{\Rp \setminus  \B_{1}^{+}} t^{1-2 \si}|\nabla U|^{2}\,\d X\leq \mu_{1},\end{align}  we have
	\begin{align*}
	\sup_{X\in \over{\R}^{n+1}_{+}\setminus  \B^+_2}	|X|^{n-2\si}U(X)\leq C(n,\si, A_0).
	\end{align*} 
\end{prop}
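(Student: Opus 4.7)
The plan is to reduce the exterior decay estimate to an interior $L^\infty$ bound via the Kelvin transform, and then apply an $\varepsilon$-regularity (Moser iteration) argument tailored to the degenerate weight $t^{1-2\sigma}$.

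First, set
\[
V(Y):=|Y|^{-(n-2\si)}\,U\bigl(Y/|Y|^{2}\bigr),\qquad Y\in\B_{1}^{+}\setminus\{0\},
\]
and $\widetilde K(y):=K(y/|y|^{2})$ for $y\in B_{1}\setminus\{0\}$. The conformal invariance of the extension operator together with the fact that $\frac{n+2\si}{n-2\si}$ is the critical exponent gives
\[
\operatorname{div}(t^{1-2\si}\nabla V)=0\ \text{in}\ \B_{1}^{+}\setminus\{0\},\qquad \pa_{\nu}^{\si}V=\widetilde K(y)\,V(y,0)^{\frac{n+2\si}{n-2\si}}\ \text{on}\ B_{1}\setminus\{0\},
\]
with $1/A_{0}\le\widetilde K\le A_{0}$. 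A change of variable shows that the weighted Dirichlet energy is conformally invariant at the critical exponent, so
\[
\int_{\B_{1}^{+}}t^{1-2\si}|\nabla V|^{2}\,\d X=\int_{\Rp\setminus\B_{1}^{+}}t^{1-2\si}|\nabla U|^{2}\,\d X\le\mu_{1}.
\]
Since the singleton $\{0\}$ has vanishing capacity in the weighted Sobolev space $H^{1}(t^{1-2\si},\B_{1}^{+})$ (as $n+2-2\si>0$) and $V$ has finite weighted Dirichlet energy, a standard removable-singularity argument (test against a logarithmic cutoff) extends $V$ to a nonnegative weak solution across the origin on all of $\B_{1}^{+}$.

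Next, since the conclusion
\[
\sup_{X\in\over{\R}^{n+1}_{+}\setminus\B^{+}_{2}}|X|^{n-2\si}U(X)=\sup_{Y\in\B^{+}_{1/2}}V(Y)\le C(n,\si,A_{0})
\]
is equivalent to bounding $V$ in $L^{\infty}(\B^{+}_{1/2})$, the heart of the proof becomes an $\varepsilon$-regularity statement: \emph{if $V\ge 0$ solves the above degenerate problem on $\B^{+}_{1}$ and $\int_{\B^{+}_{1}}t^{1-2\si}|\nabla V|^{2}$ is sufficiently small (depending only on $n,\si,A_{0}$), then $\|V\|_{L^{\infty}(\B^{+}_{1/2})}\le C(n,\si,A_{0})$}. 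This is established by a Moser iteration: test the weak form of the equation against $\eta^{2}V^{2\beta+1}$ with a cutoff $\eta$, use the weighted Sobolev trace inequality \eqref{trace} on the boundary term $\int_{B_{1}}\widetilde K V^{\frac{n+2\si}{n-2\si}}\cdot \eta^{2}V^{2\beta+1}$, and absorb the resulting $L^{2^{*}_{\si}}$ factor using the smallness $\mu_{1}$. One obtains a reverse H\"older inequality
\[
\Bigl(\int_{\B^{+}_{r}}t^{1-2\si}V^{(2\beta+2)\chi}\Bigr)^{1/\chi}\le C(\beta)\,(R-r)^{-2}\int_{\B^{+}_{R}}t^{1-2\si}V^{2\beta+2},
\]
for some gain factor $\chi>1$ coming from the weighted Sobolev embedding; iterating starting from the initial $L^{2^{*}_{\si}}(B_{1})$ trace bound yields the desired $L^{\infty}$ estimate.

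Finally, the decay bound for $U$ follows by inverting the Kelvin transform: for any $X$ with $|X|\ge 2$, writing $Y=X/|X|^{2}\in\B^{+}_{1/2}$,
\[
|X|^{n-2\si}U(X)=V(Y)\le\|V\|_{L^{\infty}(\B^{+}_{1/2})}\le C(n,\si,A_{0}).
\]
The main obstacle is the Moser iteration step: the weighted embedding and degenerate weight must be handled with care (cf.\ Fabes--Kenig--Serapioni and the framework used in \cite{JLX}), and the smallness $\mu_{1}$ must be chosen precisely so that the critical boundary nonlinearity can be absorbed into the gradient term at every iteration stage. The finite-energy removability and the conformal invariance are comparatively routine.
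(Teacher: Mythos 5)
Your proposal is correct and takes essentially the same route as the paper: Kelvin transform to reduce the exterior decay bound to an interior $\varepsilon$-regularity estimate on $\B_1^+$, with the smallness of $\mu_1$ used to absorb the critical boundary nonlinearity. The paper delegates the regularity/$L^\infty$ step to citations (\cite[Lemma 2.8, Proposition 2.6]{JLX}, \cite[Lemma 6.5]{Niu18}) and handles removability implicitly via \cite[Lemma 2.6]{FV}, whereas you sketch the Moser iteration and make the removable-singularity step explicit; these are details of the same argument rather than a different approach.
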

\begin{proof}
We perform a Kelvin transformation on $U(X)$. Let 
	\begin{gather*}
	\widetilde{X}=(\widetilde{x}, \widetilde{t})=\frac{X}{|X|^{2}}, \quad|X| \geq 1,\\V(\widetilde{X})=\frac{1}{|\widetilde{X}|^{n-2\si}} U\Big(\frac{\widetilde{x}}{|\widetilde{X}|^2},\frac{\widetilde{t}}{|\widetilde{X}|^2}\Big) .
	\end{gather*} 
Thus $V(\widetilde{X})$ satisfies  
$$
	\left\{\begin{aligned}
	&	\operatorname{div}(\widetilde{t}^{1-2 \si} \nabla V)=0& & \text { in }\, \Rp, \\
	&\pa_{\nu}^{\si}V=K(\frac{\widetilde{x}}{|\widetilde{x}|^{2}}) V(\widetilde{x}, 0)^{\frac{n+2\si}{n-2\si}}& & \text { on } \, B_{1}\setminus \{0\}.
	\end{aligned}
	\right.
$$
	By \eqref{prop:1.1-1} and Lemma \ref{traceexterior}, we have 	$$	\int_{\B_{1}^{+}} \widetilde{t}^{1-2 \si}|\nabla V|^{2}\,\d  \widetilde{X}+\int_{B_{1}} |V(\widetilde{x}, 0)|^{2^{*}_{\si}}\,\d  \widetilde{x} \leq C_{0}(n,\si) \mu_{1},
$$see also \cite[lemma 2.6]{FV} for  more delicate computations. 
It follows from the regularity results in  	\cite[Lemma 2.8]{JLX} that $V(\cdot,0)\in L^{q}(B_{9/10})$ for some $q>\frac{n}{2\si}$. Using the Harnack inequality in \cite[Proposition 2.6(iii)]{JLX} (see also \cite{XaYa}),  we just need to give an a priori bound of $\|V(\cdot,0)\|_{L^{\infty}(B_{0.5})}$  to complete the proof.  Now the rest proof  can be done in the same procedure as in the proof of \cite[Lemma 6.5]{Niu18}, we omit it here. 
\end{proof}
\begin{prop}\label{prop:1.2}
	Let $\mu_{1}$ and  $C(n,\si,A_0)$ be the positive constants in
	Proposition \ref{prop:1.1}. Then for any $2<l_{1}<l_{2}<\infty$, there exists a positive constant $R_{1}=R_{1}(n, \si,A_0, \mu_{1}, l_{1}, l_{2})>l_{2}$ such that for any $K \in L^{\infty}(B_{R_{1}} \setminus  B_{1})$ with
	$1/A_0\leq K(x)\leq A_0$, $\forall\,x\in B_{R_1} \setminus  B_{1}$, and any positive solution of
$$	\left\{\begin{aligned}
	&\operatorname{div}(t^{1-2 \si} \nabla U)=0 && \text { in }\, \Rp, \\
	&\pa_{\nu}^{\si} U=K(x) U(x, 0)^{\frac{n+2\si}{n-2\si}}& & \text { on } \,B_{R_{1}} \setminus  B_{1},
	\end{aligned}\right.
	$$
	with $$
	\int_{\B_{R_{1}}^{+} \setminus  \B_{1}^{+}} t^{1-2 \si}|\nabla U|^{2}\,\d X+\int_{B_{R_{1}} \setminus  B_{1}}|U(x, 0)|^{2^{*}_{\si}}\,\d x\leq \mu_{1},	$$
	 we have$$
	\sup_{X\in \over{\B }^+_{l_2}\setminus  \B^+_{l_1}}	|X|^{n-2\si}U(X) \leq 2 C(n,\si, A_0).	$$
\end{prop}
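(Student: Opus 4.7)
The plan is to argue by contradiction and compactness, reducing everything to Proposition \ref{prop:1.1} in the limit. Suppose the conclusion fails. Then there exist $l_1, l_2$ fixed with $2 < l_1 < l_2 < \infty$, sequences $R_l \to \infty$, functions $K_l \in L^\infty(B_{R_l} \setminus B_1)$ with $1/A_0 \leq K_l \leq A_0$, positive solutions $U_l$ to the stated boundary value problem on $\B_{R_l}^+ \setminus \B_1^+$ with energy $\leq \mu_1$, and points $X_l \in \over{\B^+_{l_2}} \setminus \B^+_{l_1}$ such that
\[
|X_l|^{n-2\si}\,U_l(X_l) > 2\,C(n,\si,A_0).
\]

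The first step is to extract limits. By weak-$*$ compactness in $L^\infty$, after passing to a subsequence $K_l \rightharpoonup K_\infty$ with $1/A_0 \leq K_\infty \leq A_0$ a.e.\ on $\Rn\setminus B_1$. The uniform energy bound $\mu_1$ together with the trace inequality gives a uniform bound on $\|U_l\|_{H^1(t^{1-2\si}, \B^+_R \setminus \B_1^+)}$ for each fixed $R$, once $l$ is large. Hence we may extract a further subsequence so that $U_l \rightharpoonup U_\infty$ weakly in $H^1(t^{1-2\si},\cdot)$ on each such annulus and $U_l(\cdot,0) \rightharpoonup U_\infty(\cdot,0)$ weakly in $L^{2_\si^*}_{loc}(\Rn\setminus B_1)$, with the limit satisfying the total energy bound $\mu_1$ by lower semicontinuity.

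The crucial step is to upgrade this to strong local convergence (say, in $C^0_{loc}$) on $\over{\Rp}\setminus \B_1^+$. The smallness of the energy $\mu_1$ plays the role of an $\var$-regularity threshold: a localized Moser/De Giorgi iteration along the lines of \cite[Lemma 2.8, Proposition 2.6]{JLX} (see also \cite{XaYa}) yields uniform $L^\infty$ bounds for $U_l$ on every compact subset of $\over{\Rp}\setminus \B_1^+$ that lies well inside $\B^+_{R_l}$. Once $L^\infty_{loc}$ bounds are in hand, the boundary H\"older estimates for degenerate elliptic equations with Neumann-type boundary condition from \cite{JLX,XaYa} provide $C^\al_{loc}$ bounds up to $\{t=0\}$, and Arzel\`a--Ascoli produces uniform convergence on compact subsets. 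A standard passage to the limit in the weak formulation then shows that $U_\infty$ is a (possibly identically zero) nonnegative solution of
\[
\left\{\begin{aligned}
&\operatorname{div}(t^{1-2\si}\nabla U_\infty)=0 && \text{in }\, \Rp,\\
&\pa_{\nu}^{\si} U_\infty = K_\infty(x)\,U_\infty(x,0)^{\frac{n+2\si}{n-2\si}} && \text{on }\, \Rn\setminus B_1,
\end{aligned}\right.
\]
with $\int_{\Rp\setminus \B_1^+}t^{1-2\si}|\nabla U_\infty|^2 \leq \mu_1$.

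Applying Proposition \ref{prop:1.1} to $U_\infty$ (after harmlessly truncating $K_\infty$ or noting that the proof there is unchanged by the weak limit of an $L^\infty$ bounded sequence) yields
\[
|X|^{n-2\si}U_\infty(X) \leq C(n,\si,A_0) \quad \text{for all }\, X \in \over{\Rp}\setminus \B_2^+.
\]
On the other hand, a further subsequence of the contradiction points satisfies $X_l \to X_\infty$ with $l_1 \leq |X_\infty|\leq l_2$; since $l_1 > 2$, the strong local convergence gives
\[
|X_\infty|^{n-2\si}\,U_\infty(X_\infty) = \lim_{l\to\infty}|X_l|^{n-2\si}U_l(X_l) \geq 2\,C(n,\si,A_0),
\]
which contradicts the previous display. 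Choosing $R_1 = R_1(n,\si,A_0,\mu_1,l_1,l_2)$ large enough produces the desired uniform estimate.

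The main obstacle is the regularity upgrade: one must turn the bare uniform energy bound into uniform $L^\infty$ and $C^\al$ bounds on annular regions that include the weighted boundary $\{t=0\}$, in the presence of the critical exponent $\frac{n+2\si}{n-2\si}$. This is precisely where the smallness of $\mu_1$ (inherited from Proposition \ref{prop:1.1}) is used to close the Moser iteration and prevent concentration; once this $\var$-regularity is available, the compactness/contradiction scheme above is routine.
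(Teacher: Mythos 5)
Your proof is correct and follows essentially the same route the paper indicates: a compactness/contradiction argument, passing to a limit solution on the full exterior region $\Rp\setminus\B_1^+$ (using the small-energy local estimates for the degenerate equation to upgrade weak convergence to locally uniform convergence) and then invoking Proposition \ref{prop:1.1} on the limit to obtain the contradiction. The paper omits the details and points to \cite[Lemma 6.6]{Niu18} for this exact scheme, so there is no substantive divergence; the only minor points worth tightening are the positivity of the limit $U_\infty$ (either $U_\infty\equiv 0$ yields an immediate contradiction at $X_\infty$, or $U_\infty>0$ by the strong maximum principle), and the remark about ``truncating $K_\infty$,'' which is unnecessary since the weak-$*$ limit of $\{K_l\}$ already inherits the pointwise bounds $1/A_0\leq K_\infty\leq A_0$.
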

\begin{proof}
	The proof can be done by using contradition argument and Proposition \ref{prop:1.1}, we omit it here and also refer to \cite[Lemma 6.6]{Niu18} for a similar proof.
\end{proof}
\begin{prop}\label{prop:1.3}
	Suppose that $l_{2}>100 l_{1}>100$, $K \in C^{1}(B_{l_{2}} \setminus  B_{l_{1}})$ and   $\|K\|_{C^{1}(B_{l_{2}} \setminus  B_{l_{1}})}\leq A_1$ for some positive constant $A_1$.  Then for any  positive solution 
	$$
	\left\{\begin{aligned}
	&\operatorname{div}(t^{1-2 \si} \nabla U)=0 && \text { in }\, \Rp, \\
	&\pa_{\nu}^{\si}  U= K(x) U(x, 0)^{\frac{n+2\si}{n-2\si}}& & \text { on }\, B_{l_{2}} \setminus  B_{l_{1}},
	\end{aligned}\right.
	$$
	satisfying 
\be\label{prop1.3-1}
	\sup_{x\in B_{l_{2}} \setminus  B_{l_{1}}}|x|^{n-2\si}U(x, 0) \leq A
\ee for some  constant $A>1$,
	we have$$
	\sup_{X\in\over{\B}_{l_2/4}^+\setminus  \B_{4l_1}^+}|X|^{n+1-2\si}|\nabla_x U| \leq C(n,\si,A_1,A),
	$$
	and $$
	\sup_{X\in\over{\B}_{l_2/4}^+\setminus  \B_{4l_1}^+}|X|^{n}|t^{1-2\si}\pa_t U| \leq C(n,\si,A_1,A),
	$$where $C(n,\si,A_1,A)$ is some positive constant depending only on $n,\si,A_1$ and $A$. 
\end{prop}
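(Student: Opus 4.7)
The plan is to establish both estimates by a rescaling argument that reduces them, at any reference point in the annular half-region, to a fixed-scale $L^\infty$-to-$C^1$ regularity estimate for the degenerate Neumann problem. The pointwise decay hypothesis \eqref{prop1.3-1} is invariant under the natural scaling $\tilde U(Y)=R^{n-2\si}U(RY)$, which makes this reduction clean.

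Concretely, fix any $X_0=(x_0,t_0)\in\over{\B}_{l_2/4}^+\setminus \B_{4l_1}^+$, set $R=|X_0|\in[4l_1,l_2/4]$, $Y_0=X_0/R$, and $s_0:=t_0/R\in[0,1]$, so $|Y_0|=1$. Define
$$
\tilde U(Y)=R^{n-2\si}U(RY),\qquad Y=(y,s)\in\Rp.
$$
A direct computation yields $\operatorname{div}(s^{1-2\si}\nabla\tilde U)=0$ in $\B_{l_2/R}^+\setminus \B_{l_1/R}^+$ and, on the corresponding piece of $\pa\Rp$,
$$
\pa_\nu^{\si}\tilde U(y,0)=\tilde K(y)\,\tilde U(y,0)^{(n+2\si)/(n-2\si)},\qquad \tilde K(y):=R^{-2\si}K(Ry).
$$
Since $R\geq 4l_1>4$, we have $\|\tilde K\|_{L^\infty}\leq A_1$, and the $C^{0,1}$-norm of $\tilde K$ is also uniformly controlled in terms of $n,\si,A_1$. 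The rescaled form of the decay hypothesis reads
$$
\tilde U(y,0)\leq A\,|y|^{-(n-2\si)}\leq 4^{n-2\si}A\qquad\text{for } 1/4\leq|y|\leq 4,
$$
and the choice $R\in[4l_1,l_2/4]$ ensures that the annular half-ball $\B_4^+\setminus \B_{1/4}^+$ lies inside the domain where $\tilde U$ satisfies the full system.

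The task then reduces to proving the fixed-scale estimate
$$
|\nabla_y\tilde U(Y_0)|+s_0^{1-2\si}|\pa_s\tilde U(Y_0)|\leq C(n,\si,A_1,A)
$$
at the reference point $Y_0$. This proceeds in two steps. First, a Moser-type iteration adapted to the weight $s^{1-2\si}$ with a bounded subcritical Neumann datum, in the form of \cite[Lemma 2.8 and Proposition 2.6]{JLX}, upgrades the pointwise boundary bound on $\tilde U(\cdot,0)$ to a uniform $L^\infty$ bound on $\tilde U$ throughout an interior annular shell containing $Y_0$, say $\B_3^+\setminus \B_{1/3}^+$. Second, the boundary $C^{1,\alpha}$ regularity theory for this degenerate Neumann problem (see \cite{CS2,JX2,JLX}) then produces pointwise bounds on $\nabla_y\tilde U(Y_0)$ and on the weighted normal derivative $s^{1-2\si}\pa_s\tilde U(Y_0)$ up to the flat boundary, with constants depending only on $n$, $\si$, $A_1$ and the $L^\infty$-bound just obtained.

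Undoing the rescaling via
$$
\nabla_x U(X_0)=R^{-(n+1-2\si)}\nabla_y\tilde U(Y_0),\qquad t_0^{1-2\si}\pa_t U(X_0)=R^{-n}\,s_0^{1-2\si}\pa_s\tilde U(Y_0),
$$
one obtains $|X_0|^{n+1-2\si}|\nabla_x U(X_0)|=|\nabla_y\tilde U(Y_0)|\leq C$ and $|X_0|^{n}|t_0^{1-2\si}\pa_t U(X_0)|=s_0^{1-2\si}|\pa_s\tilde U(Y_0)|\leq C$, which are precisely the two claimed estimates. The main obstacle is the control of the weighted normal derivative $s^{1-2\si}\pa_s\tilde U$ up to the boundary: this bound is not a consequence of classical Schauder theory, and one must appeal to boundary regularity tailored to the Muckenhoupt weight $t^{1-2\si}$ and to the critical-type Neumann nonlinearity, as developed in the Caffarelli–Silvestre extension framework and its refinements in the fractional Nirenberg literature.
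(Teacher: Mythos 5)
Your proposal is correct and follows essentially the same route as the paper: rescale by $R=|X_0|$ with the decay-invariant scaling $\tilde U(Y)=R^{n-2\si}U(RY)$, observe that the rescaled Neumann datum is uniformly bounded on a fixed annulus (you even have the correct factor $R^{-2\si}K(Ry)$, whereas the printed exponent in the paper's proof is a misprint), and then invoke the weighted interior/boundary regularity theory of \cite{JLX,XaYa,CS2} at unit scale before undoing the scaling. The paper parameterizes by a radius $r\in(4l_1,l_2/4)$ and evaluates on $|X|=1$ rather than fixing $X_0$, but this is only a cosmetic difference; the key ingredients and references are the same.
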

\begin{proof}
	For any $r \in(4 l_{1}, l_{2}/4)$,  we have
$$\left\{\begin{aligned}
	&\operatorname{div}(t^{1-2\si} \nabla U)=0&& \text { in }\, \Rp, \\
	&\pa_{\nu}^{\si}  U= K(x) U(x, 0)^{\frac{n+2\si}{n-2\si}}& & \text { on  }\,B_{2r}\setminus  B_{r/2},
	\end{aligned}\right.
$$	and
	$\sup_{x\in B_{2r}\setminus  B_{r/2}}	U(x,0) \leq (r/2)^{n-2\si} A$
	by \eqref{prop1.3-1}. 
	
	Let $V(X)=r^{n-2\si} U(rX)$, then it solves
	$$	\left\{\begin{aligned}
	&\operatorname{div}(t^{1-2 \si} \nabla V)=0 && \text { in }\, \Rp, \\
	&\pa_{\nu}^{\si} V=r^{-2n-1}K(rx)V(x,0)^{\frac{n+2\si}{n-2\si}} & & \text { on }\,B_2\setminus  B_{1/2}.
	\end{aligned}\right.$$
	Note that $V(x,0) \leq C(n,\si,A)$ and $r^{-2n-1}|K(rx)V(x,0)^{\frac{n+2\si}{n-2\si}}| \leq C(n,\si,A_1,A)$
	in the annulus $\{x \in \Rn : 1/2 \leq |x| \leq 2\}$, we deduce from  the regularity results in Section 2.2 of \cite{JLX} and  the proof of Proposition 2.19 in \cite{JLX} (see also \cite{XaYa})  that
	$$
	\sup_{|X|=1}	|\nabla_x V| \leq C(n,\si,A_1,A),
$$and
	$$
	\sup_{|X|=1}	|t^{1-2\si}\pa_{t} V| \leq C(n,\si,A_1,A).
$$
	As a consequence, $$
	\sup_{|X|=r}|X|^{n+1-2\si}	|\nabla_x U| \leq C(n,\si,A_1,A) ,
$$and$$ 
	\sup_{|X|=r}	|X|^{n}|t^{1-2\si}\pa_t U| \leq C(n,\si,A_1,A).
$$This finishes the proof.
\end{proof}
\begin{prop}\label{prop:1.4}
	Let  $\mu_1$, $R_{1}$ and $C(n, \si,A_{0})$ be the constants in Proposition \ref{prop:1.2}. Then	for any $2<l_{1}<l_{2}<\infty$, there exist some positive constants $\mu_{2}=\mu_{2}(n,\si, A_{0}) \leq \mu_{1}$, $\over{\tau}=\over{\tau}(n, \si,l_{1}, l_{2}, A_{0})$, such that for any $0 \leq \tau \leq \over{\tau}$, $K\in L^{\infty}(B_{R_1}\setminus  B_1)$ with $1/A_0\leq K(x)\leq A_0$, $\forall\,x\in B_{R_1}\setminus  B_1$, and any positive solution of
	$$
	\left\{\begin{aligned}
	&\operatorname{div}(t^{1-2 \si} \nabla U)=0 && \text { in }\, \Rp, \\
	&\pa_{\nu}^{\si}  U= K(x) U(x, 0)^{\frac{n+2\si}{n-2\si}-\tau}& &\text { on }\, B_{2R_1}\setminus  B_1,
	\end{aligned}\right.
	$$
	with
	$$
	\int_{\B_{2R_{1}}^{+} \setminus  \B_{1}^{+}} t^{1-2\si}|\nabla U|^{2}\,\d X +\int_{B_{2R_1}\setminus  B_1} |U(x, 0)|^{2^{*}_{\si}}\,\d x \leq \mu_{2},
	$$
	we have
\be\label{prop:2.4-1}
	\sup_{X\in \over{\B}^+_{l_2}\setminus  \B^+_{l_1}}	|X|^{n-2\si} U(X) \leq 3 C(n, \si, A_0).
\ee
	Furthermore, if $K\in C^{1}(B_{R_1}\setminus  B_1)$ with $\|K\|_{C^{1}(B_{R_1}\setminus  B_1)}\leq A_1$ for some positive constant $A_1$, we have$$
	\sup_{X\in \over{\B}^+_{l_2}\setminus  \B^+_{l_1}}	|X|^{n+1-2\si}|\nabla_x U|\leq 2 C(n, \si, A_1,A),
$$
	and
$$
	\sup_{X\in \over{\B}^+_{l_2}\setminus  \B^+_{l_1}} |X|^{n}|\pa_{t} U|\leq  2 C(n, \si, A_1,A) ,
$$
	where $C(n, \si, A_1,A)$ is the constant in  Proposition \ref{prop:1.3} with $A$ replaced by $3C(n,\si,A_0)$.
\end{prop}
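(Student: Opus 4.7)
The plan is to derive the pointwise bound \eqref{prop:2.4-1} from Proposition \ref{prop:1.2} by a compactness argument in the subcritical parameter $\tau$, and then to obtain the gradient estimates by applying Proposition \ref{prop:1.3} with the constant $A=3C(n,\si,A_0)$. Set $\mu_2:=\mu_1$. Arguing by contradiction for the first assertion, suppose there exist sequences $\tau_k\downarrow 0$, functions $K_k\in L^{\infty}(B_{R_1}\setminus B_1)$ with $1/A_0\leq K_k\leq A_0$, and positive solutions $U_k$ of the subcritical equation with exponent $p_k:=\frac{n+2\si}{n-2\si}-\tau_k$ satisfying the prescribed energy bound, but such that $|X_k|^{n-2\si}U_k(X_k)>3C(n,\si,A_0)$ for some $X_k\in\over{\B}^+_{l_2}\setminus\B^+_{l_1}$.

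The small-energy assumption, together with the Kelvin transform used in the proof of Proposition \ref{prop:1.1} and the Moser/Harnack machinery for degenerate elliptic equations with subcritical conformal boundary nonlinearity (see \cite{JLX,XaYa}), supplies a uniform $L^{\infty}$ and a uniform $C^{\al}_{\mathrm{loc}}$ bound for $U_k$ on a slightly enlarged annulus $\over{\B}^+_{l_2+\delta}\setminus\B^+_{l_1-\delta}$, for a $\delta=\delta(n,\si,A_0,\mu_1,l_1,l_2)>0$; crucially, the underlying De Giorgi–Moser constants depend continuously on $p_k$ and hence are uniform for $\tau_k\in[0,\over{\tau}]$ once $\over{\tau}$ is small. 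Passing to a subsequence, $X_k\to X_\infty\in\over{\B}^+_{l_2}\setminus\B^+_{l_1}$, $U_k\to U_\infty$ uniformly on $\over{\B}^+_{l_2+\delta/2}\setminus\B^+_{l_1-\delta/2}$, and $K_k\rightharpoonup^{*} K_\infty$ weakly-$*$ in $L^{\infty}(B_{R_1}\setminus B_1)$ with $1/A_0\leq K_\infty\leq A_0$. Writing $K_k U_k^{p_k}-K_\infty U_\infty^{(n+2\si)/(n-2\si)} = K_k(U_k^{p_k}-U_\infty^{(n+2\si)/(n-2\si)}) + (K_k-K_\infty)U_\infty^{(n+2\si)/(n-2\si)}$, the first term converges to zero in $L^{\infty}$ on compact pieces of the annulus (uniform convergence plus $p_k\to(n+2\si)/(n-2\si)$), and the second converges to zero weakly-$*$; this passes to the limit in the equation and shows $U_\infty$ is a positive solution of the critical problem for $K_\infty$ on $B_{R_1}\setminus B_1$ with energy at most $\mu_1$. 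Proposition \ref{prop:1.2} forces $|X_\infty|^{n-2\si}U_\infty(X_\infty)\leq 2C(n,\si,A_0)$, contradicting the limiting lower bound $\geq 3C(n,\si,A_0)$. This yields the desired $\over\tau=\over\tau(n,\si,l_1,l_2,A_0)$ and the estimate \eqref{prop:2.4-1}.

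Once \eqref{prop:2.4-1} is in hand, it is precisely hypothesis \eqref{prop1.3-1} of Proposition \ref{prop:1.3} with $A=3C(n,\si,A_0)$, on any sub-annulus. I would then rerun the rescaling proof of Proposition \ref{prop:1.3}: set $V(X)=r^{n-2\si}U(rX)$ for $r\in(4l_1,l_2/4)$; the equation for $V$ becomes $\pa_\nu^\si V = r^{-(n-2\si)\tau}K(rx)V^{p_k}$, and shrinking $\over\tau$ further (depending on $l_2$) keeps the prefactor $r^{-(n-2\si)\tau}$ bounded in $[1/2,2]$. With $V(\cdot,0)$ uniformly bounded by $3C(n,\si,A_0)$ and the right-hand side uniformly bounded, the interior regularity results of \cite[Section 2.2]{JLX} supply the $|\nabla_x V|$ and $|t^{1-2\si}\pa_t V|$ bounds on $\{|X|=1\}$ with constants arbitrarily close to those at $\tau=0$; rescaling back gives the stated gradient estimates with the loss absorbed into the doubled prefactor $2C(n,\si,A_1,A)$.

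The main obstacle is ensuring that every constant in the above chain—the Moser/Kelvin pointwise bound of Proposition \ref{prop:1.1}, the compactness argument of Proposition \ref{prop:1.2}, and the local Hölder/gradient theory of \cite{JLX,XaYa} underlying Proposition \ref{prop:1.3}—is stable as $\tau\to 0^+$, so that a single threshold $\over\tau(n,\si,l_1,l_2,A_0)$ works. This is a continuity-in-exponent issue for the De Giorgi–Nash–Moser constants in the degenerate elliptic setting with the conformal exponent $p_k$; it is routine but requires inspecting the regularity proofs to pin down the explicit $\tau$-dependence and to choose $\over\tau$ at each step so that the stated loss by at most a factor of $3/2$ (in \eqref{prop:2.4-1}) and $2$ (in the gradient bounds) is realized.
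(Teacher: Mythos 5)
Your argument follows the paper's proof in essence: a compactness/contradiction argument extracting a limit solving the critical equation, which is then ruled out by Proposition \ref{prop:1.2}, followed by a rescaling argument feeding the resulting pointwise bound into Proposition \ref{prop:1.3} for the gradient estimates. One caveat: the paper deliberately chooses $\mu_{2}$ strictly smaller than $\mu_{1}$ (invoking a uniform local compactness result from \cite[Lemma 6.5]{Niu18}) so there is slack in the $\varepsilon$-regularity threshold as $\tau\to 0^{+}$; setting $\mu_{2}=\mu_{1}$ outright, as you do, removes that margin and should be avoided, though this is a minor adjustment rather than a structural gap. (A small algebraic slip: with $V(X)=r^{n-2\si}U(rX)$ the prefactor in the rescaled boundary condition is $r^{-2\si+(n-2\si)\tau}$ rather than $r^{-(n-2\si)\tau}$; since $r$ runs over a fixed compact interval, the conclusion is unaffected.)
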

\begin{proof}
	Suppose the contrary, then there exists a sequence $0\leq \tau_{j} \to  0$ and  $U_{j}>0$ satisfying
	\begin{gather*}
	\left\{\begin{aligned}
	&\operatorname{div}(t^{1-2 \si} \nabla U_{j})=0 && \text { in }\, \Rp, \\
	&\pa_{\nu}^{\si} U_{j}= K(x) U_{j}(x, 0)^{\frac{n+2\si}{n-2\si}-\tau_j}& & \text { on }\, B_{2R_1}\setminus  B_1,
	\end{aligned}\right.
	\\
	\int_{\B_{2R_{1}}^{+} \setminus  \B_{1}^{+}} t^{1-2\si}|\nabla U_{j}|^{2}\,\d X +\int_{B_{2R_1}\setminus  B_1} |U_{j}(x, 0)|^{2^{*}_{\si}}\, \d x\leq \mu_{2},
	\end{gather*}
	but$$
	\sup _{X\in \over{\B}^+_{l_2}\setminus  \B^+_{l_1}}\{|X|^{n-2\si}U_{j}(X)\} > 3 C(n,\si, A_0).
$$
	
	Choosing $\mu_{2} \in(0, \mu_{1})$ to be small, we use  an argument similar to  the proof in \cite[Lemma 6.5]{Niu18} to obtain$$
	U_{j}(X) \to  	U(X) \quad  \text { in }\, C_{loc}^{ \al/2}(\B^+_{2R_1}\setminus  \B^+_{1}).
	$$Moreover,  $U(X)$ satisfies
\be\label{13}
	\sup _{X\in \over{\B }^+_{l_2}\setminus  \B^+_{l_1}}\{|X|^{n-2\si}U(X)\} \geq  3 C(n,\si, A_0).
\ee
	However,	 by Proposition \ref{prop:1.2} we have
$$
	\sup _{X\in \over{\B}^+_{l_2}\setminus  \B^+_{l_1}}\{|X|^{n-2\si}U(X)\} \leq 2 C(n,\si, A_0),
$$
	which contradicts to \eqref{13}.   The rest of the proof  can be done in a similar way by using Proposition \ref{prop:1.3} and contradiction argument, we omit it here.
\end{proof}
%----------------------------------------------------------------------------%
\section{\sc A minimization problem}\label{sec:2}
%%%%%%%%%%%%%%%%%%%%%%%%%%%%%%%%%%%%%%%%%%%%%%%%%%%%%%%%%%%%%%%%%%%%%%%%%%%%%%%%%%%%%%%%%
For any  $z_1,z_{2}  \in\Rn$ satisfy $|z_{1}-z_{2}| \geq 10$, denote $\om:=\Rp \setminus \{\B^+_{1}(z_{1}) \cup \B^+_{1}(z_{2})\}$. We define $D_{\om}$ by taking the closure of $C_{c}^{\infty}(\over{\om})$ under the norm
\begin{align*}
\|U\|_{D_{\om}}:=\Big(\int_{\om}t^{1-2\si}|\nabla U|^2\,\d X\Big)^{1/2}+\Big(\int_{\pa'\om}|U(x,0)|^{2^{*}_{\si}}\,\d x\Big)^{1/2^{*}_{\si}}.
\end{align*}
Clearly, $D_{\om}$ is a Banach space.

By  some appropriate extension  and using \eqref{trace}, we have a   Sobolev trace  inequality on $D_{\om}$:
\begin{lem}\label{traceexterior}
	Let $D_{\om}$ be defined as above. There exists some positive constant
	$C(n,\si)$ such that for all $ U \in D_{\om}$, there holds
$$
	\Big(\int_{\pa'\om}|U(x,0)|^{2_{\si}^{*}}\,\d x\Big)^{1/2^{*}_{\si}} \leq C(n,\si)\Big(\int_{\om } t^{1-2\si} |\nabla U|^2\, \d X\Big)^{1 / 2},
$$
	where	the constant $C(n,\si)$   depends only
	on $n$ and $\si$. In particular, it does not depend on $z_{1}, z_{2}$ provided $|z_{1}-z_{2}| \geq 10$.
\end{lem}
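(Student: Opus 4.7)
The plan is to reduce the exterior estimate to the global Sobolev trace inequality \eqref{trace} by constructing a bounded linear extension operator $E: D_{\om} \to D$ that agrees with $U$ on $\om$ and satisfies
$$
\int_{\Rp} t^{1-2\si}|\nabla (EU)|^{2}\,\d X \leq C(n,\si)\int_{\om} t^{1-2\si}|\nabla U|^{2}\,\d X,
$$
with a constant $C(n,\si)$ independent of $z_1, z_2$. Once $E$ is in hand, applying \eqref{trace} to $EU\in D$ and restricting the resulting integral from $\Rn$ back down to the subset $\pa'\om \subset \Rn$ (where $EU=U$) immediately yields the desired inequality.

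For the construction of $E$, I would proceed by density, assuming $U\in C_c^\infty(\overline{\om})$. Since the two obstacles $\B_1^+(z_1)$ and $\B_1^+(z_2)$ are half-balls of radius $1$ separated by distance at least $10-2=8$, I can extend $U$ independently into each half-ball using a localized extension supported in, say, $\B_3^+(z_i)$. Translating so that $z_i=0$ for one such $i$, the local problem is to extend a function defined on the Lipschitz domain $\B_3^+\setminus\B_1^+$ across the hemisphere $\pa''\B_1^+$ into $\B_1^+$, in a way that is bounded on the weighted Sobolev space $H^1(t^{1-2\si},\,\cdot\,)$. The weight $t^{1-2\si}$ is Muckenhoupt $A_2$ on $\R^{n+1}$ (since $|1-2\si|<1$) and depends only on $t$, so it is translation-invariant in $x$; Chua's extension theorem for weighted Sobolev spaces over bounded Lipschitz domains with $A_2$ weights then produces a bounded local extension with operator norm depending only on $n,\si$ and the Lipschitz character of $\B_3^+\setminus\B_1^+$, which is a fixed geometric datum. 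Gluing these local extensions via a smooth cutoff $\eta_i\in C_c^\infty(\B_3^+(z_i))$ with $\eta_i\equiv 1$ on $\B_2^+(z_i)$, the correction terms from derivatives of $\eta_i$ are controlled by a weighted Poincar\'e inequality on $\B_3^+(z_i)\setminus \B_1^+(z_i)$, which is again a fixed bounded Lipschitz domain carrying the same $A_2$ weight; hence the total constant depends only on $n,\si$.

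The main technical obstacle is obtaining the weighted extension $E_i$ across the hemisphere $\pa''\B_1^+$ with a constant depending only on $n,\si$. The cleanest way I would pursue is to straighten the hemispherical interface locally by a smooth diffeomorphism (which distorts the weight $t^{1-2\si}$ by a bounded factor since $t$ is bounded below on any neighborhood of the relevant piece, or alternatively by using geodesic polar coordinates preserving $t$-type behavior) and then use reflection across the straightened interface, relying on the $A_2$-theory to ensure boundedness. An alternative route is to use a Kelvin-type inversion $X\mapsto X/|X|^2$ that maps $\B_1^+$ onto $\Rp\setminus\B_1^+$ and to check that the weighted Dirichlet energy transforms in a controlled way, yielding a reflected extension directly. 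Either route gives the required bound with the advertised uniformity, completing the proof.
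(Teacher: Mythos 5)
Your overall plan — build a bounded extension operator $E:D_\omega\to D$ whose constant depends only on $n,\sigma$ and then apply the global trace inequality \eqref{trace} — is sound, and it is in the same spirit as the references the paper cites (\cite{JX2}, \cite{FV}), which establish the corresponding inequality on half-balls; the ``minor modifications'' the paper alludes to are precisely of the extension/Kelvin type you describe. The invocation of Chua's extension theorem for $A_2$-weighted Sobolev spaces on fixed bounded Lipschitz domains, combined with translation invariance in $x$ of the weight $t^{1-2\sigma}$, is the right mechanism for uniformity in $z_1,z_2$.

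However, there is a concrete error in the step you call ``the cleanest way'': you claim you can straighten the hemisphere $\pa''\B_1^+(z_i)$ and that the weight is distorted by a bounded factor ``since $t$ is bounded below on any neighborhood of the relevant piece.'' This is false. The hemisphere $\{|X-(z_i,0)|=1,\ t>0\}$ has as its boundary the equator $\{|x-z_i|=1,\ t=0\}$, which lies in $\pa\Rp$; every neighborhood of $\pa''\B_1^+(z_i)$ contains points with $t$ arbitrarily small, and this degenerate corner region is exactly where the weighted extension problem is delicate. So the ``straighten and reflect in a region where the weight is comparable to a constant'' shortcut is not available; you must genuinely work with the $A_2$ weight (as Chua's theorem does), and it is that route you should keep. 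A second, smaller gap: Chua's extension bounds $\|\nabla(EU)\|_{L^2(w)}$ by the full inhomogeneous norm $\|U\|_{H^1(w)}$, whereas the lemma's right-hand side is only the homogeneous Dirichlet energy on $\omega$. To close this you should subtract from $U$ its weighted mean $c_i$ over the fixed annulus $\B_3^+(z_i)\setminus\B_1^+(z_i)$ before extending, control $\|U-c_i\|_{L^2(w)}$ by the gradient via the weighted Poincar\'e inequality on that fixed Lipschitz domain, and glue back by adding $c_i$ on $\B_1^+(z_i)$ (the traces match on the hemisphere, so the glued function is weakly $H^1$). You mention Poincar\'e only for the cutoff terms, but this mean-subtraction is where it is essential. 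Finally, the Kelvin-inversion alternative you float is indeed how \cite{JX2}/\cite{FV} reduce exterior estimates to half-ball ones, but a single inversion handles only one obstacle; for two obstacles you would localize to each $\B_3^+(z_i)$ by cutoffs (with disjoint supports, since $|z_1-z_2|\geq 10$) and apply the single-obstacle result twice, which is essentially equivalent to your extension construction.
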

\begin{proof}
	Its proof can be done as in \cite[Lemma A.1]{JX2} or \cite[Lemma 2.6]{FV}  with minor modifications, so we omit it here.
\end{proof}

Let  $K\in L^{\infty}(\pa'\om)$ satisfy $\|K\|_{L^{\infty}(\pa'\om)}\leq A_0$ for some  constant $A_{0}>0$.  We  define a functional on $D_{\om}$ by
$$
I_{K,\om}(U):=\frac{1}{2}\int_{\om}t^{1-2\si}|\nabla U|^2\,\d X -\frac{N_{\si}}{2_{\si}^{*}-\tau} \int_{\pa'\om } K(x)H^{\tau}(x)|U(x,0)|^{2_{\si}^{*}-\tau}\,\d x
$$
with $\tau\geq 0$ small. For any $U \in D_{\om}$, using H\"older inequality and Lemma \ref{traceexterior},   we have
\begin{align}
\Big| I_{K,\om}(U)-\frac{1}{2} \int_{\om}t^{1-2\si}|\nabla U|^2\,\d X\Big|\notag \leq  & A_0 C_0(n,\si)\Big(\int_{\pa'\om }|U(x,0)|^{2_{\si}^{*}}\,\d x\Big)^{(2_{\si}^{*}-\tau)/2_{\si}^{*}}\notag\\\leq &  A_0 C_0(n,\si)\Big(\int_{\om}t^{1-2\si}|\nabla U|^2\,\d X\Big)^{(2_{\si}^{*}-\tau)/2},\label{16}
\end{align}
where $C_0(n,\si)$ denotes some universal constant  which can vary from line to line.

\begin{prop}\label{prop:2.1}
	Let $D_{\om}$ be defined as above. There exist some constants $r_{0}=r_{0}(n,\si, A_{0})\in (0,1)$ and 	$C_{1}=C_{1}(n,\si)>1$ such that for any $z_{1},z_{2}  \in \Rn$ with $|z_{1}-z_{2}| \geq 10$, and $W \in W^{\si,2}(\pa'' \om)$ with $r=\|W\|_{W^{\si,2}(\pa'' \om)}\leq r_{0}$, the following minimum is achieved:$$
	\min \Big\{I_{K,\om}(U):U \in D_{\om},\, U|_{\pa'' \om}=W, \,\int_{\om}t^{1-2\si}|\nabla U|^2\,\d X\leq C_{1} r_{0}^{2}\Big\}.
	$$The minimizer is unique (denoted $U_{W}$) and satisfies $
	\int_{\om}t^{1-2\si}|\nabla U_{W}|^2\,\d X\leq  C_{1} r^{2}/2$. Furthermore, the map $W \mapsto U_{W}$ is continuous from $ W^{\si,2} (\pa'' \om)$ to $D_{\om}$. In particular, 	the constants $r_{0}$ and $C_1$  are independent 	of  $z_{1}, z_{2}$  provided $|z_{1}-z_{2}| \geq 10$.
\end{prop}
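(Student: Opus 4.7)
The strategy is the direct method of the calculus of variations, exploiting the fact that when $r_0$ is chosen small enough the functional $I_{K,\om}$ is, on the admissible set, a strictly convex perturbation of the quadratic form $\frac12\int_\om t^{1-2\si}|\nabla U|^2$. First I would construct a reference extension: given $W\in W^{\si,2}(\pa''\om)$ with $\|W\|=r\leq r_0$, produce some $U_0\in D_\om$ with $U_0|_{\pa''\om}=W$ and $\int_\om t^{1-2\si}|\nabla U_0|^2\leq C_0(n,\si)r^2$. Since $\pa''\om$ consists of two upper hemispheres of the unit half-balls $\B_1^+(z_i)$, this is a local problem near each half-ball, and an extension can be built by solving the weighted Dirichlet problem $\operatorname{div}(t^{1-2\si}\nabla V)=0$ in $\B_1^+(z_i)$ with $V=W$ on the upper hemisphere and $V=0$ on the flat base, then extending by $0$ to the rest of $\om$. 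The constant $C_0$ is independent of $z_1,z_2$ by translation invariance. In particular, the admissible set is non-empty.

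Next I would verify coercivity. By \eqref{16} combined with Lemma \ref{traceexterior}, for every admissible $U$,
\[
\tfrac12\|U\|_*^2 - A_0 C_0(n,\si)\|U\|_*^{2_\si^*-\tau}\;\le\; I_{K,\om}(U)\;\le\; \tfrac12\|U\|_*^2 + A_0 C_0(n,\si)\|U\|_*^{2_\si^*-\tau},
\]
where $\|U\|_*^2:=\int_\om t^{1-2\si}|\nabla U|^2$. Since $2_\si^*-\tau>2$, choosing $r_0=r_0(n,\si,A_0)$ so small that $A_0C_0(n,\si)(C_1 r_0^2)^{(2_\si^*-\tau-2)/2}\leq \tfrac14$ yields $I_{K,\om}(U)\ge \tfrac14\|U\|_*^2$ on the constraint set, while $I_{K,\om}(U_0)\le C r^2$. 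A minimizing sequence $\{U_k\}$ is therefore bounded in the Hilbert norm $\|\cdot\|_*$ (equivalent to the $D_\om$-norm by Lemma \ref{traceexterior}), so up to a subsequence $U_k\rightharpoonup U_W$ weakly. Weak lower semicontinuity of $\|\cdot\|_*^2$ and weak continuity of the boundary term $\int_{\pa'\om}KH^\tau|U(\cdot,0)|^{2_\si^*-\tau}$ (the latter by the local compactness of the trace embedding into $L^{2_\si^*-\tau}_{\mathrm{loc}}$ together with the uniform $L^{2_\si^*}$ tail bound from Lemma \ref{traceexterior}) show that $U_W$ achieves the minimum. The affine condition $U_W|_{\pa''\om}=W$ is preserved because $U_k-U_0$ lies in the weakly closed subspace of $D_\om$ with zero trace on $\pa''\om$. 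Comparing $I_{K,\om}(U_W)\le I_{K,\om}(U_0)\le Cr^2$ with the lower bound above gives $\|U_W\|_*^2\leq 4Cr^2$, so choosing $C_1=8C$ ensures both that the constraint $\|U_W\|_*^2\le C_1 r_0^2$ is inactive and that $\|U_W\|_*^2\le C_1 r^2/2$.

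For uniqueness, strict convexity on the constraint set is the key point. A computation of the second variation gives, for $\varphi\in D_\om$ with zero trace on $\pa''\om$,
\[
D^2I_{K,\om}(U)[\varphi,\varphi]\;\ge\;\|\varphi\|_*^2 - (2_\si^*-\tau-1)N_\si\int_{\pa'\om}KH^\tau|U|^{2_\si^*-\tau-2}\varphi^2,
\]
and the second term is bounded, via H\"older and Lemma \ref{traceexterior}, by $C\|U\|_*^{2_\si^*-\tau-2}\|\varphi\|_*^2\le C(C_1 r_0^2)^{(2_\si^*-\tau-2)/2}\|\varphi\|_*^2$, which is at most $\tfrac12\|\varphi\|_*^2$ after possibly shrinking $r_0$ further. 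Hence $I_{K,\om}$ is uniformly convex on the admissible affine ball, and the minimizer is unique. Continuity of $W\mapsto U_W$ then follows from a standard weak-convergence/uniqueness argument: if $W_j\to W$ in $W^{\si,2}(\pa''\om)$, the corresponding minimizers satisfy $\|U_{W_j}\|_*$ uniformly bounded, any weak limit has trace $W$ and attains the minimum for the limit problem (by lower semicontinuity and the convergence of reference extensions from Step 1), hence equals $U_W$ by uniqueness; strong convergence of $\|U_{W_j}\|_*$ to $\|U_W\|_*$ then promotes weak to strong in $D_\om$.

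The main obstacle is the first step: producing an extension operator $W^{\si,2}(\pa''\om)\to D_\om$ with a norm bound uniform in the geometric configuration $(z_1,z_2)$, and with the correct mapping properties for the weighted Sobolev setting. Once this local trace/extension theorem is in place, all remaining steps are routine applications of the direct method made possible by the smallness of $r_0$, which turns the critical-exponent term into a genuine perturbation of the quadratic energy.
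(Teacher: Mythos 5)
Your overall architecture — construct a uniform extension, exploit smallness of $r_0$ to make the critical trace term a perturbation of the quadratic energy, run the direct method, and get uniqueness from strict convexity on the small ball — is essentially the paper's approach. The difference, and the one genuine gap, is in the existence step.

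You justify weak lower semicontinuity of $I_{K,\om}$ along a minimizing sequence by claiming ``weak continuity of the boundary term $\int_{\pa'\om}KH^\tau|U(\cdot,0)|^{2_\si^*-\tau}$,'' attributing this to local compactness plus a ``uniform $L^{2_\si^*}$ tail bound from Lemma \ref{traceexterior}.'' This does not hold up. First, $\tau=0$ is allowed here, in which case the exponent is exactly critical and the trace functional $U\mapsto\int_{\pa'\om}|U(\cdot,0)|^{2_\si^*}$ is \emph{not} weakly continuous on $D_\om$ (concentration is possible). Second, even for $\tau>0$, Lemma \ref{traceexterior} is a trace inequality, not a decay estimate: it gives a global $L^{2_\si^*}$ bound but no control on the mass of $|U(\cdot,0)|^{2_\si^*-\tau}$ near infinity, and the H\"older comparison with $L^{2_\si^*}$ fails on a domain of infinite measure. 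So the mechanism you invoke to pass to the limit is unavailable.

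The fix is already in your own uniqueness paragraph. On the constraint ball $\{V:\,V|_{\pa''\om}=0,\ \|V\|_*^2\le 2C_1r_0^2\}$, after writing $U=V+\over{W}$, the functional $J_{K,\om}(V)=I_{K,\om}(V+\over{W})$ is (after shrinking $r_0$) strictly convex, because its Hessian dominates $\tfrac12\|\cdot\|_*^2$ by exactly the second-variation estimate you wrote. A convex, strongly continuous functional on a Hilbert space is automatically weakly lower semicontinuous, and the constraint set (affine translate of a closed ball intersected with a closed subspace) is weakly closed, so the direct method goes through with no compactness of the trace needed. This is precisely how the paper handles existence — strict convexity does double duty for existence and uniqueness, whereas you split the argument and the existence half relies on a claim that fails at the critical exponent. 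Deploy the convexity observation for existence as well and the gap disappears; the rest of your proof (the uniform extension via per-ball weighted Dirichlet problems and translation invariance, the coercivity via \eqref{16}, the energy comparison forcing $\|U_W\|_*^2\le C_1r^2/2$, and the continuity of $W\mapsto U_W$ from uniqueness) is correct and matches the paper's reasoning.
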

\begin{rem} Following the  terminology   in \cite{Adams},	$W^{\si,2}(\pa'' \om)$  stands for fractional order Sobolev space defined on the boundary $\pa'' \om$, obtained by transporting (via a partition of unity and pull-back) the standard scale $W^{\si,2}(\Rn)=H^{\si}(\Rn)$. We refer the readers to  \cite[p.215]{Adams} for more details.
\end{rem}

\begin{proof}[Proof of Proposition \ref{prop:2.1}]
	We claim that there exist some constant $C_{1}=C_{1}(n,\si)>0$ and	$\over{W} \in  D_{\om}$ such that
\be\label{prop3.2-1}
	\int_{\om}t^{1-2\si}|\nabla \over{W}|^2\,\d X	 \leq \frac{C_{1}}{8} r_0^{2}\quad \text{ and }\quad  \over{W}|_{\pa'' \om}=W.	\ee
	To justify this, we first note that $\pa'' \om$ is compact and smooth (mollifying the singularities of $\pa'' \om$ if necessary), thus any  open cover $\{U_j\}$ of $\pa'' \om$ and the associated collection $\{\Psi_j\}$ of smooth maps from $\B_1$ onto the sets $U_j$  are finite collections, say $1 \leq j \leq k$. If $\{\omega_j\}$ is a partition of unity for $\pa'' \om$ subordinate to $\{U_j\}$,
	we  define $\theta_j W$ on $\Rn=\pa \Rp$ by
	$$
	\theta_j W(x)= \begin{cases}(\omega_j W)(\Psi_j(x, 0)) & \text {if }\,|x|<1, \\ 0 & \,\text {otherwise, }\end{cases}
	$$
	where $x=(x_1,\ldots,x_n)$. Then \eqref{prop3.2-1} follows directly from  the proof of  Proposition 2.1  in \cite{JLX}. We fix the value of $C_{1}$ from now on and  the value of $r_{0}=r_{0}(n,\si, A_{0})$ is  determined in the following.

	First  it follows from \eqref{16}-\eqref{prop3.2-1} that if  $r_{0}>0$ is chosen small enough, then
	\begin{align}
	I_{K,\om}(\over{W}) & \leq \frac{1}{2} \int_{\om}t^{1-2\si}|\nabla \over{W}|^2\,\d X+ A_0 C_0(n,\si)\Big(\int_{\om}t^{1-2\si}|\nabla \over{W}|^2\,\d X\Big)^{(2_{\si}^{*}-\tau)/2}\notag\\
	& \leq \frac{4}{5} 	\int_{\om}t^{1-2\si}|\nabla \over{W}|^2\,\d X \leq \frac{C_{1}}{10} r^{2}.\label{prop3.2-2}
	\end{align}
	Observe that for any $U$ satisfying $C_{1} r^{2}/2 \leq \int_{\om}t^{1-2\si}|\nabla U|^2\,\d X \leq 2C_{1} r_{0}^{2}$,
 we  derive from \eqref{16}  that
	\begin{align*}
	I_{K,\om}(U)  \geq &\frac{1}{2} \int_{\om}t^{1-2\si}|\nabla U|^{2}\,\d X-A_{0} C_{0}(n,\si)\Big(\int_{\om}t^{1-2\si}|\nabla U|^{2}\,\d X\Big)^{(2^{*}_{\si}-\tau)/2} \\
	\geq &\frac{1}{2} \int_{\om}t^{1-2\si}|\nabla U|^{2}\, \d X\\&-A_{0} C_{0}(n,\si)(2 C_{1} r_{0}^{2})^{2\si /(n-2\si)-\tau/2} \int_{\om}t^{1-2\si}|\nabla U|^{2}\, \d X.
	\end{align*}
	Thus,  if  we choose $r_{0}>0$ to further satisfy	
	$A_{0} C_0(n,\si)(2 r_{0}^{2} C_{1})^{2\si /(n-2\si)-\tau/2} \leq 1/4$,
	then using \eqref{prop3.2-2} we have 	
$$
	I_{K,\om}(U)  \geq \frac{1}{4}  \int_{\om}t^{1-2\si}|\nabla U|^{2}\, \d X \geq \frac{1}{4}\Big(\frac{C_{1} r^{2}}{2}\Big) >I_{K,\om}(\over{W}).
$$
	Therefore, the minimizer only can be achieved in  $\{U \in D_{\om}:\,  \leq \int_{\om}t^{1-2\si}|\nabla U|^2\,\d X \leq C_{1} r^{2}/2\}$.
	
	Next we prove the existence of the minimizer.
	Write $U=V+\over{W}$, $V|_{\pa''\om}=0$,  $J_{K,\om}(V)=I_{K,\om}(U)=I_{K,\om}(V+\over{W})$.
	We only need to minimize $J_{K,\om}(V)$ for $	\int_{\om}t^{1-2\si}|\nabla V|^2\,\d X \leq 2 C_{1} r_{0}^{2}$ due to the above argument. It is easy to see that if $r_{0}$ is small enough, then $J_{K,\om}$ is strictly convex in the ball $\{V \in D_{\om}:\, V|_{\pa'' \om}=0, 	 \int_{\om}t^{1-2\si}|\nabla V|^2\,\d X\leq 2 C_{1} r_{0}^{2}\}$. Therefore it is standard to conclude the existence of a unique local minimizer $V_{W}$.
	
	Finally, set
	$U_{W}=V_{W}+\over{W}$, then $U_{W}$ is a local minimizer. As discussed above, $U_{W}$ satisfies
	$\int_{\om}t^{1-2\si}|\nabla U_W|^2\,\d X \leq C_{1}r^{2}/2$.	The uniqueness and the continuity of the map
	$W \mapsto  U_{W}$ follows from the strict local convexity of $J_{K,\om}$.  
\end{proof}

\bigskip

\noindent Z. Tang

\noindent School of Mathematical Sciences, Beijing Normal University\\ Beijing 100875, China\\
Email: \textsf{tangzw@bnu.edu.cn}

\medskip
\noindent H. Wang

\noindent School of Mathematical Sciences, Beijing Normal University\\
Beijing 100875, China\\[1mm]
Email: \textsf{hmw@mail.bnu.edu.cn}

\medskip

\noindent N. Zhou

\noindent School of Mathematical Sciences, Beijing Normal University \\Beijing 100875, China\\
Email: \textsf{nzhou@mail.bnu.edu.cn}

\end{document}